\newtheorem{thm}{Theorem}[section]
\newtheorem{prop}[thm]{Proposition}
\newtheorem{lemma}[thm]{Lemma}
\newtheorem{cor}[thm]{Corollary}
\newtheorem{conj}[thm]{Conjecture}
\newtheorem{fact}[thm]{Fact}
\theoremstyle{definition}
\newtheorem{defn}[thm]{Definition}
\theoremstyle{remark}
\newtheorem{q}[thm]{Question}
\newtheorem{rmk}[thm]{Remark}
\newtheorem{example}[thm]{Example}
\newcommand{\C}{\mathbb{C}}
\newcommand{\R}{\mathbb{R}}
\newcommand{\Z}{\mathbb{Z}}
\newcommand{\Q}{\mathbb{Q}}
\newcommand{\N}{\mathbb{N}}
\newcommand{\bdry}{\partial}
\newcommand{\s}{\vskip.1in}
\newcommand{\n}{\noindent}
\newcommand{\F}{\mathbb{F}}
\newcommand{\m}{\underline}
\newcommand{\op}{\operatorname}
\newcommand{\ora}{\overrightarrow}
\newcommand{\be}{\begin{enumerate}}
\newcommand{\ee}{\end{enumerate}}
\begin{document}

\title[Sutures and contact homology]{Sutures and contact homology I}

\author{Vincent Colin}
\address{Universit\'e de Nantes, 44322 Nantes, France}
\email{Vincent.Colin@math.univ-nantes.fr}

\author{Paolo Ghiggini}
\address{Universit\'e de Nantes, 44322 Nantes, France}
\email{paolo.ghiggini@univ-nantes.fr}
\urladdr{http://www.math.sciences.univ-nantes.fr/\char126 ghiggini}

\author{Ko Honda}
\address{University of Southern California, Los Angeles, CA 90089}
\email{khonda@math.usc.edu} \urladdr{http://math.usc.edu/\char126
khonda}

\author{Michael Hutchings}
\address{UC Berkeley, Berkeley, CA 94720}
\email{hutching@math.berkeley.edu}
\urladdr{http://math.berkeley.edu/\char126 hutching}

%\date{April 16, 2010}

\keywords{contact structure, sutured manifolds, contact homology,
Reeb dynamics, embedded contact homology}

\subjclass[2000]{Primary 57M50; Secondary 53D10,53D40.}

\thanks{VC supported by the Institut
Universitaire de France, ANR Symplexe, and ANR Floer Power. PG
supported by ANR Floer Power. KH supported by NSF Grant DMS-0805352.
MH supported by NSF Grant DMS-0806037.}

\begin{abstract}
We define a relative version of contact homology for contact
manifolds with convex boundary, and prove basic properties of this
relative contact homology.  Similar considerations also hold for
embedded contact homology.
\end{abstract}

\maketitle

\section{Introduction and main results}

The goal of this paper is to define relative versions of contact
homology and embedded contact homology for contact manifolds with
convex boundary and to prove basic properties of these {\em relative
  contact homology} theories.  {\em Contact homology}, due to
Eliashberg-Hofer and part of the symplectic field theory (SFT) package
of Eliashberg-Givental-Hofer \cite{EGH}, is a Floer-type invariant of
a (closed) contact manifold.  It is the homology of a differential
graded algebra whose differential counts genus zero holomorphic curves
in the symplectization with one positive puncture and an arbitrary
number of negative punctures.  Contact homology has been quite
successful at distinguishing contact structures, as can be seen for
example from the works of Bourgeois-Colin~\cite{BC} and
Ustilovsky~\cite{U}.
{\em Embedded contact homology} (ECH) is a variant of contact
homology/SFT for three-dimensional contact manifolds, defined in
\cite{Hu1,HS,HT1,HT2}, which is the homology of a chain complex whose
differential counts certain embedded holomorphic curves, possibly of
higher genus, in the symplectization.
Although ECH is defined in terms of a contact form, it is actually a
topological invariant of the underlying $3$-manifold, i.e.\ it does
not depend on the contact structure (up to a possible grading shift, see
Section~\ref{subsection: contact homology for sutured}).  This
invariance follows from a theorem of Taubes \cite{T2} identifying ECH
with Seiberg-Witten Floer cohomology, which also implies the Weinstein
conjecture in dimension three \cite{T1}.

Let $M$ be a compact, oriented $(2n+1)$-dimensional manifold with
boundary.  A natural boundary condition for an oriented contact
structure $\xi$ on $M$ to satisfy is that $\bdry M$ be {\em
  $\xi$-convex}. The notion of convexity in contact geometry was
introduced by Eliashberg-Gromov~\cite{EG}, and developed by
Giroux~\cite{Gi1}. A thorough discussion will be given in
Section~\ref{subsection: convex submanifolds}, but we briefly give
definitions here: A $2n$-dimensional submanifold $\Sigma\subset M$ is
{\em $\xi$-convex} if there is a contact vector field $X$ transverse
to $\Sigma$. To a $\xi$-convex submanifold $\Sigma$ and a transverse
contact vector field $X$ we can associate the {\em dividing set}
$\Gamma=\Gamma_X\subset \Sigma$, namely the set of points $x\in
\Sigma$ such that $X(x)\in \xi(x)$. By the contact condition,
$(\Gamma,\xi\cap T\Gamma)$ is a $(2n-1)$-dimensional contact
submanifold of $(M,\xi)$; the isotopy class of $(\Gamma,\xi\cap
T\Gamma)$ is independent of the choice of $X$. The set of points $x\in
\Sigma$ where $X$ is positively (resp.\ negatively) transverse to
$\xi$ will be denoted by $R_+(\Gamma)$ (resp.\ $R_-(\Gamma)$). We
denote by $(M,\Gamma,\xi)$ the contact manifold $(M,\xi)$ with convex
boundary and dividing set $\Gamma=\Gamma_X\subset \bdry M$ with
respect to some transverse contact vector field $X$.  We emphasize
that, in this paper, $\Gamma$ is a submanifold of $\Sigma$, {\em not
  an isotopy class of submanifolds of $\Sigma$.}

\subsection{Invariants of sutured contact manifolds}
\label{subsection: contact homology for sutured}

\subsubsection{Sutured contact homology and sutured ECH}

Our first result is that the contact homology algebra and, in the
three-dimensional case, embedded contact homology can be defined for a
contact manifold $(M,\Gamma,\xi)$ with convex boundary, extending the
usual definitions. A slight subtlety is that the actual boundary
condition we want to use is not that $\partial M$ be $\xi$-convex, but
rather that $(M,\Gamma,\xi)$ be a {\em sutured contact manifold\/}.
Roughly speaking this is a sutured manifold, essentially as defined by
Gabai \cite{Ga}, with a contact structure adapted to the sutures.  The
precise definition of sutured contact manifold is given in
Section~\ref{subsection: sutured contact manifolds}, and
Section~\ref{subsection: from convex to sutured} explains how to pass
between the convex and sutured boundary conditions.  For now we write
$(M,\Gamma,\xi)$ to indicate either of these boundary conditions, and
we refer to $\Gamma$ interchangeably as a ``suture'' or a ``dividing
set''.

\begin{thm} \label{thm: well-definition} Let $(M,\Gamma,\xi)$ be a
  $(2n+1)$-dimensional sutured contact manifold. Then:
\begin{enumerate}
\item The contact homology algebra $HC(M,\Gamma,\xi)$ is defined
and independent of the choice of contact $1$-form $\alpha$ with
$\ker\alpha=\xi$, adapted almost complex structure $J$, and abstract
perturbations.
\item Suppose $\dim{M}=3$. Then the embedded contact homology
  $ECH(M,\Gamma,\alpha,J)$ is defined.
\end{enumerate}
\end{thm}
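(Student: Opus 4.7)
The plan is to extend $(M,\Gamma,\xi)$ to an open contact manifold $(M^*,\xi^*)$ by attaching to $\bdry M$ a cylindrical/conical end chosen so that (i) every closed Reeb orbit of the completed contact form $\alpha^*$ lies in the original compact piece $M$, and (ii) every finite-energy holomorphic curve in the symplectization $\R\times M^*$ is trapped inside $\R\times M$. Once such a completion is in hand, the usual closed-manifold definitions transport to our setting essentially verbatim: $HC(M,\Gamma,\xi)$ is the homology of the supercommutative algebra freely generated by the good Reeb orbits of $\alpha^*$, with differential counting genus zero finite-energy $J$-holomorphic curves in $\R\times M^*$ with one positive and arbitrarily many negative punctures; and in the three-dimensional case, the ECH chain complex is built from orbit sets, with its differential counting the appropriate embedded curves as in \cite{Hu1,HS,HT1,HT2}.

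\textbf{Steps.} First, I would use the sutured/convex normal form (Sections~\ref{subsection: sutured contact manifolds} and \ref{subsection: from convex to sutured}) to put $\alpha$ in a standard shape on a collar of $\bdry M$, then glue on an infinite end equipped with an extension $\alpha^*$ whose Reeb vector field points off to infinity on the $R_\pm(\Gamma)$ pieces and admits no closed orbits outside $M$. Second, I would verify Reeb confinement, which should be essentially formal once the end is correctly constructed. Third, and most delicately, I would pick a translation-invariant adapted $J$ on $\R\times M^*$ for which a suitable coordinate on the end pulls back under any $J$-holomorphic map to a subharmonic function, so that the maximum principle forbids curves from escaping $\R\times M$. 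Fourth, with confinement established, SFT compactness yields the usual broken-curve description and $\partial^2=0$ after choosing abstract perturbations to achieve transversality; independence of $(\alpha,J,\text{perturbation})$ follows from the standard cobordism/chain-homotopy formalism, provided one checks that the same end construction can be used simultaneously for any interpolating family, which it can because the collar normal form is preserved by the requisite homotopies. For part (2), the ECH index computation and partition conditions of \cite{Hu1,HT1,HT2} are local in the interior and survive unchanged once every counted curve is known to live in $\R\times M$.

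\textbf{Main obstacle.} The crux is the holomorphic-curve confinement in step three. For closed manifolds compactness is automatic, but here a $J$-holomorphic curve in $\R\times M^*$ could in principle escape to infinity along the attached end. The construction must arrange that the contact form and almost complex structure cooperate to make a radial coordinate on the end subharmonic on $J$-holomorphic curves, and this must be done in a way that is compatible with the asymmetric roles of $R_+(\Gamma)$ and $R_-(\Gamma)$ and extends to the exact symplectic cobordisms needed for invariance. The $\xi$-convexity of $\bdry M$, together with the sutured normal form, is exactly what makes the required barrier function exist: the contact vector field transverse to $\bdry M$ provides, after extension, the Liouville-type structure on the end that turns pseudoholomorphicity into a maximum-principle statement. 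Once confinement is in hand, the remainder of Theorem~\ref{thm: well-definition} reduces to the standard SFT/ECH machinery.
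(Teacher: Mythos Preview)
Your overall architecture matches the paper: complete $(M,\Gamma,\alpha)$ to a noncompact $(M^*,\alpha^*)$, observe that all Reeb orbits stay in $M$, prove that holomorphic curves cannot escape to infinity in $M^*$, and then import the closed-manifold machinery. The gap is in your step three, where you assert that a single ``radial coordinate'' on the end can be made subharmonic along $J$-holomorphic curves so that the maximum principle traps every curve inside $\R\times M$.

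This is only half true. The completion $M^*$ has two qualitatively different ends: the Side (governed by a coordinate $\tau$) and the Top/Bottom (governed by $t$). For the Side, your plan works: the Liouville structure on $R_\pm(\Gamma)$ makes $\tau$ plurisubharmonic with respect to the tailored $J$, and the maximum principle gives $\tau\circ F\le 0$ (Lemma~\ref{lemma: bound on tau}). But for the Top/Bottom there is in general \emph{no} subharmonic barrier. The function $t\circ F$ is harmonic only when the projection $J_0$ on $\widehat{R_\pm(\Gamma)}$ is integrable and comes from a Stein structure (Section~\ref{subsection: Stein case}); for a generic tailored $J$ this fails, and the paper says explicitly that one does \emph{not} necessarily have $|t|\le 1$. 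Curves can and do leave $\R\times M$ in the $t$-direction. What the paper proves instead is a uniform \emph{bound} on $|t|$, obtained not by a maximum principle but by a bubbling argument: one shows that unbounded $t$ would produce, after rescaling, a nonconstant finite-energy plane or cylinder in $\R\times\R\times\widehat{R_\pm(\Gamma)}$, which is impossible because this target has no closed Reeb orbits (Propositions~\ref{gradientbound}, \ref{prop: bound on b for single F}, and~\ref{noside}). This uses Hofer's lemma, gradient bounds, thick--thin decompositions of the domain, and an isoperimetric/monotonicity argument to rule out long thin tubes. Your proposal contains none of this machinery, and the Liouville/contact-vector-field heuristic you invoke does not produce a barrier in the $t$-direction.

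For part~(2) there is a second issue you do not address: the ECH differential counts curves with no a priori genus bound, so the SFT compactness you cite (which fixes the topological type of the domain) does not apply. The paper handles this separately (Proposition~\ref{prop: ECH top and bottom bound}) using Taubes's Gromov compactness via currents, which again hinges on the absence of closed Reeb orbits in $\R\times\widehat{R_\pm(\Gamma)}$ rather than on any barrier function.
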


Here contact homology is
defined over $\Q$.  One reason for this is that multiply covered Reeb orbits 
force one to use coefficients in $\Q$ or some extension thereof. On the other hand, 
ECH is defined over $\Z$. 

The definitions of these versions of contact homology, as well as the
proof of Theorem~\ref{thm: well-definition}, are be given in
Section~\ref{section: defn of sutured CH}.  The basic idea is to copy
the definitions from the closed case, and to argue that the relevant
Gromov compactness carries over.

Note that already in the closed case, the definition and proof of
invariance of contact homology require some abstract perturbations of
the moduli spaces of holomorphic curves (due to the presence of
multiply covered holomorphic curves of negative index).  This
construction is still in progress, using the polyfold technology being
developed by Hofer-Wysocki-Zehnder, see \cite{Ho3}.  The proof of
Theorem~\ref{thm: well-definition}(1) assumes that the machinery
needed to construct contact homology in the closed case works, see
Section~\ref{section: defn of sutured CH} for details. 

The differentials in both contact homology and ECH depend also on the 
choice of a coherent orientation of the moduli spaces, see \cite{BM} 
for contact homology and \cite[Section 9]{HT2} for ECH. Since the 
construction of the coherent orientation is local, it carries over unchanged 
in the sutured case. Different choices of coherent orientations yield different,
but canonically isomorphic, chain complexes.

If $A$ is a homology class in $H_1(M)$, then we write
$HC(M,\Gamma,\xi,A)$ for the homology of the subcomplex generated by
monomials $\gamma_1\dots\gamma_k$, where $\gamma_i$ is a closed orbit
of the Reeb vector field $R_\alpha$ corresponding to $\alpha$, and
$\sum_{i=1}^k[\gamma_i]=A$.  Also write $ECH(M,\Gamma,\alpha,J,A)$ for
the homology of the subcomplex generated by orbit sets
$\{(\gamma_i,m_i)\}_{i=1}^k$ where $\sum_{i=1}^k m_i[\gamma_i]=A$.

\subsubsection{Conjectural topological invariance of sutured ECH}

In the closed case, ECH is a topological invariant of the underlying
3-manifold in the following sense: If $M$ is a closed 3-manifold, if
$\alpha_i$ is a contact form on $M$ and $J_i$ is a generic
$\alpha_i$-adapated almost complex structure as needed to define the
ECH chain complex for $i=1,2$, and if $A_1\in H_1(M)$, then
\[
ECH(M,\alpha_1,J_1,A_1) \simeq ECH(M,\alpha_2,J_2,A_2),
\]
as relatively graded $\Z$-modules, where
\begin{equation}
\label{eqn:A2-A1}
A_2-A_1=\operatorname{PD}(\frak{s}_{\xi_1}-\frak{s}_{\xi_2}).
\end{equation}
Here $\frak{s}_{\xi_i}$ denotes the ${\rm Spin}^c$ structure determined by
$\xi_i=\operatorname{Ker}(\alpha_i)$, and
$\frak{s}_{\xi_1}-\frak{s}_{\xi_2}\in H^2(M;\Z)$ denotes the
difference between the two ${\rm Spin}^c$ structures.  The above invariance
follows from the theorem of Taubes \cite{T2} identifying
$ECH_*(M,\alpha_i,J_i,A_i)$ with the Seiberg-Witten Floer cohomology
$\widehat{HM}^{-*}(M,\frak{s}_{\xi_i}+\operatorname{PD}(A_i))$, up to
a possible grading shift\footnote{Both Seiberg-Witten Floer homology
  and ECH have absolute gradings by homotopy classes of oriented
  2-plane fields on $M$, see \cite{KM3,Hu2}, and it is natural to
  conjecture that Taubes's isomorphism between them respects these
  gradings.}.

This motivates the following conjecture in the sutured case:

\begin{conj} \label{conj for ECH}
The sutured embedded contact homology $ECH(M,\Gamma,\alpha,J)$ does
not depend on the choice of contact form $\alpha$, contact structure
$\xi=\ker \alpha$, or almost complex structure $J$. More precisely,
\[
ECH(M,\Gamma,\alpha_1,J_1,A_1)\simeq
ECH(M,\Gamma,\alpha_2,J_2,A_2)
\]
as relatively graded $\F$-modules, when $A_1$ and $A_2$ are related
by \eqref{eqn:A2-A1}.
\end{conj}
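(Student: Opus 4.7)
The plan is to derive Conjecture~\ref{conj for ECH} from a sutured analog of Taubes's isomorphism \cite{T2} between ECH and Seiberg--Witten Floer cohomology. Concretely, I would aim to identify
\[
ECH(M,\Gamma,\alpha,J,A)\simeq SHM(M,\Gamma;\frak{s}_{\xi}+\op{PD}(A)),
\]
where $SHM$ is Kronheimer--Mrowka's sutured monopole Floer homology. Since the right-hand side depends only on the topological data $(M,\Gamma,\frak{s}_{\xi}+\op{PD}(A))$, the isomorphism claimed in the conjecture then follows directly from the identity $\frak{s}_{\xi_1}+\op{PD}(A_1)=\frak{s}_{\xi_2}+\op{PD}(A_2)$, which is exactly~\eqref{eqn:A2-A1}.

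The first step is to complete $(M,\Gamma,\xi)$ to a non-compact contact manifold $(M^{*},\alpha^{*})$ by attaching an appropriate contact end along $\bdry M$, arranged so that every closed Reeb orbit of $R_{\alpha^{*}}$ lies in the interior of $M$; the sutured setup developed in the earlier sections of the paper makes such a completion available and ensures that the ECH chain complex built on $M^{*}$ reproduces the sutured one. The second step is to transplant the analysis of \cite{T2} to $M^{*}$: for the ${\rm Spin}^c$ structure $\frak{s}_{\xi}+\op{PD}(A)$ and large deformation parameter $r$, one studies the Seiberg--Witten equations on $M^{*}$ and on $\R\times M^{*}$, uses the contact shape of the end to show that SW solutions, and the SW Floer trajectories between them, are confined to a compact region, and then applies Taubes's identification of large-$r$ limits with orbit sets and embedded pseudoholomorphic currents to match the SW Floer chain complex on $M^{*}$ with the sutured ECH chain complex. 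The third step is to identify this SW Floer homology on the contact completion with $SHM(M,\Gamma)$ via an excision/cobordism argument comparing $M^{*}$ with Kronheimer--Mrowka's surface closure of $(M,\Gamma)$.

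The main obstacle will be the analysis underpinning the second step. Taubes's compactness, regularity and convergence arguments in \cite{T2} rely in an essential way on the ambient manifold being closed, and transferring them to $M^{*}$ requires one to rule out, using the precise geometry of the contact end, both SW solutions and limiting pseudoholomorphic currents that might escape to infinity. Matching the contact completion with Kronheimer--Mrowka's surface closure adds a further analytic layer, since the two closures of $(M,\Gamma)$ are geometrically quite different and one must show that the SW Floer invariants match across this comparison. A seemingly simpler intermediate goal---invariance under change of contact form within a fixed $\xi$---would already require ECH-cobordism maps of the sort that, even in the closed case, are constructed only through Seiberg--Witten theory, so an extension of Taubes's machinery to the sutured setting appears unavoidable.
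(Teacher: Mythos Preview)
The statement you are attempting to prove is Conjecture~\ref{conj for ECH}: it is stated in the paper as an open conjecture, and the paper contains no proof of it. There is therefore nothing to compare your proposal against. The paper itself remarks (Section~\ref{subsection: sutured ECH}) that even in the closed case the only known proof of the analogous invariance statement goes through Taubes's isomorphism with Seiberg--Witten Floer cohomology, and that a direct holomorphic-curve argument is obstructed by multiply covered curves of negative ECH index; it leaves the sutured case open.

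Your proposal is not a proof but a plausible research program: extend Taubes's ECH\,$\simeq$\,$\widehat{HM}$ isomorphism to the sutured completion $M^*$ and match the result with Kronheimer--Mrowka's sutured monopole homology. This is indeed the natural line of attack, and you correctly identify the substantive difficulties (compactness and regularity for Seiberg--Witten on the noncompact $M^*$, and the comparison between the contact completion and the closures used to define $SHM$). But you should be aware that each of these steps is a major undertaking well beyond the scope of the present paper, and that at the time of writing the conjecture was genuinely open. In short: your outline is reasonable as a strategy, but it is not a proof, and the paper does not claim one either.
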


\begin{rmk}
  We need to explain why equation \eqref{eqn:A2-A1} still makes sense
  in the sutured case.  The difference between two Spin$^c$-structures
  on $M$ is an element of $H^2(M;\Z)=H_1(M,\partial M)$.  However for
  a sutured manifold one has a fixed $2$-plane field on $\partial M$
  determined by the sutures, which determines a canonical
  Spin$^c$-structure $\mathfrak{s}_0$ in a neighborhood of $\partial
  M$.  A contact structure $\xi$ compatible with the sutures then
  determines a relative Spin$^c$-structure relative to
  $\mathfrak{s}_0$, which means a Spin$^c$-structure
  $\mathfrak{s}_\xi$ on $M$ together with an isomorphism of
  $\mathfrak{s}_\xi|_{\partial M}$ with $\mathfrak{s}_0$. These
  relative ${\rm Spin}^c$ structures comprise
  an affine space over $H^2(M,\partial M;\Z)=H_1(M)$.
\end{rmk}

\subsubsection{Invariants of Legendrian submanifolds}

Let $(M,\xi)$ be a closed $(2n+1)$-dimensional contact manifold.  Then
we can define an invariant $HC(M,\xi,L)$ of a Legendrian submanifold
$L$ in $(M,\xi)$ as follows: Let $N(L)$ be a Darboux-Weinstein
neighborhood of $L$.  Then $\bdry (M- N(L))$ is a convex submanifold
of $M$ with dividing set $\Gamma_{\bdry(M-N(L))}$.  We now define
$$HC(M,\xi,L)=HC(M-N(L),\Gamma_{\bdry(M-N(L))},\xi|_{M-N(L)}).$$
Similarly, in dimension three, if $\xi=\ker \alpha$, then we can
define
\[
ECH(M,\alpha,J,L)=ECH(M-N(L),\Gamma_{\bdry(M-N(L))},\alpha',J'),
\]
where $\alpha',J'$ are obtained from $\alpha,J$ by a modification near
$\bdry N(L)$. If Conjecture~\ref{conj for ECH} is true, then
$ECH(M,\alpha,J,L)$ depends only on the ambient manifold $M$ and the
framing of the knot $L$, as a relatively graded $\F$-module. The
details of the Legendrian knot invariants are given in
Section~\ref{subsection: invts of leg submanifolds}.

\subsection{Comparison with sutured Floer homology}

In this section $\dim{M}=3$.

The definition of the sutured versions of contact homology
theories has been known at least since the work \cite{CH}.  However,
additional impetus for the current work came from the recent
foundational work of Juh\'asz~\cite{Ju1,Ju2} on the sutured version
of Heegaard Floer homology. Juh\'asz' work also motivated the
definition of a sutured version of Seiberg-Witten Floer homology by
Kronheimer and Mrowka~\cite{KM}.

\begin{defn}
A sutured 3-manifold $(M,\Gamma)$ (see Section~\ref{subsection: sutured
  contact manifolds}) is called {\em balanced} if $M$ has no
closed components, the map $\pi_0(\Gamma)\to \pi_0(\bdry M)$ is surjective,
and $\chi(R_+(\Gamma))=\chi(R_-(\Gamma))$ on the boundary of each
component of $M$.
\end{defn}

To a balanced sutured 3-manifold $(M,\Gamma)$, Juh\'asz assigned the
{\em sutured Floer homology} module $SFH(M,\Gamma)$, which generalizes
the ``hat'' version of Heegaard Floer homology and link Floer homology
as follows. Let $M$ be a closed oriented $3$-manifold. If we define
the sutured manifold $M(1)$ to be the pair consisting of $M-B^3$ and
suture $S^1$ on $\bdry B^3$, then one has
\begin{equation}
\label{eqn:SFHM(1)}
SFH(M(1))\simeq
\widehat{HF}(M),
\end{equation}
where the right hand side is the ``hat'' version of Heegaard Floer homology.
Next, if $L\subset M$ is a link, define the sutured manifold $M(L)$ to
be the pair consisting of $M-N(L)$ and suture which consists of two
meridian curves on each component of $\bdry N(L)$.  Juh\'asz then
showed that $SFH(M(L))$ is isomorphic to the link Floer homology of
$L$.

If $(M,\Gamma,\xi)$ is a sutured contact 3-manifold with no closed
components, then the sutured manifold $(M,\Gamma)$ is automatically
balanced. To see this, recall the Euler class formula
\[
\langle e(\xi),\Sigma\rangle
=\chi(R_+(\Gamma))-\chi(R_-(\Gamma))
\]
for a $\xi$-convex surface $\Sigma$ with dividing set $\Gamma$.  Since
each component $\Sigma$ of $\bdry M$ is homologically trivial, the
claim follows.  (The $\pi_0$ surjectivity holds because each component
of $\partial M \setminus \Gamma$ is an exact symplectic manifold, see
Section~\ref{subsection: sutured contact manifolds}.)  Conversely, if
$(M,\Gamma)$ is a balanced sutured 3-manifold, then there is a contact
structure $\xi$ so that $\bdry M$ is convex with dividing set
$\Gamma$.  (Moreover, according to \cite{HKM2}, there is a tight (or
universally tight) $\xi$ with convex boundary and dividing set
$\Gamma$ on $\bdry M$ if and only if $(M,\Gamma)$ is a {\em taut}
sutured manifold, which means roughly that $R_\pm(\Gamma)$ is
incompressible and genus-minimizing in its homology class in
$H_2(M,\Gamma)$.)  In this paper we will assume without further
mention that our sutured $3$-manifolds are balanced.

If $M$ is closed, it is conjectured that ECH is isomorphic to Heegaard
Floer homology, namely $ECH(M,\xi,A)\simeq
HF^+(-M,\frak{s}_\xi+\operatorname{PD}(A))$ as relatively graded
$\Z$-modules.  Extending this to the sutured case, we conjecture the
following, which is a strengthening of Conjecture~\ref{conj for ECH}:

\begin{conj}
\label{conj:SECH}
If $(M,\Gamma,\xi)$ is a sutured contact 3-manifold, then
\[
ECH(M,\Gamma,\xi,A)
\simeq
SFH(-M,-\Gamma,\mathfrak{s}_\xi + \operatorname{PD}(A))
\]
as relatively graded $\F$-modules, where $\mathfrak{s}_\xi$ denotes
the relative Spin$^c$-structure determined by $\xi$.
\end{conj}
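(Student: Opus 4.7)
The plan is to adapt the strategy for the closed conjecture $ECH(Y,\xi) \simeq HF^+(-Y,\mathfrak{s}_\xi)$ to the sutured setting, using a partial open book decomposition to obtain a combinatorial model of both sides. First, choose a partial open book $(S,\phi)$ of $(M,\Gamma)$ compatible with $\xi$ (via the sutured Giroux correspondence of Honda-Kazez-Matic), and construct an adapted contact form $\alpha'$, with $\ker \alpha' = \xi'$ isotopic to $\xi$, whose Reeb vector field $R_{\alpha'}$ is tangent to the binding and essentially rotates the pages. The closed Reeb orbits of $R_{\alpha'}$ in the interior of $M$ then correspond to periodic orbits of the monodromy $\phi$, while the remaining orbits lie in a small neighborhood of the binding and can be made arbitrarily long-period. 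By Theorem~\ref{thm: well-definition}(2), $ECH(M,\Gamma,\alpha',J',A)$ is defined for a generic $J'$, and—assuming Conjecture~\ref{conj for ECH}—it computes the invariant on the left-hand side.

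The next step is to identify generators on the two sides. After choosing a suitable action truncation and neck-stretching along the binding, the ECH generators in class $A$ should be in bijection with unordered $k$-tuples of fixed points of $\phi$ lying in the interior of $S$, for various $k$. On the $SFH$ side, the Honda-Kazez-Matic description of $SFH(-M,-\Gamma)$ via partial open books presents generators as intersection points $\mathbf{x}\in \mathbb{T}_\alpha\cap \mathbb{T}_\beta$ in a sutured Heegaard diagram read off from $(S,\phi)$, which combinatorially are the same $k$-tuples of fixed points. A direct check then shows that the decomposition of the ECH chain complex by relative homology class $A$ matches the decomposition of $SFH$ by $\mathfrak{s}_\xi + \op{PD}(A)$, accounting for the orientation reversal.

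Matching the differentials is the main obstacle. Using Lipshitz's cylindrical reformulation of Heegaard Floer homology, the $SFH$ differential can be written as a signed count of embedded holomorphic curves in $S\times[0,1]\times \R$ with boundary on $\alpha$- and $\beta$-Lagrangian cylinders. These curves should correspond, under the model above, to ECH holomorphic curves in $\R\times M$ whose image concentrates near a page $S\times\{t_0\}$ as $t_0$ varies. The program is therefore to set up a neck-stretching argument along $\bdry N(\Gamma)$ and along the binding, prove Gromov compactness in the sutured setting (controlling possible escape of energy into the boundary region and into the binding neighborhoods), and then establish a bijective correspondence between the limiting curves and Lipshitz's disks, respecting signs from coherent orientations. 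The technical heart is to show that no extra curves appear in either the ECH or the Lipshitz count after stretching, and that embeddedness and the ECH index constraints single out exactly the curves that appear in Lipshitz's description.

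An alternative route would bypass open books and instead combine Taubes's isomorphism $ECH \simeq \widehat{HM}^{-*}$ with the sutured Seiberg-Witten Floer homology of Kronheimer-Mrowka and its (conjectural) identification with $SFH$ of $-M$; one would close up $(M,\Gamma)$ by attaching a standard contact piece along $\bdry M$, apply the closed result, and then extract the sutured invariants via an excision or surgery argument. Either way, the hardest input—matching holomorphic curve counts across different Floer theories in a setting with boundary—is essentially the sutured analog of the closed $ECH \simeq HF^+$ problem, and would be expected to be the focus of subsequent papers in this series.
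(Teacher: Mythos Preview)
The statement you are addressing is Conjecture~\ref{conj:SECH}, not a theorem: the paper explicitly presents it as a conjecture and does \emph{not} prove it. There is therefore no ``paper's own proof'' to compare your proposal against. What the paper offers instead is circumstantial evidence: Golovko's calculations for solid tori, the analogue of \eqref{eqn:SFHM(1)} via Theorem~\ref{thm: mapping cone}, and the knot filtration of Section~\ref{section: knot filtration}. None of this constitutes a proof strategy for the general conjecture.

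Your outline is a plausible research program rather than a proof, and you seem aware of this yourself (``would be expected to be the focus of subsequent papers in this series''). The open-book approach you sketch is in fact close to the strategy pursued in later work of Colin--Ghiggini--Honda, and the Seiberg--Witten route you mention as an alternative is close to what Kutluhan--Lee--Taubes carried out in the closed case. But several of your steps hide substantial difficulties that go well beyond what is in this paper: the bijection between ECH generators and tuples of fixed points of the monodromy requires careful control of Reeb orbits near the binding and a filtration argument; the identification of differentials via Lipshitz-type curves is a major analytic undertaking (and is not simply a ``neck-stretching'' consequence of the compactness results proved here); and the alternative route presupposes both a sutured Taubes isomorphism and an $SFH \simeq$ sutured monopole identification, neither of which is available in this paper. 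So while nothing in your proposal is obviously wrong as a plan, it is not a proof, and the paper makes no claim that it is one.
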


Calculations due to Golovko~\cite{Go1,Go2} confirm this conjecture in
some examples, e.g., when $M=S^1\times D^2$ and $\Gamma$ is arbitrary, for a 
universally tight contact structure.

In the closed case, it is further conjectured that the isomorphism
between $ECH$ and $HF^+$ intertwines the $U$-maps on both sides.
Assuming this conjecture, we can confirm Conjecture~\ref{conj:SECH}
for the sutured contact 3-manifold $M(1)$, where $M$ is closed, as
follows.  On the Heegaard Floer side, the map $U\colon HF^+(M)\to
HF^+(M)$ fits into an exact triangle with $\widehat{HF}(M)$ in the
third position.  To obtain an analogue of this on the ECH side, define
$\widehat{ECH}(M)$ to be the homology of the mapping cone of the
$U$-map on the ECH chain complex.  We then have the following analogue
of \eqref{eqn:SFHM(1)}:

\begin{thm} \label{thm: mapping cone}
If $M$ is a closed oriented 3-manifold, then $ECH(M(1))$
is independent of choices (as a relatively graded $\F$-module), and
\[
ECH(M(1))\simeq \widehat{ECH}(M).
\]
\end{thm}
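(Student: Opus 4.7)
I would prove Theorem~\ref{thm: mapping cone} by constructing adapted contact forms on $M$ and on $M(1)$ whose Reeb dynamics differ by a single short elliptic orbit, and then realizing the mapping-cone description of $\widehat{ECH}(M)$ as a chain-level consequence of an SFT-type neck-stretching argument. First, fix a generic point $p\in M$ and a small Darboux ball $B \ni p$. Modify a non-degenerate contact form $\alpha$ on $M$ inside $B$ so that the Reeb vector field acquires a single new closed orbit in $B$, namely a short elliptic orbit $e$ of very small action $\epsilon$ and Conley--Zehnder index $+1$ sitting near $p$, and so that on $\bdry B = S^2$ the Reeb field is tangent to a single equatorial circle and transverse to the two complementary hemispheres with opposite orientations. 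Then $M\setminus\op{int}(B)$, with the restricted contact form $\alpha'$, is a sutured contact manifold diffeomorphic to $M(1)$, with a single suture on $S^2$. By Theorem~\ref{thm: well-definition}(2), $ECC(M(1),\alpha',J')$ is well-defined for a generic adapted $J'$, and $J'$ extends to an adapted $J$ on $\R\times M$.

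Second, note that the Reeb orbits of $(M,\alpha)$ are exactly those of $(M(1),\alpha')$ together with $e$, so as $\F$-modules
\[
ECC(M,\alpha) \;\cong\; ECC(M(1),\alpha')\otimes_{\F}\F[e],
\]
where $\F[e]$ is the polynomial algebra on the admissible multiples $1,e,e^2,\ldots$ (all multiples are admissible because $e$ is elliptic). A neck-stretching along $\R\times\bdry B$ degenerates any $J$-holomorphic current in $\R\times M$ into pieces in $\R\times M(1)$ and pieces in $\R\times B$. The compactness statement, together with the constraint that a Darboux ball supports no closed orbits other than covers of $e$, implies that the ECH differential $\partial$ on $ECC(M,\alpha)$ respects the filtration by $e$-multiplicity and that its associated graded is $\partial_{M(1)}\otimes \op{id}_{\F[e]}$, where $\partial_{M(1)}$ is the sutured differential on $ECC(M(1),\alpha')$.

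Third, analyze the $U$-map, which counts ECH-index-$2$ currents through a fixed base point $z\in\R\times M$. Placing $z$ inside $\R\times B$ close to the orbit cylinder $\R\times e$ and applying the same neck-stretching forces any such current to contain a plane in $\R\times B$ through $z$ with positive asymptote a cover of $e$, glued to a curve in $\R\times M(1)$ with a new negative asymptote on $\bdry B$. A local analysis in the Darboux ball, combined with the chain-homotopy invariance of $U$ under change of base point, identifies $U$ with the chain map of degree $-2$ given on the associated graded by multiplication by $e$. Since multiplication by $e$ on $\F[e]$ is injective with cokernel $\F$, the mapping cone of $U$ is quasi-isomorphic to $ECC(M(1),\alpha')$, yielding the desired isomorphism $\widehat{ECH}(M)\simeq ECH(M(1))$. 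Independence of $ECH(M(1))$ from all auxiliary choices then follows from the invariance of $\widehat{ECH}(M)$, which is inherited from Taubes's identification of closed ECH with Seiberg--Witten Floer cohomology.

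The principal obstacle will be the chain-level identification of $U$ with multiplication by $e$. This requires a detailed study of the moduli of $J$-holomorphic planes in $\R\times B$ that pass through $z$ and are asymptotic to covers of $e$, coupled with a controlled gluing to curves in $\R\times M(1)$ broken along $\R\times\bdry B$; multiply covered $e$-planes must be treated via coherent transversality or obstruction-bundle techniques in the spirit of the closed-case analysis, in order to rule out unwanted higher-order corrections in $e$. The remaining steps---neck-stretching compactness, the bijective correspondence of Reeb orbits, and the tensor-product description of generators---follow from standard SFT arguments adapted to the sutured setting established in Section~\ref{section: defn of sutured CH}.
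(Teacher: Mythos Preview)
The paper does not actually prove this theorem. Immediately after the statement it says that ``Arguments in Section~8.4 show that $ECH(M(1))$ depends only on the contact structure. The rest of Theorem~1.6 will be proved in the sequel \cite{CGHH2}.'' So there is nothing to compare your argument against beyond the partial independence statement, and your proposal is attempting something the present paper deliberately postpones.

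That said, two substantive remarks on your outline. First, your model for $M(1)$ does not match the paper's. In Section~7.1 the sutured manifold $M(1)$ is obtained by removing a Darboux ball, which produces a \emph{concave} sutured manifold $M(1)'$, and then applying the concave-to-convex procedure of Proposition~\ref{prop: concave to sutured}. As noted at the end of Section~\ref{subsection: concave to convex}, that procedure introduces a \emph{hyperbolic} orbit parallel to the suture, not an elliptic one; and the extra orbit lives in $M(1)$, not in $M$. Your construction instead modifies $\alpha$ inside $B$ to create an elliptic orbit $e$ and then declares $M\setminus\op{int}(B)$ to be convex sutured directly. These two models of $M(1)$ have different Reeb dynamics, so at minimum you would need to invoke invariance of sutured ECH (which is Conjecture~\ref{conj for ECH} here, not a theorem) to identify them before your argument even begins.

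Second, the ``neck-stretching along $\R\times\bdry B$'' you invoke is not standard SFT neck-stretching: $\bdry B=S^2$ is a convex surface, not a contact hypersurface, and $\R\times S^2$ is not of contact type in the symplectization. The degenerations that actually occur are governed by the sutured compactness results of Section~\ref{section: compactness results}, and curves do not split cleanly into ``pieces in $\R\times M(1)$ and pieces in $\R\times B$'' in the way your filtration argument assumes. In particular, the claim that the differential respects the $e$-multiplicity filtration and that the associated graded is $\partial_{M(1)}\otimes\op{id}$ is not a formal consequence of stretching; curves in $\R\times M$ can have negative ends on covers of $e$ (there is no action or homological obstruction), so the $e$-multiplicity can drop under $\partial$. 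Your identification of $U$ with multiplication by $e$ has the same issue, and as you yourself note, making this precise requires a substantial gluing and obstruction-bundle analysis that is well beyond what is sketched here. The overall strategy is in the spirit of later work on $\widehat{ECH}$, but as written it is a heuristic rather than a proof.
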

Arguments in Section 8.4 show that $ECH(M(1))$ depends only on the contact structure.  
The rest of Theorem 1.6 will be proved in the sequel \cite{CGHH2}

We also have some evidence for Conjecture~\ref{conj:SECH} for the
sutured manifold $M(K)$, where $K$ is a nullhomologous knot in a
closed oriented 3-manifold $M$.  Namely, in Section~\ref{section: knot
  filtration} we define a filtration on the chain complex whose
homology gives $ECH(M(1))$; the associated graded complex gives
$ECH(M(K))$.  This is analogous to the Heegaard Floer story, where the
knot Floer homology, (identified with $SFH(M(K))$), is the homology of the
associated graded complex for a filtration on the chain complex
computing $\widehat{HF}(M)$, (identified with $SFH(M(1))$).

\subsection{The simplest sutured contact manifold}

Let $(W,\beta)$ be a Liouville manifold. (See
Section~\ref{subsection: Liouville manifolds} for a definition and
discussions.) Then the simplest contact manifold with convex/sutured
boundary is the {\em product sutured contact manifold}
\[
(M,\Gamma,\alpha)=(W\times[-1,1], \bdry W\times\{0\},
\operatorname{Ker}(dt+\beta)),
\]
where $t$ denotes the $[-1,1]$ coordinate on $W\times[-1,1]$.

\begin{lemma} \label{lemma: product sutured manifold}
Suppose $(M,\Gamma,\xi)$ is a product sutured manifold. If
$\alpha=dt+\beta$ is the $[-1,1]$-invariant contact form for $\xi$ as
above, then
\begin{enumerate}
\item $HC(M,\Gamma,\xi)=\Q;$
\item $ECH(M,\Gamma,\alpha,J)=\F$, if $\dim{M}=3$.
\end{enumerate}
\end{lemma}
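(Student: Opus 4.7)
The plan is to show that the Reeb vector field $R_\alpha$ of the product contact form admits no closed orbits in the completion of $M$, so the chain complexes computing $HC$ and $ECH$ have only the empty orbit set as a generator and the homologies collapse to the ground ring.

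First I would compute the Reeb vector field explicitly. Since $\alpha = dt + \beta$ and $\beta$ depends only on the $W$-coordinates, $d\alpha = d\beta$ is the pullback of the symplectic form $d\beta$ on $W$, which annihilates $\partial_t$; combined with $\alpha(\partial_t) = 1$ this forces $R_\alpha = \partial_t$.

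Next I would invoke the completion procedure for sutured contact manifolds set up in Section~\ref{subsection: sutured contact manifolds}. For a product sutured manifold this completion is particularly simple: complete the Liouville manifold $(W,\beta)$ by attaching a positive symplectization end $\partial W \times [0,\infty)$ to obtain $(\hat{W}, \hat{\beta})$, extend $[-1,1]$ to $\R$, and take $\hat{M} = \hat{W} \times \R$ with extended contact form $\hat{\alpha} = dt + \hat{\beta}$. The Reeb vector field on $\hat{M}$ is still $\partial_t$, so every trajectory has the form $s \mapsto (w, t_0 + s)$ and never closes up.

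Consequently, the contact homology DGA, generated as a graded supercommutative $\Q$-algebra by the good closed Reeb orbits, has no algebra generators and reduces to $\Q$; the differential vanishes vacuously, yielding $HC(M,\Gamma,\xi) = \Q$. Likewise the ECH chain complex, freely generated as an $\F$-module by admissible orbit sets, has only the empty orbit set as generator, so $ECH(M,\Gamma,\alpha,J) = \F$. The main point to verify is that this specific completion fits the framework of Section~\ref{section: defn of sutured CH}, which it does because the product structure persists through the completion and the absence of Reeb orbits trivializes any compactness concern for holomorphic curves.
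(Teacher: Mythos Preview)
Your proposal is correct and follows essentially the same approach as the paper: the key observation is that $R_\alpha=\partial_t$ has no closed orbits, so the chain complexes are generated only by the unit (for $HC$) or the empty orbit set (for $ECH$). The paper's proof is a two-line version of yours; your added verification that the completion $M^*$ is $\widehat{W}\times\R$ with Reeb field still $\partial_t$ is a reasonable elaboration, though the paper elsewhere notes generally that all periodic orbits of $R_{\alpha^*}$ lie in $M$.
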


\begin{proof}
  The Reeb vector field of $\alpha$ is $R_\alpha=\bdry_t$, which has
  no closed orbits. The algebra $HC(M,\Gamma,\alpha)=\Q$ is generated
  by the unit $1$, and the vector space $ECH(M,\Gamma,\alpha,J)=\F$ is
  generated by the empty set.
\end{proof}

\subsection{Gluing theorems} $\mbox{}$

\subsubsection{Connected sums}
The simplest gluing result describes the behavior of contact homology and
ECH under connected sum. Given a $(2n+1)$-dimensional closed contact
manifold $(M,\xi)$, let us write
$\widehat{HC}(M,\xi)=HC(M-B^{2n+1},\Gamma=S^{2n-1},\xi|_{M-B^{2n+1}})$,
where $(B^{2n+1},\Gamma=S^{2n-1},\xi)$ is the standard Darboux ball
with convex boundary. Then:

\begin{thm}
\label{thm: connected sum} Let $(M_1,\xi_1)$ and $(M_2,\xi_2)$ be
$(2n+1)$-dimensional closed contact manifolds. If $(M_1\#
M_2,\xi_1\#\xi_2)$ is the contact manifold obtained by removing
standard Darboux balls from each $(M_i,\xi_i)$ and gluing, then:
\begin{enumerate}
\item $\widehat{HC}(M_1\#
M_2,\xi_1\#\xi_2)=\widehat{HC}(M_1,\xi_1)\otimes
\widehat{HC}(M_2,\xi_2)$.
\item 
If $\dim M_1=\dim M_2=3$ and we take ECH with coefficients in a field, then
\[
\widehat{ECH}(M_1\#
M_2,\xi_1\#\xi_2)=\widehat{ECH}(M_1,\xi_1)\otimes
\widehat{ECH}(M_2,\xi_2).
\]
\end{enumerate}
\end{thm}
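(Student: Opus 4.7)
The plan is to choose a contact form on $\widehat{M_1\#M_2}$ that decouples the Reeb dynamics of the two summands by a Darboux-type neck, and then to decouple holomorphic curves as well via a neck-stretching and SFT-compactness argument. Specifically, inside $\widehat{M_1\#M_2}=(M_1\#M_2)-B^{2n+1}$ there are two parallel embedded convex $(2n)$-spheres $S_1,S_2$, each with $S^{2n-1}$-suture, cutting the manifold into three sutured pieces: a copy of $\widehat{M_1}$, a copy of $\widehat{M_2}$, and an intermediate region $N$ with three convex sphere boundaries ($S_1$, $S_2$, and the outer $\bdry (\widehat{M_1\#M_2})$). I would pick adapted contact forms $\alpha_i$ on $\widehat{M_i}$ and extend them by a standard Darboux-type contact form $\alpha_N$ on $N$ whose Reeb vector field has no closed orbit in the interior of $N$, so that from the Reeb point of view $N$ is a ``flow-through'' region. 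This can be arranged because $N$ is a standard contact piece on which the Reeb vector field can be taken to be a non-vanishing gradient-like vector field escaping to the convex boundary.

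With this contact form $\alpha$ in place, every closed Reeb orbit lies in $\widehat{M_1}$ or $\widehat{M_2}$. The generators of the chain complex for $\widehat{M_1\#M_2}$ therefore split naturally: for contact homology, the free graded-commutative algebra on the orbits of $\alpha$ is the tensor product algebra of the chain algebras for the $\widehat{M_i}$; for ECH, the orbit sets are pairs $(\eta_1,\eta_2)$ with $\eta_i$ an orbit set in $\widehat{M_i}$.

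To show that the differential respects the splitting, I would stretch the almost complex structure along the two separating convex spheres $S_1$ and $S_2$, and apply the SFT compactness theorem adapted to the sutured setting from earlier in the paper. Any sequence of $J$-holomorphic curves contributing to $\bdry$ then limits to a holomorphic building with levels in the symplectizations of $\widehat{M_1}$, $N$, or $\widehat{M_2}$. Since $\alpha_N$ has no closed Reeb orbits in the interior of $N$, no level of the limit building can live nontrivially in $N\times\R$: every such level would have to be asymptotic to orbits that do not exist. Hence each surviving building has all its curves in the symplectization of either $\widehat{M_1}$ or $\widehat{M_2}$, and the differential factors as $\bdry=\bdry_1\otimes 1 + 1\otimes\bdry_2$. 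Taking homology and applying K\"unneth---over $\Q$ for part (1), over $\F$ for part (2), which explains the field hypothesis---gives the stated isomorphisms; the algebra structure in the contact homology case is automatically the tensor-product algebra since multiplication is just juxtaposition of orbits.

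The hardest part will be the neck-stretching and compactness step. One must carefully set up SFT-type compactness in the sutured context---where the symplectization is completed by attaching ends at the sutured boundary rather than over a closed contact manifold---and ensure that in the stretched limit no closed Reeb orbits of the standard model on $N$ are created by the stretching process. Once this is in hand, it remains to check that the neighborhood of the outer $(S^{2n},S^{2n-1})$ boundary produced by our three-piece decomposition agrees, up to the contact Darboux theorem, with the neighborhood used in the definition of $\widehat{HC}$ and $\widehat{ECH}$, so that the resulting isomorphism is canonical.
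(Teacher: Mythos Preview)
Your overall strategy---arrange a contact form so that all Reeb orbits lie in $\widehat{M_1}$ or $\widehat{M_2}$, then show holomorphic curves decouple---matches the paper's. Two points of comparison are worth noting.

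First, the paper's setup is simpler than your three-piece decomposition. Rather than cutting along two convex spheres and producing a middle region $N$ with three sphere boundaries, the paper stacks $M_1''=\widehat{M_1}$, a single product neck $D^{2n}\times[-N,N]$ with contact form $dt+\beta$, and $M_2''=\widehat{M_2}$; the outer sutured $S^{2n}$ boundary is automatically the union of $R_+(\Gamma_1'')$, the side $S^{2n-1}\times[-N,N]$, and $R_-(\Gamma_2'')$. The curve-decoupling mechanism is then not SFT neck-stretching along a contact hypersurface with building limits, but the $t$-direction elongation of Section~8: one shows directly (as in Lemma~8.2) that for fixed asymptotics any curve entering the long neck would produce a nonconstant finite-energy curve in $\R\times\R\times\widehat{R_+(\Gamma)}$, which is impossible since there are no closed orbits there. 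Your phrasing in terms of ``levels in the symplectization of $N$'' is a reasonable heuristic for this, but the actual argument needs the direct-limit machinery of Section~8.3 to pass from action-filtered statements to the full contact homology, since the required neck length depends on the action bound.

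Second, for part~(2) your proposal glosses over a genuine subtlety. For ECH one cannot simply run the same direct-limit argument, because the continuation maps between the chain complexes for different neck lengths are not presently definable by counting holomorphic curves. The paper instead invokes results from \cite{HT3}: for each action cutoff $K$ it shows $ECH_{\le K}(M'',\alpha_N,J_N)\simeq ECH_{\le K}(M_1''\sqcup M_2'')$ for $N$ large, then uses the Seiberg-Witten-based continuation isomorphisms of \cite{HT3} (after embedding into a closed manifold) to establish that $ECH_{\le K}(M'',\alpha_N,J_N)$ is independent of $N$ and compatible with the inclusion maps, so that the direct limit over $K$ is well-defined. Without this external input the ECH argument does not close.
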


The proof of Theorem~\ref{thm: connected sum} is given in
Section~\ref{subsection: proof of connected sum theorem}. We remark
that, in Theorem~\ref{thm: connected sum}(2), we have a tensor product
of homologies since the ground ring is the field $\F$.  With $\Z$
coefficients one would need to modify the right hand side according to
the K\"unneth formula for the homology of a tensor product of chain
complexes.  Note also that Theorem~\ref{thm: connected sum}(b) is
consistent with the conjectural equivalence of ECH and Heegaard Floer
homology (and their respective $U$ maps), because the analogous
property holds for $\widehat{HF}$.

Before stating the next two theorems we need to make the following:

\s\n {\bf Disclaimers.}  Theorems~\ref{thm: sutured gluing}(2) and
Theorem~\ref{thm: convex gluing}(2) for ECH presuppose part of
Conjecture~\ref{conj for ECH}, namely that sutured ECH depends only on
the contact structure and not on the contact form or almost complex
structure.  They also assume a slightly stronger conjecture, namely
that a suitable ``exact symplectic cobordism'' between sutured contact
3-manifolds induces a map on sutured ECH satisfying certain basic
properties, see Section~\ref{subsection: ECH case} for details.
Analogous maps on ECH induced by exact symplectic cobordisms between
{\em closed\/} contact 3-manifolds are constructed by Hutchings and
Taubes~\cite{HT3}, using Seiberg-Witten theory.

\subsubsection{Sutured manifold gluing}
Let $(M',\Gamma',\xi')$ be a sutured contact manifold.  Suppose
there exist codimension zero Liouville submanifolds $P_+\subset
R_+(\Gamma')$ and $P_-\subset R_-(\Gamma')$ which are
symplectomorphic with respect to $d\alpha$, where $\alpha$ is a
contact $1$-form for $\xi'$, and the symplectomorphism takes
$\alpha|_{P_+}$ to $\alpha|_{P_-}$.  Then we can glue $P_+$ and
$P_-$ to obtain a new sutured contact manifold $(M,\Gamma,\xi)$ with
a properly embedded surface $P$ which is transverse to the Reeb flow.
Details of this {\em sutured manifold gluing} --- the inverse
procedure of a {\em sutured manifold decomposition}, as defined by
Gabai~\cite{Ga} in dimension $3$ --- will be given in
Section~\ref{subsection: gluing}. We then have the following:

\begin{thm} \label{thm: sutured gluing}
If $(M,\Gamma,\xi)$ is obtained from performing a sutured manifold
gluing on $(M',\Gamma',\xi')$, then there are canonical injections:
\begin{enumerate}
\item $\Phi\colon HC(M',\Gamma',\xi')\hookrightarrow
HC(M,\Gamma,\xi);$
\item $\Phi\colon ECH(M',\Gamma',\xi')\hookrightarrow
ECH(M,\Gamma,\xi)$, when $\dim{M}=3$.
\end{enumerate}
Moreover, the map (1) is a $\Q$-algebra homomorphism. In both cases the image 
coincides with the subgroup of (E)CH generated by Reeb orbits which do not intersect $P$.
\end{thm}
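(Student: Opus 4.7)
The plan is to realize the (E)CH chain complex of $(M',\Gamma',\xi')$ as a subcomplex of the (E)CH chain complex of the glued manifold $(M,\Gamma,\xi)$, consisting of those orbit sets whose underlying Reeb orbits are all disjoint from the properly embedded surface $P$. Before gluing, I would choose a contact form $\alpha'$ on $(M',\Gamma',\xi')$ whose restriction to a neighborhood of $P_+\cup P_-$ has the product form $dt+\beta_P$, with $\beta_P$ the common Liouville form on $P_+\cong P_-$; this is possible by the compatibility assumed in the definition of sutured gluing. The gluing then produces a contact form $\alpha$ on $M$ equal to $dt+\beta_P$ in a neighborhood of $P$, so $R_\alpha=\bdry_t$ is transverse to $P$ there. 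I would also choose the adapted almost complex structure $J$ to be translation-invariant in a neighborhood of $\R\times P$, making $\R\times P$ a $J$-holomorphic submanifold of the symplectization.

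Since $R_\alpha$ is transverse to $P$, every closed Reeb orbit of $\alpha$ either avoids $P$ altogether or crosses it transversally in finitely many points, all in the same direction. Cutting $M$ open along $P$ recovers $M'$, with $P$ splitting as $P_+\sqcup P_-$, and this operation restricts to a canonical bijection between Reeb orbits of $\alpha$ disjoint from $P$ and Reeb orbits of $\alpha'$ disjoint from $P_\pm$. This bijection defines $\Phi$ on generators (monomials for $HC$, orbit sets for $ECH$) and makes it multiplicative in the contact homology case. Let $C$ denote the corresponding submodule of the (E)CH chain complex of $M$.

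The central claim is that $C$ is a subcomplex and the bijection above extends to a chain isomorphism of $C$ with the (E)CH chain complex of $M'$. This rests on a positivity-of-intersections argument. If $u$ is any $J$-holomorphic curve in the completed symplectization $\R\times M$, then either $u\subset\R\times P$ or $u$ meets $\R\times P$ at isolated points, each contributing positively to the algebraic intersection number $[u]\cdot[\R\times P]$. That number is determined by the asymptotic data: it equals the difference of the (nonnegative) number of crossings of $P$ by the positive asymptotics and by the negative asymptotics, both transverse in the same direction because $R_\alpha=\bdry_t$ there. Hence if the positive asymptotics of $u$ are disjoint from $P$, positivity forces the negative asymptotics to be disjoint as well and $u$ to avoid $\R\times P$. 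Such a curve lies in $\R\times(M\setminus P)$, canonically identified with the corresponding region of the completion of $\R\times M'$; an identical argument using the divisors $\R\times P_\pm$ on the $M'$ side matches the moduli spaces and yields the chain isomorphism.

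With the chain-level statement in hand, $\Phi$ on homology is canonically defined and its image is by construction the subgroup generated by orbit sets disjoint from $P$. Injectivity of $\Phi$ on homology follows by a neck-stretching argument: replacing a tubular neighborhood $P\times[-\epsilon,\epsilon]$ of $P$ in $M$ with $P\times[-N,N]$ for large $N$ sends the action of each Reeb orbit crossing $P$ to infinity while leaving $C$ unchanged, and a direct-limit argument over action thresholds identifies $\Phi$ with an equality on each action-truncated piece, giving injectivity in the limit. The main obstacle throughout is the rigorous treatment of the intersection number $[u]\cdot[\R\times P]$, since $P$ is noncompact with boundary $\bdry P\subset\Gamma$ extending into the side end over $\Gamma$; one must verify that no holomorphic curve with asymptotics disjoint from $P$ escapes around $\bdry P$, and that the asymptotic intersection count receives no contribution from the side. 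This should be handled by fixing a normal form for $\alpha$ near $\Gamma$ and applying a maximum principle on the side end. In the contact homology case one must further arrange the Hofer--Wysocki--Zehnder abstract perturbations to be supported away from a neighborhood of $\R\times P$, so that the positivity confinement survives the perturbation.
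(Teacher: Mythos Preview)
Your central mechanism does not work: $\R\times P$ is \emph{not} a $J$-holomorphic submanifold for any adapted $J$. By definition $J\partial_s=R_\alpha$, and you have arranged $R_\alpha=\partial_t$ to be transverse to $P$, so $J\partial_s\notin T(\R\times P)$. Moreover $\R\times P$ has real codimension one, so a holomorphic curve meets it generically in a $1$-manifold rather than in isolated points, and there is no intersection number to which positivity could apply. What does survive is the purely homological observation: Reeb orbits meet $P$ with nonnegative algebraic intersection (since $R_\alpha$ is positively transverse to $P$), and a holomorphic cobordism forces the total intersection of positive and negative ends with $P$ to agree, so if the positive ends avoid $P$ then so do the negative ends. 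This is precisely how the paper shows that your $C$ is a subcomplex. But nothing prevents the curve itself from crossing $\R\times P$ along circles and returning, so the claimed chain isomorphism between $C$ and the complex of $M'$ is unfounded. There is a second obstruction as well: the completion $(M')^*$ carries an infinite Top over $P_+\subset R_+(\Gamma')$ and an infinite Bottom over $P_-\subset R_-(\Gamma')$, regions with no counterpart in $M^*$ once $P_\pm$ are identified; curves contributing to $\partial_{M'}$ may wander there (the bound on $|t|$ in Section~\ref{section: compactness results} is finite but not $\leq 1$ outside the Stein case), so even a curve in $\R\times(M')^*$ that happens to avoid $P_\pm$ need not correspond to a curve in $\R\times M^*$.

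The paper handles both obstructions by making neck-stretching the main engine rather than an afterthought. One builds a family $(M_n,\alpha_n,J_{\kappa,n})$ stretched in both the $t$-direction (inserting layers of length $2n$ across $P$) and the $\tau$-direction (parameter $\kappa$), and proves by a compactness argument (Theorem~\ref{disappearing}, combining Lemmas~\ref{lemma: equiv of moduli spaces} and~\ref{stretching}) that for fixed ends in $M'$ and sufficiently large $\kappa,n$ the moduli spaces in $\R\times M_n^*$ and in $\R\times(M')^*$ coincide: a sequence of curves escaping into the neck would yield a nonconstant finite-energy plane or cylinder in $\R\times\R\times\widehat{R_\pm(\Gamma')}$, which is impossible since there are no closed Reeb orbits there. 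Because the required $n$ grows with the action of the ends, a single glued form cannot serve for all generators; instead one passes to a direct limit over action levels, using continuation maps that are upper-triangular with respect to action (Section~\ref{subsection: continuation maps and direct limits}) to assemble the injection $\Phi$ and verify injectivity. Your closing paragraph gestures in this direction, but the neck-stretching and direct limit constitute essentially the entire proof, not a patch applied after a chain isomorphism you do not in fact have.
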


Theorem~\ref{thm: sutured gluing} is analogous to a theorem of
Juh\'asz in the context of sutured Floer homology~\cite{Ju1,Ju2},
namely that there is an injection
$$\Phi\colon SFH(M',\Gamma')\hookrightarrow SFH(M,\Gamma)$$ of sutured Floer
homology modules. Its proof will be given in Section~\ref{subsection:
  proof of connected sum theorem}. 

\subsubsection{Convex gluing}
A more general type of gluing is that of gluing along a
closed convex submanifold.  Postponing the precise procedure for
gluing along a convex submanifold $S$ until Section~\ref{subsection:
gluing along convex submanifolds}, we have the following results:

\begin{thm} \label{thm: convex gluing}
If $(M,\Gamma,\xi)$ is obtained from $(M',\Gamma',\xi')$ by gluing
along a closed convex submanifold $S$, then there are canonical maps:
\begin{enumerate}
\item
$\Phi\colon HC(M',\Gamma',\xi')\rightarrow HC(M,\Gamma,\xi);$
\item
$\Phi\colon ECH(M',\Gamma',\xi')\rightarrow ECH(M,\Gamma,\xi)$, when
$\dim{M}=3$.
\end{enumerate}
Moreover, the map (1) is a $\Q$-algebra homomorphism.
\end{thm}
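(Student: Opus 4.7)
My plan is to reduce the convex gluing to the sutured manifold gluing of Theorem~\ref{thm: sutured gluing}. The key idea is that convex gluing along a closed convex hypersurface $S$ can be realized, after a controlled modification of the sutured boundary, as a sutured gluing of explicit Liouville pieces cut out of $S$ by its dividing set $\Gamma_S$.

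First I would set up the local picture. By the convex neighborhood theorem (Giroux), a tubular neighborhood of any closed $\xi$-convex submanifold $S$ with dividing set $\Gamma_S$ is contactomorphic to a standard model of the form $S\times(-\epsilon,\epsilon)$ with contact form determined by $\Gamma_S$ and the Liouville structures on $R_\pm(S)$. I would normalize the contact $1$-form $\alpha$ for $\xi'$ on $M'$ in collar neighborhoods of the two boundary copies $S_+, S_-\subset\bdry M'$ so that the identification used in the convex gluing becomes strict, not merely up to contact isotopy.

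Next I would perform a corner-rounding. In $(M',\Gamma',\xi')$ each copy $S_\pm$ is a convex boundary component with its own dividing set $\Gamma_S$. By rounding the edge along which $S_\pm$ meets the rest of $\bdry M'$, I can absorb $\Gamma_S$ into the main dividing set $\Gamma'$: the pieces $R_+(S)$ and $R_-(S)$ in $S_\pm$ then sit as codimension-zero Liouville subdomains of $R_\pm(\Gamma')$ of the modified sutured contact manifold. With the normalization of the previous step, the original convex gluing of $S_+$ to $S_-$ becomes precisely a sutured manifold gluing, with $P_+$ and $P_-$ the Liouville pieces coming from $R_+(S)$ and $R_-(S)$ on the two sides (the reversal of sign being built into the gluing map). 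Applying Theorem~\ref{thm: sutured gluing}(1) to this sutured gluing produces the desired $\Q$-algebra homomorphism $\Phi\colon HC(M',\Gamma',\xi')\to HC(M,\Gamma,\xi)$, and part (2) follows analogously from Theorem~\ref{thm: sutured gluing}(2), subject to the Disclaimer on exact symplectic cobordism maps in ECH.

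The main obstacle I expect is verifying that the sutured gluing performed after the rounding genuinely yields the same contact manifold $(M,\Gamma,\xi)$ as the direct convex gluing, rather than merely a contact manifold isotopic to it near $S$. This requires a careful check, using the uniqueness in the convex neighborhood theorem and the flexibility of the Liouville identification on $P_\pm$, that the two constructions produce contactomorphic data on the collar of $S$ that extend over all of $M$. A related subtlety is that, unlike in Theorem~\ref{thm: sutured gluing}, the image of $\Phi$ need not be the full subgroup of orbits avoiding $S$ and $\Phi$ need not be injective: the convex gluing introduces a full codimension-one hypersurface in the interior of $M$, so new Reeb orbits crossing $S$ and new holomorphic curves can appear in the differential on $M$ and may kill classes coming from $M'$. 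This is precisely why the statement only asserts a canonical map, without injectivity.
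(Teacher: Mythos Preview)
Your reduction to Theorem~\ref{thm: sutured gluing} does not go through. In the convex gluing setup of Section~\ref{subsection: gluing along convex submanifolds}, $S_1$ and $S_2$ are \emph{entire closed components} of $\bdry M'$: the pieces $S_i^\pm = S_i\cap R_\pm(\Gamma')$ already sit inside $R_\pm(\Gamma')$, and their boundaries $\bdry S_i^\pm$ lie entirely in $\bdry R_\pm(\Gamma')$ (they are components of the suture $\Gamma'$). There is no ``edge where $S_\pm$ meets the rest of $\bdry M'$'' to round. Now recall the boundary hypothesis in the sutured gluing of Section~\ref{subsection: gluing}: the diffeomorphism $\phi$ must take $(\bdry P_+)_{int}$ to $(\bdry P_-)_\bdry$ and $(\bdry P_+)_\bdry$ to $(\bdry P_-)_{int}$. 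For $P_+=S_1^+$ and $P_-=S_2^-$ one has $(\bdry P_+)_{int}=(\bdry P_-)_{int}=\emptyset$, so this condition forces $\bdry S_i^\pm=\emptyset$, i.e.\ $\Gamma_S=\emptyset$, which is impossible for a closed convex hypersurface. Geometrically the point is that sutured gluing produces a surface $P\subset M$ which is \emph{everywhere} transverse to the Reeb flow, whereas convex gluing produces a closed convex $S$ along whose dividing set the Reeb vector field is tangent; these are genuinely different local models and one does not reduce to the other by corner rounding.

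The paper's proof in Section~\ref{section: convex gluing} proceeds instead by a direct construction. Using Lemma~\ref{lemma: convexgluing}, one builds contact forms $\alpha_{n,g_0,g_1}$ on $M_n\cong M$ by inserting product layers $S_1^\pm\times[0,n]$ and then \emph{filling} the remaining toric boundary by $V\times D^2$ with an explicit form $f(r)d\theta+g(r)\beta'_0$. The crucial new feature, absent from the sutured gluing, is controlling these fillings: one arranges that every Reeb orbit meeting $M_n\setminus M'$ has action $>n$, so it cannot appear among the negative ends of a low-action curve. Lemma~\ref{lemma: inclusion convex gluing case} then shows, by a two-parameter stretching (in $n$ for the $t$-direction and in a parameter $B$ governing the filling), that holomorphic curves with positive ends in $M'$ stay in $\R\times(M')^*$. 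The map $\Phi$ is obtained by a direct-limit argument as in Sections~\ref{section: warm-up 1} and~\ref{section: warm-up 2}, and naturality is checked separately. Your final paragraph correctly anticipates that $\Phi$ need not be injective, but the mechanism you would need to build $\Phi$ in the first place is this filling-plus-stretching construction, not an appeal to Theorem~\ref{thm: sutured gluing}.
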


The proof of Theorem~\ref{thm: convex gluing} will be given in
Section~\ref{section: convex gluing}. Theorem~\ref{thm: convex
gluing} is analogous to a theorem of Honda-Kazez-Mati\'c~\cite{HKM}
for sutured Floer homology.

Unlike the case of a sutured manifold gluing, the convex gluing does
not necessarily give an injection of the corresponding contact
homology algebras.  However, we still have the following:

\begin{cor}
If $HC(M,\Gamma,\xi)\not=0$, then $HC(M',\Gamma',\xi')\not=0$.
\end{cor}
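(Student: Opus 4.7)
The plan is to deduce the corollary as an immediate consequence of Theorem~\ref{thm: convex gluing}(1), using the fact that the algebra homomorphism $\Phi$ is unital. I will argue by contrapositive: assuming $HC(M',\Gamma',\xi')=0$, I will show $HC(M,\Gamma,\xi)=0$.

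First I would recall that the contact homology algebra $HC$ has a unit element $1$, represented at the chain level by the empty word of Reeb orbits. If $HC(M',\Gamma',\xi')=0$ as a $\Q$-algebra, then in particular $1=0$ in $HC(M',\Gamma',\xi')$. Since $\Phi$ is a $\Q$-algebra homomorphism, it sends the multiplicative unit to the multiplicative unit, so $\Phi(1)=1$ in $HC(M,\Gamma,\xi)$. Combining these, $1=\Phi(0)=\Phi(1)=1$ forces $0=1$ in $HC(M,\Gamma,\xi)$, which means $HC(M,\Gamma,\xi)=0$, contradicting the hypothesis.

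The only point that needs verification beyond citing Theorem~\ref{thm: convex gluing}(1) is that the algebra homomorphism $\Phi$ is indeed unital. This should be automatic from the construction of $\Phi$ in Section~\ref{section: convex gluing}: the cobordism map is defined by counting holomorphic curves with positive and negative punctures, and the empty collection of orbits on the $(M',\Gamma',\xi')$ side is sent to the empty collection on the $(M,\Gamma,\xi)$ side (since the only curves without positive punctures that contribute to $\Phi(1)$ are those giving rise to the identity contribution on the empty word). I expect the main (minor) obstacle to be merely confirming, by examining the definition of $\Phi$, that ``$\Q$-algebra homomorphism'' in the statement of Theorem~\ref{thm: convex gluing} is meant in the unital sense; once that is clear, the corollary follows in one line.
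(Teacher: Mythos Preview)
Your proof is correct and follows exactly the same reasoning as the paper: the paper's proof consists of the single observation that $\Phi$ is a $\Q$-algebra homomorphism, and your contrapositive argument (that $1=0$ in the source forces $1=\Phi(1)=0$ in the target) is precisely the unpacking of that observation. The unitality of $\Phi$ is indeed built into the construction in Section~\ref{section: convex gluing}, so there is nothing further to check.
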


\begin{proof}
This is due to the fact that the gluing map
$$\Phi\colon  HC(M',\Gamma',\xi')\to HC(M,\Gamma,\xi)$$
is a $\Q$-algebra homomorphism.
\end{proof}

For example, if $M$ is closed and if $L$ is a Legendrian submanifold of
$M$, then $M$ is obtained by gluing along the convex submanifold
$\partial N(L)$.  Thus we obtain:

\begin{cor}\label{cor: nonvanishing Legendrian invariant}
Let $L$ be a closed Legendrian submanifold of a closed contact
manifold $(M,\xi)$.  If $HC(M,\xi)\not=0$, then $HC(M,\xi,L)\not=0$.
\end{cor}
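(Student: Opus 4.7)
The plan is to realize $(M,\xi)$ as the result of a convex gluing and then invoke the preceding corollary. First I would decompose $M$ as the union of $M-N(L)$ and $N(L)$, glued along the closed convex submanifold $S=\partial N(L)$. Setting
$$(M',\Gamma',\xi') \;=\; (M-N(L),\Gamma_{\partial(M-N(L))},\xi|_{M-N(L)}) \;\sqcup\; (N(L),\Gamma_{\partial N(L)},\xi|_{N(L)}),$$
the convex gluing procedure of Section~\ref{subsection: gluing along convex submanifolds} along $\partial N(L)$ recovers $(M,\xi)$, now viewed as a sutured contact manifold with empty boundary. Note that, since $M$ is closed, the ``sutured'' hypothesis is vacuous and $HC(M,\emptyset,\xi)$ agrees with the usual contact homology $HC(M,\xi)$.

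Applying the preceding corollary to this gluing, the hypothesis $HC(M,\xi)\neq 0$ immediately gives $HC(M',\Gamma',\xi')\neq 0$. The remaining task is to identify $HC(M',\Gamma',\xi')$ with $HC(M,\xi,L)$. Since the Reeb vector field and any adapted almost complex structure on the symplectization are tangent to each component of $M'$, the closed Reeb orbits and the relevant pseudoholomorphic curves split along the two components, so the DGA of $M'$ factors as a tensor product:
$$HC(M',\Gamma',\xi') \;\simeq\; HC(M,\xi,L) \;\otimes\; HC(N(L),\Gamma_{\partial N(L)},\xi|_{N(L)}).$$

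Finally, I would evaluate the second factor using Lemma~\ref{lemma: product sutured manifold}. A Darboux--Weinstein neighborhood of a closed Legendrian $L$ is contactomorphic to a compact neighborhood of the zero section in $J^1(L)=T^*L\times\R$ with contact form $dt-\lambda_L$; taking $W=D^*L$ and $\beta=-\lambda_L$ exhibits $N(L)$ as a product sutured contact manifold, so $HC(N(L))=\Q$. Combining these identifications yields $HC(M,\xi,L)\simeq HC(M',\Gamma',\xi')\neq 0$, as desired. The only step requiring real verification is the K\"unneth-type splitting $HC(M_1\sqcup M_2)\simeq HC(M_1)\otimes HC(M_2)$, but this should be essentially tautological, since the Reeb orbits, adapted almost complex structures, and holomorphic curves all decompose according to the connected components of a disjoint union.
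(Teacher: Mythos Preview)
Your argument is correct and follows the same approach as the paper: cut $M$ along the closed convex hypersurface $\partial N(L)$, apply the preceding corollary to conclude $HC(M',\Gamma',\xi')\neq 0$, and then identify this with $HC(M,\xi,L)$. The paper's ``proof'' is just the one-line remark ``$M$ is obtained by gluing along the convex submanifold $\partial N(L)$. Thus we obtain:'', so your write-up is actually more detailed than the paper's own. Your K\"unneth step together with Lemma~\ref{lemma: product sutured manifold} is fine; note that it simplifies further: since $N(L)$ with the product form has no closed Reeb orbits at all, the chain complexes for $M'$ and for $M-N(L)$ are literally identical, so one does not even need to invoke K\"unneth.
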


In contrast to Corollary~\ref{cor: nonvanishing Legendrian invariant},
the Legendrian contact homology --- due to Chekanov~\cite{Ch} and
Eliashberg in dimension three, and Ekholm-Etnyre-Sullivan~\cite{EES}
in higher dimensions --- of a stabilized Legendrian submanifold always
vanishes.  On the other hand, let $\mathcal{A}$ be the contact
homology differential graded algebra (DGA) for some choice of contact
form $\alpha$ for $(M,\xi)$, almost complex structure $J$, and
abstract perturbation.  If $\mathcal{A}$ admits an {\em augmentation},
i.e., a chain map $\mathcal{A}\to \Q$ with the trivial differential
for $\Q$, for example if $(M,\xi)$ has an exact symplectic filling, then
$HC(M,\xi)\not=0$.

\s In a sequel, we plan to prove gluing theorems for
contact homology and embedded contact homology for the initial step
in a sutured manifold hierarchy.

\s\n {\em Outline of the paper.} Sections~\ref{section:
sutured}--\ref{section: operations on sutured contact manifolds}
present the basic material on sutured contact manifolds. In
Section~\ref{section: sutured} we introduce Liouville manifolds,
convex submanifolds, and sutured contact manifolds, and in
Section~\ref{section: almost complex} we introduce almost complex
structures which are ``tailored'' to sutured contact manifolds.
Section~\ref{section: operations on sutured contact manifolds}
collects the various operations that can be done with sutured
contact manifolds --- in particular we discuss switching between the
sutured and convex boundary conditions, and explain the sutured
manifold gluing and convex gluing procedures. Then in
Section~\ref{section: compactness results} we prove the necessary
compactness results for holomorphic curves in completions of sutured
contact manifolds. In Section~\ref{section: defn of sutured CH} we
define sutured contact homology and sutured ECH and prove
Theorem~\ref{thm: well-definition}. Section~\ref{section:
variations} is devoted to the various invariants that can be defined
via sutured contact homology: the ``hat'' versions of contact
homology and ECH, Legendrian knot invariants, and a transverse knot
filtration.  Finally, after some preliminary considerations on
neck-stretching in Sections~\ref{section: warm-up 1}
and~\ref{section: warm-up 2}, we prove Theorem~\ref{thm: sutured
gluing} in Section~\ref{section: proof} and Theorem~\ref{thm: convex
gluing} in Section~\ref{section: convex gluing}.

\section{Sutured contact manifolds}
\label{section: sutured}

In this paper, when we refer to a $(2n+1)$-dimensional contact
manifold $(M,\xi)$, it is assumed that the ambient manifold $M$ is
oriented, and the contact structure $\xi$ is cooriented by a global
$1$-form $\alpha$ which is positive, i.e., satisfies $\alpha\wedge
(d\alpha)^n>0$.

\subsection{Liouville manifolds}
\label{subsection: Liouville manifolds}

\begin{defn}
A {\em Liouville manifold} (often also called a {\em Liouville
domain}) is a pair $(W,\beta )$ consisting of a compact, oriented
$2n$-dimensional manifold $W$ with boundary and a $1$-form $\beta$
on $W$, where $\omega =d\beta$ is a positive symplectic form on $W$
and the {\it Liouville vector field} $Y$ given by $\imath_Y \omega
=\beta$ is positively transverse to $\partial W$ (i.e., exits from
$W$ along $\bdry W$). It follows that the $1$-form $\beta_0 =\beta
\vert_{\partial W}$ (this notation means $\beta$ pulled back to
$\bdry W$) is a positive contact form on $\partial W$, whose kernel we
denote by $\zeta$.
\end{defn}

There is a neighborhood $N(\bdry W)$ of $\bdry W$ which can be
written as $(-\varepsilon,0]\times \bdry W$, with coordinates
$(\tau,x)$, where $Y=\bdry_\tau$, $\beta=e^\tau\beta_0$, and $\bdry
W=\{0\}\times \bdry W$. In other words, $(N(\bdry W),d\beta)$ is
locally symplectomorphic to the symplectization of $\beta_0$, with
$Y=\bdry_\tau$.

We briefly give a proof of this fact: Since $Y$ is transverse to
$\bdry W$, we take $\bdry W=\{0\}\times \bdry W$ and $Y=\bdry_\tau$.
Then we can write $\beta=\beta_\tau+fd\tau$, where
$\beta_\tau=\beta|_{\{\tau\}\times \bdry W}$ does not contain
any $d\tau$-term. Then $d\beta= d_x \beta_\tau + d\tau\wedge
{d\beta_\tau\over d\tau}+d_xf\wedge d\tau$, where $d_x$ means $d$ in
the $\bdry W$-direction. The Liouville condition $\imath_Y d\beta=\beta$
implies that ${d\beta_\tau\over d\tau}-d_xf= \beta_\tau+fd\tau$.
Hence $f=0$ and ${d\beta_\tau\over d\tau}=\beta_\tau$, implying
$\beta_\tau=e^\tau\beta_0$.

We write $(\widehat W,\widehat\beta)$ to denote the completion of
$(W,\beta)$, obtained by attaching the positive symplectization
$([0,\infty)\times \bdry W,e^\tau\beta_0)$.

Two Liouville $1$-forms $\beta^0$ and $\beta^1$ on $W$ are {\em
homotopic} if there is a $1$-parameter family of Liouville $1$-forms
$\beta^t$, $t\in[0,1]$, such that the corresponding Liouville vector
field $Y^t$ on $N(\bdry W)= (-\varepsilon,0]\times \bdry W$ is
$\bdry_\tau$.  We can then complete the homotopy $\beta^t$ to
$\widehat W$ by setting $\widehat\beta^t= e^\tau\beta^t_0$ on
$[0,\infty)\times \bdry W$, where $\beta^t_0=\beta^t|_{\bdry W}$.

\subsection{Convex submanifolds}
\label{subsection: convex submanifolds}

Let $(M,\xi)$ be a $(2n+1)$-dimensional contact manifold. Following
Giroux~\cite{Gi1}, we say that a closed, oriented $2n$-dimensional
submanifold $\Sigma$ of $M$ is {\it $\xi$-convex} if there is a
contact vector field $X$ transverse to $\Sigma$. (Recall that a
contact vector field is generated by a contact Hamiltonian function.
Hence any contact vector field $X$ which is defined in a neighborhood
of $\Sigma$ can be extended to a contact vector field on all of $M$,
and thus convexity is a local condition.) 
Given $X$ as above, one defines the {\em dividing set} $\Gamma$ to be
$\{x\in\Sigma~|~X(x)\in \xi(x)\}$.

To understand the dividing set more explicitly, let
$N(\Sigma)=[-\varepsilon,\varepsilon]\times\Sigma$ be a
neighborhood\footnote{In this paper, a ``neighborhood'' is not
  necessarily an open neighborhood.} of $\Sigma=\{0\}\times\Sigma$,
such that $X=\bdry_t$, where $t$ denotes the
$[-\varepsilon,\varepsilon]$ coordinate.  By changing the sign of $X$
if necessary, we may assume that $\bdry_t$ gives the normal
orientation of $\Sigma$.  We can now find a $1$-form $\alpha$ for
$\xi$ which in $N(\Sigma)$ is given by $\alpha=fdt+\beta$, where $f$
and $\beta$ do not depend on $t$ and $\beta$ has no $dt$-term.  The
dividing set is then $\Gamma=\{f=0\}$.  Since $\alpha$ is a contact
form,
\begin{equation}\label{eqn: contact}
\alpha\wedge (d\alpha)^{n}=
fdt(d\beta)^n+ndfdt\beta(d\beta)^{n-1}>0.
\end{equation}
It follows that (i) $df\not=0$ along $\Gamma$, and
hence $\Gamma$ is a codimension $1$ submanifold of $\Sigma$, and
(ii) $\beta$ is a contact form on $\Gamma$. In particular, (iii)
$\xi=\ker \alpha$ is transverse to $\Gamma$. The dividing set
$\Gamma$ is not necessarily connected.

\begin{lemma} \label{lemma: two defns of convex}
A closed, oriented, codimension one submanifold $\Sigma\subset
(M,\xi)$ is $\xi$-convex if and only if there is an oriented,
codimension one submanifold $\Gamma$ of $\Sigma$ and a (cooriented)
contact form $\alpha$ for $\xi$ such that:
\begin{enumerate}
\item[(A)] $\Gamma$ decomposes $\Sigma$ into alternating positive
and negative open regions $R_\pm(\Gamma)$ so that $(R_+
(\Gamma),d\alpha|_{R_+ (\Gamma)})$ and
$(R_-(\Gamma),d\alpha|_{R_-(\Gamma)} )$, endowed with the
orientation of $\Sigma$ on $R_+ (\Gamma )$ and its opposite on $R_-
(\Gamma )$,  are positive symplectic manifolds;
\item[(B)]  the form $\alpha \vert_\Gamma$ is a positive contact form
on $\Gamma$ for the boundary orientation of $R_+ (\Gamma )$.
\end{enumerate}
\end{lemma}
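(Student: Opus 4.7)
The lemma is a biconditional, so I treat the two directions separately.

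\textbf{Forward direction} ($\xi$-convexity $\Rightarrow$ (A), (B)). Given a contact vector field $X$ transverse to $\Sigma$, flow by $X$ to identify a tubular neighborhood $N(\Sigma)\cong[-\varepsilon,\varepsilon]_t\times\Sigma$ with $X=\bdry_t$. Solving the ODE $\bdry_t h=-\lambda h$, where $\mathcal{L}_X\alpha=\lambda\alpha$, I may rescale $\alpha$ by a positive function so that it is $X$-invariant, giving $\alpha=f(x)\,dt+\beta(x)$ on $N(\Sigma)$. Set $\Gamma:=\{f=0\}$ and $R_\pm:=\{\pm f>0\}$. The expansion of the contact condition already displayed as~(1.5) is
\[
f(d\beta)^n - n\,df\wedge\beta\wedge(d\beta)^{n-1} \;>\; 0 \quad\text{on }\Sigma.
\]
Evaluating at $\Gamma$ (where $f=0$) forces $df\neq 0$ and $\beta\wedge(d\beta)^{n-1}|_\Gamma\neq 0$, so $\Gamma$ is a smooth codimension-one submanifold and $\beta|_\Gamma$ is a contact form; a sign check against the outward normal of $R_+$ verifies~(B). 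For~(A) I use the identity
\[
\bigl(d(\beta/|f|)\bigr)^n \;=\; \op{sgn}(f)\cdot\frac{f(d\beta)^n-n\,df\wedge\beta\wedge(d\beta)^{n-1}}{|f|^{n+1}},
\]
which shows $\beta/|f|$ is a Liouville form of the correct sign on each $R_\pm$ (positive on $R_+$, negative on $R_-$ in $\Sigma$'s orientation), although singular along $\Gamma$. Using Weinstein's standard tubular neighborhood of the codimension-two contact submanifold $(\Gamma,\beta|_\Gamma)\subset(M,\xi)$ I smoothly interpolate $\beta/|f|$ across $\Gamma$ to obtain a genuine contact form $\alpha'$ for $\xi$ whose restriction to $\Sigma$ satisfies (A) in addition to (B).

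\textbf{Reverse direction} ((A), (B) $\Rightarrow$ $\xi$-convexity). Construct a local model of $\xi$ near $\Sigma$ for which $\bdry_t$ is visibly a transverse contact vector field, then match to $\xi$ by a relative Gray argument. By~(A) the Liouville vector field $Y_\pm$ dual to $\alpha|_{R_\pm}$ exists, and by~(B) it is outward-transverse to $\Gamma$; using $Y_\pm$ build a smooth function $u\colon\Sigma\to\R$ with $u>0$ on $R_+$, $u<0$ on $R_-$, $u=0$ on $\Gamma$, and $du|_\Gamma\neq 0$ (for instance, a cutoff of the negative Liouville parameter near $\Gamma$). On $\tilde N(\Sigma):=(-\delta,\delta)_t\times\Sigma$ set $\tilde\alpha:=u\,dt+\alpha|_\Sigma$; its contact condition
\[
u(d\alpha|_\Sigma)^n - n\,du\wedge\alpha|_\Sigma\wedge(d\alpha|_\Sigma)^{n-1}\;>\;0
\]
holds because the first summand has sign $\op{sgn}(u)$ by~(A) away from $\Gamma$, while at $\Gamma$ the first summand vanishes and the second is positive by~(B) combined with the direction of $du$ into $R_+$ (the same orientation computation used in the forward direction). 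Hence $\bdry_t$ is a transverse contact vector field for $\ker\tilde\alpha$. Since $\tilde\alpha|_\Sigma=\alpha|_\Sigma$, the contact structures $\ker\tilde\alpha$ and $\xi$ agree on $T\Sigma$; the relative contact neighborhood theorem (Moser-Gray) furnishes a diffeomorphism of a neighborhood of $\Sigma$, fixing $\Sigma$ pointwise, that carries $\ker\tilde\alpha$ to $\xi$, and its push-forward of $\bdry_t$ is the desired contact vector field for $\xi$ transverse to $\Sigma$.

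\textbf{Main obstacle.} The most delicate point is the smoothing in the forward direction: although $\beta/|f|$ has the correct Liouville-type signs on $R_\pm$, it blows up at $\Gamma$, and replacing it by a smooth contact form for $\xi$ on all of $M$ without losing sign control globally on $R_\pm$ requires careful use of the standard Weinstein neighborhood of $\Gamma$ as a codimension-two contact submanifold. By contrast the reverse direction, once the sign check at $\Gamma$ is in place, is essentially an application of the Moser-Gray trick.
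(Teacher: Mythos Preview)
Your overall strategy matches the paper's: for $(\Rightarrow)$ you rescale the contact form so that its restriction to $R_\pm$ becomes Liouville, and for $(\Leftarrow)$ you build a $t$-invariant model $\tilde\alpha = u\,dt + \beta$ and invoke a Moser--Gray argument. But there is a real gap in your $(\Leftarrow)$ direction, and the fix is exactly the technical device the paper uses in \emph{both} directions and which you bypass.

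In $(\Leftarrow)$ you verify the contact inequality
\[
u\,(d\beta)^n \;-\; n\,du\wedge\beta\wedge(d\beta)^{n-1}\;>\;0
\]
only ``away from $\Gamma$'' (first term positive by (A)) and ``at $\Gamma$'' (second term positive by (B)). You do not control the transition region $\operatorname{supp}(du)\cap R_\pm$. There both terms are present; near $\Gamma$ the first term is doubly small (both $u$ and $(d\beta)^n$ vanish on $\Gamma$) while the second is of order $|du|$, so you must know the \emph{sign} of $dr\wedge\beta\wedge(d\beta)^{n-1}$ throughout $\operatorname{supp}(du)$, not just on $\Gamma$. This is fixable --- choose $\operatorname{supp}(du)$ small enough that the continuity argument from (B) applies --- but it is not what you wrote, and your suggested $u$ (``cutoff of the negative Liouville parameter'') has a separate issue: the Liouville vector fields $Y_+$ and $Y_-$ are defined on the two sides separately and need not glue to a smooth field across $\Gamma$, so the Liouville parameter is not obviously smooth on $\Sigma$.

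The paper handles both this and your acknowledged forward-direction smoothing with the same tool: the \emph{characteristic line field} $L\subset\xi\cap T\Sigma$ (the null direction of $d\alpha|_{\xi\cap T\Sigma}$). This is intrinsic to $(\Sigma,\xi)$, hence smooth across $\Gamma$, and is transverse to $\Gamma$. Flowing along it gives coordinates $(\tau,x)$ on a collar of $\Gamma$ in which $\beta = g(\tau,x)\beta_0$ with $\beta_0=\beta|_\Gamma$; the symplectic condition (A) forces the sign of $\partial_\tau g$ on each side. In these coordinates the contact condition for $\tilde f(\tau)\,dt + \tilde g\,\beta_0$ reduces to the single scalar inequality $\tilde f'\tilde g - \tilde f\,\partial_\tau\tilde g>0$, which is easy to arrange and covers the \emph{entire} collar, not just its boundary. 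The same normalization lets the paper carry out explicitly the smoothing you defer to ``Weinstein's neighborhood'': one multiplies $\alpha$ by $h/g$ for an explicit $h(\tau,x)$ with the correct monotonicity in $\tau$, rather than trying to tame the blow-up of $1/|f|$.
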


A contact form $\alpha$ satisfying (A) and (B) above is said to be
{\it adapted} to $(\Sigma,\Gamma )$.   When $M$ has dimension three,
$\Gamma$ is an oriented multicurve on the surface $\Sigma$ which is
positively transverse to $\xi$. 

\begin{rmk} \label{rmk: transverse Reeb vector field}
Let $R_\alpha$ be the Reeb vector field associated with $\alpha$.
The condition that $d\alpha $ be symplectic on $R_\pm (\Gamma)$ is
equivalent to the condition that $R_\alpha$ be positively transverse
to $\Sigma$ along $R_+ (\Gamma)$ and negatively transverse to
$\Sigma$ along $R_- (\Gamma)$.
\end{rmk}

\begin{rmk}
  If $(\Sigma ,\Gamma)$ is a convex hypersurface of $(M,\xi )$, then
  the proof of Lemma~\ref{lemma: two defns of convex} shows that the
  closures $\overline{R_\pm (\Gamma)}$ are Liouville manifolds with a
  Liouville form obtained from the restriction of an adapted contact
  form by a slightly modification near $\Gamma$.  Also, one can choose
  an adapted contact form $\alpha$ so that $(\Sigma,d\alpha|_\Sigma)$
  is a {\em folded symplectic manifold\/}, as defined in \cite{CGW}.
\end{rmk}

The proof of Lemma~\ref{lemma: two defns of convex} uses the following
notion: Given a codimension one submanifold $\Sigma$ of $(M,\xi)$, the
{\em characteristic line field} $L$ is the singular line field in
$\zeta=\xi\cap T\Sigma$ such that $\imath_L(d\alpha|_\zeta)=0$ for any
contact form $\alpha$ for $\xi$. The line field $L$ is singular where
$\xi=T\Sigma$.

\begin{proof}[Proof of Lemma~\ref{lemma: two defns of convex}]
  ($\Rightarrow$) Suppose $\Sigma$ is a convex submanifold. Let
  $\alpha=fdt+\beta$ be the contact form on
  $N(\Sigma)=[-\varepsilon,\varepsilon]\times\Sigma$ as above. By
  Equation~(\ref{eqn: contact}), $\Gamma$ can be oriented so that
  $\alpha|_\Gamma$ is a positive contact form on $\Gamma$.  With this
  orientation of $\Gamma$, the normal orientation of $\Gamma$ in
  $\Sigma$ is given by the direction in which $f$ is decreasing.  We
  then define $R_+(\Gamma)$ (resp.\ $R_-(\Gamma)$) to be the region
  $\{f>0\}$ (resp.\ $\{f<0\}$).  This proves (B).

In order to prove (A) we further normalize the contact form.  Let
$N(\Gamma)=[-1,1]\times[-\varepsilon,\varepsilon]\times\Gamma$ be a
sufficiently small neighborhood of $\Gamma$ with coordinates
$(\tau,t,x)$ so that $\beta$ is a contact form on all
$\{(\tau,t)\}\times\Gamma$.  Here we take $\bdry_t$ for $N(\Gamma)$
to agree with $\bdry_t$ for $N(\Sigma)$.  By possibly multiplying
$\alpha$ by a positive function, we may assume that $f=1$ for
$\tau\geq {1\over 2}$, $f=-1$ for $\tau\leq -{1\over 2}$, $f$ is
constant outside of $N(\Gamma)$, and $f=f(\tau)$ inside $N(\Gamma)$.
Wherever $f$ is locally constant, $(d\beta)^n$ is $>0$ or $<0$ as
appropriate, by Equation~(\ref{eqn: contact}).

Next, let $L$ be the line field on $N(\Gamma)$ which agrees with the
characteristic line field on each level set $A_{t_0}=\{t=t_0\}$ of
$N(\Gamma)$. Take a $t$-invariant vector field $Y$ that directs $L$ so
that the component of $Y$ in the $\tau$-direction is exactly
$\bdry_\tau$. This is possible since $\xi\pitchfork \Gamma$ and
$d\alpha$ is nondegenerate on $\xi\cap T\Gamma$; hence $L$ must have a
component transverse to $\Gamma$. Flowing along $Y$ gives us a new
coordinate function $x$ on $N(\Gamma)$ so that
$\alpha=fdt+\beta$, where $\beta$ only has $dx$-terms and no
$d\tau$-term, $f=f(\tau)$ as before, and $\alpha$ is
$t$-invariant. Now
$$d\alpha=f'(\tau)d\tau dt+d\tau {d\beta\over d\tau}+d_x\beta,$$
where $d_x$ is the derivative in the $x$-direction.  Since
$\imath_{\bdry_\tau} d\alpha|_\zeta=0$, it follows that ${d\beta\over
d\tau}(v)=0$ for all $v\in \ker \beta|_{T\Sigma}$, or, equivalently,
${d\beta\over d\tau}$ is a function times $\beta$. Hence, on
$N(\Gamma)$, we can write
\begin{equation}
\label{eqn: alpha}
\alpha=f(\tau)dt+g(\tau,x)\beta_0,
\end{equation}
where $\beta_0=\beta|_{\{(0,0)\}\times\Gamma}$ and $g>0$.  Also, the
contact condition implies that ${\bdry g\over \bdry \tau}<0$ when
$f=1$ and ${\bdry g\over \bdry \tau}>0$ when $f=-1$.

Finally, let $h$ be a positive $t$-invariant function on $N(\Gamma)$
so that:
\begin{itemize}
\item[(i)] $h=g$ for $\tau\geq {1\over 2}$ and $\tau\leq -{1\over 2}$;
\item[(ii)] ${\bdry h\over \bdry \tau}<0$ for $\tau>0$; and
\item[(iii)] ${\bdry h\over \bdry \tau}>0$ for $\tau<0$.
\end{itemize}
We claim now that condition (A) is fulfilled by a contact form that
agrees with $(h/g)\alpha$ on $N(\Sigma)$.  We need to check that
$d((h/g)\alpha|_{\Sigma})$ is a positive symplectic form on
$R_+(\Gamma)$ and a negative symplectic form on $R_-(\Gamma)$.  On the
complement of $N(\Gamma)$, this follows from equation \eqref{eqn:
  contact} since $f$ is constant there. On $N(\Gamma)$, we have $(h/
g)\alpha|_{\Sigma}=h\beta_0$, and $d(h\beta_0)$ is symplectic on each
of $R_\pm(\Gamma)$ by (ii) and (iii).

\s\n ($\Leftarrow$) Suppose now that there is a contact $1$-form
$\alpha$ which is adapted to $(\Sigma,\Gamma)$. Let
$\beta=\alpha|_\Sigma$. We first normalize $\beta$ on $N(\Gamma)\cap
\Sigma=\{t=0,-1\leq \tau\leq 1\}$: Let $X$ be the characteristic
vector field on $N(\Gamma)\cap\Sigma$ so that its $\tau$-component
is $\bdry_\tau$. Flowing along $X$ (starting at $\tau=0$) gives us
new coordinates $(\tau,x)$ so that $\beta(\tau,x)=g(\tau,x)\beta_0$,
where $\beta_0=\beta|_{\tau=0}$ and $g$ is a positive function.
Moreover, since $d\beta>0$ is a positive symplectic form for
$\tau>0$, it follows that ${\bdry g\over \bdry \tau}<0$ on $\tau>0$;
similarly, ${\bdry g \over \bdry\tau}>0$ on $\tau<0$.

Next we construct a $1$-form $\widetilde\alpha$ on $N(\Sigma)$ of
the form:
$$\widetilde\alpha(\tau,t,x)=\widetilde f dt+ \widetilde\beta.$$
where $\widetilde f$ and $\widetilde\beta$ do not depend on $t$. The
function $\widetilde f\colon \Sigma\to\R$ is constant outside of
$N(\Gamma)$ and can be written as $\widetilde f(\tau)$ on
$N(\Gamma)$ so that $\widetilde{f}(\tau)=1$ for $\tau\geq {1\over
2}$, $\widetilde{f}(\tau)=-1$ for $\tau\leq -{1\over 2}$,
$\widetilde{f}(0)=0$, and $\widetilde{f}'(\tau)>0$ for $-{1\over 2}<
\tau < {1\over 2}$. The $1$-form $\widetilde\beta$ equals $\beta$
outside of $N(\Gamma)$ and equals $\widetilde{g}\beta_0$ on
$N(\Gamma)$, where $\widetilde{g}(\tau,x)=g(\tau,x)$ near $\tau=-1,
1$, $\widetilde{g}(\tau,x)$ only depends on $\tau$ for $-{1\over
2}\leq \tau \leq {1\over 2}$,  $\widetilde{g}>0$ for all $\tau$,
${\bdry\widetilde{g}\over \bdry \tau}<0$ on $\tau>0$, and
${\bdry\widetilde{g}\over \bdry\tau}>0$ on $\tau<0$.

The $1$-form $\widetilde\alpha$ is clearly contact outside of
$N(\Gamma)$. Inside $N(\Gamma)$ we compute that:
\begin{eqnarray}\label{eqn: wedging}
\widetilde\alpha\wedge (d\widetilde\alpha)^n&=& n
\left({\bdry\widetilde{f}\over
\bdry\tau}\widetilde{g}-\widetilde{f}{\bdry \widetilde{g}\over \bdry
\tau}\right) \widetilde{g}^{n-1}d\tau dt \beta_0 (d\beta_0)^{n-1}
>0.
\end{eqnarray}

Since $\alpha, \widetilde\alpha$ pull back to $1$-forms $\beta$,
$\widetilde\beta$ which differ by a conformal factor on $\Sigma$,
there is a local diffeomorphism which fixes $\Sigma$ and sends
$\ker \alpha$ to $\ker\widetilde\alpha$. Since $\Sigma$ is clearly
convex with respect to $\widetilde\alpha$, the same holds for
$\alpha$.
\end{proof}

The following is a corollary of the above proof:

\begin{cor} \label{cor: cor lemma two definitions of convex}
The contact structure in a neighborhood of a convex submanifold
$\Sigma$ can be normalized so that it is given by a contact form
$\alpha_0=f dt +\beta$, where $f$ and $\beta$ do not depend on $t$,
$f=1$ on $R_+(\Gamma)-N(\Gamma)$, $f=-1$ on $R_-(\Gamma)-N(\Gamma)$,
$f=f(\tau)$ on $N(\Gamma)$, the zero set of $f$ is $\tau=0$,
$\beta=g(\tau)\beta_0$ on $N(\Gamma)$, $g(\tau)>0$, $\beta_0$ is a
contact form on $\Gamma$, and ${\bdry f\over \bdry\tau} g-f{\bdry
g\over \bdry \tau}>0$.
\end{cor}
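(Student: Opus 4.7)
The corollary is essentially a repackaging of what was produced, step by step, in the forward direction ($\Rightarrow$) of the proof of Lemma~\ref{lemma: two defns of convex}: that proof already constructed a contact form of exactly the shape $f(\tau)\,dt + g(\,\cdot\,)\beta_0$ on a neighborhood of $\Gamma$, with $f\equiv\pm1$ outside $N(\Gamma)$ and with the monotonicity of $g$ prescribed on each side of $\Gamma$. The plan is to extract and collate this construction in the form demanded by the corollary, and then to verify that the contact condition collapses to the stated Jacobian inequality.

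Concretely, I would first apply Lemma~\ref{lemma: two defns of convex} to obtain a contact form $\alpha$ adapted to $(\Sigma,\Gamma)$, and trivialize a collar $N(\Sigma)=[-\varepsilon,\varepsilon]\times\Sigma$ by the flow of the transverse contact vector field, so that $\alpha=f\,dt+\beta$ with $f$ and $\beta$ independent of $t$. I would then run through the three normalizations carried out in the proof of Lemma~\ref{lemma: two defns of convex}: (i) multiplication by a positive function on $\Sigma$ to force $f\equiv 1$ on $R_+(\Gamma)\setminus N(\Gamma)$, $f\equiv -1$ on $R_-(\Gamma)\setminus N(\Gamma)$, and $f=f(\tau)$ on $N(\Gamma)$ with zero set $\{\tau=0\}$; (ii) flowing along the lifted characteristic line field $Y$ on $N(\Gamma)$, with $\tau$-component $\bdry_\tau$, to produce coordinates $(\tau,t,x)$ in which $\beta$ has no $d\tau$-term and, by the relation $\imath_{\bdry_\tau}(d\alpha|_\zeta)=0$, takes the form $g(\tau,x)\beta_0$ with $\beta_0$ a positive contact form on $\Gamma$; and (iii) the conformal $h/g$-rescaling of that proof, with $h=h(\tau)$ chosen to agree with $g$ near $\tau=\pm\tfrac12$ and to satisfy $\bdry h/\bdry\tau<0$ for $\tau>0$ and $\bdry h/\bdry\tau>0$ for $\tau<0$, which replaces $\beta$ by $h(\tau)\beta_0$, i.e., a coefficient depending only on $\tau$.

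With the model $\alpha_0=f(\tau)\,dt+g(\tau)\beta_0$ in hand on $N(\Gamma)$, the contact condition $\alpha_0\wedge(d\alpha_0)^n>0$ unravels, exactly as in the computation \eqref{eqn: wedging}, into
\[
n\Bigl(\tfrac{\bdry f}{\bdry\tau}\,g - f\,\tfrac{\bdry g}{\bdry\tau}\Bigr)\,g^{n-1}\,d\tau\wedge dt\wedge \beta_0\wedge(d\beta_0)^{n-1}>0,
\]
which is precisely the stated Jacobian inequality. The main subtlety is in step (iii): the $h/g$ rescaling a priori introduces $x$-dependence into the $dt$-coefficient through the factor $1/g(\tau,x)$, so to keep $f$ a function of $\tau$ alone one must follow the rescaling by a further positive factor depending only on $\tau$ (which is permitted because we are normalizing a contact structure rather than a fixed form, and the contact condition is insensitive to multiplication by positive functions). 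Once this interplay is arranged, the boundary values $f\equiv\pm1$ outside $N(\Gamma)$, the vanishing locus $f^{-1}(0)=\{\tau=0\}$, the positivity $g>0$, and the Jacobian sign follow directly from the choices made in the earlier normalization.
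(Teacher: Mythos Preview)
Your overall plan is right in spirit—the corollary is indeed meant to be read off from the proof of Lemma~\ref{lemma: two defns of convex}—but step~(iii) contains a genuine gap. After step~(ii) you have $\alpha=f(\tau)\,dt+g(\tau,x)\beta_0$ on $N(\Gamma)$, and you propose to multiply by $h(\tau)/g(\tau,x)$ to force the $\beta_0$-coefficient to be $h(\tau)$. As you yourself note, this makes the $dt$-coefficient equal to $f(\tau)h(\tau)/g(\tau,x)$, which depends on $x$. Your proposed remedy, ``a further positive factor depending only on $\tau$,'' cannot cure this: multiplying by any $k(\tau)$ leaves the factor $1/g(\tau,x)$ in place. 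In fact no conformal rescaling can work here at all: if $\phi\cdot f(\tau)$ is to depend only on $\tau$ then (away from $\{f=0\}$) $\phi=\phi(\tau)$, and then $\phi\cdot g(\tau,x)$ still depends on $x$. So one cannot reach the normal form of the corollary from equation~\eqref{eqn: alpha} by multiplication by a positive function alone.

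What the proof of Lemma~\ref{lemma: two defns of convex} actually provides is the model form $\widetilde\alpha=\widetilde f(\tau)\,dt+\widetilde g\,\beta_0$ constructed in the $(\Leftarrow)$ direction, with $\widetilde g$ depending only on $\tau$ on the inner region $|\tau|\le\tfrac12$ and $\widetilde f=\pm1$ for $|\tau|\ge\tfrac12$, together with the concluding Moser--Weinstein step showing that $\ker\alpha$ and $\ker\widetilde\alpha$ are related by a local diffeomorphism fixing $\Sigma$. That diffeomorphism—a genuine coordinate change, not a rescaling—is what produces the normal form; one then takes $N(\Gamma)$ to be the region where $\widetilde g=\widetilde g(\tau)$. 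Your extraction of steps~(i)--(ii) and your identification of the contact condition with the Jacobian inequality via~\eqref{eqn: wedging} are correct; the missing ingredient is precisely this passage through the explicit model $\widetilde\alpha$ and the Moser argument from the reverse implication of the lemma.
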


\begin{example}
Let $(K,\theta)$ be a supporting open book for a closed $(M,\xi)$
and let $\alpha$ be a contact form for $\xi$ adapted to $(K,\theta)$
(as in Giroux~\cite{Gi2}). Let $\Sigma$ be the submanifold of $M$
which is the union of (closures of) two pages of the open book that
match up smoothly.  Then $\Sigma$ is $\xi$-convex with dividing set
$K$ and adapted form $\alpha$.
\end{example}

\subsection{Sutured contact manifolds}
\label{subsection: sutured contact manifolds}

\begin{defn}
A compact oriented manifold $M$ of dimension $m$ with boundary
and corners is a {\it sutured manifold} if it comes with an
oriented, not necessarily connected submanifold $\Gamma\subset \bdry
M$ of dimension $m-2$ (called the {\em suture}), together with a
neighborhood $U(\Gamma) =[-1,0]\times [-1,1] \times \Gamma$ of
$\Gamma=\{(0,0)\}\times \Gamma$ in $M$, with coordinates $(\tau
,t)\in [-1,0]\times [-1,1]$, such that the following holds:
\begin{itemize}
\item $U\cap \partial M =(\{0 \} \times [-1,1]\times \Gamma) \cup
([-1,0]\times \{ -1\} \times \Gamma) \cup ([-1,0]\times \{ 1\}
\times \Gamma$);
\item $\partial M -(\{ 0\} \times (-1,1) \times \Gamma)$ is the disjoint
union of two submanifolds which we call $R_- (\Gamma )$ and $R_+
(\Gamma )$,\footnote{At the risk of some confusion, we will use this
definition of $R_\pm(\Gamma)$ when we view $(M,\Gamma)$ as a sutured
manifold, and the definition of $R_\pm(\Gamma)$ given in
Section~\ref{subsection: convex submanifolds} when we think of
$\bdry M$ as being smooth.} where the orientation of $\bdry M$
agrees with that of $R_+ (\Gamma)$ and is opposite that of $R_-
(\Gamma )$, and the orientation of $\Gamma$ agrees with the boundary
orientation of $R_\pm(\Gamma)$.
\item The corners of $M$ are precisely $\{ 0\} \times \{ \pm 1\} \times
\Gamma$.
\end{itemize}
\end{defn}

The notion of a sutured manifold was introduced by Gabai
in~\cite{Ga} for $3$-manifolds.
The definition above is slightly different from the usual one; in
particular the neighborhoods $U(\Gamma)$ do not appear in Gabai's
definition.

By analogy with the theory of branched surfaces, the submanifold
$\partial_h M =R_+ (\Gamma )\cup R_- (\Gamma )$ is often called the
{\it horizontal boundary} and $\partial_v M =\{ 0\} \times [ -1,1]
\times \Gamma$ the {\it vertical} boundary of $M$.\footnote{Strictly
  speaking, the orientation of $U(\Gamma)$ is that of the product
  $[-1,1]\times[-1,0]\times\Gamma$.  However we write the first two
  factors in the opposite order because we want to visualize $[-1,0]$ as the
  horizontal direction and $[-1,1]$ as the vertical direction.}

\begin{defn}
\label{def:SCM}
Let $(M,\Gamma ,U(\Gamma ))$ be a $(2n+1)$-dimensional sutured
manifold. If $\xi$ is a contact structure on $M$, we say that $(M
,\Gamma ,U(\Gamma ),\xi )$ is a {\em sutured contact manifold} if
$\xi$ is the kernel of a positive contact $1$-form $\alpha$ such
that:
\begin{itemize}
\item $(R_+ (\Gamma ),\beta_+ =\alpha\vert_{R_+ (\Gamma )} )$ and
$(R_- (\Gamma ),\beta_- =\alpha\vert_{R_- (\Gamma )} )$ are
Liouville manifolds;
\item $\alpha =Cdt+\beta$ inside $U(\Gamma )$, where $C$ is a positive
constant and $\beta$ is independent of $t$ and does not have a
$dt$-term;
\item $\partial_\tau =Y_\pm$ inside $U(\Gamma)$, where $Y_\pm$ is the
Liouville vector field for $\beta_\pm$.
\end{itemize}
Such a contact form $\alpha$ is said to be {\it adapted} to $(M,\Gamma
,U(\Gamma ))$.  (This is analogus to, but different from, the notion
of a contact form adapted to a convex submanifold as discussed in
Section~\ref{subsection: convex submanifolds}.)  We sometimes denote
the sutured contact manifold by $(M,\Gamma,U(\Gamma),\alpha)$.
\end{defn}

We note two immediate consequences of the above definition.  First,
the Reeb vector field $R_\alpha$ of $\alpha$ equals
$\frac{1}{C} \partial_t$ on $U(\Gamma )$ and is positively transversal
to all of $R_\pm (\Gamma )$, i.e., enters $M$ along $R_- (\Gamma )$
and exits $M$ along $R_+ (\Gamma )$.  Second, on
$U(\Gamma')=[-1,0]\times[-1,1]\times\Gamma'$, with coordinates
$(\tau,t,x)$, we have $\alpha'=Cdt+e^\tau\beta_0(x)$, where $\beta_0$
is a contact form on $\Gamma'$.

\begin{example}
Let $(W,\beta )$ be a Liouville manifold and let $N(\partial
W)=(-\varepsilon,0]\times \bdry W$ be the neighborhood of $\partial
W$ with coordinates $(\tau,x)$, so that the Liouville vector field
$Y$ equals $\bdry_\tau$. Then the manifold
\[
(W\times [-1,1] , \partial W\times \{ 0\} ,N(\partial W) \times
[-1,1] , dt+\beta )
\]
is a sutured contact
manifold, called a {\it product} sutured contact manifold.
\end{example}

\begin{example}
\label{example: interval fibered extension}
Let $(M',\Gamma',U(\Gamma'),\xi')$ be a $(2n+1)$-dimensional sutured
contact manifold with adapted contact form $\alpha'$.  Let
$\Gamma_0'\subset \Gamma'$ be a union of connected components of
$\Gamma'$.  Also let $(W,\beta)$ be a $2n$-dimensional {\em Liouville
  cobordism\/} from $\bdry_+W$ to $\bdry_-W$. By this we mean that
$\bdry W = \bdry_+W - \bdry_-W$ and $d\beta$ is a symplectic form on
$W$, such that the Liouville vector field $Y$ satisfying $\imath_Y
d\beta=\beta$ points into $W$ along $\bdry_- W$ and out of $W$ along
$\bdry_+W$.  Suppose there is a diffeomorphism
\[
\phi\colon  (\bdry_-W,\beta|_{\bdry_-W})\stackrel\sim\to
(\Gamma_0',\beta_0|_{\Gamma_0'}).
\]
We can then define a new sutured contact
manifold $(M,\Gamma,U(\Gamma),\xi)$, called an {\em
interval-fibered extension} of
$(M',\Gamma',U(\Gamma'),\xi')$, by
\[
M=M'\sqcup (W\times[-1,1])/\sim,
\]
where $(0,t,\phi(y))\sim(y,t)$ for all $y\in \bdry_-W$.  Here
$\xi=\operatorname{Ker}(\alpha)$ where $\alpha$ is
obtained by gluing $\alpha'$ and $Cdt+\beta$.  Also
$\Gamma=(\Gamma'-\Gamma_0')\sqcup (\bdry_+W\times\{0\})$, and
$R_\pm(\Gamma)= R_\pm(\Gamma')\cup (W\times\{\pm 1\}).$
\end{example}

\subsection{Completion of a sutured contact manifold}
\label{sec:completion}

Let $(M,\Gamma ,U(\Gamma ),\xi)$ be a sutured contact manifold with an
adapted contact form $\alpha$.  We now explain how to extend
$(M,\alpha)$ to a ``complete'' noncompact contact manifold
$(M^*,\alpha^*)$.

The Reeb flow of $\alpha$ defines
neighborhoods $[1-\varepsilon,1]\times R_+(\Gamma)$ and
$[-1,-1+\varepsilon]\times R_-(\Gamma)$ of $R_+(\Gamma) =\{1\}\times
R_+ (\Gamma)$ and $R_-(\Gamma)=\{-1\}\times R_-(\Gamma)$ respectively,
in which $\alpha = Cdt+\beta_\pm$, where $t\in [-1,-1+\varepsilon]\cup
[1-\varepsilon,1]$ extends the $t$-coordinate on $U(\Gamma)$.
%On
%$U(\Gamma)$ we have $\alpha=Cdt+\beta$ and $\beta=\beta_+=\beta_-$,
%and $\bdry_\tau$ is the Liouville vector field $Y$ for $\beta$.
The first step is to extend $\alpha$ ``vertically'' by gluing
$[1,\infty)\times R_+(\Gamma)$ and $(-\infty,-1]\times R_-(\Gamma)$
with the forms $Cdt+\beta_+$ and $Cdt+\beta_-$ respectively. The
boundary of this new manifold is $\{ 0\} \times \R \times \Gamma$.  In
the neighborhood $[-1,0]\times \R\times\Gamma$ of the boundary with
coordinates $(\tau,t,x)$, we have $\alpha=Cdt+e^\tau\beta_0(x)$ where
$\beta_0$ is a contact form on $\Gamma$.

To complete the construction of $(M^*,\alpha^*)$, we then extend
``horizontally'', similarly to the construction of an interval-fibered
extension, by gluing $[0,\infty)\times \R \times \Gamma $ with the
form $Cdt +e^\tau \beta_0$.

For convenience, we extend the coordinates $(\tau,t)$, which are so
far defined only on the ends of $M^*$, to functions on all of $M^*$ so
that $t(M) \subset [-1,1]$ and $\tau(M) \subset [-1,0]$.  We then
refer to $t> 1$ as the Top (T), to $t<-1$ as the Bottom (B), and to
$\tau>0$ as the Side (S).  Consistently with our notation for the
completion of Liouville manifolds in general, we let
$(\widehat{R_\pm(\Gamma)},\widehat{\beta}_\pm)$ denote the completion
of $(R_\pm(\Gamma),\beta_\pm)$ obtained by extending to (S).

\section{Almost complex structures}
\label{section: almost complex}

\subsection{Adapted and tailored almost complex structures}
\label{subsection: almost complex structure}

Let $(Y,\xi)$ be a contact manifold.  Then an almost complex
structure $J$ on the symplectization $\R\times Y$, with
$\R$-coordinate $s$, is {\em adapted to the symplectization} if the
following hold:
\begin{enumerate}
\item $J$ is $s$-invariant;
\item $J$ takes $\xi$ to itself on each $\{s\}\times Y$;
\item $J$ maps $\bdry_s$ to the Reeb vector field $R_{\alpha}$
associated to a contact $1$-form $\alpha$ for $\xi$;
\item $J|_\xi$ is $d\alpha$-positive, i.e., $d\alpha(v,Jv)>0$ for all
nonzero $v\in\xi$.
\end{enumerate}
We remark that Condition (4) does not depend on the choice of
$\alpha$. If we also want to specify the contact $1$-form $\alpha$,
then we say that $J$ is {\em $\alpha$-adapted}.

Let $(W,\beta)$ be a Liouville manifold and let $\zeta$ be the
contact structure given on $\partial W$ by $\ker \beta_0$, where
$\beta_0=\beta|_{\bdry W}$. Recall the completion
$(\widehat{W},\widehat{\beta})$ of $(W,\beta)$, where
$\widehat{W}=W\cup ([0,\infty)\times \bdry W)$ and
$\widehat\beta|_{[0,\infty)\times\bdry W}= e^\tau\beta_0$.  Here
$\tau$ is the $[0,\infty)$-coordinate.  An almost complex structure
$J_0$ on $\widehat{W}$ is {\em $\widehat\beta$-adapted} if it is:

\begin{enumerate}
\item $\beta_0$-adapted on $[0,\infty)\times \bdry W$;
\item $d\beta$-positive on $W$, i.e.,
$d\beta(v,J_0v)>0$ for all nonzero tangent vectors $v$.
\end{enumerate}

Let $(M,\Gamma ,U(\Gamma ),\xi )$ be a sutured contact manifold,
$\alpha$ an adapted contact form and $(M^* ,\alpha^* )$ its
completion.  We consider the symplectization $(\R \times M^* ,d(e^s
\alpha^* ))$ of $(M^* ,\alpha^* )$, where $s$ is the coordinate on
$\R$. We say that an almost complex structure $J$ on $\R\times M^*$
is {\em tailored to $(M^*,\alpha^*)$} if the following hold:
\begin{enumerate}
\item[(A$_0$)] $J$ is $\alpha^*$-adapted;
\item[(A$_1$)] $J$ is $\bdry_t$-invariant in a neighborhood of
$M^* \setminus int(M)$;
\item[(A$_2$)] The projection of $J$ to $T\widehat{R_\pm(\Gamma)}$
is a $\widehat\beta_\pm$-adapted almost complex structure $J_0$ on
the completion $(\widehat{R_+(\Gamma)}, \widehat\beta_+)\sqcup
(\widehat{R_-(\Gamma)}, \widehat\beta_-)$ of the Liouville manifold
$(R_+(\Gamma), \beta_+)\sqcup (R_-(\Gamma), \beta_-)$.
\end{enumerate}
Note that, by the above conditions, $J_0$ uniquely determines $J$ on
$M^*\setminus int(M)$.  Moreover, the flow of $\bdry_t$ identifies
$J_0 \vert_{\widehat{R_+(\Gamma)}-R_+(\Gamma)}$ and $J_0
\vert_{\widehat{R_-(\Gamma)}-R_-(\Gamma)}$.

\subsection{Integrable complex structures $J_0$ for Stein domains.}

We now discuss the integrable complex structure on a Stein domain,
which, as we will see in Section~\ref{subsection: Stein case}, is
often more convenient for calculations.  The slight drawback is that
the integrable complex structure is usually not adapted to the
symplectization.

Let $(W,J_0)$ be a Stein domain.  Then there exists a Morse function
$f \colon  W\rightarrow \R$ which is strictly plurisubharmonic
and for which $\partial W$ is a regular level set. If $\beta
=-d^{\C}f=-df \circ J_0$, then we claim that $(W,\beta)$ is a
Liouville manifold and that the symplectic form $\omega=d\beta$ is
$J_0$-compatible. Indeed, $\omega$ is symplectic since $\omega
(v,J_0 v)>0$ (i.e., $\omega$ is tamed by $J_0$) for all nonzero
tangent vectors $v$ of $W$ by the strict plurisubharmonicity of $f$.
Moreover, $\omega (\cdot,J_0 \cdot)$ is symmetric by the
integrability of $J_0$: Writing
$(*)=-\omega(X,J_0Y)+\omega(Y,J_0X)$, we compute, using the Cartan
formula, that
\begin{eqnarray*}(*)&=& dd^{\C} f (X,J_0Y)+dd^{\C} f(J_0X,Y)\\ &=&
X(d^{\C}f(J_0Y))-J_0Y(d^{\C} f(X)) -d^{\C} f([X,J_0Y])  \\ &&
+J_0X(d^{\C} f(Y))  -Y(d^{\C} f(J_0X))-d^{\C}
f([J_0X,Y]).\end{eqnarray*} Now, the integrability of $J_0$ is
equivalent to the vanishing of the Nijenhuis tensor, i.e.,
$$J_0[X,J_0Y] +J_0[J_0X,Y]=[J_0X,J_0Y]-[X,Y].$$
Thus $(*)=-X(df(Y))+Y(df(X))+df([X,Y])
+J_0X(df(J_0Y))-J_0Y(df(J_0X))-df([J_0X,J_0Y])= -d^2 f(X,Y)+d^2
f(J_0X,J_0Y)=0$, and we have proved that $\omega(\cdot,J_0\cdot)$ is
symmetric. Now let $\zeta$ be the contact structure on $\bdry W$
given by $\ker \beta|_{\bdry W}$.  If $v \in \zeta$, then $\beta
(J_0 v) =df (v) =0$, and thus $J_0$ fixes $\zeta$. Let
$g(X,Y)=\omega(X,J_0Y)$ be the compatible metric on $W$. Then the
Liouville vector field $X$ satisfying $\imath_X \omega= \beta$ is given
by $X=\nabla f=J_0X_f$, where the gradient $\nabla$ is with respect
to $g$ and $X_f$ is the Hamiltonian vector field of $f$ with respect
to $\omega$. Hence the Liouville vector field $X$ is positively
transverse to $\bdry W$ and $(W,\beta)$ satisfies the conditions of
a Liouville manifold.

When $W$ is a compact surface with nonempty boundary, there is a
complex structure $J_0$ which makes $(W,J_0)$ into a Stein domain.
Thus $W$ has the structure of a Liouville manifold with a compatible
almost complex structure $J_0$.

One subtlety that we address in Subsection~\ref{subsection: interp}
is that, in a neighborhood of $\bdry W$, the integrable $J_0$ is
often slightly different from an almost complex structure $J_0'$
which is $\beta_0=\beta|_{\bdry W}$-adapted. If
$(-\varepsilon,\varepsilon)\times \bdry W$ is a piece of the
symplectization of $\bdry W$ with coordinates $(\tau,x)$ and $\bdry
W= \{\tau=0\}$ so that the Liouville vector field $X=\bdry_\tau$,
then the level sets of $f$ differ slightly from the level sets of
$\tau$. Also, while $J_0'$ can be made to agree with $J_0$ on $\ker
\beta_0$, and $J_0$ maps $\bdry_\tau\mapsto g_0R_{\beta_0}$, the
function $g_0$ is usually not constant. The following is an example
of the above-mentioned issues, which the authors learned from Jian
He.

\begin{example}
Consider $\C^n$ with coordinates $z_i=x_i+\sqrt{-1}~y_i$ and the
standard integrable complex structure $J_0$. Let $M$ be an
ellipsoid in $\C^n$ which is a level set of
$$f={1\over 2} \sum_i (x_i^2 +\lambda_i y_i^2).$$

We compute that
\[
df=\sum_i (x_idx_i+\lambda_i y_idy_i),
\]
\[
\beta=-df\circ J_0=\sum_i (-\lambda_i y_idx_i+ x_idy_i),
\]
\[
\omega=d\beta= \sum_i (1+\lambda_i)dx_idy_i,
\]
\[
X_f=\sum_i{1\over
1+\lambda_i}(-x_i\bdry_{y_i}+\lambda_iy_i\bdry_{x_i}),
\]
where $X_f$
is the Hamiltonian vector field of $f$ with respect to $\omega$, and
$$X=\nabla f= J_0X_f=\sum_i {1\over 1+\lambda_i}(x_i\bdry_{x_i}+\lambda_i
y_i\bdry_{y_i}).$$   Hence, we have
$$df(X)= \sum_i{1\over 1+\lambda_i}(x_i^2+\lambda_i^2y_i^2).$$
It follows that if not all the $\lambda_i$ are the same, then
$df(X)=df(\bdry_\tau)$ is not constant on the level sets of
$f$, and so the level sets of $\tau$ are different from the level sets
of $f$.
\end{example}

\subsection{Interpolation of almost complex structures on
symplectizations.} \label{subsection: interp0}

Let $Y$ be an odd-dimensional manifold and let $\beta_0, \beta_0'$ be
homotopic contact $1$-forms on $Y$, i.e.\ suppose there is a $1$-parameter
family of contact $1$-forms from $\beta_0$ to $\beta_0'$.  Consider
$\R\times Y$ with coordinates $(\tau,x)$. We then have the following
lemma, which is used to prove that the sutured contact homology
algebras are independent of the choice of almost complex structure:

\begin{lemma} \label{lemma: interpol}
There is a constant $C> 0$ and an almost complex structure $J$ on
$\R\times Y$ which is $C\cdot\beta_0'$-adapted for $\tau\geq 1$ and
$\beta_0$-adapted for $\tau\leq 0$, and such that $\tau$ is
plurisubharmonic with respect to $J$, i.e.,
$-dd^\C\tau(v,Jv)=-d(d\tau\circ J)(v,Jv)\geq 0$ for all tangent
vectors $v\in T(\R\times Y)$.
\end{lemma}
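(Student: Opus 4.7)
The plan is to construct $J$ explicitly from a one-parameter family of contact $1$-forms on $Y$ interpolating $\beta_0$ to $C\beta_0'$ along the $\tau$-direction, and then to verify plurisubharmonicity by a direct calculation of the Hermitian form $-dd^{\C}\tau(\cdot, J\cdot)$. First I would pick a smooth nondecreasing $s\colon \R\to [0,1]$ with $s\equiv 0$ on $(-\infty, 0]$ and $s\equiv 1$ on $[1,\infty)$, together with a smooth positive $h\colon \R\to (0,\infty)$ with $h\equiv 1$ on $(-\infty, 0]$ and $h\equiv C$ on $[1,\infty)$, all derivatives vanishing at the endpoints, and set $\lambda_\tau := h(\tau)\,\beta_0^{s(\tau)}$. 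This is a contact $1$-form on $Y$ for every $\tau$, equal to $\beta_0$ for $\tau\le 0$ and to $C\beta_0'$ for $\tau\ge 1$. I then define $J$ on $\R\times Y$ by $J(\partial_\tau) := R_{\lambda_\tau}$ and by taking $J|_{\ker\lambda_\tau}$ to be a smooth family of complex structures compatible with $d\beta_0^{s(\tau)}|_{\ker\beta_0^{s(\tau)}}$, which exists by contractibility of the space of compatible complex structures on a fixed symplectic vector bundle. By construction, $J$ is $\beta_0$-adapted on $\{\tau\le 0\}$ and $C\beta_0'$-adapted on $\{\tau\ge 1\}$.

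To check plurisubharmonicity, I would first verify that $d\tau\circ J = -\lambda_\tau$ as $1$-forms on $\R\times Y$, using the $J$-invariant splitting $T(\R\times Y) = \R\partial_\tau\oplus \R R_{\lambda_\tau}\oplus \ker\lambda_\tau$; this gives $-dd^{\C}\tau = d\lambda_\tau$. Writing $\eta_s := \partial_s\beta_0^s$, one has the decomposition
\[
d\lambda_\tau \;=\; d\tau\wedge(h'\beta_0^{s}+ hs'\eta_s) + h\cdot d_Y\beta_0^{s}.
\]
Evaluating the Hermitian form $d\lambda_\tau(v, Jv)$ in the above splitting, using $\imath_{R_{\lambda_\tau}}d\lambda_\tau = 0$ and the compatibility of $J|_{\ker\lambda_\tau}$, a Schur complement computation reduces plurisubharmonicity to the pointwise inequality
\[
\frac{h'(\tau)}{h(\tau)} + s'(\tau)\,\eta_{s(\tau)}\bigl(R_{\beta_0^{s(\tau)}}\bigr) \;\ge\; \tfrac{1}{4}\,h(\tau)\,s'(\tau)^2\,\bigl\|\eta_{s(\tau)}\big|_{\ker\beta_0^{s(\tau)}}\bigr\|^2,
\]
with norm induced by $d\beta_0^{s(\tau)}(\cdot, J\cdot)$. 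Both sides vanish trivially outside $[0,1]\times Y$.

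The main obstacle is this pointwise inequality on $[0,1]\times Y$, since the right-hand side scales linearly in $h$ and hence simply taking $C$ large is counterproductive. I would exploit the freedom in the choice of $s$ and $h$: concentrate $s'$ in a narrow sub-interval of $[0,1]$ on which $h$ is still of modest size and $h'/h$ is made large enough to dominate the right-hand side, invoking the boundedness of $\eta_s(R_{\beta_0^s})$ and $\|\eta_s|_{\ker\beta_0^s}\|$ (which holds on compact $Y$, or on the compact region where the data differs from the trivial one). Outside that sub-interval $s'\equiv 0$, so the inequality reduces to $h'/h\ge 0$, which is satisfied as $h$ grows monotonically up to its final value $C$; this value is determined by integrating $h'/h$ and may be chosen as large as the construction requires.
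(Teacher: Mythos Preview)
Your reduction to the Schur-complement inequality is correct, but the final step---``concentrate $s'$ while keeping $h$ modest and making $h'/h$ large''---does not close. From your inequality one gets, at any point where $\eta_s(R_{\beta_0^s})\le 0$,
\[
\Bigl(-\tfrac{1}{h}\Bigr)' \;=\; \frac{h'}{h^2}\;\ge\;\frac{1}{4}\,(s')^2\,\|\eta_s|_{\ker\beta_0^s}\|^2.
\]
Integrating over $[0,1]$ (and using $h(0)=1$, $h\ge 1$, and Cauchy--Schwarz $\int_0^1(s')^2\,d\tau\ge(\int_0^1 s'\,d\tau)^2=1$) forces $1-\tfrac{1}{C}\ge \tfrac{1}{4}\inf\|\eta_s|_{\ker}\|^2$, which is impossible once $\|\eta_s|_{\ker}\|^2\ge 4$ somewhere; concentrating $s'$ only makes $\int(s')^2$ larger. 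The feedback is that your right-hand side scales like $h(s')^2$, so any attempt to dominate it by $h'/h$ forces $h$ to blow up on the very interval where you wanted it ``modest''. In dimension three there is no freedom in $J|_{\ker\beta_0^s}$ to shrink $\|\eta_s|_{\ker}\|$, and reparametrising the homotopy cannot reduce $\int(s')^2$ below $1$.

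The paper avoids this entirely by first applying Gray's theorem: the homotopy $\beta_0^s$ yields a diffeomorphism taking $\beta_0'$ to $f\beta_0$, so after pulling back one may take $\lambda_\tau=g(\tau,x)\beta_0(x)$ with a single fixed contact hyperplane $\ker\beta_0$. Then $\partial_\tau\lambda_\tau=(\partial_\tau g)\beta_0$ vanishes on $\ker\beta_0$, i.e.\ your $\|\eta_s|_{\ker}\|\equiv 0$, the cross terms disappear, and plurisubharmonicity reduces to $\partial_\tau g\ge 0$. That is the missing idea: straighten the contact planes first so the Schur complement never arises.
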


\begin{proof}
This is just a modification of the usual proof of the
plurisubharmonicity of $\tau$ with respect to a $\beta_0$-adapted
almost complex structure.

By Gray's theorem, the homotopy from $\beta_0$ to $\beta_0'$ gives
rise to a diffeomorphism isotopic to the identity, which takes
$\beta_0'$ to $f\beta_0$ for some positive function $f$ on $Y$.
Hence, after composing with a diffeomorphism of $\R\times Y$ of type
$(\tau,x)\mapsto (\tau,\phi_\tau(x))$, where
$\phi_\tau\colon Y\stackrel\sim\to Y$ is a diffeomorphism, we may assume
that $\beta_0'=f\beta_0$. We then define a $1$-form
$\beta(\tau,x)=g(\tau,x)\beta_0(x)$ on $\R\times Y$ such that
$g(\tau,x)$ is a smooth function which satisfies the following:
\begin{enumerate}
\item[(i)] $g(\tau,x)=1$ for $\tau\leq 0$;
\item[(ii)] $g(\tau,x)=C\cdot f(x)$ for
$\tau\geq 1$, where $C$ is a constant greater than $\max(1/f)$;
\item[(iii)] ${\bdry g(\tau,x)\over \bdry\tau}\geq 0$.
\end{enumerate}

Let $R_\tau$ be the Reeb vector field for $\beta(\tau)$. Then we
choose $J$ so that $J(\tau,x)$ sends $\ker\beta(\tau)=\ker\beta_0$
to itself and $\bdry_\tau$ to $R_\tau$, and satisfies
$d_Y\beta(\tau)(X,JX)>0$ for all nonzero $X\in \ker\beta(\tau)$, where
$d_Y$ denotes the exterior derivative on $Y$.

We claim that $d\tau\circ J=-\beta$.  Indeed, $d\tau\circ J$ sends
$\ker\beta(\tau)\mapsto 0$, $\bdry_\tau\mapsto 0$, and
$R_\tau\mapsto -1$, agreeing with the evaluation of $-\beta$ on
these tangent vectors.

We now have
\begin{equation} \label{equation: ddctau}
-dd^\C\tau=d\beta=d(g\beta_0)={\bdry g\over \bdry\tau}d\tau\wedge
\beta_0+d_Y(g\beta_0),
\end{equation}
If we write $v\in T(\R\times Y)$ as $X+a\bdry_\tau+bR_\tau$, where
$X\in\ker\beta(\tau)$, then $Jv=JX+aR_\tau-b\bdry_\tau$.  Evaluating
the pair $(v,Jv)$ on the right-hand side of Equation~(\ref{equation:
  ddctau}), we obtain:
\begin{equation}
\label{equation: eq2} g^{-1} {\bdry g\over \bdry
\tau}(a^2+b^2)+d_Y(g\beta_0)(X,JX)\geq 0.
\end{equation}
This proves the plurisubharmonicity of $\tau$.
\end{proof}

By rescaling in the $\tau$-direction we obtain the following:

\begin{cor} \label{cor: plurisub}
There is an almost complex structure $J$ on $\R\times Y$ which is
$\beta_0'$-adapted for sufficiently positive $\tau$ and
$\beta_0$-adapted for sufficiently negative $\tau$, so that some increasing
function $u$ of $\tau$ is $J$-plurisubharmonic.  In particular, no
holomorphic map from a Riemann surface with punctures into
$(\R\times Y,J)$ attains a local maximum in the $\tau$-direction.
\end{cor}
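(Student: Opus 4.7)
The plan is to deduce the corollary from Lemma~\ref{lemma: interpol} by reparametrizing the $\R$-factor. The only mismatch between the lemma and the corollary is that the lemma produces an almost complex structure which is $C\cdot\beta_0'$-adapted at the positive end rather than $\beta_0'$-adapted; since the Reeb vector field of $C\beta_0'$ is $\tfrac{1}{C}R_{\beta_0'}$, the discrepancy is purely a rescaling that can be absorbed by compressing the $\tau$-axis by a factor of $C$ over the positive end. The plurisubharmonic function is then no longer $\tau$ itself but the inverse of the compression, which is what provides the increasing function $u$ in the statement.

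Concretely, let $J'$ be the almost complex structure furnished by Lemma~\ref{lemma: interpol}, choose a smooth diffeomorphism $\phi\colon\R\to\R$ with $\phi'>0$ such that $\phi(\tau)=\tau$ for $\tau\le 0$ and $\phi(\tau)=\tau/C+\mathrm{const}$ for $\tau\ge 1$, and let $\Phi\colon\R\times Y\to\R\times Y$ be $(\tau,x)\mapsto(\phi(\tau),x)$. Set $J:=\Phi_*J'$. On $\{\tau\le 0\}$ the map $\Phi$ is the identity, so $J$ coincides with $J'$ and is $\beta_0$-adapted. For $\tau\ge\phi(1)$ the chain rule gives
\[
J\,\partial_\tau \;=\; \frac{1}{\phi'(\phi^{-1}(\tau))}\,J'\,\partial_{\tau_{\mathrm{old}}} \;=\; C\cdot\tfrac{1}{C}\,R_{\beta_0'} \;=\; R_{\beta_0'},
\]
and since $\Phi$ does not touch the $Y$-factor, $J$ preserves $\ker\beta_0'$ and is $d\beta_0'$-positive there, so $J$ is $\beta_0'$-adapted at large $\tau$. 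The function $u:=\phi^{-1}$ is smooth and strictly increasing in $\tau$, and $u\circ\Phi$ is exactly the old coordinate $\tau_{\mathrm{old}}$, which is $J'$-plurisubharmonic by the lemma. Because $\Phi$ is a biholomorphism $(\R\times Y,J')\to(\R\times Y,J)$, plurisubharmonicity is preserved and $u$ is $J$-plurisubharmonic.

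For the ``in particular'' clause, for any $J$-holomorphic map $F\colon S\to\R\times Y$ from a punctured Riemann surface the composite $u\circ F$ is subharmonic on $S$. The strong maximum principle then prevents it from attaining an interior local maximum unless it is locally constant, and in either case, because $u$ is strictly monotone in $\tau$, the projection $\tau\circ F$ admits no strict interior local maximum. I do not foresee a genuine obstacle; the argument is soft once Lemma~\ref{lemma: interpol} is in hand. The one piece of bookkeeping to get right is the constant factor $C$ appearing when matching Reeb vector fields under the reparametrization, which is precisely what forces the slope $1/C$ in the choice of $\phi$ at the positive end.
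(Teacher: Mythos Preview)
Your proposal is correct and is precisely the approach the paper takes: the paper's entire proof is the sentence ``By rescaling in the $\tau$-direction we obtain the following,'' and your reparametrization $\Phi(\tau,x)=(\phi(\tau),x)$ with slope $1/C$ at the positive end is exactly that rescaling, with $u=\phi^{-1}$ playing the role of the increasing plurisubharmonic function. You have simply supplied the details the paper omits.
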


\subsection{Interpolation between the adapted and integrable 
almost complex structures.} \label{subsection: interp}

Let $(W,J_0)$ be a Stein domain with a strictly plurisubharmonic
function $\tau$ and a corresponding Liouville $1$-form $\beta$.
(Unlike our previous notation, $\tau$ now denotes the plurisubharmonic
function and not the coordinate near the boundary given by the
Liouville vector field.)
Without loss of generality, we may assume that $\bdry W=\{\tau=0\}$.
Writing $Y=\bdry W$, let $N(\bdry W)=[-\varepsilon,0]\times Y$ be a
neighborhood of $\bdry W=\{0\}\times Y$ with coordinates $(\tau,x)$.
Extend this to $[-\varepsilon,\infty)\times Y$, also with coordinates
$(\tau,x)$. Write $\beta_\tau=\beta|_{\{\tau\}\times Y}$ and
$\zeta_\tau=\ker \beta_\tau$.

\begin{lemma} \label{lemma: interpol2}
Suppose $\beta'_0$ is a contact $1$-form which is homotopic to
$\beta_0$. On $[-\varepsilon,\infty)\times Y$, there exist an almost
complex structure $J$ and a $J$-plurisubharmonic function $u(\tau)$
such that:
\begin{itemize}
\item[(i)] $J$ is $\beta_0'$-adapted for sufficiently positive $\tau$;
\item[(ii)] $J$ agrees with $J_0$ on $N(\bdry W)$.
\end{itemize}
\end{lemma}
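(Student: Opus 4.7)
The strategy is to construct $J$ in two pieces: on $N(\bdry W)=[-\varepsilon,0]\times Y$ set $J=J_0$, and on $[0,\infty)\times Y$ build $J$ following the template of Lemma~\ref{lemma: interpol}, taking care that the two pieces match smoothly at $\tau=0$ and that a single smooth $u(\tau)$ is plurisubharmonic throughout.

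By Gray's theorem, after an isotopy of $Y$ we may assume $\beta_0'=f\beta_0$ for some positive function $f$ on $Y$. On the integrable side, $J_0|_{\bdry W}$ preserves $\zeta=\ker\beta_0$ and sends $\bdry_\tau\mapsto g_0R_{\beta_0}$ for some positive function $g_0$ on $Y$. To extend, I would pick a smooth $1$-form
\[
\beta(\tau,x)=g(\tau,x)\beta_0(x)
\]
on $[0,\infty)\times Y$ with $g(0,x)=1/g_0(x)$, $g(\tau,x)=Cf(x)$ for $\tau\geq\tau_1$ (with $C>0$ large), and ${\bdry g\over\bdry\tau}\geq 0$; and I would define $J$ on $[0,\infty)\times Y$ by requiring $J$ to preserve $\zeta$, to send $\bdry_\tau\mapsto R_{\beta(\tau)}$, and to be $d_Y\beta(\tau)$-compatible on $\zeta$, with $J|_{\zeta,\,\tau=0}=J_0|_\zeta$. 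The normalization $g(0,x)=1/g_0(x)$ forces $R_{\beta(0)}=g_0R_{\beta_0}$, matching $J_0\bdry_\tau$; together with the $\zeta$-matching this yields $J=J_0$ at $\tau=0$. Smoothness across $\tau=0$ is arranged by interpolating the family of adapted structures on $\zeta$ from $J_0|_\zeta$ at $\tau=0$ to the standard $d_Y\beta_0'$-compatible one for $\tau\geq\tau_1$.

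For plurisubharmonicity, one takes $u$ smooth, strictly increasing, and sufficiently convex. On $[-\varepsilon,0]\times Y$, strict plurisubharmonicity of $\tau$ for $J_0$ (the Stein hypothesis) forces $u(\tau)$ to be $J_0$-plurisubharmonic for any such $u$. On $[0,\infty)\times Y$, using $d\tau\circ J=-\beta$ and writing $v=X+a\bdry_\tau+bR_\tau$ with $X\in\zeta$, one computes
\[
-d(du(\tau)\circ J)(v,Jv)=u''(\tau)(a^2+b^2)+u'(\tau)\left[g^{-1}{\bdry g\over\bdry\tau}(a^2+b^2)+d_Y(g\beta_0)(X,JX)\right],
\]
which is $\geq 0$ by the choice of $g$ and by $u',u''\geq 0$. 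Finally, by rescaling in $\tau$ as in Corollary~\ref{cor: plurisub}, one ensures $J$ is exactly $\beta_0'$-adapted for sufficiently large $\tau$, giving~(i); condition~(ii) holds by construction.

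The main obstacle is arranging the smooth match at $\tau=0$ without violating plurisubharmonicity. The integrable $J_0$ is not itself $\beta_0$-adapted---the function $g_0$ measures this defect, as the ellipsoid example illustrates---so the interpolating family on $[0,\infty)\times Y$ must be built starting from the nonstandard form $\beta_0/g_0$, not from $\beta_0$ itself. A careful choice of $g$ and of the family of adapted structures on $\zeta$, combined with $u$ made sufficiently convex in a transition strip around $\tau=0$, is needed so that the two plurisubharmonicity inequalities patch smoothly across the interface.
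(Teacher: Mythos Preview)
There is a genuine gap in the matching at $\tau=0$. Your claim that setting $g(0,x)=1/g_0(x)$ forces $R_{\beta(0)}=g_0R_{\beta_0}$ is false whenever $g_0$ is nonconstant: the Reeb field of $(1/g_0)\beta_0$ satisfies $\beta_0(R)=g_0$, but the condition $\imath_R d((1/g_0)\beta_0)=0$ forces $R=g_0R_{\beta_0}+Z$ with $Z\in\zeta$ determined by $\imath_Z d\beta_0|_\zeta=-dg_0|_\zeta$, and $Z\neq 0$ unless $dg_0=0$. So your $J\partial_\tau$ at $\tau=0$ does not equal $J_0\partial_\tau$. More fundamentally, on $N(\partial W)$ the integrable $J_0$ preserves the \emph{varying} hyperplane field $\zeta_\tau=\ker\beta_\tau$, while your $J$ on $[0,\infty)$ preserves the \emph{fixed} $\zeta=\ker\beta_0$; these two families of complex structures cannot be glued smoothly at $\tau=0$ just by matching pointwise values. (You are also conflating two coordinate systems: the statement $J_0\partial_\tau=g_0R_{\beta_0}$ from Section~3.2 uses the Liouville coordinate, whereas the lemma takes $\tau$ to be the plurisubharmonic function.)

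The paper's proof avoids this by working with the family $\beta_\tau$ itself rather than with conformal rescalings of $\beta_0$. After a coordinate adjustment making $\partial_\tau$ parallel to the Liouville field $\nabla\tau$, one has $\beta(\tau,x)=\beta_\tau(x)$ with no $d\tau$-term and $J_0\partial_\tau=R_\tau$ \emph{exactly} (no $g_0$). One then extends the family $\beta_\tau$ smoothly to $[0,\infty)\times Y$, constant equal to $\beta_0$ for $\tau\geq 1$, and defines $J$ by the same recipe ($J$ preserves $\zeta_\tau$, $J\partial_\tau=R_\tau$); smoothness at $\tau=0$ is then automatic. The price is that $d\beta'$ now contains cross terms involving $\dot\beta_\tau$, and these are precisely what is controlled by choosing $u$ so convex that $u''/u'\gg 0$ on $[0,1]$ --- the inequality $K\sum x_i^2+k\sum y_i^2\geq\sum a_{ij}x_iy_j$ with $K\gg 0$ is the relevant elementary estimate. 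Your intuition in the last paragraph that convexity of $u$ is the key is correct, but it is needed to dominate $\dot\beta_\tau$-terms, not to patch across a conformal defect.
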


We thank Yasha Eliashberg for suggesting that something like the
above lemma might be true.

\begin{proof}
By applying Corollary~\ref{cor: plurisub} above, we may assume that
$\beta'_0=\beta_0$.

Let us first consider the Liouville $1$-form $\beta=-d\tau\circ J_0$
on $N(\bdry W)$.  By changing the identification of $N(\partial W)$
with $[-\varepsilon,0]\times Y$, we can arrange for the vector field
$\bdry_\tau$ to be parallel to, but not necessarily a constant
multiple of, the Liouville vector field $\nabla \tau$ which satisfies
$\imath_{\nabla\tau}d\beta=\beta$.  It then follows that $\beta$ has
no $d\tau$-terms.  Hence $\beta(\tau,x)=\beta_\tau(x)$.  We also
observe that, if $R_\tau$ is the Reeb vector field for $\beta_\tau$ on
$\{\tau\}\times Y$, then it is parallel to the Hamiltonian vector
field $X_\tau$ for $\tau$, which satisfies $$\imath_{X_\tau}
d\beta=\imath_{X_\tau}(d_Y\beta_\tau+d\tau\wedge
\dot\beta_\tau)=d\tau,$$ where $\dot\beta_\tau={d\beta_\tau\over
  d\tau}$. Moreover, we claim that $J_0(\bdry_\tau)=R_\tau$. Indeed,
since $\nabla\tau$ is parallel to $\bdry_\tau$, $X_\tau$ is parallel
to $R_\tau$, and $J_0(\nabla\tau)=-X_\tau$, we have $J_0(\bdry_\tau)$
is a function times $R_\tau$. The function can be determined from the
equation $\beta_\tau(R_\tau)=-d\tau\circ J_0(R_\tau)=1$.

Next define a smooth function $u\colon [-\varepsilon,0]\to \R$ so that it
satisfies the following:
\begin{itemize}
\item $u(\tau)=\tau$ on $[-\varepsilon,-{\varepsilon\over 2}]$;
\item ${d^2u\over d\tau^2}\geq 0$ on $[-\varepsilon,0]$; and
\item ${d^2u\over d\tau^2}(0)\gg {du\over d\tau}(0)$.
\end{itemize}
The function $u(\tau)$ is $J_0$-plurisubharmonic on $N(\bdry W)$.
This follows from the general fact that the composition of a
plurisubharmonic function with a smooth, increasing, convex function
$u$ from a subset of $\R$ to $\R$ is plurisubharmonic. Here ``convex''
means $u''\geq 0$ at all points in the domain. 
To see this explicitly, if we set $\beta'=-du\circ J_0$, then
\[
\begin{split}
\beta'&=-{du\over d\tau}(d\tau\circ J_0)={du\over d\tau}\beta,\\
d\beta'&={d^2u\over d\tau^2}d\tau\wedge \beta+{du\over
d\tau}d\beta\\
&= {d^2u\over d\tau^2} d\tau\wedge (-d\tau\circ J_0) + {du\over
d\tau}d\beta.
\end{split}
\]
The conditions on $u(\tau)$ then imply that $d\beta'(v,J_0v)>0$ for
all nonzero $v$.

It is useful below to write $g(\tau)={du\over d\tau}$, and to
rewrite the above equation as
\begin{equation} \label{eqn: for d beta prime}
d\beta'={dg\over d\tau} d\tau\wedge (-d\tau\circ J_0) + g (d_Y
\beta_\tau+d\tau\wedge \dot\beta_\tau),
\end{equation}
where $g(\tau)$ satisfies ${dg\over d\tau}\gg g$ near $\tau=0$.

We now extend $\beta'=g\beta$ and $J=J_0$ over $[0,\infty)\times Y$. First
choose $g\colon [0,\infty)\to \R$ so that ${dg\over d\tau}\gg g$ on
$[0,1]$ and ${dg\over d\tau}>0$ elsewhere. We then extend $\beta$ so
that:
\begin{itemize}
\item $\beta(\tau,x)=\beta_\tau$, i.e., $\beta$ has no $d\tau$-term;
\item $\beta_\tau$ are contact forms on $Y$;
\item $\beta_\tau=\beta_0$ for $\tau\geq 1$.
\end{itemize}
(The only reason we cannot set $\beta_\tau=\beta_0$ for all $\tau\geq
0$ is that we require $\beta$ to be smooth.)  Let
$\zeta_\tau=\ker\beta_\tau$ and $R_\tau=R_{\beta_\tau}$. Since $J_0$
maps $\zeta_\tau$ to itself and $\bdry_\tau\mapsto R_\tau$ on
$N(\partial W)$, we can extend $J_0$ to $J$ so that $\zeta_\tau$ is
mapped to itself and $\bdry_\tau\mapsto R_\tau$.

Now let
$u=u(\tau)$ be the extension of $u|_{N(\bdry W)}$ to
$[-\varepsilon,\infty)\times Y$ so that ${du\over d\tau}=g(\tau)$.
To show that $u$ is $J$-plurisubharmonic, first observe that
\[
-du\circ J={du\over d\tau} (-d\tau\circ J) = g \beta =\beta'.
\]

Thus we need to verify the nonnegativity condition
$d\beta'(v,Jv)\geq 0$. Write $v=X+a\bdry_\tau+b R_\tau$, where
$X\in\zeta_\tau$, so that $Jv= JX+aR_\tau-b \bdry_\tau$. Then
Equation~(\ref{eqn: for d beta prime}) gives
\[
\begin{split}
d\beta'(v,Jv)&={dg\over d\tau}(a^2+b^2)+g (d_Y
\beta_\tau(X,JX))\\
& +  g(a\dot\beta_\tau(JX+aR_\tau)+b\dot\beta_\tau(X+bR_\tau)).
\end{split}
\]
The nonnegativity is immediate for $\tau\geq 1$ since
$\dot\beta_\tau=0$. The nonnegativity for $\tau\in[0,1]$ follows
from ${dg\over d\tau}\gg g$ and is based on the inequality
$$K \sum_ix_i^2+ k \sum_i y_i^2\geq \sum_{ij} a_{ij} x_iy_j,$$ where
$k>0$ and $a_{ij}$ are given, and $K\gg 0$ is chosen in response to
$k,a_{ij}$.
\end{proof}

\section{Operations on sutured contact manifolds}
\label{section: operations on sutured contact manifolds}

\subsection{Switching between convex and sutured boundary 
conditions}
\label{subsection: from convex to sutured}

In this subsection we describe how to pass between the convex and
sutured boundary conditions.

When $(M, \Gamma ,U(\Gamma ),\xi )$ is a sutured contact manifold,
it is easy to smooth the corners of $M$ inside $U(\Gamma )
=[-1,0]\times [-1,1] \times \Gamma$, so that the resulting manifold
$M'$ has boundary $\bdry M'$ which is transversal to the Reeb vector
field $R=\frac{1}{C}
\partial_t$ except at $\Gamma=\{(0,0)\}\times \Gamma$.  More
precisely, the portion of $\bdry M'$ for which $t> 0$ (resp.\ $t<0$)
is positively (resp.\ negatively) transverse to $R$. Hence the
slight retract $(M', \Gamma ,\xi \vert_{M'} )$ of $M$ has
$\xi$-convex boundary by Lemma~\ref{lemma: two defns of convex}.

On the other hand, the following lemma explains how to pass from
convex to sutured boundary.

\begin{lemma}\label{l:suture}
Let $(M,\xi )$ be a $(2n+1)$-dimensional contact manifold with
$\xi$-convex boundary $(\partial M, \Gamma )$, and let
$N(\Gamma)\subset M$ be a tubular neighborhood of $\Gamma$. Then
there exists a codimension $0$ sutured contact submanifold $(M',
\Gamma', U(\Gamma'), \xi \vert_{M'} )$ of $M$, together with a
contact form $\alpha$ on $M$, such that $\alpha \vert_{M'}$ is adapted to
$(M', \Gamma', U(\Gamma'), \xi \vert_{M'} )$, $M-M'\subset N(\Gamma)$,
$U(\Gamma')\subset N(\Gamma )$, and $(\Gamma' ,\xi\cap T\Gamma')$ is
isotopic to $(\Gamma ,\xi\cap T\Gamma)$ through $(2n-1)$-dimensional
contact submanifolds of $(M, \xi )$.
\end{lemma}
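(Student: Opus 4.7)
The plan is to use Corollary~\ref{cor: cor lemma two definitions of convex} to put $\alpha$ into a normal form on a collar of $\partial M$, and then to define $M'$ by slightly retracting $M$ near $\Gamma$, so that the new boundary has corners along a suture $\Gamma'$ isotopic to $\Gamma$. First I would apply Corollary~\ref{cor: cor lemma two definitions of convex} to choose $\alpha$ so that, in a collar $[-\varepsilon,0]_{t_c}\times\partial M$ of $\partial M$, one has $\alpha=f(\tau_c)\,dt_c+g(\tau_c)\beta_0$ on a tubular neighborhood $(-1,1)_{\tau_c}\times\Gamma\subset\partial M$ of $\Gamma$, with $f=\pm 1$ for $\pm\tau_c\geq 1/2$. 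A direct computation of the Reeb vector field $R_\alpha$ in these coordinates shows that $R_\alpha$ has no $\partial_{\tau_c}$-component, so every hypersurface $\{\tau_c=\tau_c^0\}$ is Reeb-invariant; moreover, for $\tau_c^0\geq 1/2$ we have $R_\alpha|_{\{\tau_c=\tau_c^0\}}=\partial_{t_c}$ and $\alpha|_{\{\tau_c=\tau_c^0\}}=dt_c+g(\tau_c^0)\beta_0$, which is already close to the sutured normal form.

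Next I would define $M'\subset M$ by excising a small neighborhood of $\Gamma$ in $M$. Concretely, take $M'=M\setminus W$, where $W=\{(t_c,\tau_c,x):t_c^2+\tau_c^2<r^2,\;t_c\leq 0,\;x\in\Gamma\}$ for $r>0$ small enough that $W\subset N(\Gamma)$. Then $M-M'=W\subset N(\Gamma)$ as required, and $U(\Gamma')$ will be taken inside $N(\Gamma)$ as well. The new boundary $\partial M'$ consists of $\partial M$ with a small strip near $\Gamma$ removed, together with the half-cylinder $\{t_c^2+\tau_c^2=r^2,\;t_c\leq 0\}\times\Gamma$, joined at corners $\{t_c=0,\tau_c=\pm r\}\times\Gamma$, so it has exactly the L-shaped structure required by Definition~\ref{def:SCM} near $\Gamma$.

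The suture $\Gamma'$ is placed at the equator of the half-cylinder, $\Gamma'=\{(t_c,\tau_c)=(-r,0)\}\times\Gamma$, which is isotopic to $\Gamma$ through the contact submanifolds $\Gamma_s=\{(s,0)\}\times\Gamma$ for $s\in[-r,0]$: the restriction of $\alpha$ to $\Gamma_s$ is $g(0)\beta_0$, a positive contact form on $\Gamma$. To identify the sutured neighborhood $U(\Gamma')$ as a model $[-1,0]_\tau\times[-1,1]_t\times\Gamma$ with $\alpha=C\,dt+e^\tau\beta_0'$, one (i) rescales $\alpha$ by a conformal factor $h>0$ so that the new Reeb vector field is tangent to the half-cylinder (which is consistent at $\Gamma'$ because $\partial_{\tau_c}\in\xi$ whenever $f(\tau_c)=0$), (ii) uses the new Reeb flow on the half-cylinder to define $t$, and (iii) uses the Liouville vector field of $\alpha|_{R_+(\Gamma')}\sqcup \alpha|_{R_-(\Gamma')}$ (which is a rescaling of $\partial_{\tau_c}$ via $\tau=\log(g(\tau_c)/g(0))$) to define $\tau$.

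The main technical obstacle is the explicit construction of the conformal factor $h$ so that the resulting Reeb vector field is simultaneously tangent to the whole half-cylinder and so that the induced Liouville structures on $R_\pm(\Gamma')$ are actually symplectic (i.e., $\alpha|_{R_\pm(\Gamma')}$ remain Liouville forms after the rescaling). This amounts to choosing $h$ on $N(\Gamma)$ so that the Hamiltonian vector field of $\log h/h$ (with respect to $d\alpha|_\xi$) supplies exactly the missing $\partial_{\tau_c}$-component of the Reeb on the half-cylinder; compatibility with the unmodified $\alpha$ on $M'\setminus N(\Gamma)$ is then achieved by letting $h$ interpolate smoothly to~$1$ away from $\Gamma$.
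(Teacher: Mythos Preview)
Your overall strategy --- excise a small half-disk around $\Gamma$, place $\Gamma'$ at the midpoint of the resulting circular arc, and rescale $\alpha$ so that the Reeb becomes tangent to the new vertical boundary --- is exactly the shape of the paper's argument. The gap is that you stop precisely where the paper does its real work, and the ``main technical obstacle'' you flag is not a routine detail.

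Making $R_{h\alpha}$ tangent to the single hypersurface $\{t_c^2+\tau_c^2=r^2\}\times\Gamma$ is not enough: the sutured condition requires $\alpha=C\,dt+e^\tau\beta_0'$ on a full product neighborhood $U(\Gamma')=[-1,0]_\tau\times[-1,1]_t\times\Gamma'$, which means the Reeb must foliate a whole shell by arcs of \emph{equal} Reeb-time from $R_-$ to $R_+$, and the transverse $1$-form must be $t$-invariant and of symplectization type in a second coordinate $\tau$. Your description ``choose $h$ so that the Hamiltonian vector field of $\log h/h$ supplies the missing $\partial_{\tau_c}$-component'' names one pointwise equation (tangency along one hypersurface) but does not explain why a single scalar $h$ can simultaneously arrange tangency on a family of hypersurfaces, constant flow-time, and compatibility of the two Liouville structures. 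Nor do you verify that the coordinate change $(t_c,\tau_c)\mapsto(t,\tau)$ built from the Reeb and Liouville flows actually brings $h\alpha$ into the form $C\,dt+e^\tau\beta_0'$ rather than $C\,dt+\beta(\tau,x)$ for some unspecified $\beta$.

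The paper avoids this by inserting one extra normalization: in polar coordinates $(\tau,t)=(r\cos\theta,r\sin\theta)$ on $N(\Gamma)$, a Moser--Weinstein argument identifies $\ker\alpha_0$ with $\ker(\beta_0+r^2\,d\theta)$ near $\Gamma$, so that $\alpha_0=h_0(r,\theta,x)\,(\beta_0+r^2\,d\theta)$. In this model an elementary computation shows that for \emph{any} $h$ with $\partial h/\partial r<0$ the Reeb of $h(\beta_0+r^2\,d\theta)$ is positively transverse to $\{\theta=\mathrm{const}\}$; choosing $h=C_0/r^2$ on a shell $\varepsilon/2\le r\le\varepsilon$ gives $\alpha=C_0\,d\theta+(C_0/r^2)\beta_0$ there, which is literally the sutured form after $t=\theta$, $e^\tau\propto r^{-2}$. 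One then sets $M'=M\setminus\{r<\varepsilon/2\}$. The Moser--Weinstein step is what converts your undetermined $h$ and ad~hoc coordinates into an explicit model in which all of the sutured conditions are visible at once; without it, your sketch does not yet prove the lemma.
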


\begin{proof}
Since $\Sigma=\bdry M$ is $\xi$-convex, there is a neighborhood
$N(\Sigma)=[-\varepsilon,0]\times\Sigma$ of $\Sigma=\{0\}\times
\Sigma$ with first coordinate $t$ and a contact form
$\alpha_0=fdt+\beta$ as given by Corollary~\ref{cor: cor lemma two
definitions of convex}.  In particular, on
$N(\Gamma)=[-1,1]\times[-\varepsilon,0]\times \Gamma$, the form
$\alpha_0$ can be written as
$$\alpha_0=f(\tau)dt+g(\tau)\beta_0=g(\tau)(\beta_0+\widetilde{f}(\tau)dt);$$
we may assume that $\widetilde{f}(\tau)=\tau$ for $-{1\over 4}\leq
\tau\leq {1\over 4}$, ${\bdry g\over \bdry \tau}>0$ for $\tau<0$,
${\bdry g\over \bdry\tau}<0$ for $\tau>0$, and $g(\tau)=g(-\tau)$.
Then
\begin{itemize}
\item[($\pitchfork$)]$R_{\alpha_0}$ is
positively transverse to $\bdry M$ along $R_+(\Gamma)$ and
negatively transverse to $\bdry M$ along $R_-(\Gamma)$.
\end{itemize}

Consider cylindrical coordinates $(r,\theta,x)$ on $N(\Gamma)$ so
that
$$(\tau,t)=(r\cos(\theta),r\sin(\theta))$$ and the portion contained
in $M$ is $\pi\leq \theta\leq 2\pi$. Let
$$U=\{\pi\leq \theta\leq 2\pi, 0\leq r\leq \delta\}\subset
N(\Gamma).$$ Along $t=0$, $-{1\over 4}\leq \tau\leq {1\over 4}$, the
contact forms
\begin{eqnarray*}
\alpha_0&=&g(\tau)(\beta_0+ \widetilde{f}(\tau) dt)\\
\alpha_1&=&g(r) (\beta_0 + r^2 d\theta )
\end{eqnarray*} agree and the interpolation
$\alpha_s=(1-s)\alpha_0+s\alpha_1$ is contact. Hence, by the usual
Moser-Weinstein technique, there is a $1$-parameter family of local
diffeomorphisms $\phi_s$, $s\in[0,1]$, near $\Gamma$ so that
$\phi_0=id$, $\phi_s=id$ along $\Sigma$, and $(\phi_1)_*$ takes
$\xi_{\alpha_0}$ to $\xi_{\alpha_1}$.  In other words, after a
change of coordinates we may write
$$\alpha_0=h_0(r,\theta,x) (\beta_0+r^2d\theta)$$
on $U$, for some positive function $h_0\colon U\to \R$ and sufficiently
small $\delta$.  Note that we {\em have not} modified $\alpha_0$ by
a conformal factor, and $R_{\alpha_0}$ still satisfies
($\pitchfork$).

Now let $h\colon U\to\R$ be any positive function. We claim that the Reeb
vector field $R_\alpha$ for the contact form $\alpha = h (\beta_0 +
r^2 d\theta )$ is positively transverse to the surfaces $\{ \theta
=const \} \subset U-\Gamma$ if and only if $\frac{\partial
h}{\partial r} <0$. Indeed, by plugging $R_\alpha$ into the equation
$\alpha=h(\beta_0+r^2d\theta)$, we obtain
$$\beta_0 (R_\alpha)={1\over h}-r^2 d\theta (R_\alpha).$$
Also, the coefficient of $dr$ in the equation $\imath_{R_\alpha}
d\alpha=0$ gives
$$\frac{\partial h}{\partial r} \beta_0 (R_\alpha) + \left(r^2 \frac{\partial
h}{\partial r} +2r h  \right)d\theta (R_\alpha)=0.$$ Putting the two
identities together, we obtain
$$\frac{\partial h}{\partial r}=-2r h^2 d\theta (R_\alpha)$$ and the
conclusion follows.

Now we take a function $h$ on $U$ with the following properties:
\begin{itemize}
\item $h=h_0$ on $\partial U\cap \{r=\delta\}$;
\item $\frac{\partial h}{\partial r} <0$;
\item $h=\frac{C_0}{r^2}$ when ${\varepsilon\over 2} \leq r\leq \varepsilon$.
(Here $C_0>0$ is a large constant and $\varepsilon>0$ is a small
constant $<\delta$.)
\end{itemize}
If we define $\alpha$ to be $h(\beta_0 + r^2 d\theta )$ on $U$ and
$\alpha_0$ on $M-U$, then the Reeb vector field $R_\alpha$ is
transverse to $R_{\pm} (\Gamma )$.  On ${\varepsilon\over 2}\leq
r\leq \varepsilon$, since $\alpha= {C_0\over r^2}\beta_0+C_0d\theta$
we have $R_\alpha={1\over C_0}\bdry_\theta$. We then take $M' =M-\{
r<{\varepsilon\over 2} \}$, $\Gamma'=\{ r={\varepsilon \over
2},\theta ={3\pi \over 2}\}$ and $U(\Gamma') =M\cap \{
{\varepsilon\over 2} \leq r\leq \varepsilon \}$. The
$\theta$-coordinate becomes the $t$-coordinate on $U(\Gamma')$ and
the contact form $\alpha$ gives this modified manifold $(M',
\Gamma',U(\Gamma'))$ the structure of a sutured contact manifold.

Finally, $\Gamma$ is isotopic to $\Gamma'$ through contact
submanifolds of type $(\Gamma_{a,b},\ker\beta_0)$, where
$\Gamma_{a,b}=\{r=a,\theta=b\}$.
\end{proof}

\subsection{From concave to convex boundary}
\label{subsection: concave to convex}

\begin{defn}
Let $M$ be a compact $(2n+1)$-dimensional manifold with
$3\pi/2$-corners and let $\Gamma\subset \bdry M$ be a
$(2n-1)$-dimensional submanifold. We call $(M,\Gamma,V(\Gamma))$ is a
{\em concave sutured manifold} with {\em suture} $\Gamma$, if
$V(\Gamma)\subset M$ is a neighborhood of
$\Gamma=\{(0,0)\}\times\Gamma$ of the form
$$([-1,1] \times [-2,2]- (0,1] \times (-1,1)) \times \Gamma$$
with coordinates $(\tau,t,x)$, and all the corners of $M$ lie in the
interior of $V(\Gamma)$.
\end{defn}

Let $R_+ (\Gamma)\sqcup R_- (\Gamma ) =\partial M -int (\{ 0\}
\times [-1,1] \times \Gamma )$ be the horizontal boundary and $\{ 0
\} \times [-1,1] \times \Gamma$ be the vertical boundary of $M$.
Here the orientation of $R_+(\Gamma)$ (resp.\ $R_-(\Gamma)$) agrees
with (resp.\ is opposite of) the boundary orientation of $M$, and
the orientation of $\Gamma$ is the boundary orientation of
$R_\pm(\Gamma)$.

\begin{defn}
$(M,\Gamma, V(\Gamma),\xi )$ is a {\it concave sutured contact
manifold} if $\xi$ is contact structure on $M$ and there exists a
contact form $\alpha$ for $\xi$ so that $(R_\pm
(\Gamma),\alpha|_{R_\pm(\Gamma)})$ are Liouville manifolds, $\alpha
=C dt +\beta$ in $V(\Gamma)$, and the Reeb orbits along the vertical
boundary go from $R_+(\Gamma)$ to $R_-(\Gamma)$ (instead of from
$R_-(\Gamma)$ to $R_+(\Gamma)$, which is the case for convex
sutures). Here $C>0$ and $\beta$ is independent of $t$ and has no
$dt$-term.
\end{defn}

\begin{example}
Let $(M,\xi =\ker \alpha )$ be a contact $3$-manifold and let
$S\subset M$ be a compact, oriented surface which is transversal to
the Reeb vector field $R_\alpha$ and whose boundary $\partial S$ is
positively transversal to $\xi$. Now, if $N(S)=S\times
[-\varepsilon,\varepsilon ]$ is a collar neighborhood of $S$ whose
$[-\varepsilon,\varepsilon]$-coordinate $t$ satisfies
$R_\alpha=\bdry_t$, then $M-int(N(S))$ is naturally a concave
sutured contact manifold with respect to the form $\alpha$.  In
particular, $\Gamma =\{0\}\times\partial S$, the vertical boundary
is $\partial S \times [-\varepsilon ,\varepsilon]$, $R_+ (\Gamma ) =
S\times \{ -\varepsilon \}$, and $R_- (\Gamma ) = S\times \{
+\varepsilon \}$.
\end{example}

\begin{example} \label{example: Legendrian2}
Let $(M,\xi)$ be a closed $(2n+1)$-dimensional contact manifold and let
$L\subset M$ be a closed Legendrian submanifold.  By the
Darboux-Weinstein neighborhood theorem, there is a sufficiently
small neighborhood $N(L)$ of $L$ which is contactomorphic to a small
neighborhood of the zero section $\{z=p_1=\dots=p_n=0\}$ in the
$1$-jet space $\R\times T^*L$ with the contact $1$-form $\alpha=
dz+\lambda$, where $\lambda$ is the Liouville form on $T^*L$ which
is locally given by $\sum_i p_idq_i$.  The Reeb vector field is given by
$R_\alpha=\bdry_z$, and we can take the boundary of the tubular
neighborhood of the zero section to be
$\Sigma=\{(z,p,q)~|~z^2+|p|_q^2=\varepsilon^2\}$ after choosing
a Riemannian metric on $L$.
Then $R_\alpha$ is positively transverse to $\Sigma$ (with the
boundary orientation) for $z>0$, negatively transverse to $\Sigma$
for $z<0$, and tangent to $\Sigma$ for $z=0$. The set
$\Gamma=\{(z,p,q)~|~z=0, |p|_q=\varepsilon\}$ is the unit cotangent
bundle of $L$, and is a $(2n-1)$-dimensional contact manifold.  One
can see this for example by observing that the Liouville vector
field $\sum_i p_i\bdry_{p_i}$ for $(T^*L,\lambda)$ is transverse to
$\Gamma$. If we set $N(L)=\{(z,p,q) ~|~ z^2< \varepsilon^2, |p|_q^2
<\varepsilon^2\}$, then $(M-N(L),\xi|_{M-N(L)},\Gamma)$ is a concave
sutured manifold.
\end{example}

\begin{prop} \label{prop: concave to sutured}
Let $\mathcal{M}=(M,\Gamma ,V(\Gamma ),\xi )$ be a concave sutured
contact manifold. Then there is an inclusion of $\mathcal{M}$ into a
convex sutured contact manifold
$\mathcal{M'}=(M',\Gamma',U(\Gamma'),\xi')$, so that the contact
manifold with convex boundary $(M_{sm},\Gamma,\xi)$, obtained by
smoothing the corners of $\mathcal{M}$, is isotopic to the contact
manifold with convex boundary $(M'_{sm},\Gamma',\xi')$, obtained by
smoothing the corners of $\mathcal{M}'$. Here $\Gamma$ and $\Gamma'$
are isotopic contact submanifolds and $M'-M\subset
(0,1]\times(-1,1)\times \Gamma$.
\end{prop}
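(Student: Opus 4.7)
I will attach a small ``tab'' to $M$ reaching into the concave notch, with the convex sutured structure of $\mathcal{M}'$ concentrated at the tip of the tab. After a standard Weinstein-type normalization we may assume that on $V(\Gamma)$ the adapted form reads $\alpha=Cdt+e^{\tau}\beta_0$, with $\beta_0$ a positive contact form on $\Gamma$ and $\partial_\tau$ the Liouville vector field of $\beta_\pm=\alpha|_{R_\pm(\Gamma)}$ near $\Gamma$; the same formula extends $\alpha$ to a contact form $\widetilde\alpha$ on the enlarged ambient manifold $\widetilde M=M\cup\bigl((0,1]\times(-1,1)\times\Gamma\bigr)$.

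\textbf{Construction of $\mathcal{M}'$.} Pick $\tau_0\in(0,1)$, $t_0\in(0,1)$, and $\varepsilon\in(0,\tau_0)$, attach the tab $T=(0,\tau_0]\times[-t_0,t_0]\times\Gamma$ to $M$, and set
\[
M'=M\cup T,\qquad \Gamma'=\{(\tau_0,0)\}\times\Gamma,\qquad U(\Gamma')=[\tau_0-\varepsilon,\tau_0]\times[-t_0,t_0]\times\Gamma,
\]
with $\xi'=\ker\widetilde\alpha|_{M'}$. Inspection of the $(\tau,t)$-plane picture shows that $M'$ has convex $\pi/2$ corners at the tab tips $\{(\tau_0,\pm t_0)\}\times\Gamma$ (one quadrant of $M'$ meets each), reflex corners at the tab base $\{(0,\pm t_0)\}\times\Gamma$ (three quadrants), and retains the original reflex corners of $\mathcal{M}$ at $\{(0,\pm 1)\}\times\Gamma$. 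A standard corner-rounding of the four reflex corners, together with a $C^\infty$-small perturbation of the adapted contact form supported near each, makes $\partial M'$ smooth at those points and turns $R_\pm(\Gamma')$ into Liouville manifolds (the perturbation is needed because $\widetilde\alpha|_{\{0\}\times[t_0,1]\times\Gamma}=Cdt+\beta_0$ is contact but not Liouville). After these modifications, $M'$ has precisely the two convex corners, and on $U(\Gamma')$ the adapted form matches the product model $Cdt+e^{\tau}\beta_0$ of Definition~\ref{def:SCM} after the shift $\tau\mapsto\tau-\tau_0$. Thus $\mathcal{M}'$ is a convex sutured contact manifold, and by construction $M'\setminus M=T\subset (0,1]\times(-1,1)\times\Gamma$.

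\textbf{Isotopies and main obstacle.} Both $\partial M_{sm}$ and $\partial M'_{sm}$ are convex hypersurfaces in $\widetilde M$ that agree outside the compact region $V(\Gamma)\cup T$. Inside this region I would produce a smooth $1$-parameter family of convex hypersurfaces $\Sigma_s$ interpolating between the two smoothings, each with dividing set the contact submanifold $\Gamma_s=\{(s\tau_0,0)\}\times\Gamma$ for $s\in[0,1]$ (each $\Gamma_s$ is contact because $\widetilde\alpha|_{\{(\tau,0)\}\times\Gamma}=e^{\tau}\beta_0$ is a positive contact form on $\Gamma$ for every $\tau$), and then realize the family as the flow of a compactly supported contact vector field on $\widetilde M$. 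This yields the required ambient contact isotopy carrying $(M_{sm},\Gamma,\xi)$ to $(M'_{sm},\Gamma',\xi')$ and sliding $\Gamma$ to $\Gamma'$ through the $\Gamma_s$. The main obstacle is organizing the four corner smoothings and the accompanying perturbations of the adapted form so that $R_\pm(\Gamma')$ is genuinely Liouville and the interpolating hypersurfaces $\Sigma_s$ carry the prescribed dividing sets $\Gamma_s$; both reduce to local model computations in the product region $(\tau,t)\times\Gamma$, where Lemma~\ref{lemma: two defns of convex} and Corollary~\ref{cor: cor lemma two definitions of convex} provide the flexibility needed to put the two smoothings in a common standard form related by translation in $\tau$.
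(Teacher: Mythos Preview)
Your construction has a genuine gap, and it is not a matter of corner-rounding or small perturbation.

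First a minor point: for a concave sutured manifold the Liouville vector field on $R_\pm(\Gamma)$ points in the $-\partial_\tau$ direction, so the normal form is $\alpha=Cdt+e^{-\tau}\beta_0$, not $e^{\tau}\beta_0$. But fixing the sign does not rescue the argument.

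The real problem is the following. After you attach the rectangular tab $T$ with the \emph{unchanged} product form, the Reeb vector field on $M'$ is still $\tfrac{1}{C}\partial_t$ everywhere in $V(\Gamma)\cup T$. Trace the boundary of $M'$ in the $(\tau,t)$-plane starting from the corner $(\tau_0,t_0)$ and moving away from your proposed suture. Along the tab top $(0,\tau_0]\times\{t_0\}\times\Gamma$ the Reeb field \emph{exits} $M'$ (so this is $R_+(\Gamma')$). Then comes the leftover wall $\{0\}\times[t_0,1]\times\Gamma$, to which $\partial_t$ is \emph{tangent}. Then comes the old notch edge $(0,1]\times\{1\}\times\Gamma$, where the Reeb field \emph{enters} $M'$ (so this is $R_-(\Gamma')$). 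Thus on a single connected arc of $\partial M'\setminus\Gamma'$ the sign of Reeb transversality flips from positive to negative. The same happens symmetrically on the $t<0$ side. No $C^\infty$-small perturbation of the form, and no corner-rounding, can cure this: any smooth hypersurface interpolating between ``Reeb exits'' and ``Reeb enters'' must contain a locus where Reeb is tangent, i.e.\ an additional dividing set. Your $M'$ therefore cannot be a convex sutured contact manifold with the single suture $\Gamma'=\{(\tau_0,0)\}\times\Gamma$; it would require extra suture components, which is not what the proposition asserts. (The sentence ``the perturbation is needed because $\widetilde\alpha|_{\{0\}\times[t_0,1]\times\Gamma}=Cdt+\beta_0$ is contact but not Liouville'' misdiagnoses the issue: the restriction is not even symplectic, since $d\beta_0$ is degenerate in the $t$-direction, and the underlying reason is precisely the Reeb tangency.)

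The paper's proof takes a quite different route that addresses exactly this obstruction. Rather than extending $\alpha$ as a product form, it fills the entire notch $[0,1]\times[-1,1]\times\Gamma$ with a form $f(\tau,t)\,dt+g(\tau,t)\,\beta_0$, where $g$ is a carefully chosen Morse function on the square having a single saddle $h$ and a single source $e$. The Reeb field is then parallel to $\tfrac{\partial f}{\partial\tau}R_0+X_g$, so in the $(\tau,t)$-plane it follows the level sets of $g$ rather than the vertical lines. One then \emph{excises} a small disk around the source together with a thin channel to the boundary; the boundary circle of that disk becomes the new vertical boundary, and along it the Reeb flow now runs from $R_-$ to $R_+$ as required. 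The saddle separatrices are what route the old $R_\pm(\Gamma)$ correctly into the new $R_\pm(\Gamma')$ without any sign flip. This Morse-theoretic modification of the contact form is the essential idea you are missing.
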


\begin{proof}
On $V(\Gamma)=([-1,1] \times [-2,2]- (0,1] \times (-1,1)) \times
\Gamma$ the adapted contact form is $\alpha=Cdt+\beta$, where $C$ is
a positive constant. Without loss of generality we can write $\beta
=e^{-\tau} \beta_0$, where $\beta_0$ is a $1$-form on $\Gamma$. (The
minus sign in $e^{-\tau}\beta_0$ is due to the fact that the
Liouville vector field on $R_\pm(\Gamma)$ points in the negative
$\tau$-direction.) We now describe how to extend $\alpha$ to the
product $[0,1] \times [-1,1]\times \Gamma$. To that end, we look for
a form of type $f(\tau ,t)dt +g(\tau,t)\beta_0$, where $f,g
\colon [-1,1]\times [-2,2] \rightarrow \R$, and $f=C$ and $g=e^{-\tau}$
outside of $[0,1] \times [-1,1]$.

Let $g$ be a positive Morse function on $[0,1]\times [-1,1]$, whose
level sets are obtained from perturbing the foliation by intervals
$\{ \tau \} \times [-1,1]$, $\tau \in [0,1]$, by adding a pair of
(canceling) critical points --- a saddle $h$ and a source $e$
--- as in Figure~\ref{modification}.
\begin{figure}[ht]
\begin{overpic}[height=1.5in]{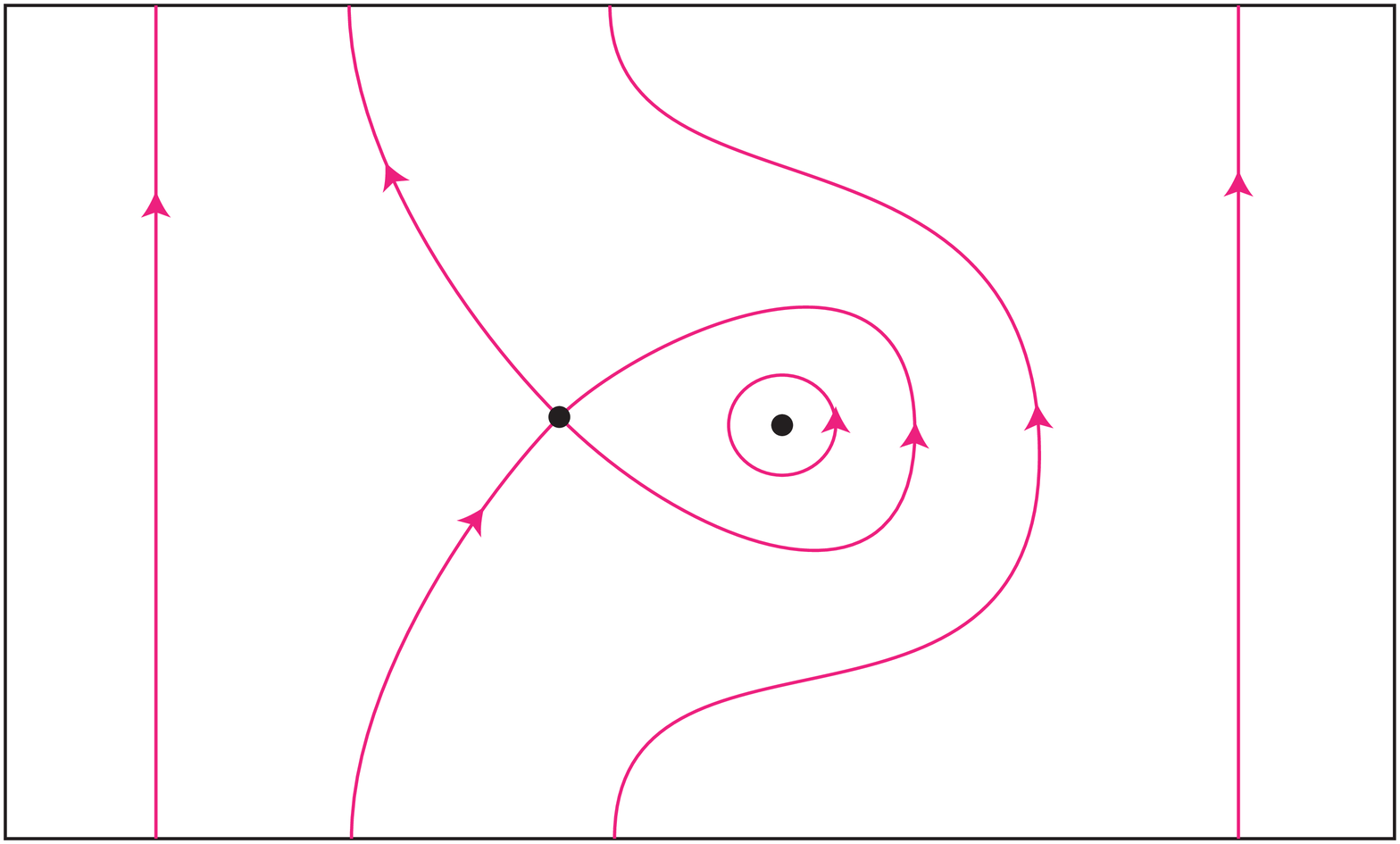}
\put(33.2,29.25){\tiny{$h$}} \put(58.5,33.5){\tiny{$e$}}
\end{overpic}
\caption{The level sets of $g$ on $[0,1]\times[-1,1]$. The arrows
indicate the direction of $X_g$.} \label{modification}
\end{figure}
Two of the separatrices of $h$ go to $[0,1]\times\{\pm 1\}$ and
decompose $[0,1]\times[-1,1]$ into two components; we assume that
$e$ is on the component which contains $(1,0)$.  We choose $g$ so
that ${\bdry g\over \bdry \tau}<\varepsilon$ whenever
$\frac{\partial g}{\partial \tau} \geq 0$. (This happens at those
points in Figure~\ref{modification} where the arrows on the level
sets point downwards.) Next choose a positive function $f$ on
$[0,1]\times [-1,1]$ so that $\frac{\partial f}{\partial \tau}\geq
0$ on $[0,1]\times[-1,1]$ and ${\partial f\over \partial \tau}$ is a
large positive constant where $\frac{\partial g}{\partial \tau} \geq
0$.

On $[0,1]\times[-1,1]\times\Gamma$, with $\alpha$ defined as above,
we compute
$$d\alpha =\frac{\partial f}{\partial \tau} d\tau \wedge dt +dg\wedge
\beta_0 +gd\beta_0.$$ The contact condition for $\alpha$ is
$$g^{n-1} \left(g \frac{\partial f}{\partial \tau} - f\frac{\partial
g}{\partial \tau} \right)>0,$$ and the requirements on ${\bdry
f\over \bdry \tau}$ and ${\bdry g\over \bdry \tau}$ yield the
contact condition.

Let $X_g$ be the Hamiltonian vector field with respect to the
symplectic form $d\tau \wedge dt$. Note that $X_g$ is tangent to the
level sets of $g$. The Reeb vector field $R_\alpha$ is parallel to
$\frac{\partial f}{\partial \tau} R_0 + X_g,$ where $R_0$ is the
Reeb vector field for $\beta_0$ on $\Gamma$. Indeed, we compute
that:
\begin{eqnarray*}
\imath_{\frac{\partial f}{\partial \tau} R_0 + X_g} d\alpha &=& {\bdry
f\over \bdry \tau}\cdot \imath_{X_g} (d\tau \wedge dt) +\imath_{\frac{\partial
f}{\partial \tau} R_0} (dg \wedge
\beta_0) +g\cdot \imath_{\frac{\partial f}{\partial \tau} R_0} d\beta_0 \\
&=&\frac{\partial f}{\partial \tau} dg -\frac{\partial f}{\partial
\tau}dg +0=0.
\end{eqnarray*}

Let $\delta$ be an arc in $[0,1] \times [-1,1]$ which connects the
source $e$ to the point $(1,0)$ and is transversal to $X_g$. Let $D$
be a small disk of radius $r$ about $e$, whose boundary is a level
set of $g$, and let $N_\varepsilon$ be an $\varepsilon$-neighborhood
of $\delta$, with $\varepsilon \ll r$. Consider the manifold $M''$,
obtained from $M$ by adding $([0,1] \times [-1,1] -int(D\cup
N_\varepsilon )) \times \Gamma$.  See Figure~\ref{modification2}.
\begin{figure}[ht]
\begin{overpic}[height=1.5in]{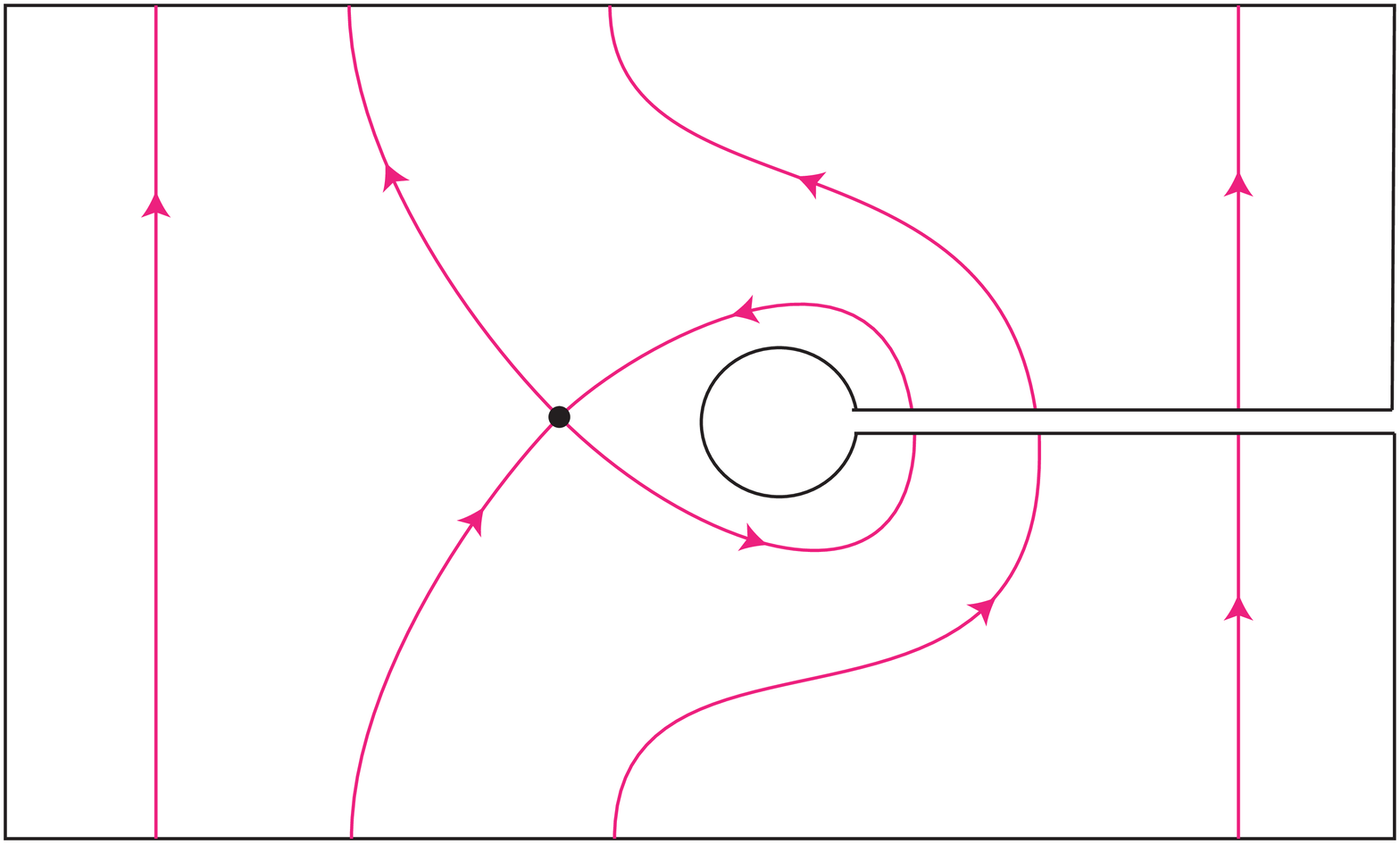}
\end{overpic}
\caption{Excavation of $D\cup N_\varepsilon$ from
$[0,1]\times[-1,1]$.} \label{modification2}
\end{figure}
The contact form on $M''$ is the restriction of $\alpha$, defined
above. We then modify $M''$ slightly so that the corners along
$\tau=1$ are smoothed and the horizontal boundary is transverse to
$R_\alpha$.  Note that $R_\alpha$ is tangent to $(\partial D
-N_\varepsilon ) \times \Gamma$ and the orbits connect from
$R_-(\Gamma)$ to $R_+(\Gamma)$; we may also need to make a slight
modification so that the flow lines of the Reeb vector field from
$R_-(\Gamma)$ to $R_+(\Gamma)$ have constant length near the
vertical boundary. The resulting manifold $(M' ,\alpha|_{M'})$ is a
(convex) sutured contact manifold whose vertical boundary contains
$(\partial D -N_\varepsilon ) \times \Gamma$.

Finally, the isotopy of $(M_{sm},\Gamma,\xi)$ to
$(M'_{sm},\Gamma',\xi')$ follows from observing that there is a
$1$-parameter family of convex submanifolds which connect between
$\bdry M_{sm}$ and $\bdry M'_{sm}$ inside $M$.  We use
Lemma~\ref{lemma: two defns of convex} and find submanifolds which
are (positively or negatively) transverse to $R_\alpha$ except at
some contact submanifold $\{(\tau,t)\}\times\Gamma$, where
$(\tau,t)\in [0,1]\times[-1,1]$.
\end{proof}

The only periodic orbits of $R_\alpha$ that are contained in $M'-M$
are periodic orbits of $R_0$ contained in $\{ h\}\times \Gamma$.
When $\dim M=3$, this construction gives a collection of hyperbolic
orbits (one for each component of $\Gamma$) which are parallel to
the suture $\Gamma$.

\subsection{Gluing sutured contact manifolds}
\label{subsection: gluing}

The procedure of gluing sutured contact manifolds, together with
compatible Reeb vector fields, was first described in \cite{CH} when
$\dim M=3$. Here we describe the sutured gluing so that it is also
applicable to higher dimensions.

Let $(M',\Gamma', U(\Gamma'), \xi')$ be a sutured contact manifold of
dimension $2m+1$ and let $\alpha'$ be an adapted contact form. Let
\[
\pi: U(\Gamma')=[-1,0]\times[-1,1]\times\Gamma'\to [-1,0]\times
\Gamma',
\]
be the projection onto the first and third factors.  If we think of
$[-1,0]\times\Gamma'$ as a subset of $R_+(\Gamma')$ (resp.\
$R_-(\Gamma')$), then we denote the projection by $\pi_+$ (resp.\
$\pi_-$). By definition, the horizontal components $(R_\pm (\Gamma')
,\beta'_\pm =\alpha' \vert_{R_{\pm } (\Gamma')} )$ are Liouville
manifolds. We denote by $Y'_\pm$ their Liouville vector fields. The
contact form $\alpha'$ is $dt+\beta'_\pm$ on the neighborhoods $R_+
(\Gamma') \times [1-\varepsilon ,1 ]$ and $R_- (\Gamma')\times [-1,
-1+\varepsilon ]$ of $R_+ (\Gamma')=R_+ (\Gamma')\times \{ 1\}$ and
$R_- (\Gamma')=R_- (\Gamma')\times \{ -1\}$, found using the Reeb
flow. Also, we may assume that the Reeb vector field $R_{\alpha'}$
is given by $\bdry_t$ on $U(\Gamma')$, after scaling the contact
form.

Take a $2m$-dimensional submanifold $P_+\subset R_+(\Gamma')$ with
smooth boundary\footnote{This is slightly different from what
appears in \cite{CH}, where it is assumed that $\partial P_+$ has
corners along $\bdry (\bdry P_+)_\bdry = \bdry (\bdry P_+)_{int}$.}
so that:
\begin{itemize}
\item $\bdry P_+$ is the union of $(\bdry P_+)_\bdry \subset
\bdry R_+(\Gamma')$ and $(\bdry P_+)_{int}\subset int(R_+(\Gamma'))$
and
\item $\bdry P_+$ is positively transverse to the Liouville vector
field $Y'_+$ on $R_+ (\Gamma')$.
\end{itemize}
Similarly take $P_-\subset R_-(\Gamma')$, $(\bdry P_-)_\bdry$, and
$(\bdry P_-)_{int}$ with $Y'_-$ positively transversal to $\partial
P_-$. See Figure~\ref{suturedgluing}.  Whenever we refer to $(\bdry
P_\pm)_{int}$ and $(\bdry P_\pm)_\bdry$, we assume that closures are
taken as appropriate.

Suppose we have a pair $P_+, P_-$ so that $\pi ((\bdry
P_-)_\bdry)\cap \pi ((\bdry P_+)_\bdry )=\emptyset$ and there is a
diffeomorphism $\phi$ which sends $(P_+,\beta'_+ |_{P_+} )$ to
$(P_-,\beta'_- |_{P_-} )$ and takes $(\bdry P_+)_{int}$ to $(\bdry
P_-)_\bdry$ and $(\bdry P_+)_\bdry$ to $(\bdry P_-)_{int}$.  We will
refer to the triple $(P_+,P_-,\phi)$ as the {\em gluing data}. For
the purposes of gluing, it suffices to require that
$\beta'_+|_{P_+}$ and $\phi^*(\beta'_-|_{P_-})$ be homotopic on
$P_+$, via a homotopy which is constant in a neighborhood of $\bdry
P_+$. In that case, there is a $1$-parameter family of adapted
contact $1$-forms $(\alpha')^\sigma$, $\sigma\in[0,1]$, on
$(M',\Gamma',U(\Gamma'))$ so that $(\alpha')^0=\alpha'$,
$(\alpha')^\sigma= C^\sigma dt+(\beta')^\sigma_\pm$ on
$R_\pm(\Gamma')$, $(\beta')^\sigma_\pm=\beta'_\pm$ on
$R_\pm(\Gamma')-int(P_\pm)$, and
$(\beta')^1_+|_{P_+}=\phi^*((\beta')^1_-|_{P_-})$. This is made
possible by the flexibility theorem of Giroux~\cite{Gi1}. (Note
that, when $\dim M'=3$, we only need $\beta'_+|_{P_+}$ and
$\phi^*(\beta'_-|_{P_-})$ to match up on $\bdry P_+$, since we can
linearly interpolate between primitives of positive area forms on a
surface.)

\begin{figure}[ht]
\begin{overpic}[height=1.5in]{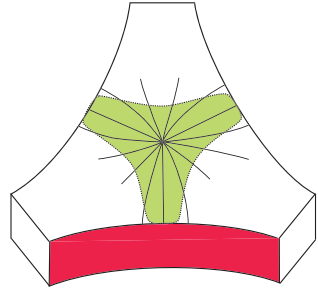}
\end{overpic}
\caption{The diagram shows $P_+\subset R_+(\Gamma')$. The line field
represents $Y'_+\subset \ker\beta'_+$, and the vertical annuli
represent the vertical boundary of $M'$.} \label{suturedgluing}
\end{figure}

Topologically, we construct the sutured manifold $(M,\Gamma)$ from
$(M',\Gamma')$ and the gluing data $(P_+,P_-,\phi)$ as follows: Let
$M=M'/\sim$, where
\begin{enumerate}
\item $x\sim \phi(x)$ for all $x\in P_+$;
\item $x\sim x'$ if $x,x'\in \pi^{-1}(\Gamma')$ and $\pi(x)=\pi(x')
\in \Gamma'$.
\end{enumerate}
In words, (2) says that we collapse the annular neighborhood of
$\Gamma'$ onto $\Gamma'$. Then
\begin{align*}
R_+(\Gamma)= \overline{(R_+(\Gamma')- P_+)}/\sim, & \quad \text{i.e. }
(\bdry P_+)_{int} \text{ is identified with } \pi_+(\bdry P_-)_{\bdry},\\
R_-(\Gamma)= \overline{(R_-(\Gamma')-P_-)}/\sim, & \quad \text{i.e.} (\bdry P_-)_{int}
\text{ is identifiled with } \pi_-(\bdry P_+)_{\bdry},
\end{align*}
and
\[
\Gamma=\overline{(\Gamma'-\pi(\bdry P_+\sqcup \bdry P_-))}/\sim. \]
% \quad \text{i.e.} \pi((\bdry P_+)_{int}\cap (\bdry P_+)_\bdry) \text{ is identified with } 
% \pi((\bdry P_-)_{int}\cap (\bdry P_-)_\bdry). \]

In \cite[Definition 3.1]{Ga}, Gabai defined the notion of a {\em
sutured manifold decomposition} for sutured $3$-manifolds , which is
the inverse construction of our sutured gluing.

\begin{fact}
Suppose $\dim M=3$. Let $P \subset (M, \Gamma)$ be the surface
obtained by identifying $P_+$ and $P_-$. Then $P$ gives rise to a
sutured manifold decomposition
$$(M, \Gamma) \stackrel{P} \leadsto (M', \Gamma').$$
\end{fact}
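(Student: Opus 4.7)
The plan is to verify that cutting $(M,\Gamma)$ along the glued surface $P$ undoes the sutured gluing construction, recovering $(M',\Gamma')$ exactly as in Gabai's definition of sutured manifold decomposition. Since the gluing and cutting operations are inherently inverse at the topological level, the work is in matching up the boundary bookkeeping with Gabai's formulas.

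First I would check that $P$ is a properly embedded, oriented surface in $M$ with $\partial P\subset \Gamma$, which is the defining property of a decomposing surface. By construction $P$ is the image in $M$ of $P_+$ (equivalently, of $P_-$ under $\phi$), oriented by the orientation inherited from $P_+\subset R_+(\Gamma')$. Its boundary comes from the two types of boundary circles of $P_\pm$: $(\partial P_+)_\partial\subset \Gamma'$ is mapped into $\Gamma$ by the collapse of $\pi^{-1}(\Gamma')$, while $(\partial P_+)_{int}$, identified with $(\partial P_-)_\partial\subset\Gamma'$ by $\phi$, also ends up on $\Gamma$. Thus every boundary component of $P$ lies on $\Gamma$, and the interior of $P$ lies in $\operatorname{int}(M)$. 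The transversality of $\partial P_\pm$ to the Liouville fields $Y'_\pm$ rules out inessential components, so $P$ satisfies the non-degeneracy conditions of a decomposing surface.

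Next I would cut $M$ open along $P$. Since the gluing identifies two disjoint collared pieces $P_+\subset R_+(\Gamma')$ and $P_-\subset R_-(\Gamma')$ via $\phi$, cutting $M$ along $P$ produces exactly $M'$, with the two resulting sheets $P^+, P^-\subset \partial M'$ identified with $P_+$ and $P_-$ respectively. It remains to verify Gabai's formulas
\[
R_+(\Gamma')=(R_+(\Gamma)\cap M')\cup P^+, \qquad R_-(\Gamma')=(R_-(\Gamma)\cap M')\cup P^-,
\]
\[
\Gamma'=(\Gamma\cap M')\cup N(P^+\cap R_-(\Gamma))\cup N(P^-\cap R_+(\Gamma)).
\]
These follow directly from the construction of $(M,\Gamma)$: the identifications $R_\pm(\Gamma)=\overline{R_\pm(\Gamma')-P_\pm}/\sim$ say that $R_\pm(\Gamma)\cap M'$ is precisely $R_\pm(\Gamma')-P_\pm$, so adjoining the sheet $P^\pm\cong P_\pm$ recovers $R_\pm(\Gamma')$. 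Similarly, $\Gamma\cap M'$ is the image of $\Gamma'-\pi(\partial P_+\sqcup\partial P_-)$, and the two additional neighborhood terms restore precisely the annular portions of $\Gamma'$ that were removed when the vertical boundary was collapsed in the gluing.

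The main obstacle is the orientation and boundary-type bookkeeping in this last step. One has to verify that the distinction in our setup between $(\partial P_\pm)_\partial$ (which already lies on $\Gamma'$) and $(\partial P_\pm)_{int}$ (which lies in the interior of $R_\pm(\Gamma')$) matches, under the orientation of $P$, Gabai's distinction between the boundary components of $P$ that merge into the existing suture and those that create new suture components $N(P^\pm\cap R_\mp(\Gamma))$ after cutting. Once the two sides are consistently oriented --- using the orientation of $P_+$ on $P$, and Gabai's convention that $P^+$ is the sheet on the positive normal side of $P$ in $M'$ --- the identification of the two constructions is essentially a direct unwinding of the definitions, and the Fact follows.
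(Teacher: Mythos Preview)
Your argument is correct and is precisely the definitional unwinding that the statement requires. The paper, however, does not give a proof at all: it simply states this as a Fact, preceded by the remark that Gabai's sutured manifold decomposition ``is the inverse construction of our sutured gluing.'' In other words, the authors regard it as immediate from the way the gluing was set up, and leave the boundary bookkeeping you carried out to the reader. Your write-up is a faithful expansion of what the paper leaves implicit.
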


\s\n {\bf Construction of $(M_n, \alpha_{n})$.} For the purposes of
studying holomorphic curves, we want to stretch in both the
$\tau$-and $t$-directions.  The construction of the contact manifold
will depend on the parameter $n$, and the resulting glued-up sutured
contact manifold will be written as
$$(M_n,\Gamma_n,U(\Gamma_n),\xi_{n}=\ker(\alpha_{n})).$$

{\em Step 1: gluing top and bottom.} Let $(M^{(0)},\alpha^{(0)})=
(M^{(0)}_n,\alpha^{(0)}_n)$ --- we will
often suppress $n$ to avoid cluttering the notation --- be the
contact manifold obtained from the completion $((M')^*,(\alpha')^*)$
by removing the Side, i.e., $M^{(0)}=M'\cup
(R_+(\Gamma')\times[1,\infty))\cup (R_-(\Gamma')\times
(-\infty,-1])$. Then construct $(M^{(1)},\alpha^{(1)})$ from
\begin{equation} \label{eqn: m1}
M^{(0)}- (P_+\times[n,\infty))- (P_-\times(-\infty,-n]),
\end{equation} by taking closures and identifying:
\begin{itemize}
\item $P_+\times\{n\}$ with $P_-\times\{-n\}$;
\item $(\bdry P_+)_{int}\times[n,\infty)$ with $(\bdry
P_-)_{\bdry}\times [-n,\infty)$;
\item $(\bdry P_+)_\bdry \times (-\infty,n]$ with $(\bdry
P_-)_{int}\times (-\infty,-n]$;
\end{itemize}
all via the identification $(x,t)\mapsto (\phi(x),t-2n)$. Let us
write $P_+^c= \overline{R_+(\Gamma')-P_+}$ and
$P_-^c=\overline{R_-(\Gamma')-P_-}$. Next take $n'\gg 0$ and truncate 
the Top and Bottom of $(M^{(1)},\alpha^{(1)})$ to obtain the (compact) 
sutured manifold $(M^{(2)},\Gamma^{(2)},U(\Gamma^{(2)}))$ with contact 
form $\alpha^{(2)}$ so that $M^{(2)}$ contains
$$M'\cup (P_+^c\times [1,n'])\cup (P_-^c\times[-n',-1]),$$
the Reeb vector field $R=R_{\alpha^{(2)}}$ is transverse to the
horizontal boundary, and the vertical boundary $E$ is foliated by
interval orbits of $R$ with fixed action $\geq 3n'$.

{\em Step 2: Extending the side} Let $\rho\colon  E\to B$ be the fibration whose 
fibers are the interval orbits of $R$, so that $B$ is diffeomorphic to $\Gamma$.  
The base $B$ is a union of finitely many codimension zero submanifolds $B_i$
so that there are local sections $s_i\colon  B_i\to \rho^{-1}(B_i)$ for
which $s_i(B_i)$ are $(2n-1)$-dimensional contact submanifolds.  Let
$(x,t)$ be coordinates on $\rho^{-1}(B_i)$ so that $R=\bdry_t$, $x$
is a local coordinate system for $B_i$, and $t=0$ corresponds to
$s_i(B_i)$. We consider the extension
$$\tilde\rho\colon  [0,\infty)\times E\to [0,\infty)\times B$$
with first coordinate $\tau$ so that $\{0\}\times E$ is identified
with $E\subset M^{(2)}$ and $\tilde\rho(\tau,x,t)=(\tau,\rho(x,t))$.
We can extend the contact form $\alpha^{(2)}$ to a $t$-invariant
contact form on $[0,\infty)\times E$ which is given by
$dt+e^\tau\beta_0(x)$, where $(\tau,x,t)$ are coordinates on
$[0,\infty)\times \rho^{-1}(B_i)$.

At this point we are not guaranteed the existence of a global
section $s\colon B\to E$ which is contact when $\tau=0$.  However, given
any section $s\colon B\to E$, for sufficiently large $\tau=\tau_0$, we
claim that the submanifold $\{\tau_0\}\times s(B)$ is contact.
Indeed, any section $s$ can locally be written as $(x,t)\mapsto (x,
f(x))$, and pulling back $dt+e^\tau\beta_0(x)$ yields
$df(x)+e^\tau\beta_0(x)$. If $\tau_0\gg 0$, the term
$e^\tau\beta_0(x)$ dominates $df(x)$, and the section becomes
contact. Attaching
\begin{equation}
\label{eqn: V3} V=[0,\tau_0]\times E
\end{equation}
to $M^{(2)}$ gives us $(M_n,\alpha_n)$. The horizontal boundary
which is positively (resp.\ negatively) transverse to $R$ will be
called $R_+(\Gamma_n)$ (resp.\ $R_-(\Gamma_n)$). 

We now verify that
$R_\pm(\Gamma_n)$ are Liouville manifolds. The $1$-form $\alpha_n$
restricts to the primitive of a symplectic form on $R_\pm
(\Gamma_n)$, since $R$ is transverse to $R_\pm(\Gamma_n)$. Without
loss of generality the ends of $R_\pm(\Gamma_n)$ are of the form
$[0,\tau_0]\times\bdry E$ with local contact form $dt+df(x)+e^\tau
\beta_0(x)$.  As before, when $\tau_0\gg 0$, $e^\tau\beta_0(x)$
dominates $df(x)$, and the Liouville vector field corresponding to
$df(x)+e^\tau\beta_0(x)$ approaches one parallel to $\bdry_\tau$. It
now follows that the resulting manifold $(M_n, \Gamma_n,
U(\Gamma_n), \xi_n, \alpha_n)$ is a sutured contact manifold.

Now we describe the completion $M_n^*$ of $M_n$. Let
\begin{equation}
\label{eqn: V3star} V^*=[0,\infty)\times \R\times B
\end{equation}
be the completion of $V=[0,\tau_0]\times E$, obtained by extending
to (T), (B), and (S).  Then $M_n^*$ is obtained from $M^{(1)}$ by
attaching $V^*$.

{\em Step 3: interval-fibered extension} Let $$S=(R_+(\Gamma')\times\{n\})\cup 
(R_-(\Gamma')\times
\{-n\})\subset M^{(1)},$$ and let $S_\infty\subset M^{(1)}$ be the
noncompact, possibly disconnected surface obtained from $S$ by
attaching all the $P_+^c\times\{(2k+1)n\}$ and
$P_-^c\times\{(-2k-1)n\}$, where $k$ ranges over all the positive
integers. Note that it is possible for $S_\infty$ to have finitely many
noncompact components and countably many compact components.
There is an embedding
\[
\eta_n\colon  S_\infty\times[-n+1,n-1]\hookrightarrow M^{(1)},
\]
which maps $(x,t)$ to the time-$t$ translation of $x\in S_\infty$
along $\bdry_t$, such that
\begin{equation} \label{eqn: interval fibered 1}
M'_e=M^{(1)}-(S_\infty\times[-n+1,n-1])
\end{equation}
is obtained from $M'$ by attaching an interval-fibered product which
is diffeomorphic to $(S_\infty-S)\times[-1,1]$.

We will call $M_e'$ an {\em infinite interval-fibered extension}
(i.e., an exhaustion of interval-fibered extensions) of
$(M',\Gamma')$. More explicitly,
\begin{equation} \label{eqn: interval fibered 2}
M'_e= M' \cup \bigcup_{k>0} \left(P^c_+ \times [2kn-1, 2kn
+1]\right) \cup\bigcup_{k>0} \left(P_-^c\times [-2kn-1, -2kn
+1]\right),
\end{equation}
where the gluing maps are given as before by $(x,t) \mapsto
(\phi(x), t-2n)$ for $x\in P_+$.

We can write $\overline{S_\infty \setminus S} = S_+ \cup S_-$, where
$S_+$ is the subsurface obtained by gluing together the
$P_+^c\times\{(2k+1)n\}$ pieces and $S_-$ is the subsurface obtained
by gluing together the $P_-^c\times\{(-2k-1)n\}$ pieces. Let us
denote by $(\bdry P_+)_{int}\subset S_+$ the union of connected
components of $(\bdry P_+)_{int}\times \{(2k+1)n\}$ which are on the
boundary of $S_+$, i.e., when $k=1$; similarly define $(\bdry
P_-)_{int}\subset S_-$. Then we can write $M'_e$ more abstractly as
$$M' \cup (S_- \times [-1,1]) \cup (S_+ \times [-1,1]),$$ where we
glue $(\partial P_+)_{int} \times [-1,1] \subset S_+ \times [-1,1]$
to $(\partial P_-)_{\partial} \times [-1,1] \subset M'$ by
$(\phi,id)$, and $(\partial P_-)_{int} \times [-1,1] \subset S_-
\times [-1,1]$ to $(\partial P_+)_{\partial} \times [-1,1] \subset
M'$.

Summarizing, we have the following:

\begin{lemma}
Suppose $n>0$. Given a sutured contact manifold
$(M',\Gamma',U(\Gamma'),\alpha')$ and gluing data $(P_+,P_-,\phi)$,
there exists an inclusion of sutured contact manifolds
$$(M',\Gamma',U(\Gamma'),\alpha')\hookrightarrow
(M_n,\Gamma_n,U(\Gamma_n),\alpha_{n}),$$ where
$(M_n,\Gamma_n,U(\Gamma_n))$ is homeomorphic to
$(M,\Gamma,U(\Gamma))$ and the completion of $M_n$ is
$M_n^*=M^{(1)}\cup V^*$.  Here $V^*$ is a fibered piece given by
Equation~(\ref{eqn: V3star}) and $M^{(1)}$ admits a noncompact
embedding of $S_\infty\times [-n+1,n-1]$ so that
$R_{\alpha_{n}}=\bdry_t$ on $(S_\infty-S) \times [-n+1,n-1]$ and
$S\times ([-n+1,-\varepsilon]\cup [\varepsilon, n-1])$, and
$M^{(1)}-(S_\infty\times[-n+1,n-1])$ is an infinite interval-fibered
extension of $(M',\Gamma',U(\Gamma'),\alpha')$ which is independent
of $n$.
\end{lemma}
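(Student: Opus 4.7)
The plan is to verify that the three-step construction preceding the statement produces a sutured contact manifold with all the asserted structure; the lemma is essentially a bookkeeping summary of that construction, so I would organize the argument by step.

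First, I would address the contact structure on $M^{(1)}$ produced by Step 1. Under the identification $(x,t)\mapsto(\phi(x),t-2n)$, the form $dt+\beta'_+$ on $P_+\times\{n\}$ is sent to $dt+\phi^{*}\beta'_-$, whereas the $P_-\times\{-n\}$ side carries $dt+\beta'_-$; these agree only when $\phi^{*}\beta'_-=\beta'_+$. Since a priori $\beta'_+|_{P_+}$ and $\phi^{*}(\beta'_-|_{P_-})$ are only homotopic rel boundary, I would invoke the flexibility theorem of Giroux to replace $\alpha'$ by a homotopic adapted contact form $(\alpha')^1$ for which the two restrictions match, after which the identification yields a smooth contact form $\alpha^{(1)}$. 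Truncating in the Top and Bottom directions then gives the compact $(M^{(2)},\alpha^{(2)})$ whose vertical boundary $E$ is foliated by interval Reeb orbits of action at least $3n'$.

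Second, for Step 2 I would verify that the side extension $V=[0,\tau_0]\times E$ with form $dt+e^{\tau}\beta_0(x)$ glues smoothly onto $M^{(2)}$ along $\{0\}\times E$: the contact condition on $V$ is immediate from the product structure, and matching across $\{0\}\times E$ uses the standard vertical-boundary model from Definition~\ref{def:SCM}. The choice $\tau_0\gg 0$ serves two roles. First, given an arbitrary global section $s\colon B\to E$, writing $s$ locally as $x\mapsto (x,f(x))$ makes $\{\tau_0\}\times s(B)$ carry the pullback $df(x)+e^{\tau_0}\beta_0(x)$, which is contact once $e^{\tau_0}\beta_0$ dominates $df$; this produces the suture $\Gamma_n$. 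Second, the same domination implies that the Liouville vector field on $R_\pm(\Gamma_n)$ is a small perturbation of $\bdry_\tau$ near the boundary, hence transverse, so $R_\pm(\Gamma_n)$ inherits a Liouville structure. The topological identification $(M_n,\Gamma_n,U(\Gamma_n))\cong(M,\Gamma,U(\Gamma))$ then follows from the abstract quotient model $M=M'/\sim$ described earlier.

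Third, for Step 3 I would identify the completion $M_n^{*}=M^{(1)}\cup V^{*}$ by re-attaching the Top, Bottom, and Side. Since $S$ is transverse to the Reeb flow, the time-$t$ translation gives the embedding $\eta_n\colon S_\infty\times[-n+1,n-1]\hookrightarrow M^{(1)}$, and $R_{\alpha_n}=\bdry_t$ on $(S_\infty-S)\times[-n+1,n-1]$ because no modification was made to the contact form on the Top/Bottom pieces $P_\pm^{c}\times[(2k\pm 1)n-1,(2k\pm 1)n+1]$; on $S\times([-n+1,-\varepsilon]\cup[\varepsilon,n-1])$ the same holds because the Reeb flow is unobstructed outside a small neighborhood of the gluing locus. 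Removing this open thickening leaves exactly the infinite interval-fibered extension of Equation~(\ref{eqn: interval fibered 2}), whose abstract presentation is manifestly independent of $n$ after reindexing.

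The main obstacle I anticipate is Step 2: producing a genuine sutured contact manifold from the side extension when the Reeb fibration $\rho\colon E\to B$ may fail to admit a global contact section at $\tau=0$. Moving the section out to $\tau=\tau_0\gg 0$ is the essential trick, and the single estimate $e^{\tau_0}\beta_0\gg df$ is what simultaneously makes the section contact and forces the Liouville vector field on $R_\pm(\Gamma_n)$ to behave correctly. Once Step 2 is handled, the inclusion $M'\hookrightarrow M_n$, the topological identification with $M$, and the structure of the completion are straightforward consequences of the explicit construction.
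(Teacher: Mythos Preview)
Your proposal is correct and matches the paper's approach: the lemma is stated immediately after the three-step construction with the phrase ``Summarizing, we have the following,'' and no separate proof is given because the construction itself is the argument. You have accurately identified the key technical points at each step---the Giroux flexibility needed to match $\beta'_+|_{P_+}$ with $\phi^*(\beta'_-|_{P_-})$, the $\tau_0\gg 0$ domination trick that simultaneously makes the global section contact and the Liouville vector field on $R_\pm(\Gamma_n)$ outward-pointing, and the independence of $M'_e$ from $n$---all of which appear explicitly in the paper's construction preceding the lemma.
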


\s\n {\bf Almost complex structures.} We now discuss the
gluing/extension of almost complex structures under sutured manifold
gluing.

We first define an almost complex structure $J'_{\kappa}$ on
$\R\times M'$ which is tailored to the sutured contact manifold
$(M', \Gamma', U(\Gamma'), \alpha')$. Consider the neighborhood
$U(\Gamma')=[-1,0]\times[-1,1]\times \Gamma'$ with coordinates
$(\tau,t,x)$, where we may assume that $\beta'_+=e^\tau
\overline\beta_0$ and $\overline\beta_0=\beta'_+|_{\{0,0\}\times
\Gamma'}$. Choose a diffeomorphism
$$H_\kappa\colon [-1,0]\times \Gamma'\stackrel\sim\to
[0,\kappa]\times \Gamma',$$
$$(\tau,x)\mapsto (h_\kappa(\tau),x),$$
where $h_\kappa\colon [-1,0]\stackrel\sim\to[0,\kappa]$, $h_\kappa(-1)=0$,
$h_\kappa(0)=\kappa$, $h_\kappa'(\tau)=1$ in a neighbourhood of $\tau=-1,0$, and
$h_\kappa$ is linear outside a biger neighnourhood of. $\tau=-1,0$. Then choose the
projection $(J'_\kappa)_0$ of $J'_\kappa$ to $R_\pm(\Gamma')$ so
that:
\begin{itemize}
\item $(J'_\kappa)_0$ is adapted to $\overline\beta_0$ on
$H_\kappa([-1,0]\times\Gamma')=[0,\kappa]\times \Gamma'$;
\item $(J'_\kappa)_0$ is independent of $\kappa$ on
$R_\pm(\Gamma')-((-1,0]\times\Gamma')$;
\item $\phi_*$ takes $(J'_\kappa)_0$ along $\bdry (\bdry P_+)_{int}$
to $(J'_\kappa)_0$ along $\bdry (\bdry P_-)_{int}$, so that they
agree when projected to the base $B$ of the fibration $\rho\colon E\to B$.
\end{itemize}
On $M'-U(\Gamma')$, choose $J'_{\kappa}$ to be independent of
$\kappa$.

We then extend $J'_{\kappa}$ to an almost complex structure
$J_{\kappa,n}$ on $\R\times M^{(1)}$ which satisfies the following:
\begin{enumerate}
\item $J_{\kappa,n}$ is adapted to the symplectization
$(\R\times M^{(1)},d(e^s\alpha^{(1)}))$.
\item $J_{\kappa,n}$ is $\bdry_t$-invariant on each connected
component of $P_+^c\times[2n-1,\infty)$, $P_-^c\times (-\infty,
-2n+1]$, $\bdry M^{(1)}$, and $S\times( [-n+1,-\varepsilon]\cup
[\varepsilon,n-1])$.
\item The extension to the interior of $S\times
[-\varepsilon,\varepsilon]$ is arbitrary, but is independent of $n$.
\end{enumerate}
The almost complex structure $J_{\kappa,n}$ on $P_+^c\times
[2n-1,\infty)$ is defined by specifying the projection
$(J_{\kappa,n})_0$ of $J_{\kappa,n}$ to $P_+^c$ so that
$(J_{\kappa,n})_0$ agrees with $(J'_\kappa)_0$ along $\bdry
P_+^c-\bdry P_+$ and with $\phi_*(J'_\kappa)_0$ along $\bdry
P_+^c\cap \bdry P_+$. The extension of $(J_{\kappa,n})_0$ to the
interior of $P_+^c$ is arbitrary, provided it is compatible with
$d\beta'_+$.  In particular, $(J_{\kappa,n})_0$ does not need to
agree with $(J'_\kappa)_0$ on all of $P_+^c$. The almost complex
structure $(J_{\kappa,n})_0$ is defined similarly on $P_-^c$.

Next we extend $J_{\kappa,n}$ to $V^*=[0,\infty)\times \R\times B$,
as follows: On each $[0,\infty)\times \rho^{-1}(B_i)$ with
coordinates $(\tau,x,t)$ and contact form $dt+e^\tau\beta_0(x)$,
choose an $e^\tau\beta_0$-adapted almost complex structure $J_0$ on
$[0,\infty)\times s_i(B_i)=\{t=0\}$.
%(We might need to do some
%interpolation with $J_{\kappa,n}$ that is already defined on
%$\tau=0$.)
This determines $J_{\kappa,n}$ which projects to $J_0$.
By construction, we may also assume that the sections $s_i(B_i)$ and
$s_j(B_j)$ differ by $t=\mbox{const}$ on the overlap
$\rho^{-1}(B_i\cap B_j)$; hence the contact form on
$[0,\infty)\times \rho^{-1}(B_i\cap B_j)$ is $dt+e^\tau\beta_0(x)$
with respect to either coordinate chart. This means that we can
choose a $J_0$ on all of $[0,\infty)\times B$ and a $J_{\kappa,n}$
which projects to $J_0$ on all of $V^*$.

We now verify that $J_{\kappa,n}$ is tailored to
$(M_n^*,\alpha_n^*)$. Conditions (A$_0$) and (A$_1$) are easily
satisfied.  It remains to verify (A$_2$), namely $J_{\kappa,n}$ is
$d\beta_\pm$-positive, where $\beta_\pm$ is the restriction of
$\alpha_n$ to $R_\pm (\Gamma_{n})$. The reason this needs
verification is that the adjustment in the vertical direction
implies that the $t$-variable undergoes a coordinate change of the
type $(t,y)\mapsto (t+f(y),y)$, where $y$ is a coordinate on
$R_\pm(\Gamma')$.  By pulling back, we see that
$\beta_\pm(y)=df(y)+\beta'_\pm(y)$, and $d\beta_\pm=d\beta'_\pm$.
Hence the $d\beta_\pm$-positivity is inherited from the
$d\beta'_\pm$-positivity.

\begin{rmk} \label{rmk: tau is plurisubharmonic}
Let $J_0$ be the projection of $J_{\kappa,n}$ onto $[0,\infty)\times
B$. Since the $1$-forms $\beta_0(x)$ patch to give a contact
$1$-form $\beta_0$ on $B$, it follows that $([0,\infty)\times
B,d(e^\tau\beta_0))$ is a (positive) symplectization and $J_0$ is
adapted to the symplectization.  Hence $\tau$ is a plurisubharmonic
function with respect to $J_0$.
\end{rmk}

\subsection{Gluing along convex submanifolds}
\label{subsection: gluing along convex submanifolds} In this
subsection we discuss gluing along convex submanifolds. In
particular, we carefully construct a contact $1$-form which is
suited to counting holomorphic curves.

Let $(M',\Gamma', U(\Gamma'),\xi')$ be a sutured contact manifold of
dimension $2m+1$ and $\alpha'$ be an adapted contact $1$-form.  Let
$S_1$ and $S_2$ be two disjoint components of $\partial M'$ and let
$S_i^\pm =S_i \cap R_\pm (\Gamma')$. A neighborhood of $S_i^+$ in
$(M',\alpha')$ is given by $(S_i^+ \times [1-\varepsilon ,1], dt
+\beta' )$, where $t\in [1-\varepsilon ,1]$ and $S_i^+ =S_i^+ \times
\{ 1\}$. Similarly, we have a neighborhood $(S_i^- \times [-1,
-1+\varepsilon], dt +\beta')$ of $S_i^- =S_i^- \times \{ -1\}$.
Suppose there is a diffeomorphism $h \colon S_1^+ \cup S_1^- \rightarrow
S_2^- \cup S_2^+$ which takes $(S_1^+ ,\beta' \vert_{S_1^+} )$ to
$(S_2^- ,\beta' \vert_{S_2^-} )$ and $(S_1^- ,\beta' \vert_{S_1^-}
)$ to $(S_2^+ ,\beta' \vert_{S_2^+} )$, and which can be extended to
a (piecewise smooth) homeomorphism from $S_1$ to $S_2$. Also suppose
that when we compose $h|_{S_1^+}$ and $h^{-1}|_{S_2^+}$ with the
identifications of $\partial S_i^-$ and $\partial S_i^+$ by the flow
of $\partial_t$ in $U(\Gamma')$, we get the identity on $\partial
S_1^+$.

Instead of gluing directly using $h$, we insert layers as follows:
Fix $n> 0$. Then let $(M'_n,\alpha'_n)$ be the contact manifold
obtained by gluing the products $(S_1^+ \times [0,n],
dt+\beta'\vert_{S_1^+} )$ and $(S_1^- \times [0,n],
dt+\beta'\vert_{S_1^-} )$ to $(M',\alpha' )$ by identifying $S_1^+$
with $S^+_1\times\{0\}$, $S^-_2$ with $S_1^+\times\{n\}$, $S_2^+$
with $S_1^-\times\{0\}$, and $S_1^-$ with $S_1^-\times\{n\}$.

We now construct the contact manifold $(M_n,\alpha_{n,f,g})$ by
filling in some of the boundary components of $(M'_n,\alpha'_n)$.
Here $f,g$ are smooth functions $[0,1]\sqcup\dots\sqcup[0,1]\to \R$,
where there is a copy of $[0,1]$ for each component $V$ of
$S_1\cap\Gamma$. Moreover, $f,g$ depend on $n$. Consider a boundary
component of $(M'_n,\alpha'_n)$ of the form $V\times S^1$, where $V$
corresponds to a connected component of $S_1\cap\Gamma$ and
$S^1=\R/2\pi\Z$ has coordinate $\theta$. The contact form
$\alpha'_n$ on $V\times S^1$ is given by $a_n d\theta+\beta'_0$, where
$a_n$ is a constant $> {n\over \pi}$ and $\beta'_0=\alpha'|_{V}$. We
then fill $V\times S^1$ with $V \times D^2$, where we are using
polar coordinates $(r,\theta)$ on $D^2$ and $S^1=\{r=1\}$. We
require the contact form $\alpha_{n,f,g}$ on $V\times D^2$ to be of
the form
$$f(r) d\theta +g(r)\beta'_0,$$  with boundary condition
$(f(1),g(1))=(a,1)$.  The contact condition is equivalent to
$$f'g-g'f = (f',g')\cdot (g,-f)>0,$$ which in words says that the
path $\{(f(r),g(r)), r\in[0,1]\}$, is transverse to radial rays in
the $(f,g)$-plane and rotates clockwise around the origin. The Reeb
vector field $R=R_{\alpha_{n,f,g}}$ is given by $R=\frac{1}{f'g-g'f}
(f' R_0 -g'\partial_\theta)$, where $R_0$ is the Reeb vector field
for $\beta'_0$.

We now choose specific $f$ and $g$ so that an orbit $\gamma$ of $R$
which passes through $M_n-M'$ has action $A(\gamma)>n$.  Let
$B_0,B_1$ be large positive constants so that $B_0-aB_1\gg 0$. Then
set:
\begin{itemize}
\item $(f(r),g(r))=(ar^2, B_0-aB_1r^2)$ for
$r\in[0,1-2\varepsilon]$;
\item $0<f'(r)$ and $g'(r)< 0$ for
$r\in[1-2\varepsilon,1-\varepsilon]$;
\item $f(r)=a$, $g'(r)< 0$ for $r\in[1-\varepsilon, 1]$;
\item $(f(1),g(1))=(a,1)$;
\item $f(r)=a$, $g(r)=e^{1-r}$ for $r\in[1-\varepsilon/2,1]$.
\end{itemize}
The last condition is to ensure the smooth gluing of
$fd\theta+g\beta'_0$ with $\alpha_n'$.

If $\gamma$ passes through $M_n'-M'$, then we claim that
$A(\gamma)>n$ by construction. Suppose $\gamma$ lies in $V\times
S^1$. Then we compute $R={1\over B_0}R_0+{B_1\over B_0}\bdry_\theta$
for $0\leq r\leq 1-2\varepsilon$.  Since ${B_0\over B_1}\gg a$, it
takes at least $2\pi a$ units of time to travel once around the
$\theta$-direction; hence $A(\gamma)>n$. (When $r=0$, then
$R={1\over B_0}R_0$ and $\gamma$ is tangent to $V\times\{0\}$.  If
$B_0$ is sufficiently large, then $A(\gamma)>n$.) On the other hand,
for $r\in[1-2\varepsilon,1]$, the coefficient in front of
$\bdry_\theta$ in $R$ is less than ${1\over f}\approx {1\over a}$;
hence $A(\gamma)>n$ for sufficiently small $\varepsilon$.

%We say that $(M'_n ,\alpha'_{n,f,g})$ is obtained by the {\it convex
%gluing} of $(M,\alpha )$ along $S_1$ and $S_2$.

Summarizing the above discussion, we have:

\begin{lemma}\label{lemma: convexgluing}
Let $(M,\xi)$ be a compact contact manifold of dimension $2m+1$ and
$(S, \Gamma_S) \subset (M, \xi)$ be a convex submanifold. If
$(M',\xi')$ is obtained from $(M,\xi)$ by cutting along $S$, then,
for any $n> 0$ and appropriate $f=f(n),g=g(n)$, $(M,\xi)$ is
contactomorphic to $(M_n,\ker\alpha_{n,f,g})$, where
$\alpha_{n,f,g}$ is obtained from a contact $1$-form $\alpha'$
adapted to $(M',\Gamma', U(\Gamma'),\xi')$ by attaching (i) layers
$(S_1^+ \times [0,n], dt+\beta'\vert_{S_1^+} )$ and $(S_1^- \times
[0,n], dt+\beta'\vert_{S_1^-} )$ and (ii) $(V\times
S^1,fd\theta+g\beta'_0)$. The Reeb vector field
$R=R_{\alpha_{n,f,g}}$ satisfies the following:
\begin{itemize}
\item Every orbit of $R$ which intersects $M_n-M'$ has action
larger than $n$;
\item $R$ is tangent to $V \times \{0\}$, positively transverse to
$S^+_i \times \{ t\}$ and $S^-_i \times \{ t\}$ for all $t\in
[0,n]$, and transverse to $\theta=const$ on $V\times (D^2-\{0\})$.
\end{itemize}
\end{lemma}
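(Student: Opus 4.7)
The plan is to verify each of the three assertions (existence of $\alpha_{n,f,g}$ as a smooth contact form, contactomorphism to $(M,\xi)$, Reeb dynamics) by unpacking the explicit pieces of the construction given just above the statement.

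First I would check smoothness and the contact condition of $\alpha_{n,f,g}$. The pieces $M'$, $S_1^\pm\times[0,n]$, and $V\times D^2$ glue smoothly because $\alpha'$ already has the product form $dt+\beta'\vert_{S_1^\pm}$ in a collar of $S_1^\pm$ (so the inserted layers extend the $t$-coordinate smoothly), and on the cylindrical boundary $V\times S^1$ of a disk filling one has $\alpha'_n=a_nd\theta+\beta'_0$; matching this with $f(r)d\theta+g(r)\beta'_0$ is arranged by the normalization $f(r)=a_n$, $g(r)=e^{1-r}$ for $r\in[1-\varepsilon/2,1]$. The contact condition $f'g-g'f>0$ is direct: on $[0,1-2\varepsilon]$ one computes $f'g-g'f=2aB_0r+O(r^3)$ using $f(r)=ar^2$, $g(r)=B_0-aB_1r^2$, which is $>0$ provided $B_0\gg aB_1$; on the remaining intervals the sign requirements imposed on $f',g'$ say exactly that the planar path $r\mapsto(f(r),g(r))$ rotates clockwise around the origin and never crosses a radial ray, which is equivalent to the contact condition.

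Second, the contactomorphism. Topologically $M_n$ is obtained from $M'$ by reintroducing a neighborhood of $S$: gluing $S_1$ to $S_2$ via the extension of $h$ and capping the resulting $V\times S^1$ by $V\times D^2$. Since $M'=M\smallsetminus S$ by construction, the underlying manifold of $M_n$ is diffeomorphic to $M$, and $\ker\alpha_{n,f,g}$ coincides with $\xi$ on $M'$. It remains to identify the two contact structures near $S$. For this I would use Giroux flexibility: on $V\times D^2$ each torus $V\times\{r=\mathrm{const}\}$, $r>0$, is convex with dividing set $V\times\{(a_n,0)\}$ (which in the limit $r\to 0$ collapses to $S\cap\Gamma_S$), so $\alpha_{n,f,g}$ realizes the prescribed dividing set on $S$. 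Because the characteristic foliations, and thus the germs of the two contact structures along $S$, are controlled up to isotopy by the dividing set (for a convex hypersurface), a Moser argument relative to $M'$ produces the desired contactomorphism $(M,\xi)\cong(M_n,\ker\alpha_{n,f,g})$.

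Third, the Reeb dynamics. On $M'$ and on each layer $S_i^\pm\times[0,n]$ the Reeb vector field is $\partial_t$, so it is transverse to the slices $S_i^\pm\times\{t\}$ with the correct sign. On $V\times D^2$ one computes $R=(f'g-g'f)^{-1}(f'R_0-g'\partial_\theta)$; since $g'\le 0$ with equality only at $r=0$, the $\partial_\theta$-component is strictly positive for $r>0$, giving transversality to $\{\theta=\mathrm{const}\}$ on $V\times(D^2\smallsetminus\{0\})$. At $r=0$ one has $f'(0)=g'(0)=0$ and the limiting value $R(0)=\frac{1}{B_0}R_0$ lies in $TV$, which is the claimed tangency to $V\times\{0\}$. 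The action bound $A(\gamma)>n$ for orbits meeting $M_n\smallsetminus M'$ was already derived just before the lemma: on the inner disk region $R\approx\frac{1}{B_0}R_0+\frac{B_1}{B_0}\partial_\theta$, so traversing the $\theta$-circle requires time $\ge 2\pi B_0/B_1>n$ once $B_0/B_1$ is chosen large enough in terms of $n$; on the outer annulus the $\partial_\theta$-coefficient is $\le 1/a\ll 1/n$, giving the same bound.

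The main obstacle is the second paragraph: showing that capping the $V\times S^1$ corner with the particular family $(f,g)$ reproduces the original contact germ along $S$ and not some other convex filling. The other two assertions are direct manipulations with the explicit formulas.
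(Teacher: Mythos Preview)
Your first and third paragraphs are correct and match the paper exactly: the lemma is introduced with ``Summarizing the above discussion, we have'', so its proof \emph{is} the preceding construction, and you have checked those formulas. (One small omission, shared with the paper: closed orbits that meet a layer $S_1^\pm\times[0,n]$ without entering $V\times D^2$ are not treated explicitly, but there $R=\partial_t$ and the layer alone contributes action $n$, so $A(\gamma)>n$ is immediate.)

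Your second paragraph is where you depart from the paper, and the argument as written has a gap. The assertion that each $V\times\{r=\mathrm{const}\}$ is convex with dividing set ``$V\times\{(a_n,0)\}$'' does not parse: the radial field $\partial_r$ is \emph{not} a contact vector field (precisely because $f'/f\neq g'/g$, which is the contact condition), so these hypersurfaces are not obviously convex, and the symbol ``$(a_n,0)$'' has no clear meaning. More importantly, Giroux flexibility in the form you invoke is a three-dimensional result, while the lemma is stated in dimension $2m+1$.

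The paper leaves the contactomorphism implicit, but the dimension-independent argument is more elementary than flexibility. By Corollary~\ref{cor: cor lemma two definitions of convex} and the proof of Lemma~\ref{l:suture}, a tubular neighborhood of $\Gamma_S$ in the original $(M,\xi)$ is itself of the model form $\tilde f(r)\,d\theta+\tilde g(r)\,\beta_0'$ on $V\times D^2$, with $(\tilde f,\tilde g)$ a clockwise-rotating path having the same boundary values as your $(f,g)$. Two such forms on $V\times D^2$ are isotopic rel boundary whenever their paths are homotopic through clockwise paths with fixed endpoints; since both $(f,g)$ and $(\tilde f,\tilde g)$ remain in the half-plane $\{g>0\}$ and so rotate by less than half a turn, they are homotopic, and Gray stability supplies the contactomorphism. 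Replace your flexibility paragraph with this path-homotopy argument and the proof is complete.
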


When $\dim M=3$, the dividing set $V \times \{0\}$ is a periodic
orbit of $R$.

We define the tailored almost complex structure $J=J_{n,f,g}$ on
$\R\times M_n^*$ as follows: Choose a tailored $J'$ on $\R\times
(M')^*$ so that its restrictions to $S_1^+$ and $S_2^-$, and also
its restrictions to $S_1^-$ and $S_2^+$, agree via $h$. We then
extend $J'$ to $J$ on $\R\times S_1^\pm\times [0,n]$ so that $J$ is
invariant in both the $s$- and $t$-directions.  Finally, we extend
$J$ so that it is $\alpha_{n,f,g}$-adapted on $\R\times V\times
D^2$.

\section{Compactness results}
\label{section: compactness results}

Let $(M,\alpha)$ be a sutured contact manifold, and let
$(M^*,\alpha^*)$ denote its completion as defined in Section
\ref{sec:completion}.  Let $J$ be an almost complex structure on
$\R\times M^*$ tailored to $(M^*,\alpha^*)$, as defined in
\ref{subsection: almost complex structure}.  In this section we show
that the SFT compactness theorem for holomorphic curves in the
symplectization of a closed contact manifold, proved in \cite{BEHWZ}
and \cite{CM}, extends to the case of $J$-holomorphic curves in
$\R\times M^*$.  At the end of this section, we extend the compactness
theorem for embedded contact homology \cite{Hu1} to $\R\times M^*$ in
the case $\dim(M)=3$.

\subsection{Convergence of stable Riemann surfaces}
We begin by reviewing some notation and classical results about the
convergence of stable Riemann surfaces, following \cite{BEHWZ}.

A {\em marked Riemann surface\/} is a triple $\mathbf{S}=(\Sigma, j,
\mathbf{m})$ consisting of a closed Riemann surface $(\Sigma,j)$ and a
finite ordered set $\mathbf{m}\subset \Sigma$ of ``punctures'' or
``marked points''.  (The surface $\Sigma$ does not need to be
connected.) Two marked Riemann surfaces $\mathbf{S}=(\Sigma, j,
\mathbf{m})$ and $\mathbf{S}'=(\Sigma', j', \mathbf{m}')$ are said to
be {\em equivalent} if there exists a diffeomorphism $\varphi \colon
\Sigma \stackrel\sim\to \Sigma'$ such that $\varphi_*j=j'$ and
$\varphi(\mathbf{m})= \mathbf{m}'$ in an order-preserving way. The
surface $\mathbf{S}$ is called {\em stable} if, on each connected
component $\Sigma_0$ of $\Sigma$, we have
$2g(\Sigma_0)+\mu(\Sigma_0)\geq 3$. Here $g(\Sigma_0)$ is the genus of
$\Sigma_0$ and $\mu(\Sigma_0)$ is the number of marked points on
$\Sigma_0$.  A {\em nodal} Riemann surface is a quadruple
$\mathbf{S}=(\Sigma,j,\mathbf{m},D)$, where $(\Sigma,j,\mathbf{m})$ is
a marked Riemann surface as before, and
$D\subset\Sigma\setminus\mathbf{m}$ is a finite set partitioned into
unordered pairs $\{(d_i',d_i'')\}$.  A stable nodal Riemann surface is
defined as above, where the set of marked points is taken to be
$\mathbf{m}\sqcup D$.  From a nodal surface $\mathbf{S}=(\Sigma, j,
\mathbf{m}, D)$ one can form a singular surface $\hat{\Sigma}_D=
\Sigma / (d_i' \sim d_i'')$.

Let $\mathbf{S}=(\Sigma,j,\mathbf{m})$ be a stable marked Riemann
surface. Then on $\dot{\Sigma}= \Sigma \setminus \mathbf{m}$, there is
a unique complete, finite volume hyperbolic metric $h^{j,\mathbf{m}}$
which is compatible with $j$. Denote its injectivity radius by
$\rho$. Given $\epsilon>0$, we define the ``thick part'' and ``thin
part''
\begin{align*}
& {\rm Thick}_{\epsilon}(\mathbf{S})= \{ x \in \dot\Sigma ~|~
\rho(x) \ge \epsilon \}, \\
& {\rm Thin}_{\epsilon}(\mathbf{S})= \overline{\{ x \in \dot\Sigma
~|~ \rho(x) < \epsilon \}}.
\end{align*}
If $\epsilon < \log (1+ \sqrt{2})$, then each component of ${\rm
Thin}_{\epsilon}(\mathbf{S})$ is conformally equivalent to a
punctured disk or to a finite cylinder.  Each cylindrical component
$C$ of ${\rm Thin}_{\epsilon}(\mathbf{S})$ contains a unique closed
geodesic $\Gamma_C$. The thick and thin parts of complete, finite
volume hyperbolic metrics for stable nodal Riemann surfaces are defined
similarly, except that we take $\dot\Sigma=
\Sigma\setminus(\mathbf{m}\cup D)$.

\begin{defn}\label{dm}
A sequence of marked Riemann surfaces $\mathbf{S}_n = (\Sigma_n,
j_n, \mathbf{m}_n)$ {\em converges} to a nodal Riemann surface
$\mathbf{S} = (\Sigma, j, \mathbf{m}, D)$ if the following hold:

\begin{itemize}
\item There exist a smooth surface $\Sigma^D$, diffeomorphisms $\varphi_n
\colon \Sigma^D \stackrel\sim\to \Sigma_n$ and an ordered set
$\mathbf{m}^D \subset \Sigma^D$ such that $\varphi_n(\mathbf{m}^D)=
\mathbf{m}_n$ (as ordered sets).

\item There exist disjoint circles
$\Gamma_1, \ldots, \Gamma_k \subset \Sigma^D  \setminus
\mathbf{m}^D$ and a map $\varphi \colon \Sigma^D \to \hat{\Sigma}_D$
such that $\varphi$ is a diffeomorphism between $\Sigma^D \setminus
\bigcup \Gamma_i$ and $\Sigma \setminus D$, and
$\varphi(\mathbf{m}^D)= \mathbf{m}$ (as ordered sets).

\item $\varphi_n(\Gamma_i) \subset \Sigma_n$ are
closed geodesics for the metric $h^{j_n, \mathbf{m}_n}$ and are
contained in the thin part (defined using some $\epsilon<\log(1+\sqrt{2})$).

\item $\varphi_n^*j_n \to \varphi^* j$  in $C^{\infty}_{loc}(\Sigma^D
\setminus \bigcup \Gamma_i)$ or, equivalently, $\varphi_n^* (h^{j_n,
\mathbf{m}_n}) \to \varphi^* (h^{j, \mathbf{m}})$ in
$C^{\infty}_{loc}(\Sigma^D \setminus (\bigcup \Gamma_i \cup
\mathbf{m}^D))$.

\item Given a point $c_i \in \Gamma_i$, the geodesic arcs
$\delta_i^n$ for the metric $\varphi^*(h^{j_n, \mathbf{m}_n})$ which
intersect $\Gamma_i$ orthogonally at $c_i$ and whose endpoints are
contained in the thick part of $\Sigma^D$ for the metric
$\varphi^*(h^{j_n, \mathbf{m}_n})$, converge uniformly as $n \to
\infty$ to a continuous arc in $\Sigma^D$ which passes through $c_i$
and is a geodesic in $\Sigma^D \setminus (\bigcup \Gamma_i \cup
\mathbf{m}^D)$ for the metric $\varphi^* (h^{j, \mathbf{m}})$.
\end{itemize}
\end{defn}

\begin{thm}\label{classical}
Any sequence of stable marked Riemann surfaces $\mathbf{S}_n= (\Sigma_n,
j_n, \mathbf{m}_n)$ with fixed $2g(\Sigma_n)+\mu(\Sigma_n)$ has a
subsequence which converges to a nodal Riemann surface $\mathbf{S}=
(\Sigma, j, \mathbf{m}, D)$.
\end{thm}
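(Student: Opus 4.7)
The plan is to follow the classical Deligne--Mumford argument, working entirely through the intrinsic hyperbolic geometry that Definition~\ref{dm} already places front and center. By the uniformization theorem, each stable marked surface $\mathbf{S}_n = (\Sigma_n, j_n, \mathbf{m}_n)$ carries a unique complete finite-volume hyperbolic metric $h_n = h^{j_n,\mathbf{m}_n}$ on $\dot\Sigma_n = \Sigma_n \setminus \mathbf{m}_n$, compatible with $j_n$, and extracting a limit of the $j_n$ is equivalent to extracting a limit of the $h_n$. The whole argument will be carried out for the metrics $h_n$.

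First I would fix $\epsilon < \log(1+\sqrt{2})$ (the Margulis constant in dimension two) and split $\dot\Sigma_n = \mathrm{Thick}_\epsilon(\mathbf{S}_n) \cup \mathrm{Thin}_\epsilon(\mathbf{S}_n)$. The classical structure theorem for the thin part says each component of $\mathrm{Thin}_\epsilon(\mathbf{S}_n)$ is either a horocyclic cusp neighborhood of a puncture in $\mathbf{m}_n$ or a standard hyperbolic annulus around a unique closed geodesic $\Gamma_{n,i}$ of length $\ell_{n,i} < 2\epsilon$. Because $2g(\Sigma_n)+\mu(\Sigma_n)$ is fixed, the total hyperbolic area $2\pi(2g-2+\mu)$ is fixed, so the number of such short geodesics is uniformly bounded; after passing to a subsequence I may assume this number $k$ is constant and that the combinatorial pattern of which cusps and handles are pinched stabilizes. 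Next, the thick part has bounded geometry: its injectivity radius is $\geq \epsilon/2$ and, by a standard area/diameter estimate, the diameter of each component is bounded in terms of $g$ and $\mu$. Fix a smooth model $\Sigma^D$ and an ordered set $\mathbf{m}^D$ matching this stabilized combinatorics, and fix disjoint circles $\Gamma_i \subset \Sigma^D \setminus \mathbf{m}^D$ corresponding to the collapsing geodesics.

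Using the bounded geometry of $\mathrm{Thick}_\epsilon(\mathbf{S}_n)$, I would choose diffeomorphisms $\varphi_n : \Sigma^D \stackrel\sim\to \Sigma_n$ sending $\mathbf{m}^D$ to $\mathbf{m}_n$ in order, sending each $\Gamma_i$ to the closed geodesic $\varphi_n(\Gamma_i) = \Gamma_{n,i}$ inside the thin part, and sending the thick part of $\Sigma^D$ (with respect to some auxiliary background metric) to the thick part of $\Sigma_n$ in a way that is uniformly bi-Lipschitz on compact pieces. By Arzel\`a--Ascoli for sections of the bundle of metrics with uniform two-sided curvature bounds and uniform $C^k$ bounds (obtainable, for instance, from harmonic coordinates adapted to the bounded injectivity radius), a diagonal subsequence gives $\varphi_n^* h_n \to h_\infty$ in $C^\infty_{loc}$ on $\Sigma^D \setminus \bigcup \Gamma_i$. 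The limit $h_\infty$ is a complete finite-volume hyperbolic metric on the open surface $\Sigma^D \setminus \bigcup \Gamma_i$, each end of which develops either into a cusp (corresponding to a marked point in $\mathbf{m}^D$) or into a new cusp (corresponding to one of the two sides of some $\Gamma_i$, since $\ell_{n,i} \to 0$). Compactifying these cusps yields a nodal Riemann surface $\mathbf{S}=(\Sigma, j, \mathbf{m}, D)$, where $D$ pairs up the two cusps coming from each $\Gamma_i$; the map $\varphi : \Sigma^D \to \hat\Sigma_D$ is the obvious one collapsing each $\Gamma_i$ to the node $d_i' \sim d_i''$.

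Finally I would verify the last bullet of Definition~\ref{dm}: transverse geodesic arcs $\delta_i^n$ to $\Gamma_{n,i}$ at a chosen point $c_i$ converge to a continuous arc through $c_i$ which is geodesic off the nodes. This is a direct computation in the model hyperbolic cusp/collar: as $\ell_{n,i}\to 0$, the standard collar around $\Gamma_{n,i}$ has length $\to \infty$ (by the Margulis collar lemma), and a perpendicular geodesic of fixed length to the thick part converges to the union of two cuspidal geodesic rays meeting at the node. Putting the pieces together produces the required convergence $\mathbf{S}_n \to \mathbf{S}$.

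The main obstacle is the construction of the diffeomorphisms $\varphi_n$ that simultaneously identify the thick parts in a $C^\infty_{loc}$-controlled way, identify the stabilized combinatorics, and send $\Gamma_i$ exactly onto the short geodesics $\Gamma_{n,i}$. Once the bounded geometry of the thick part and the structure theorem for the thin part are in hand, everything else is standard; but extracting a genuine diffeomorphism (not merely a quasi-isometry) requires some care, typically carried out via a pair-of-pants decomposition along the short geodesics together with the length-twist (Fenchel--Nielsen) coordinates on the thick pieces, passing to the limit on each coordinate separately.
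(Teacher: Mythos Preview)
The paper does not actually prove this theorem: it is stated without proof as a classical background result (the Deligne--Mumford compactness theorem), quoted from \cite{BEHWZ}. Your sketch is a reasonable outline of the standard hyperbolic-geometry proof of that result, so there is nothing to compare against in the paper itself.
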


\begin{fact}\label{lunghezza}
Let $g_n$ be a sequence of Riemannian metrics which converges
uniformly to a Riemannian metric $g$. Let $l_n$ be the length
functional for $g_n$ and $l$ be the length functional for $g$. Then
for any $\delta >0$ there exists $n_0 \in \N$ such that for all
$n\geq n_0$ and for all arcs $\gamma$ we have
\[ (1 - \delta) l(\gamma) \le l_n(\gamma) \le (1+\delta)  l(\gamma). \]
\end{fact}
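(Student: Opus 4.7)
The plan is to reduce the integral inequality on arcs to a pointwise inequality between the two norms induced on each tangent space, using the uniform convergence hypothesis, and then integrate.

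First, I would unpack the meaning of uniform convergence $g_n \to g$: on the (compact pieces of the) manifold where the arcs live, this means that the bilinear forms $g_n$ converge to $g$ uniformly on the unit tangent bundle of $g$, equivalently that the quadratic-form ratio $g_n(v,v)/g(v,v)$ converges uniformly to $1$ as a function on nonzero tangent vectors. Given any $\delta > 0$, pick $\varepsilon>0$ small enough that $\sqrt{1-\varepsilon}\geq 1-\delta$ and $\sqrt{1+\varepsilon}\leq 1+\delta$. By uniform convergence, there exists $n_0$ such that for all $n\geq n_0$ and every nonzero tangent vector $v$,
\[
(1-\varepsilon)\, g(v,v)\;\leq\;g_n(v,v)\;\leq\;(1+\varepsilon)\, g(v,v).
\]
Taking square roots gives the pointwise norm inequality
\[
(1-\delta)\,\|v\|_g\;\leq\;\|v\|_{g_n}\;\leq\;(1+\delta)\,\|v\|_g.
\]

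Second, for any piecewise $C^1$ arc $\gamma\colon [a,b]\to M$, integrate this pointwise comparison applied to $v=\gamma'(t)$ over $[a,b]$. Since $l(\gamma)=\int_a^b \|\gamma'(t)\|_g\, dt$ and $l_n(\gamma)=\int_a^b \|\gamma'(t)\|_{g_n}\, dt$, monotonicity of the integral yields $(1-\delta)\, l(\gamma)\leq l_n(\gamma)\leq (1+\delta)\, l(\gamma)$, uniformly in $\gamma$. This is exactly the claim.

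There is really no main obstacle: the only point requiring care is the interpretation of \emph{uniform} convergence of metrics on a possibly noncompact manifold. In the applications where this fact is invoked (Definition~\ref{dm} and the subsequent compactness arguments), all the relevant arcs lie in a fixed compact piece of $\Sigma^D$ on which the metrics in question converge uniformly in the usual sense, so the pointwise estimate above is available on the region traversed by $\gamma$, and the integral comparison goes through without modification.
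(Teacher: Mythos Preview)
Your argument is correct and is the standard way to prove this; the paper itself does not give a proof, stating only that ``the proof of Fact~\ref{lunghezza} is an easy exercise.'' Your pointwise comparison of norms via the ratio $g_n(v,v)/g(v,v)$ followed by integration is exactly the intended exercise, and your remark about compactness of the region where the arcs live is the right caveat.
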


The proof of Fact~\ref{lunghezza} is an easy exercise.

\begin{prop} \label{radius}
Let $\mathbf{S}_n = (\Sigma_n, j_n, \mathbf{m}_n)$ be a sequence of
Riemann surfaces which converges to a nodal Riemann surface
$\mathbf{S}= (\Sigma, j, \mathbf{m}, D)$, in the sense of
Definition~\ref{dm}.  Then for all $\epsilon, \delta >0$ there is
$n_0 \in \N$ such that
\begin{align*}
\varphi^{-1}({\rm Thick}_{\epsilon}(\mathbf{S})) & \subset
\varphi_n^{-1}({\rm
Thick}_{(1- \delta)\epsilon} (\mathbf{S}_n)), \\
\varphi^{-1}({\rm Thin}_{\epsilon}(\mathbf{S})) & \subset
\varphi_n^{-1}({\rm Thin}_{(1+ \delta)\epsilon} (\mathbf{S}_n)),
\end{align*}
for all $n \ge n_0$.
\end{prop}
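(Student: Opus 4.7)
The goal is to compare injectivity radii of the pulled-back hyperbolic metrics $g := \varphi^* h^{j,\mathbf{m}}$ (defined on $U := \Sigma^D\setminus(\bigcup\Gamma_i\cup\mathbf{m}^D)$) and $g_n := \varphi_n^* h^{j_n,\mathbf{m}_n}$ (defined on $\Sigma^D\setminus \mathbf{m}^D$) on $\Sigma^D$, using that $g_n\to g$ in $C^\infty_{\mathrm{loc}}(U)$. The plan is to combine Fact~\ref{lunghezza} with the Margulis/collar structure of thin parts, since the injectivity radius is ultimately a length-theoretic quantity.

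\textbf{Step 1: reduction to a compact set.} For $\epsilon < \log(1+\sqrt 2)$, the $\epsilon$-thick part of any complete finite-volume hyperbolic surface is compact and the $\epsilon$-thin part is a disjoint union of cusp horoballs and standard collars around simple closed geodesics of length $<2\epsilon$. Hence $\varphi^{-1}(\mathrm{Thick}_\epsilon(\mathbf{S}))$ is a compact subset of $U$. I fix a compact $K\subset U$ containing a $g$-neighborhood of $\varphi^{-1}(\mathrm{Thick}_\epsilon(\mathbf{S}))$ of width (say) $10\epsilon$. On $K$, $g_n|_K\to g|_K$ uniformly in $C^\infty$, so by Fact~\ref{lunghezza} there exists $n_0$ with $(1-\delta/2)\,l_g\le l_{g_n}\le (1+\delta/2)\,l_g$ for every arc lying in $K$ and all $n\ge n_0$; in particular $g$- and $g_n$-distances of points inside $K$ are comparable within the same factor.

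\textbf{Step 2: the thick-part inclusion.} Let $x\in\varphi^{-1}(\mathrm{Thick}_\epsilon(\mathbf{S}))$ and let $\gamma$ be any loop at $x$ that is non-contractible in $\Sigma_n\setminus\mathbf{m}_n$ after applying $\varphi_n$. If $\gamma\subset K$, then $\varphi(\gamma)$ is non-contractible in $\dot\Sigma$ (because $K\subset U$), hence $l_g(\gamma)\ge 2\rho_g(\varphi(x))\ge 2\epsilon$, and Step~1 gives $l_{g_n}(\gamma)\ge 2(1-\delta)\epsilon$. If $\gamma$ exits $K$, then by the choice of $K$ the $g_n$-length of $\gamma$ is at least $\sim 8\epsilon$ purely from leaving and returning through the $g_n$-comparable collar of $K$, which is already $\ge 2(1-\delta)\epsilon$. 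Thus $\rho_{g_n}(x)\ge (1-\delta)\epsilon$, proving the first inclusion.

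\textbf{Step 3: the thin-part inclusion.} For $x\in\varphi^{-1}(\mathrm{Thin}_\epsilon(\mathbf{S}))$, I must exhibit a $g_n$-loop at $x$ of $g_n$-length $\le 2(1+\delta)\epsilon$ that is non-contractible in $\Sigma_n\setminus\mathbf{m}_n$. If $x$ lies in a cusp neighborhood of a marked point of $\mathbf{m}^D$, then such a short horocyclic loop exists in $g$; since that region is compactly supported inside $U$ away from the other cusp, $C^\infty_{\mathrm{loc}}$ convergence and Fact~\ref{lunghezza} transfer the bound to $g_n$ with multiplicative error $(1+\delta/2)$. If instead $x$ lies in the cusp of $g$ corresponding to a collapsing circle $\Gamma_i$, one invokes the standard hyperbolic collar model: the $g_n$-geodesic $\varphi_n(\Gamma_i)$ has length $L_n\to 0$, its collar has width $\sim\log(1/L_n)\to\infty$, and the normalized collar metric $du^2+(L_n\cosh u)^2(d\theta/2\pi)^2$ converges (after shifting the base point so it tracks $x$) to a pair of parabolic cusps modeled on the limiting $g$ near $\Gamma_i$. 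The explicit form of these models gives $\rho_{g_n}(x)\le (1+\delta)\rho_g(\varphi(x))\le (1+\delta)\epsilon$ once $L_n$ is small enough.

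\textbf{Main obstacle.} Step~2 is straightforward once the quantitative $C^\infty$ convergence is combined with the compactness of the thick part. The delicate point is Step~3 near the $\Gamma_i$, precisely because $g_n$ does \emph{not} converge to $g$ there: a geodesic is degenerating into a pair of cusps. One must replace $C^\infty_{\mathrm{loc}}$ convergence by the explicit collar-to-cusp comparison, controlling injectivity radii through the universal formulas for hyperbolic collars in terms of the vanishing length $L_n$. This is essentially the content of the last convergence bullet in Definition~\ref{dm}, and making that argument quantitative is the crux of the proof.
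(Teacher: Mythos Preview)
Your approach is correct in spirit, but the paper takes a considerably more streamlined route. Instead of your case analysis, the paper works pointwise with the characterization of the injectivity radius as half the length of the shortest homotopically nontrivial geodesic loop through $z$, and compares such loops directly via Fact~\ref{lunghezza}. For the thick part, if $\gamma$ and $\gamma_n$ denote the shortest loops for $g$ and $g_n$ respectively, one simply writes
\[
l_n(\gamma_n)\ \ge\ (1-\delta)\,l(\gamma_n)\ \ge\ (1-\delta)\,l(\gamma)\ \ge\ 2(1-\delta)\epsilon,
\]
the middle inequality holding because $\gamma_n$ is a competitor for $\gamma$. For the thin part, the short $g$-loop $\gamma$ is itself a competitor for $\gamma_n$ (a loop around a $\Gamma_i$-cusp is homotopic to $\Gamma_i$, hence still nontrivial in $\Sigma^D\setminus\mathbf{m}^D$), giving
\[
l_n(\gamma_n)\ \le\ l_n(\gamma)\ \le\ (1+\delta)\,l(\gamma)\ <\ 2(1+\delta)\epsilon.
\]
This bypasses both your compact-set maneuver in Step~2 and the explicit collar models in Step~3.

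That said, the subtlety you flag is real: Fact~\ref{lunghezza} requires uniform convergence of metrics, while $g_n\to g$ only in $C^\infty_{\mathrm{loc}}$ on $U$; the paper's chain of inequalities is written for a fixed $z$ and does not explicitly justify a uniform $n_0$. In particular, for the thin-part argument the loop $\gamma$ can lie arbitrarily close to $\Gamma_i$ as $z$ ranges over the whole thin part, exactly where the local convergence degenerates. Your Step~2 (restricting to a compact $K$ and handling loops that exit $K$ by a crude length estimate) and your Step~3 (replacing $C^\infty_{\mathrm{loc}}$ convergence by the explicit collar-to-cusp model near $\Gamma_i$) are precisely what one needs to upgrade the paper's pointwise argument to a uniform one. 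So your approach is not a detour---it is the careful version of the same proof, filling in uniformity that the paper leaves to the reader.
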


\begin{proof}
Let $h_n$ be the complete, finite volume hyperbolic metric on
$\Sigma^D \setminus \mathbf{m}^D$ which is compatible with
$\varphi_n^*j_n$, and $h$ be the complete, finite volume hyperbolic
metric on $\Sigma^D \setminus (\mathbf{m}^D \cup \Gamma_i)$ which is
compatible with $\varphi^*j$. Also let $\rho_n$ and $\rho$ be the
injectivity radii of $h_n$ and $h$, respectively. Let $z \in
\Sigma^D \setminus (\mathbf{m}^D \cup \Gamma_i)$. In order to
compute the injectivity radii at $z$, it suffices to compute the
shortest geodesic loops based at $z$ (see for example
\cite[Lemma~4.8]{Hum}). Let $\gamma$ be the shortest $g$-geodesic
loop based at $z$, and let $\gamma_n$ be the shortest $g_n$-geodesic
loop based at $z$. By Fact~\ref{lunghezza}, we have
\[ (1 - \delta) l(\gamma) \le  (1 - \delta) l(\gamma_n) \le l_n(\gamma_n), \]
for sufficiently large $n$. Hence $(1-\delta) \rho(z) \le
\rho_n(z)$. We then conclude that $\rho_n(z) \ge (1 -\delta)
\epsilon$ whenever $\rho(z) \ge \epsilon$.

On the other hand, if $l(\gamma) < \epsilon$, then we have
\[ l_n(\gamma_n) \le l_n(\gamma) \le (1+ \delta) l(\gamma), \]
for sufficiently large $n$. We then conclude that $\rho_n(z)
\le (1+\delta) \epsilon$ whenever $\rho(z) < \epsilon$.
\end{proof}

\subsection{Holomorphic curves in $\R\times M^*$.}

Let $J$ be a tailored almost complex structure on $\R\times M^*$ as usual.
Let $(\Sigma,j,\mathbf{m})$ be a marked Riemann
surface.  The notation
\[
F=(a,f)\colon (\Sigma,j,\mathbf{m})\to(\R\times M^*,J)
\]
denotes a $(j,J)$-holomorphic map from the punctured Riemann surface
$\dot\Sigma=\Sigma\setminus\mathbf{m}$ to $M^*$.  If $p\in\mathbf{m}$,
and if $\gamma$ is a Reeb orbit of $\alpha$, we say that $F$ is
``positively asymptotic'' to $\gamma$ at $p$ if $\lim_{z\to
  p}a(z)=+\infty$ and if the restriction of $f$ to a circle around $p$
converges to $\gamma$ as the size of the circle converges to zero.  We
also say that $p$ is a ``positive puncture'' of $F$ asymptotic to
$\gamma$.  We define ``negatively asymptotic'' analogously but with
$\lim_{z\to p}a(z)=-\infty$.

Now let $\underline{\gamma}=(\gamma_1,\ldots,\gamma_k)$ and
$\underline{\gamma}'=(\gamma_1',\ldots,\gamma_l')$ be finite ordered
lists of Reeb orbits, possibly with repetitions.  Let
$\mathcal{M}_g(\underline{\gamma};\underline{\gamma}';J)$ denote the
moduli space of holomorphic maps $F$ as above such that $\Sigma$ has
genus $g$, there are $k+l$ marked points in $\mathbf{m}$, $F$ is
positively asymptotic to $\gamma_i$ at the $i^{th}$ marked point, and
$F$ is negatively asymptotic to $\gamma_j'$ at the $(k+j)^{th}$ marked
point.

We wish to extend the SFT compactness theorem to sequences of
holomorphic curves in these moduli spaces.  To do so, it is sufficient
to show that for any sequence of such curves, the projections to $M^*$
are confined in a compact set.

We first show that a sequence of holomorphic curves cannot escape from
the side (S).

\begin{lemma} \label{lemma: bound on tau}
Let $F \in \mathcal{M}_g(\m{\gamma};\m{\gamma}';J)$ for some $g$,
$\m{\gamma}$, and $\m{\gamma}'$. Then $\tau \circ F(z) \leq 0 $ for
all $z \in \dot\Sigma$.
\end{lemma}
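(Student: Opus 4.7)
The proof is a maximum principle argument applied to the function $\tau\circ F$ on $\dot\Sigma$. The two ingredients are the plurisubharmonicity of $\tau$ on the Side region of $\R\times M^*$ and an analysis showing that all closed Reeb orbits are bounded away from $\{\tau=0\}$, so that $\tau\circ F$ takes negative values near every puncture.

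\textbf{Step 1 (plurisubharmonicity of $\tau$).} In the Side region of $M^*$ the contact form is $\alpha^* = C\,dt + e^\tau\beta_0$, and the tailored conditions (A$_0$)--(A$_2$) force $J$ to be $\partial_t$-invariant there, to satisfy $J\partial_s=R_{\alpha^*}=\partial_t/C$, to preserve $\xi^*$, and to restrict on $\xi^*$ to an almost complex structure coming from a $\widehat\beta_\pm$-adapted structure on the symplectization-like piece $[0,\infty)\times\Gamma$ of $\widehat{R_\pm(\Gamma)}$. A direct computation modeled on Equation~(\ref{equation: eq2}) in the proof of Lemma~\ref{lemma: interpol}, together with Remark~\ref{rmk: tau is plurisubharmonic}, then yields $-dd^\C\tau(v,Jv)\geq 0$ for every tangent vector $v$ to $\R\times M^*$ at any point with $\tau\geq 0$.

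\textbf{Step 2 (asymptotics at punctures).} I claim every closed Reeb orbit $\gamma$ of $\alpha^*$ satisfies $\tau(\gamma)\leq -\delta$ for some fixed $\delta>0$. Indeed, in each of the Top, Bottom, and Side of $M^*$, as well as on the neighborhood $U(\Gamma)$, the Reeb vector field $R_{\alpha^*}$ is a positive constant multiple of $\partial_t$ and hence admits no closed orbits; so every closed Reeb orbit is contained in the ``interior'' region $M\setminus U(\Gamma)$, on which the extension of $\tau$ to $M$ can be arranged to satisfy $\tau\leq -\delta$. In particular, at each puncture $p$ of $F$ the asymptotic convergence $F(z)\to\gamma$ forces $\tau\circ F(z)\to\tau(\gamma)\leq -\delta$ as $z\to p$.

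\textbf{Step 3 (maximum principle).} Suppose for contradiction that $\tau\circ F(z_0)>0$ for some $z_0\in\dot\Sigma$, and let $V\subset\dot\Sigma$ be the connected component of $F^{-1}(\{\tau>0\})$ containing $z_0$. Since $F$ is $(j,J)$-holomorphic and $\tau$ is $J$-plurisubharmonic on the image of $V$, the function $\tau\circ F$ is $j$-subharmonic on $V$, and it vanishes on $\partial V\subset\dot\Sigma$. By Step 2, no puncture of $\dot\Sigma$ lies in the closure of $V$ inside $\Sigma$, so $\overline V$ is a compact subset of $\dot\Sigma$. The strong maximum principle applied to $\tau\circ F$ on $V$ then forces $\tau\circ F$ to be constant, but the boundary values identify this constant as $0$, contradicting $\tau\circ F(z_0)>0$. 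The main obstacle is the verification in Step~1: although the underlying computation parallels that of Lemma~\ref{lemma: interpol}, it must be checked that the compatibility between the $\alpha^*$-adapted condition on $\R\times M^*$ and the $\widehat\beta_\pm$-adapted condition on the horizontal projection is exactly what is needed to make $\tau$ convex; the rest of the argument is standard once this convexity and the absence of Reeb orbits in the ends are in hand.
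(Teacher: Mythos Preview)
Your proof is correct and follows the same maximum-principle strategy as the paper. The one difference worth noting is that instead of verifying the $J$-plurisubharmonicity of $\tau$ on $\R\times M^*$ directly as in your Step~1, the paper observes that on the region $\{\tau>0\}$ the projection of $F$ to $\widehat{R_+(\Gamma)}$ (forgetting the $s$- and $t$-coordinates) is $J_0$-holomorphic, and then invokes the standard plurisubharmonicity of $\tau$ on the symplectization end $[0,\infty)\times\Gamma$ of $\widehat{R_+(\Gamma)}$---this sidesteps the explicit computation, though your direct verification is equally valid.
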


\begin{proof}
Suppose there exists a point $z \in \dot\Sigma$ such that $\tau
\circ F(z)>0$. Then $\tau \circ F$ has a local maximum, which we
assume without loss of generality to be attained at $z$.

The projection of $J$ to $\widehat{R_+(\Gamma)}$ is $J_0$.  Hence
the projection of $F$ to $\widehat{R_+(\Gamma)}$ (when restricted to
$\R\times (M^*-M)$) is a $J_0$-holomorphic map. Since $\tau$
restricted to $\widehat{R_+(\Gamma)}$ is a plurisubharmonic
function, a local maximum of $\tau \circ F$ is forbidden by the
maximum principle.
\end{proof}

The main task in the rest of this section is to show that a sequence
of holomorphic curves cannot escape from the top (T) or bottom (B).
For this purpose we ned to consider somewhat more general holomorphic
curves than the ones in
$\mathcal{M}_g(\overline{\gamma};\overline{\gamma}';J)$, in particular
the restrictions of such curves to certain subsets of the domain.
However our curves $F$ will always have an upper bound on $\tau\circ
F$ as a result of Lemma~\ref{lemma: bound on tau}.  In addition, all
our curves $F$ will have finite {\em Hofer energy\/} $E(F)$; see
\cite[Sec.\ 5.3]{BEHWZ} for the definition of Hofer energy, and
\cite[Prop.\ 5.13]{BEHWZ} for the proof that any curve in
$\mathcal{M}_g(\overline{\gamma};\overline{\gamma}';J)$ (and
consequently the restriction of any such curve to a subset of the
domain) has finite Hofer energy.

\subsection{The Stein case}
\label{subsection: Stein case}

It is easiest to obtain a bound on $|t|$ when
$(\widehat{R_\pm(\Gamma)},J_0,\widehat{\beta}_\pm)$ is a Stein
manifold.  (Recall that we can arrange to be in this situation
when $\widehat{R_\pm(\Gamma)}$ is a surface.)
In this case we have $|t|\le 1$ by the following lemma and
corollary.

\begin{lemma}
\label{lemma: harmonic} Suppose $J_0$ is an integrable complex
structure which makes
$(\widehat{R_\pm(\Gamma)},J_0,\widehat\beta_\pm)$ into a Stein
manifold. If $F\colon  (\Sigma,j,\mathbf{m})\rightarrow (\R\times M^*,J)$
is a holomorphic map, then $t \circ F$ is a harmonic function on the
open set $\{ z \in \dot\Sigma : |t \circ F(z)|>1 \}$.
\end{lemma}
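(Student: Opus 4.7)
The plan is to restrict attention to the top piece of $M^\ast$ (the bottom piece is symmetric), and derive harmonicity of $t_F := t\circ F$ from a direct Cauchy--Riemann calculation combined with the Stein identity for $\widehat\beta_+$. By the preceding Lemma~\ref{lemma: bound on tau}, $\tau\circ F \le 0$ everywhere on $\dot\Sigma$, so any point with $t\circ F > 1$ is mapped into the vertical cylindrical end $\R \times [1,\infty) \times \widehat{R_+(\Gamma)}$, with coordinates $(s,t,y)$, contact form $\alpha^\ast = C\,dt + \widehat\beta_+$, and $J$ characterized by $J\partial_s = \tfrac{1}{C}\partial_t$, $J\partial_t = -C\partial_s$, and $J|_\xi$ induced from $J_0$ via the projection $\xi \to T\widehat{R_+(\Gamma)}$ that forgets the $\partial_t$-component.

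Writing $F = (a, t_F, g)$ with $g\colon \dot\Sigma \to \widehat{R_+(\Gamma)}$, I split a vector $dg(v)\in T\widehat{R_+(\Gamma)}\subset TM^\ast$ as its $\xi$-lift $dg(v) - \tfrac{\widehat\beta_+(dg(v))}{C}\partial_t$ plus $\tfrac{\widehat\beta_+(dg(v))}{C}\partial_t$, apply $J$ componentwise, and match the $\partial_s$-, $\partial_t$-, and $T\widehat{R_+(\Gamma)}$-parts of the Cauchy--Riemann equation $dF\circ j = J\,dF$. This produces
\[
dg \circ j = J_0\circ dg, \qquad dt_F \circ j = \tfrac{1}{C}\bigl(da - g^\ast(J_0^\ast\widehat\beta_+)\bigr),
\]
so in particular $g$ is $(j,J_0)$-holomorphic and $dt_F \circ j$ is expressed cleanly in terms of $da$ and a pulled-back one-form on $\widehat{R_+(\Gamma)}$.

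The Stein hypothesis then supplies a strictly plurisubharmonic function $f_0$ on $\widehat{R_+(\Gamma)}$ with $\widehat\beta_+ = -d^{\C} f_0 = -df_0\circ J_0$; integrability ($J_0^2 = -\mathrm{id}$) gives $J_0^\ast\widehat\beta_+ = df_0$. Applying $d$ to the displayed identity,
\[
d(dt_F\circ j) = -\tfrac{1}{C}\,g^\ast\bigl(d(df_0)\bigr) = 0,
\]
which is precisely the assertion that $t_F$ is harmonic on $\{t\circ F > 1\}$. The analogous argument on $\{t\circ F < -1\}$ uses $\widehat{R_-(\Gamma)}$ and its plurisubharmonic function in place of $\widehat{R_+(\Gamma)}$ and $f_0$.

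The main work is simply the bookkeeping in the second step; the essential reason the argument closes is that closedness of $J_0^\ast\widehat\beta_+$ is available in the integrable Stein setting, whereas for a general $\widehat\beta_+$-adapted (non-integrable) $J_0$ one would only have $g^\ast(d\widehat\beta_+)\ge 0$, giving at best subharmonicity of $t_F$ rather than harmonicity.
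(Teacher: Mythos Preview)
Your proof is correct and takes essentially the same approach as the paper: both show that $d^{\C}(t\circ F)$ is closed by reducing to the Stein identity $\widehat\beta_+\circ J_0 = df_0$, though the paper computes $dt\circ J = ds - \beta\circ J$ directly on $\R\times M^*$ and checks $\beta\circ J = df$ there, rather than first decomposing $F=(a,t_F,g)$ and deriving the componentwise Cauchy--Riemann equations as you do. One minor remark: your appeal to Lemma~\ref{lemma: bound on tau} is unnecessary, since by the construction of the completion the region $\{t>1\}$ in $M^*$ is already $(1,\infty)\times\widehat{R_+(\Gamma)}$.
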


\begin{proof}
We prove the lemma for the case when $(t \circ F)(z) >1$; the
argument for $(t \circ F)(z) < -1$ is identical. The symplectization
of the top (T) is written as $\R \times (1,\infty)\times
\widehat{R_+(\Gamma)}$, with coordinates $s$ on $\R$ and $t$ on
$(1,\infty)$. On $(1,\infty)\times \widehat{R_+(\Gamma)}$, we may
take the contact form to be $\alpha = dt + \beta$, where
$\beta=\widehat\beta_+$. Since $(\widehat{R_+(\Gamma)}, J_0, \beta)$
is Stein, $\beta\circ J_0=df$, where $f$ is the strictly
plurisubharmonic function. With these conventions in place, we
compute the Laplacian of $t \circ F$:
\begin{align*}
  dd^{\C}(t \circ F) & = d(d(t \circ F) \circ j)= d(F^*(dt \circ J))=
  F^* d((\alpha-\beta)\circ J)\\
  & = F^* d(ds- (\beta \circ J))= - F^* d (\beta \circ J).
\end{align*}

We now claim that $\beta\circ J=df$. First we observe that
$\beta\circ J$ and $df$ both evaluate to zero on $\bdry_s$ and
$\bdry_t$. Next we compare $(\beta\circ J)(X)$ and $(\beta\circ
J_0)(X)=df(X)$ for any vector $X$ tangent to
$\widehat{R_+(\Gamma)}$. Since $J(X) = J_0(X)+
v_0(X)\partial_s+v_1(X) \partial_t$ by the definition of $J$ and
$\beta(\partial_s)=\beta(\partial_t)=0$, it follows that
$(\beta\circ J)(X)=(\beta\circ J_0)(X)$.

Finally, since $\beta \circ J$ is exact, we conclude that $dd^{\C}(t
\circ F) = 0$.
\end{proof}

\begin{cor}
  Suppose $J_0$ is an integrable complex structure which makes
  $(\widehat{R_\pm(\Gamma)},J_0,\widehat\beta_\pm)$ into a Stein
  manifold. If
  $F\in\mathcal{M}_g(\underline{\gamma};\underline{\gamma}';J)$, then
  $|(t \circ F)(z)| \le 1$ for all $z \in \dot\Sigma$.
\end{cor}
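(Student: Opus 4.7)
The plan is to apply the maximum principle to $t\circ F$, using the harmonicity established in Lemma~\ref{lemma: harmonic}. Set
\[
U_+=\{z\in\dot\Sigma:(t\circ F)(z)>1\},\qquad U_-=\{z\in\dot\Sigma:(t\circ F)(z)<-1\};
\]
the goal is to show $U_\pm=\emptyset$. I will argue the case of $U_+$, the case of $U_-$ being symmetric.

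First I would show that every closed Reeb orbit of $\alpha^*$ on $M^*$ lies in a region where $|t|\le 1-\delta$ for some uniform $\delta>0$. On the Top, Bottom, and Side, and on the Reeb-flow collars $[1-\varepsilon,1]\times R_+(\Gamma)$ and $[-1,-1+\varepsilon]\times R_-(\Gamma)$ of the horizontal boundary, as well as on $U(\Gamma)$, the Reeb vector field equals $(1/C)\partial_t$, which admits no periodic orbits. Hence every asymptotic orbit $\gamma_i,\gamma_j'$ of $F$ must lie in the complement of these regions; by choosing the extension of $t$ to $M$ appropriately, $|t|\le 1-\delta$ on this complement.

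Next, since $F\in\mathcal{M}_g(\underline\gamma;\underline\gamma';J)$ has finite Hofer energy (\cite[Prop.~5.13]{BEHWZ}), the $C^0$-asymptotic convergence at each puncture $p$ to the trivial cylinder over the asymptotic orbit (see \cite[Sec.~5]{BEHWZ}) yields a punctured neighborhood $N_p^*\subset\dot\Sigma$ of $p$ on which $|t\circ F|<1-\delta/2$. Combined with Lemma~\ref{lemma: bound on tau}, this ensures that $U_+$ is disjoint from $\bigcup_p N_p^*$, so $\overline{U_+}$ (closure in $\dot\Sigma$) is compact in $\dot\Sigma$. The boundary $\partial U_+$ lies in $\{t\circ F=1\}$, and $t\circ F$ is continuous on $\overline{U_+}$ and harmonic on $U_+$ by Lemma~\ref{lemma: harmonic}. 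The maximum principle therefore gives $t\circ F\le 1$ on $\overline{U_+}$, contradicting the defining condition $t\circ F>1$ on $U_+$ unless $U_+=\emptyset$.

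The main technical step is the asymptotic control of $|t\circ F|$ near each puncture, which reduces to the standard $C^0$-asymptotic convergence of finite-energy $J$-holomorphic maps to trivial cylinders. Without the strict inequality $|t(\gamma)|<1$ on the asymptotic orbits --- which is why the structure of $\alpha^*$ on the collars of the horizontal boundary is essential --- the closure $\overline{U_+}$ could accumulate at a puncture and the maximum-principle step would fail.
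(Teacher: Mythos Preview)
Your proof is correct and follows the same approach as the paper: harmonicity of $t\circ F$ on $\{|t\circ F|>1\}$ (Lemma~\ref{lemma: harmonic}) together with the maximum principle. The paper's version is terser---it simply asserts that if $|t\circ F|>1$ somewhere then $t\circ F$ has a local maximum and invokes the maximum principle---whereas you carefully spell out the asymptotic control near the punctures that justifies this; your detour through a strict bound $|t|\le 1-\delta$ on the asymptotic orbits (and the attendant choice of extension of $t$) is not actually needed, since $\limsup_{z\to p}(t\circ F)(z)\le 1$ already forces the supremum of $t\circ F$ to be attained in the interior whenever it exceeds $1$.
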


\begin{proof}
If there is a point $z \in \dot\Sigma$ such that $|(t \circ F)(z)|
>1$, then there is a local maximum for $t \circ F$, which we may
assume to be attained at $z$. But this is forbidden by the maximum
principle because $t \circ F$  is harmonic in a neighborhood of $z$
by Lemma~\ref{lemma: harmonic}.
\end{proof}

The non-Stein case is less nice because we do not necessarily have
$|t|\le 1$.  However we can still obtain a theoretical upper bound on
$|t|$, as the rest of this section will explain.

\subsection{Bubbling lemma}
In this subsection we adapt the usual bubbling argument to our
noncompact setting; cf.\ \cite[Lemma 5.11]{BEHWZ}.

Let $\mathbf{S}= (\Sigma, j,
\mathbf{m})$ be a marked Riemann surface, and let
\[
F=(a,f)\colon  (\Sigma,j,\mathbf{m})\to (\R\times M^*,J)
\]
be a holomorphic map as above.  Below, we write
$b= t \circ f \colon \dot{\Sigma} \to \R$. 
When the image of $F$ is
contained in the symplectization $\R \times (1, +\infty) \times
\widehat{R_+(\Gamma)}$ of the Top (or in the symplectization $\R
\times (-\infty, -1) \times \widehat{R_-(\Gamma)}$ of the Bottom),
we can write $f=(b,v)$, where $v \colon \dot{\Sigma} \to
\widehat{R_{\pm}(\Gamma)}$.  

On $\R\times M^*$, we will always use the Riemannian metric
\begin{equation}
\label{eqn: metric} g= ds \otimes ds + \alpha^* \otimes \alpha^* + d
\alpha^* ( \cdot, J \cdot) - d\alpha^*(J \cdot,\cdot),
\end{equation}
where $s$ is the $\R$-coordinate.  (The last term is added to
symmetrize the tensor, since we are only taking $J$ to be
$d\alpha$-positive.) With respect to this metric, $\| \nabla t \|$
is uniformly bounded.

Also recall the following (by now well-known) topological lemma.

\begin{lemma}[Hofer's lemma] \label{hofer}
Let $(X, d)$ be a complete metric space, $f \colon X \to \R$ be a
non-negative continuous function, $x_0 \in X$, and $\delta>0$. Then
there exist $x \in X$ and a positive number $\varepsilon \le \delta$
such that
\[ d(x_0,x) < 2 \delta, \quad \sup_{B_{\varepsilon}(x)} f \le 2 f(x),
\quad \varepsilon f(x) \ge \delta f(x_0),\] where $B_\varepsilon(x)$
is an open ball of radius $\varepsilon$ about $x$.
\end{lemma}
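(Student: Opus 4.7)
The plan is to argue by contradiction via the standard doubling/refinement procedure that goes under the name of the ``Hofer trick.'' Suppose the conclusion fails for the given pair $(x_0,\delta)$. I will then inductively construct a sequence $(x_k)_{k\ge 0}$ in $X$ together with the scales $\varepsilon_k=\delta/2^k$ such that the conclusion also fails for each pair $(x_k,\varepsilon_k)$, but with $f(x_k)\ge 2^k f(x_0)$ and $d(x_{k-1},x_k)<\delta/2^{k-1}$.

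To start the induction, note that $(x_0,\delta)$ automatically satisfies $d(x_0,x_0)=0<2\delta$ and $\delta f(x_0)\ge\delta f(x_0)$, so the presumed failure of the lemma forces the remaining inequality to fail: $\sup_{B_\delta(x_0)}f>2f(x_0)$, and we pick $x_1\in B_\delta(x_0)$ with $f(x_1)>2f(x_0)$. For the inductive step, one checks that the pair $(x_k,\delta/2^k)$ still satisfies the first inequality, because
\[
d(x_0,x_k)\le \sum_{j=1}^{k}d(x_{j-1},x_j)<\sum_{j=1}^{k}\frac{\delta}{2^{j-1}}<2\delta,
\]
and the third inequality, because $(\delta/2^k)f(x_k)\ge(\delta/2^k)\cdot 2^k f(x_0)=\delta f(x_0)$. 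Hence the failure of the lemma at $(x_k,\delta/2^k)$ must come from the middle condition, producing $x_{k+1}\in B_{\delta/2^k}(x_k)$ with $f(x_{k+1})>2f(x_k)$.

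Finally, I would close the argument by observing that the telescoping distance bound above makes $(x_k)$ a Cauchy sequence, so by completeness it converges to some $x_\infty\in X$ with $d(x_0,x_\infty)\le 2\delta$. As soon as some iterate $x_{k_0}$ has $f(x_{k_0})>0$ (which is automatic for $k_0=0$ when $f(x_0)>0$, and for $k_0=1$ otherwise, since $f(x_1)>2f(x_0)\ge 0$ would force $f(x_1)>0$ unless the first failure step already produced $f(x_1)>0$; if it did not, we simply restart from $x_1$), we get $f(x_k)\ge 2^{k-k_0}f(x_{k_0})\to\infty$, contradicting the continuity of $f$ at $x_\infty$. Thus the assumption that the lemma fails is untenable. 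The main (and only) subtlety is the degenerate case $f(x_0)=0$, where the geometric-growth estimate is vacuous at the first step and one must restart the estimate from the first iterate on which $f$ is strictly positive; beyond this bookkeeping point, the proof is essentially automatic from completeness.
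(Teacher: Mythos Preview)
Your proof is correct and is exactly the standard ``Hofer trick'' argument. Note that the paper does not actually supply a proof of this lemma; it simply cites it as a ``(by now well-known) topological lemma,'' so there is nothing to compare against.

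One small expository point: your treatment of the degenerate case $f(x_0)=0$ is more hesitant than it needs to be. The base step already produces $x_1\in B_\delta(x_0)$ with $f(x_1)>2f(x_0)=0$, a \emph{strict} inequality, so $f(x_1)>0$ automatically. From then on the doubling $f(x_{k+1})>2f(x_k)$ gives $f(x_k)\ge 2^{k-1}f(x_1)\to\infty$, and the contradiction with continuity at $x_\infty$ follows just as in the nondegenerate case. No ``restart'' is required; the only change is that the growth estimate is anchored at $f(x_1)$ rather than $f(x_0)$.
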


Let us write $\mathbb{D}_r=\{z\in \C~|~ |z|<r\}$, and
$\mathbb{D}=\mathbb{D}_1$.  Then we have the following:

\begin{lemma}[Bubbling] \label{rescaling}
Consider a sequence of $J$-holomorphic maps
\[ F_n = (a_n, f_n) \colon {\mathbb D} \to (\R \times M^*, J) \]
satisfying $E(F_n)<C$ and $\tau \circ F_n <C'$ for some constants $C',
C>0$. Suppose that $\| \nabla F_n(0) \| \to \infty$ as $n \to
\infty$. Then after passing to a subsequence, there exists a sequence
of points $x_n \in {\mathbb D}$ converging to $0$, and sequences of
positive numbers $c_n,R_n\to\infty$ such that $|x_n| + c_n^{-1}R_n <1$
and the rescaled maps
$$ F_n^0 \colon  {\mathbb D}_{R_n} \to (\R \times M^*, J),$$
$$ z \mapsto F_n(x_n+c^{-1}z)$$
converge in $C^{\infty}_{loc}(\C)$ to one of the following:
\begin{enumerate}
\item a nonconstant holomorphic map $F^0 \colon \C \to (\R \times M^*,
  J)$, after translating in the $s$-direction, or
\item a nonconstant holomorphic map $F^0 \colon \C \to
(\R \times \R \times \widehat{R_{\pm}(\Gamma)}, J)$, after
translating in the $s$- and $t$-directions.
\end{enumerate}
In both cases the limit map satisfies the condition $E(F^0) \le C$.
\end{lemma}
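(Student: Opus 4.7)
The plan is to follow the standard bubbling-off argument, adapted to handle the noncompactness of $M^*$. The essential idea is to rescale in a neighborhood of a point where $\|\nabla F_n\|$ blows up, extract a subsequence via elliptic regularity, and then translate in the $s$-direction (and possibly the $t$-direction) so that the rescaled maps stay in a bounded region.

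First, I would apply Hofer's lemma (Lemma~\ref{hofer}) to the function $\|\nabla F_n\|$ on $\mathbb{D}$ with base point $0$ and a sequence $\delta_n\to 0$ chosen so that $\delta_n \|\nabla F_n(0)\|\to\infty$. This produces points $x_n\in \mathbb{D}$ with $x_n\to 0$ and positive numbers $\varepsilon_n\leq \delta_n$ such that $\sup_{B_{\varepsilon_n}(x_n)}\|\nabla F_n\|\leq 2\|\nabla F_n(x_n)\|$ and $\varepsilon_n \|\nabla F_n(x_n)\|\to\infty$. Setting $c_n=\|\nabla F_n(x_n)\|$ and $R_n=\varepsilon_n c_n$, the rescaled maps $F_n^0(z)=F_n(x_n+c_n^{-1}z)$ are defined on $\mathbb{D}_{R_n}$ with $R_n\to\infty$, satisfy $\|\nabla F_n^0\|\leq 2$ throughout, and $\|\nabla F_n^0(0)\|=1$ so that no limit can be constant.

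Next I would deal with the noncompactness of the target. By Lemma~\ref{lemma: bound on tau}, we already have $\tau\circ F_n^0\leq 0$, so the image is confined in the $\tau$-direction. The $\R$-component $a_n\circ F_n^0$ can be normalized by translating in $s$ so that, say, $a_n\circ F_n^0(0)=0$; this does not disturb $J$-holomorphicity. It remains to control the $t$-component $b_n=t\circ f_n\circ F_n^0$ evaluated at $0$. There are two cases, which correspond to the dichotomy in the statement. If the sequence $b_n(0)$ remains bounded, then the images of $F_n^0$ on any fixed compact disk are contained in a fixed compact subset of $\R\times M^*$ (using the bound on $\|\nabla F_n^0\|$ together with the bounds on $\tau$ and $s$). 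Then standard elliptic bootstrapping for $J$-holomorphic maps (as in \cite{BEHWZ}) and the Arzel\`a--Ascoli theorem give $C^\infty_{\mathrm{loc}}$-convergence along a subsequence to a nonconstant $J$-holomorphic map $F^0\colon \C\to \R\times M^*$, establishing conclusion (1).

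In the other case, after passing to a subsequence we have $|b_n(0)|\to\infty$; we may assume $b_n(0)\to+\infty$ (the other sign is symmetric). Then I translate in the $t$-direction by $-b_n(0)$; on the region where the unshifted images have $t>1$, the contact form is $dt+\widehat\beta_+$ and $J$ is invariant under $t$-translation by condition (A$_1$) on a tailored almost complex structure, so the translated maps are still $J$-holomorphic into the symplectization $\R\times \R\times\widehat{R_+(\Gamma)}$. The $t$-translated maps now have $t\circ f_n\circ F_n^0(0)=0$, and the gradient bound guarantees that on each fixed compact disk the images are confined to a compact subset of $\R\times \R\times \widehat{R_+(\Gamma)}$ (the $\widehat{R_+(\Gamma)}$ component being controlled by the bound on $\tau$ and on the gradient). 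Elliptic regularity and Arzel\`a--Ascoli again produce a $C^\infty_{\mathrm{loc}}$-limit $F^0\colon \C\to \R\times\R\times \widehat{R_+(\Gamma)}$ that is nonconstant, proving (2). The energy bound $E(F^0)\leq C$ follows in both cases from the lower semicontinuity of Hofer energy under $C^\infty_{\mathrm{loc}}$-convergence combined with invariance of energy under $s$- and $t$-translations and under conformal rescaling of the domain.

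The main obstacle is the second case: one must verify that the rescaled maps, after the joint $(s,t)$-translation, genuinely take values in the symplectization of the Top (or Bottom) on arbitrarily large disks, rather than partly in $\R\times M^*$ and partly in the symplectization. This is handled by noting that the gradient bound $\|\nabla F_n^0\|\leq 2$ implies that $b_n=t\circ f_n\circ F_n^0$ varies by at most $2R$ on $\mathbb{D}_R$; since $b_n(0)\to\infty$, for any fixed $R$ and sufficiently large $n$ the entire disk $\mathbb{D}_R$ is mapped into the region $\{t>2\}$, which lies in the symplectization of (T). This justifies the use of the $\bdry_t$-invariance of $J$ needed to make the $t$-translation argument go through.
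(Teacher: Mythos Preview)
Your argument is correct and essentially identical to the paper's proof: Hofer's lemma to produce $x_n$, $c_n$, $R_n$ and the rescaled maps with $\|\nabla F_n^0\|\le 2$ and $\|\nabla F_n^0(0)\|=1$, followed by the dichotomy on whether $b_n(x_n)$ stays bounded, with Gromov--Schwarz plus Arzel\`a--Ascoli in each case and the gradient bound to confine the image to the Top/Bottom in Case~2. The only slip is your appeal to Lemma~\ref{lemma: bound on tau} for the $\tau$-bound: that lemma requires the curve to have punctures asymptotic to Reeb orbits and does not apply to maps from $\mathbb{D}$; here you should simply use the hypothesis $\tau\circ F_n<C'$, which is all that is needed for the compactness step.
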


The gradients $\| \nabla F_n(0) \| $ are computed with respect to
the standard Euclidean metric on $\mathbb{D}$ and the metric on
$\R\times M^*$ given by Equation~(\ref{eqn: metric}).

\begin{proof}
Choose a sequence $\delta_n >0$ such that $\delta_n \to 0$ and
$\delta_n \| \nabla F_n(0) \| \to \infty$. Applying Hofer's lemma to
$\|\nabla F_n\|$, we obtain new sequences $x_n \in {\mathbb D}$ and
$0 < \varepsilon_n \le \delta_n$ such that $x_n \to 0$ and
\[ \sup_{|x-x_n| \le \varepsilon_n} \| \nabla F_n(x) \| \le
2 \| \nabla F_n(x_n) \|, \quad \varepsilon_n \| \nabla F_n(x_n) \|
\to \infty. \] Set $c_n=  \| \nabla F_n(x_n) \|$ and $R_n=
\varepsilon_n \| \nabla F_n(x_n) \|$. For sufficiently large $n$ we
have $|x_n|+ c_n^{-1}R_n <1$. Hence there exist rescaled maps
\[ F^0_n(z)= \left ( a_n^0(z), f_n^0(z) \right ) =
\left ( a_n(x_n+c_n^{-1}z)- a_n(x_n), f_n(x_n+ c_n^{-1}z) \right ),
\] defined on ${\mathbb D}_{R_n}$. The
sequence $\{F_n^0\}$ satisfies the following:
\begin{itemize}
\item $a^0_n(0)=0$,
\item $\| \nabla F_n^0(0) \|=1$,
\item $\displaystyle\sup_{z\in{\mathbb D}_{R_n}} \| \nabla F_n^0(z) \| \le 2$,
\item $E(F_n^0) \le E(F) \le C$.
\end{itemize}

\s We now consider two cases:

\s\n {\em Case 1.} If there is a constant $C>0$ such that $|
b_n(x_n)| \le C$, then the maps $F_n^0$ are uniformly bounded, in
the sense that for any compact set $K_1 \subset \C$ there is a
compact set $K_2 \subset \R \times M^*$ such that $ F_n^0(K_1)
\subset K_2$ for all $n$ sufficiently large. This is a consequence
of the uniform bounds on $\| \nabla F_n^0 \|$ and on $\tau \circ
F_n^0$. The Gromov--Schwarz lemma \cite[Lemma~5.1]{BEHWZ} implies
that all the derivatives of $F_n^0$ are bounded. Hence we can apply
the Arzel\`a--Ascoli theorem and extract a subsequence which
converges in $C^{\infty}_{loc}(\C)$ to a finite energy plane
\[ F^0 \colon \C \to (\R \times M^*, J). \]
(In the rest of the paper, the Gromov--Schwarz lemma and the
Arzel\`a--Ascoli theorem will be used repeatedly without specific
mention.) The limiting map $F^0$ is nonconstant since $\| \nabla
F_n^0(0) \|=1$ for all $n$.

\s\n {\em Case 2.} Suppose that $b_n(x_n)$ is unbounded. Then,
without loss of generality, we can assume that $\lim \limits_{n \to
+ \infty} b(x_n)= + \infty$ and that there exists a sequence $R_n'
\le R_n$ such that
\[
\lim \limits_{n \to \infty} R_n' = + \infty \quad  \text{and} \quad
F_n^0(\mathbb{D}_{R_n'}) \subset \R \times (1, \infty) \times
\widehat{R_+(\Gamma)} \subset \R \times M^*.
\]
Therefore $F_n^0$ can be viewed as a map
\[
F_n^0 = (a_n^0, b_n^0, v_n^0) \colon
\mathbb{D}_{R_n'} \to \R \times \R \times \widehat{R_+(\Gamma)}.
\]
If we define
\[
\widetilde{F}_n^0(z) = (a_n^0(z)-a_n^0(0), b_n^0(z)- b_n^0(0),
v_n^0(z)),
\]
then the uniform bound on the gradient implies that for any compact
set $K \subset \C$ there is a positive constant $C$ such that
$\widetilde{F}_n^0(K) \subset [-C,C] \times [-C,C] \times
\widehat{R_+(\Gamma)}$. Hence there is a subsequence which converges
in $C^{\infty}_{loc}(\C)$ to a nonconstant finite energy plane
\[
\widetilde{F}^0 \colon \C \to (\R \times \R \times
\widehat{R_+(\Gamma)},J).
\]
This completes the proof of Lemma~\ref{rescaling}.
\end{proof}

\subsection{Bound on the $t$-coordinate}
In this section we discuss the bound on the $t$-coordinate and the
removal of singularities.

\subsubsection{Gradient bound for a single curve}

We start this subsection with the following useful lemma.

\begin{lemma} \label{noarea} Let $(M^*, \alpha^*)$ be the completion
  of a sutured contact manifold $(M,\alpha)$, and let $J$ be a
  tailored almost complex structure on $\R\times M^*$.
\begin{enumerate}
\item If $F=(a,f) \colon \C \to (\R \times M^*, J)$ is a finite energy
holomorphic map with bounded gradient and $\int_\C f^* d \alpha^*=0$,
then $F$ is constant.
\item If $F=(a,f) \colon \C^\times=\C-\{0\} \to (\R \times M^*, J)$ is a finite
energy holomorphic map with bounded gradient, $R_{\alpha^*}$ has no
closed orbits, and $\int_{\C^\times} f^* d \alpha^*=0$, then $F$ is constant.
\end{enumerate}
\end{lemma}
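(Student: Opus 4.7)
The plan is to exploit the vanishing of the $d\alpha^*$-energy to confine the image of $F$ to a ``trivial cylinder'' $\R\times\gamma$ over a single Reeb trajectory, and then to show that holomorphic maps from $\C$ (or $\C^\times$) into such a cylinder with the prescribed analytic bounds must be constant.

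\emph{Part (1).} Since $J$ is tailored, $d\alpha^*(v,Jv)\ge 0$ for all $v\in\xi$, with equality exactly when $v=0$. For a $J$-holomorphic $F$, the integrand $f^*d\alpha^*$ is pointwise nonnegative and vanishes precisely where the $\xi$-component of $df$ vanishes, so the hypothesis $\int_{\C}f^*d\alpha^*=0$ forces $df$ to lie in the plane spanned by $\partial_s$ and $R_{\alpha^*}$ at every point. Consequently $f$ takes values in a single complete Reeb trajectory $\gamma\subset M^*$. On the trivial cylinder $\R\times\gamma$ the tailored $J$ restricts to the standard complex structure sending $\partial_s\mapsto R_{\alpha^*}|_\gamma$, so $F$ is holomorphic into either $\R\times\R\cong\C$ (if $\gamma$ is not periodic) or $\R\times S^1$ (if $\gamma$ is periodic); lifting through the universal cover in the periodic case yields in either case a holomorphic $\widetilde{F}\colon\C\to\C$. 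The bounded-gradient hypothesis combined with Liouville's theorem applied to $\widetilde{F}'$ shows that $\widetilde{F}$ is affine. A direct computation of the pull-back $F^*d(\phi(s)\alpha^*)=\phi'(s)\,F^*(ds\wedge\alpha^*)$ on the trivial cylinder then shows that any nonzero linear coefficient produces infinite Hofer energy, contradicting $E(F)<\infty$. Therefore $F$ is constant.

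\emph{Part (2).} The plan is to reduce to part (1) by removing the two punctures of $\C^\times$. At each puncture the standard asymptotic dichotomy for finite-energy holomorphic curves in symplectizations (cf.\ \cite{BEHWZ}) says that $F$ either extends continuously across the puncture or converges, after reparameterization on a long cylindrical end, to a multiple cover of a closed Reeb orbit. Because $R_{\alpha^*}$ has no closed orbits, only the removable alternative can occur, and one obtains a smooth extension $\widehat{F}\colon S^2\to\R\times M^*$ with bounded gradient and $\int\widehat{f}^*d\alpha^*=0$. Viewed as a map out of $\C=S^2\setminus\{\infty\}$, this $\widehat{F}$ satisfies the hypotheses of part (1), so it is constant; hence $F$ is constant.

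The main obstacle is justifying the asymptotic dichotomy used in part (2): one needs that on a semi-infinite cylindrical end $[R,\infty)\times S^1$, finite Hofer energy forces the $\xi$-component of the derivative to decay to zero, and bounded gradient then gives $C^0$-subconvergence to a closed Reeb orbit, ruling out continuous but unbounded asymptotic behavior. Once the dichotomy is in hand, the ``no closed orbits'' hypothesis upgrades it to removability, and the reduction to part (1) is routine. The bounded-gradient hypothesis plays the dual role of enabling Liouville's theorem in part (1) and controlling oscillations near punctures in part (2); without it, one would need a substantially more delicate asymptotic analysis.
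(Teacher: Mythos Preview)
Your argument for part~(1) is correct and is exactly the paper's argument: confine to a trivial cylinder $\R\times\gamma$, lift to $\C$, apply Liouville to the derivative, and use finite Hofer energy to kill the linear term.

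Your argument for part~(2), however, has a genuine gap. You invoke the asymptotic dichotomy from \cite{BEHWZ} for the map $F\colon\C^\times\to\R\times M^*$, but that dichotomy is proved for symplectizations of \emph{compact} contact manifolds. Here $M^*$ is noncompact, and the possibility that $f$ escapes in the $t$-direction near a puncture must be ruled out separately. In fact, the paper's removal-of-singularities result for $\R\times M^*$ (Corollary~\ref{removal}) is proved \emph{after} Lemma~\ref{noarea}, via Proposition~\ref{prop: bound on b for single F}, whose proof explicitly uses Lemma~\ref{noarea}(2). So your route is circular in the logical structure of the paper. Your own sketch of how to justify the dichotomy (``finite Hofer energy forces the $\xi$-component to decay, bounded gradient gives $C^0$-subconvergence to a closed Reeb orbit'') does not close this gap: with no closed orbits available as limits, you still need to exclude the image drifting to infinity in $M^*$, and bounded gradient on the cylinder permits logarithmic growth of $t\circ f$.

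The paper avoids all of this. It first uses the vanishing of $\int f^*d\alpha^*$ (together with the absence of closed orbits) to confine $F$ to $\R\times\gamma$ with $\gamma\simeq\R$, exactly as in part~(1), obtaining a holomorphic $\phi\colon\C^\times\to\C$. Then, rather than removing singularities, it analyzes $\phi$ directly: the bounded-gradient hypothesis says $z\,\phi'(z)$ is bounded on $\C^\times$, and a short Laurent-series argument (or simply: $z\,\phi'(z)$ is bounded holomorphic on $\C^\times$, hence constant, so $\phi' = c/z$, forcing $c=0$ by single-valuedness) shows $\phi$ is of the form $c_{-1}z^{-1}+c_0+c_1z$; the nonconstant cases are then excluded by finite Hofer energy. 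This is entirely self-contained and does not appeal to any asymptotic analysis on the noncompact target.
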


In (2) the gradient is computed using the flat metric on
$\C^\times$, viewed as an infinite cylinder.

\begin{proof}
  (1) The first statement is basically \cite[Lemma~28]{Ho1}, which
  goes through without modification to our noncompact case. By the
  zero $d\alpha^*$-energy condition, $\op{Im}(F)$ is contained in
  $\R\times \gamma$, where $\gamma$ is a Reeb orbit of
  $R_{\alpha^*}$. Let $\widetilde\gamma$ be the universal cover of
  $\gamma$ if $\gamma\simeq S^1$ or $\R$, and let
  $\widetilde\gamma\simeq \R$ be the extension of $\gamma$ to $M^*$ if
  $\gamma$ is an interval.  Then $F$ factors through a holomorphic map
  $\phi\colon \C \to \C=\R\times \widetilde\gamma$. Note that $\nabla
  F$ is bounded if and only if ${\bdry\phi\over \bdry z}$ is bounded
  with respect to the flat metric on both $\C$'s. It then follows that
  ${\bdry\phi\over \bdry z}$ is bounded and hence constant. Therefore
  $\phi(z)=c_0+c_1z$ for some constants $c_0,c_1$, and the
  corresponding $F$ does not have finite Hofer energy unless $c_1=0$.

(2) If $R_{\alpha^*}$ has no closed orbits, then $F$ factors through a
holomorphic map $\phi\colon  \C^\times\to \C=\R\times\widetilde\gamma$,
where $\widetilde\gamma\simeq \R$. First observe that any
holomorphic function $\phi(z)$ on $\C^\times$ can be written as a
Laurent series $\sum_{n\in\Z} a_n z^n$, $a_n\in \C$, where
$\phi_0(z)= \sum_{n\geq 1} a_n z^n$ is a holomorphic function on
$\C$ and $\phi_\infty(z)=\sum_{n\leq -1} a_nz^n$ is a holomorphic
function on $\C^\times\cup \{\infty\}$, and both $\phi_0$ and
$\phi_\infty$ have infinite radius of convergence. Next observe that
the boundedness of $\nabla F$ is equivalent to the boundedness of
${\bdry\over \bdry w} (\phi\circ g(w))= {\bdry \phi\over \bdry
z}(e^w)\cdot e^w={\bdry \phi\over \bdry z}(z)\cdot z$, where $g\colon 
\R\times [0,2\pi]\to \C^\times$ sends $w\mapsto z=e^w$, and we are
using the flat metric on $\R\times[0,2\pi]$ and $\C=\R\times
\widetilde\gamma$. It follows that ${\bdry\phi\over \bdry z}$ (and
hence ${\bdry \phi_0\over \bdry z}$) is bounded for $|z|$ large.
Hence $\bdry\phi_0\over \bdry z$ is constant and $\phi_0(z)=c_1z$.
Similarly, $\phi_\infty(z)=c_{-1}z^{-1}$. We then conclude that
$\phi(z)= c_{-1}z^{-1}+c_0+c_1z$.  The image of $\phi$ contains a
neighborhood of the point at infinity, which contradicts the finite
Hofer energy condition of $F$.
\end{proof}

The following proposition is analogous to
\cite[Proposition 27]{Ho1}, and its proof only needs some minor
changes.

\begin{prop}[Gradient bound for a single curve] \label{gradientbound}
Let $F \colon (\Sigma, j,\mathbf{m}) \to (\R \times M^*, J)$ be a
finite energy holomorphic map with bounded $\tau \circ F$.  Then
\[ \sup_{z \in \dot{\Sigma}} \rho(z) \| \nabla F(z) \| < + \infty, \]
where $\rho$ denotes the injectivity radius of the complete, finite
volume hyperbolic metric $h$ on $ \dot{\Sigma}$ which is compatible
with $j$ and $\| \nabla F(z)\|$ is measured with respect to $h$ on
$\dot{\Sigma}$ and the Riemannian metric on $\R\times M^*$ defined
in Equation~(\ref{eqn: metric}).
\end{prop}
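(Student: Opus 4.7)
The plan is to follow the classical Hofer bubbling strategy, with the modifications required by the noncompactness of $M^*$. I argue by contradiction: assume there is a sequence $z_n \in \dot\Sigma$ with $\rho(z_n)\|\nabla F(z_n)\| \to +\infty$. Since the complete, finite volume hyperbolic metric $h$ on $\dot\Sigma$ has bounded injectivity radius, we must have $\|\nabla F(z_n)\| \to +\infty$. Moreover, the Gromov--Schwarz lemma together with the uniform upper bound on $\tau\circ F$ and the uniform lower bound on $\rho$ over each $\op{Thick}_\epsilon(\mathbf{S})$ shows that $\|\nabla F\|$ is bounded on every thick part. Hence the $z_n$ must eventually lie in arbitrarily thin parts, and after passing to a subsequence they accumulate at the cusp associated to some puncture $p\in\mathbf{m}$.

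Next I apply Hofer's Lemma~\ref{hofer} to the function $\|\nabla F\|$ on $\dot\Sigma$ (with its complete hyperbolic metric), starting at $z_n$, to produce new centers $x_n\to p$ and radii $\varepsilon_n\to 0$ with $\sup_{B_{\varepsilon_n}(x_n)}\|\nabla F\|\le 2\|\nabla F(x_n)\|$ and $\varepsilon_n\|\nabla F(x_n)\|\to+\infty$. Passing to a conformal chart around each $x_n$ and setting $c_n=\|\nabla F(x_n)\|$, $R_n=c_n\varepsilon_n$, the rescaled maps $F_n^0(w)=F(x_n+c_n^{-1}w)$ on $\mathbb{D}_{R_n}$ have gradient bounded by $2$, Hofer energy bounded by $E(F)$, and still satisfy $\tau\circ F_n^0\le \sup(\tau\circ F)$. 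Applying Lemma~\ref{rescaling}, after a further subsequence and translations in $s$ (case 1) or in both $s$ and $t$ (case 2), the $F_n^0$ converge in $C^{\infty}_{\op{loc}}(\C)$ to a nonconstant, finite energy, bounded gradient $J$-holomorphic plane $F^0$ valued in $\R\times M^*$ or in $\R\times\R\times\widehat{R_\pm(\Gamma)}$.

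The main technical point, and the step I expect to be the principal obstacle, is to verify that the bubble $F^0$ carries no $d\alpha^*$-area. This is the standard concentration argument: since $F$ has finite total $d\alpha^*$-area (controlled by $E(F)$) and the $d\alpha^*$-form pulls back non-negatively on a $J$-holomorphic surface, the area $\int_{U_\delta(p)}F^*d\alpha^*$ over a $\delta$-neighborhood of $p$ tends to $0$ as $\delta\to 0$. Because $c_n^{-1}R_n=\varepsilon_n\to 0$ and $x_n\to p$, the disks $\mathbb{D}_{c_n^{-1}R_n}(x_n)$ eventually lie inside any fixed $U_\delta(p)$, and the rescaling is a $d\alpha^*$-isometry on the pullback. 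Hence $\int_{\mathbb{D}_{R_n}}(F_n^0)^*d\alpha^*\to 0$, and passing to the $C^{\infty}_{\op{loc}}$ limit gives $\int_\C(F^0)^*d\alpha^*=0$.

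Finally I apply Lemma~\ref{noarea}. In case 1 the plane $F^0\colon\C\to\R\times M^*$ has zero $d\alpha^*$-area and bounded gradient, so Lemma~\ref{noarea}(1) forces $F^0$ to be constant. In case 2 the Reeb vector field on the model end $\R\times\widehat{R_\pm(\Gamma)}$ is $\partial_t$, which has no closed orbits; the same factorization argument as in Lemma~\ref{noarea}(2) (pushing $F^0$ through a holomorphic map $\C\to\R\times\R$, invoking Liouville's theorem to conclude the latter is affine, and then using finite Hofer energy to kill the linear term) again forces $F^0$ to be constant. Either conclusion contradicts the normalization $\|\nabla F^0(0)\|=1$ inherited from the rescaling, so the assumed unboundedness of $\rho\|\nabla F\|$ is impossible, and the proposition follows.
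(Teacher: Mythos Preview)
Your argument is correct and follows essentially the same bubbling strategy as the paper: contradiction, rescaling near a puncture, zero $d\alpha^*$-area of the bubble, and Lemma~\ref{noarea} to force constancy. The paper streamlines the setup slightly by first passing to holomorphic charts $\psi_n\colon\mathbb{D}\to\dot\Sigma$ with $\|\nabla\psi_n\|\sim\rho(z_n)$ and then invoking Lemma~\ref{rescaling} directly (which already contains the Hofer's lemma step internally), so your separate application of Hofer's lemma on $(\dot\Sigma,h)$ followed by a citation of Lemma~\ref{rescaling} is a harmless redundancy; note also that in your Case~2 the relevant part of Lemma~\ref{noarea} is (1) (planes $\C$) rather than (2) (cylinders $\C^\times$), though the Liouville-type argument you describe is the right one.
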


\begin{rmk} \label{rmk: commensurate} Near a puncture, $\rho(z)\|
  \nabla F(z)\|$, calculated with respect to a complete, finite volume
  hyperbolic metric (i.e., a cusp), is commensurate to $\|\nabla
  F(z)\|$, calculated with respect to a flat metric on a
  half-cylinder. \end{rmk}

\begin{proof}
  We argue by contradiction. Suppose there is a sequence $z_n \in
  \dot{\Sigma}$ such that $$\rho(z_n) \| \nabla F(z_n) \| \to \infty$$
  as $n \to \infty$. By passing to a subsequence we may assume that
  $z_n$ converges to a puncture in $\mathbf{m}$.  Next, there exist
  holomorphic charts $\psi_n \colon \mathbb{D} \stackrel\sim\to
  {\mathcal D_n} \subset \dot{\Sigma}$ such that $\psi_n(0)=z_n$ and
  \[ C_1 \rho(z_n) \le \| \nabla \psi_n(z) \| \le C_2 \rho(z_n) \] for
  all $z\in \mathbb{D}$.  Here $C_1$ and $C_2$ are two positive
  constants that do not depend on $z_n$ and $\nabla$ is calculated
  with respect to the standard Euclidean metric on $\mathbb{D}$ and
  the hyperbolic metric on $\dot\Sigma$.  (This follows from
  Remark~\ref{rmk: commensurate}.) Setting
\[
\widetilde{F}_n = (\tilde{a}_n, \tilde{f}_n)= (a \circ \psi_n, f \circ
\psi_n),
\]
we have $\| \nabla \widetilde{F}_n(0) \| \to +\infty$ as $n \to +
\infty$.

We now apply Lemma~\ref{rescaling} to obtain the bubbling off of a
finite energy plane $\widetilde{F}^0=(\tilde{a}^0, \tilde{f}^0)$. In
both Cases (1) and (2) of Lemma~\ref{rescaling}, we have
\[
0\leq \int_{\C} (\tilde{f}^0)^* d \alpha\leq  \lim_{n \to \infty} \int_{\mathbb{D}}
(\tilde{f}_n)^* d \alpha = \lim_{n \to \infty} \int_{{\mathcal D}_n}
f^* d \alpha = 0,
\]
because the size of ${\mathcal D}_n$ is going to zero as $n$ goes to
infinity. Moreover, $\|\nabla \widetilde{F}^0\|$ is bounded by
construction. Hence $\widetilde{F}^0$ is a constant map by
Lemma~\ref{noarea}. This contradicts the property that $\| \nabla
\widetilde{F}_n^0(0) \|=1$ for all $n$.
\end{proof}

\subsubsection{Bound on $b$ for a single curve}

\begin{prop} \label{prop: bound on b for single F}
Let $F \colon \dot{\mathbb D}=\mathbb{D}-\{0\} \to (\R \times M^*,
J)$ be a finite energy $J$-holomorphic map such that $\tau \circ F$
is bounded. Then $b= t \circ F$ is bounded.
\end{prop}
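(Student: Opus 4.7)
The approach is to argue by contradiction. Suppose $b = t \circ F$ is unbounded on $\dot{\mathbb D}$. Since $\tau \circ F$ is bounded by hypothesis, the image of $F$ can only escape in the $\pm t$-direction, so without loss of generality there is a sequence $z_n \to 0$ with $b(z_n) \to +\infty$. The plan is to translate $F$ (in the $s$- and $t$-directions) to produce a limiting $J$-holomorphic map into the symplectization of the Top end, and then exploit the absence of closed Reeb orbits there via Lemma~\ref{noarea}.

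Concretely, I would parametrize $\dot{\mathbb D}$ by the half-cylinder $(0,\infty)\times S^1$ via $z=e^{-(s+i\theta)}$. By Proposition~\ref{gradientbound} together with Remark~\ref{rmk: commensurate}, $\|\nabla F\|$ in the flat metric is uniformly bounded by some constant $C$ on the tail. Let $(s_n,\theta_n)$ be the coordinates of $z_n$; then $b(s_n,\theta_n)\to+\infty$ and, since $\|\nabla b\|\le C$, for every compact $K\subset \R\times S^1$ the image $F\bigl(K+(s_n,\theta_n)\bigr)$ lies in the Top region $\R\times(1,\infty)\times \widehat{R_+(\Gamma)}$ once $n$ is large. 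On the Top, $\alpha^*=C\,dt+\widehat{\beta}_+$ and $J$ is both $\partial_s$- and $\partial_t$-invariant by conditions (A$_0$) and (A$_1$), so the translated maps
\[
\tilde F_n(s,\theta)\;=\;F(s+s_n,\theta+\theta_n)-\bigl(a(s_n,\theta_n),\,b(s_n,\theta_n),\,0\bigr)
\]
are $J$-holomorphic on their (exhausting) domains. They share the uniform gradient bound, and $\tilde F_n(0,0)=(0,0,v(s_n,\theta_n))$ lies in $\{0\}\times\{0\}\times K_0$ for a fixed compact $K_0\subset \widehat{R_+(\Gamma)}$ (because $\tau$ is bounded). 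Gromov--Schwarz controls all higher derivatives, so Arzel\`a--Ascoli yields a $C^\infty_{\mathrm{loc}}$-limit $\tilde F\colon \R\times S^1\to \R\times\R\times \widehat{R_+(\Gamma)}$ with bounded gradient and finite Hofer energy. Moreover, for any compact $K$,
\[
\int_K \tilde F^* d\alpha^*\;=\;\lim_n\int_{K+(s_n,\theta_n)} F^* d\alpha^*\;=\;0,
\]
since $F$ has finite total $d\alpha^*$-area. Thus $\tilde F$ has vanishing $d\alpha^*$-area on $\R\times S^1\cong\C^\times$.

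The target $\R\times\R\times\widehat{R_+(\Gamma)}$ is the symplectization of $(\R\times\widehat{R_+(\Gamma)},\,dt+\widehat{\beta}_+)$, whose Reeb vector field $\partial_t$ has \emph{no} closed orbits. Lemma~\ref{noarea}(2) then forces $\tilde F$ to be constant. The main obstacle, which I anticipate being the most delicate step, is converting this constancy into an actual contradiction: by itself, constancy of $\tilde F$ only says that $F$ becomes asymptotically flat in translated frames, which is not immediately incompatible with $b\to+\infty$ (for instance, $b$ could in principle grow sublinearly along the cylinder). To close the argument, I would refine the choice of $(s_n,\theta_n)$ by applying Hofer's Lemma~\ref{hofer} to $\|\nabla F\|$ and then invoking Case~(2) of the Bubbling Lemma~\ref{rescaling} to extract a \emph{nonconstant} finite-energy holomorphic plane $\C\to\R\times\R\times\widehat{R_+(\Gamma)}$ with zero $d\alpha^*$-area and bounded gradient; this must be constant by Lemma~\ref{noarea}(1), contradicting its nontrivial construction. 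The technical heart of the proof will be verifying that the bubble produced by Lemma~\ref{rescaling} genuinely lands in the Top symplectization (so that $\partial_s$- and $\partial_t$-invariance of $J$ is available) and has small $d\alpha^*$-area (via the finite total $d\alpha^*$-area of $F$ and the shrinking rescaling domain), so that Lemma~\ref{noarea} applies.
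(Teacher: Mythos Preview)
Your overall setup is correct and matches the paper: pass to half-cylinder coordinates, use the gradient bound from Proposition~\ref{gradientbound}, translate in $s$ and $t$, and aim for a limit in $\R\times\R\times\widehat{R_+(\Gamma)}$ to which Lemma~\ref{noarea}(2) applies. You also correctly identify the crux: a constant limit is not yet a contradiction, since $b$ could grow while $\|\nabla F\|$ decays along the cylinder.

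However, your proposed fix does not work. The gradient of $F$ is already \emph{uniformly bounded} on the half-cylinder by Proposition~\ref{gradientbound}, so there is no blow-up sequence to feed into Hofer's Lemma~\ref{hofer} or the Bubbling Lemma~\ref{rescaling}; the hypothesis $\|\nabla F_n(0)\|\to\infty$ of Lemma~\ref{rescaling} is simply unavailable here. Thus you cannot manufacture a nonconstant plane this way. The genuine obstruction is exactly the scenario you flagged: $\|\nabla F\|\to 0$ along the end while $b\to\infty$ (slow growth over an infinitely long cylinder), and bubbling says nothing about it.

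The paper resolves this by a dichotomy that your proposal is missing. One first builds nested intervals $[r_n^{(1)},r_n^{(4)}]$ on which $b$ increases by large amounts and the image stays in the Top. If $\sup\|\nabla F\|\ge c>0$ on the inner interval $[r_n^{(2)},r_n^{(3)}]$, one recenters at a point of gradient $\ge c$ and obtains a \emph{nonconstant} cylinder limit with zero $d\alpha^*$-area, contradicting Lemma~\ref{noarea}(2). If instead $\sup\|\nabla F\|=\varepsilon_n\to 0$ there, one shrinks $[r_n^{(2)},r_n^{(3)}]$ so that the image $Z_n$ has diameter between $1$ and $2$; the boundary circles then have length $\le 2\pi\varepsilon_n$, and an isoperimetric inequality caps them by small-area disks, forcing $\operatorname{Area}(Z_n)=O(\varepsilon_n^2)$. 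But $Z_n$ contains a ball of fixed radius disjoint from its boundary, so the monotonicity lemma gives a uniform positive lower bound on $\operatorname{Area}(Z_n)$, a contradiction. This ``long and thin tube'' argument, not bubbling, is the missing ingredient.
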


\begin{proof}
Let us rewrite $F$ as
$$F=(a,f)\colon [0,\infty)\times S^1 \to (\R\times M^*,J),$$
with coordinates $(r,\theta)$ for $[0,\infty)\times S^1$. Here we
are using the flat metric on the half-cylinder and the metric on
$\R\times M^*$ given by Equation~(\ref{eqn: metric}). The gradient
bound (Proposition~\ref{gradientbound}) and Remark~\ref{rmk:
commensurate} imply a uniform bound on $|b(r,\theta) -
b(r,\theta')|$ for all $r,\theta,\theta'$.

Arguing by contradiction, suppose that $b$ is not bounded. Without
loss of generality, we may assume that $\limsup \limits_{r\to
\infty} b(r,\theta) = \infty$. By the bound on $\|\nabla F\|$, there
are increasing sequences $\kappa_n\to\infty$, $r_n^{(i)}\to\infty$,
$i=1,2,3,4$, such that:
\begin{itemize}
\item $r_n^{(1)}< r_n^{(2)}< r_n^{(3)}<r_n^{(4)}$;
\item $r_n^{(i+1)}-r_n^{(i)}\to \infty$, $i=1,2,3$;
\item $b(r_n^{(i)},0)=i\kappa_n$, $i=1,2,3,4$; and
\item $b(r,\theta)\geq 1$ for all $(r,\theta)\in [r_n^{(1)},r_n^{(4)}]
\times S^1$, i.e., $f([r_n^{(1)},r_n^{(4)}]\times S^1)$ is contained
in the Top.
\end{itemize}
Hence we may view $F|_{[r_n^{(1)},r_n^{(4)}]\times S^1}$ as a map:
$$F_n\colon [r_n^{(1)},r_n^{(4)}]\times S^1\to (\R\times \R\times
\widehat{R_+(\Gamma)},J).$$

Modulo translations in the $r$-, $s$- and $t$-directions, we can
extract a convergent subsequence of $F_n$.  However, we need to
exercise some care in order to ensure that the limiting curve is
nonconstant.

First suppose that there is a constant $c>0$ such that
$\displaystyle\sup_{r\in [r_n^{(2)}, r_n^{(3)}]}\|\nabla F_n\|\geq
c$ for all $n$. (Note that we still have an upper bound on $\|\nabla
F_n\|$.) Then, after translating in the $r$- and $\theta$-directions
and restricting the domain, we may view $F_n$ as
\[
\widetilde F_n\colon  [-R_n,R_n]\times S^1\to (\R\times \R\times
\widehat{R_+(\Gamma)},J),
\]
where $\|\nabla \widetilde F_n(0,0)\|>c$ and $R_n\to\infty$. (Note
that we are using $[r_n^{(2)}, r_n^{(3)}]\subset [r_n^{(1)},
r_n^{(4)}]$ to give ourselves extra room on both sides.)  By our
assumption that $\tau\circ F$ is bounded, we can pass to a subsequence
so that after translating in the $s$- and $t$-directions, $\widetilde
F_n$ converges in $C^\infty_{loc}$ to
\[
\widetilde F_\infty\colon  \R\times S^1\to (\R\times\R\times
\widehat{R_+(\Gamma)},J).
\]
Since $\|\nabla \widetilde
F_\infty(0,0)\|\geq c$, it follows that $\widetilde F_\infty$ is
nonconstant. Also $\|\nabla \widetilde F_\infty\|$ is bounded by
construction.  Since $\widetilde F_\infty$ has zero $d\alpha$-energy
as argued in Proposition~\ref{gradientbound}, we can apply
Lemma~\ref{noarea}(2) to obtain a contradiction.

On the other hand, suppose there is a positive sequence
$\varepsilon_n\to 0$ such that $$\displaystyle\sup_{r\in [r_n^{(2)},
r_n^{(3)}]} \|\nabla F_n\|=\varepsilon_n.$$ By shrinking the
interval $[r_n^{(2)},r_n^{(3)}]$ if necessary, we may assume that
the distance between $F_n(r_n^{(2)},0)$ and $F_n(r_n^{(3)},0)$ is
$1$ and the diameter of $Z_n=F_n([r_n^{(2)}, r_n^{(3)}]\times S^1)$
is between $1$ and $2$. Such ``long and thin'' tubes in $\R\times
\R\times \widehat{R_+(\Gamma)}$ can be eliminated by the
isoperimetric inequality and the monotonicity lemma. Here the area
is calculated with respect to the metric given in
Equation~(\ref{eqn: metric}). More precisely, by the gradient bound,
$\gamma_n^{(2)}=F_n(\{r=r_n^{(2)}\})$ has length $\leq 2\pi
\varepsilon_n$. Now recall the following well-known isoperimetric
inequality (see for example~\cite[Proposition~A.1]{Hum}):

\begin{lemma}[Isoperimetric inequality]
Let $(M,g)$ be a Riemannian manifold with bounded geometry.  Then
there exist constants $\varepsilon>0$ and $C>0$ satisfying the
following: for every  $0<r<\varepsilon$ and geodesic ball $B_r(x)$
of radius $r$, if $\gamma$ is a closed curve in $B_r(x)$ of length
$l(\gamma)$, then there is a surface $S\subset B_r(x)$ with boundary
$\gamma$ such that
$$\mbox{Area}(S) \leq C (l(\gamma))^2.$$
Here the area and length are calculated with respect to the metric
$g$.
\end{lemma}

Continuing the proof of Proposition~\ref{prop: bound on b for single
  F}: by the isoperimetric inequality, there is a surface $S_n^{(2)}$
which bounds $\gamma_n^{(2)}$ and has area $\leq K_0 \varepsilon_n^2$,
where $K_0$ does not depend on $n$. The same can be said about
$\gamma_n^{(3)}=F_n(\{r=r_n^{(3)}\})$.

We now claim that
\begin{equation} \label{eqn: upper bound for zn}
C\cdot\mbox{Area}(Z_n)\leq \mbox{Area} (S_n^{(2)}\cup S_n^{(3)})
\leq 2K_0\varepsilon_n^2,
\end{equation}
for some positive constant $C$ which is independent of $n$.  The
first inequality follows from noting that:
\begin{enumerate}
\item[(i)] $C_1\cdot\int_{S}\omega\leq \mbox{Area} (S)$ for any
  surface $S$ (Wirtinger's inequality),
\item[(ii)] $\int_{Z_n} \omega = \int_{S_n^{(2)}\cup S_n^{(3)}}\omega$ 
(since $Z_n\cup S_n^{(2)}\cup S_n^{(3)}$ is nullhomologous due to
the fact that $Z_n$ is thin), and
\item[(iii)] $C_2 \cdot
\mbox{Area}(Z_n)\leq \int_{Z_n} \omega$ (since $J$ tames $\omega$ and
$Z_n$ is holomorphic).
\end{enumerate}
Here $\omega=d(e^s \alpha)$ is the
symplectization $2$-form and (i) and (iii) work because each $Z_n$,
after translation, is contained in $0\leq s\leq 2$ by the diameter
bound.

On the other hand, since $\varepsilon_n\to 0$ and the distance
between $F_n(r_n^{(2)},0)$ and $F_n(r_n^{(3)},0)$ is fixed, there is
a constant $\delta>0$, independent of $n$, such that a ball
$B_\delta(x_n)$ of radius $\delta$ centered at some point $x_n\in
Z_n$ does not intersect the boundary of $Z_n$. Then by the
monotonicity lemma,
\begin{equation}
\mbox{Area}(Z_n\cap B_\delta(x_n))\geq K_1 \delta^2
\end{equation}
for some constant $K_1>0$ which is independent of $n$. This
contradicts Inequality~(\ref{eqn: upper bound for zn}) for
sufficiently small $\varepsilon_n$. This concludes the proof of
Proposition~\ref{prop: bound on b for single F}.
\end{proof}

\subsubsection{Removal of singularities}

We now state some corollaries of the bound on the $t$-coordinate.

\begin{cor}[Removal of singularities for Top/Bottom] \label{removal}
Every finite energy holomorphic map
\[ F=(a,f,v) \colon \dot{\mathbb{D}}= \{ z \in \C ~|~ 0 < |z| < 1 \}
\to (\R \times \R \times \widehat{R_+(\Gamma)},J) \] with $\tau
\circ f$ bounded, extends to a finite energy holomorphic map
$$\overline{F} \colon \mathbb{D} \to (\R \times \R \times
\widehat{R_+(\Gamma)}, J).$$
\end{cor}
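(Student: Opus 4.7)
The plan is to reduce the statement to the standard removal-of-singularities theorem for finite energy holomorphic curves in a symplectization, after using the two-sided bounds on $\tau\circ f$ and $t\circ F$ to confine the image of $f$ to a compact region.

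First, I would reinterpret the target as a symplectization. Because $J$ is tailored (condition (A$_1$)), the almost complex structure on $\R\times\R\times\widehat{R_+(\Gamma)}$ is $\bdry_t$-invariant, and the contact form is $\alpha=dt+\widehat\beta_+$. Hence we may view the target as the symplectization $\R\times Y$ of the contact manifold $Y=\R\times\widehat{R_+(\Gamma)}$ with contact form $\alpha_Y=dt+\widehat\beta_+$. The Reeb vector field of $\alpha_Y$ is $\bdry_t$, which translates the $\R$-factor of $Y$; in particular, $\alpha_Y$ admits \emph{no} closed Reeb orbits. This is the structural fact that will force the puncture to be removable.

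Second, I would confine the image of $f$ to a compact subset of $Y$. By hypothesis $\tau\circ f$ is bounded, and Proposition~\ref{prop: bound on b for single F} applied to the restriction of $F$ then yields that $b=t\circ F$ is bounded as well. Together these imply that $f(\dot{\mathbb D})$ lies in a set of the form $[-C,C]\times\bigl(R_+(\Gamma)\cup([0,C']\times\bdry R_+(\Gamma))\bigr)$, which is a compact subset of $Y$.

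Third, I would invoke the standard asymptotic analysis of Hofer for a finite energy $J$-holomorphic curve at a puncture in a symplectization (see \cite{Ho1} and \cite[Thm.~5.10, Prop.~5.13]{BEHWZ}): reparametrizing $\dot{\mathbb D}$ as a half-cylinder $[0,\infty)\times S^1$ via $z=e^{-(r+i\theta)}$, either (a) $F$ extends continuously across the puncture, in which case by elliptic regularity for pseudoholomorphic maps (Gromov's removal of singularities) the extension is smooth and $J$-holomorphic, or (b) $a=s\circ F\to\pm\infty$ along some sequence $r_n\to\infty$ and, after a suitable subsequence, $F$ is asymptotic to a trivial cylinder over a closed Reeb orbit of $(Y,\alpha_Y)$. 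Case (b) is incompatible with the absence of closed Reeb orbits established in the first step, so case (a) must hold. The extension automatically has finite Hofer energy, since the energy of $F$ on a punctured neighborhood of $0$ equals the energy of its extension.

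The main obstacle, which I have circumvented by Step 2, is controlling the $s$-coordinate at the puncture: a priori $a=s\circ F$ could diverge, which would require the image to accumulate on a closed Reeb orbit. The key observation is that the Reeb flow on $Y=\R\times\widehat{R_+(\Gamma)}$ is the translation flow in the $\R$-direction, so no closed orbits exist, ruling out the non-removable case automatically once finite energy and a compact image in $Y$ are established.
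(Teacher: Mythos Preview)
Your proposal is correct and follows essentially the same route as the paper's proof: bound $b=t\circ F$ via Proposition~\ref{prop: bound on b for single F}, then apply the standard symplectization dichotomy (removable singularity versus asymptotic to a closed Reeb orbit), and conclude removability from the absence of closed Reeb orbits in $\R\times\widehat{R_+(\Gamma)}$. The paper's argument is terser but identical in content.
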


\begin{proof}
Since $b$ is bounded by Proposition~\ref{prop: bound on b for single
F}, the usual argument for a symplectization applies: either $F$
approaches a closed orbit of the Reeb vector field as $|z|\to 0$, or
the singularity is removable. Since there are no closed orbits on
$\R\times\R\times\widehat{R_+(\Gamma)}$, the result follows.
\end{proof}

\begin{cor} \label{removal-global}
Let $F=(a, f) \colon (\Sigma, j,\mathbf{m})  \to (\R \times M^*, J)$
be a finite energy $J$-holomorphic map with $\tau\circ F$ bounded. Then the set
of punctures $\mathbf{m}$ can be written as $\mathbf{m}^+ \sqcup
\mathbf{m}^- \sqcup \mathbf{m}^r$, where:
\begin{itemize}
\item for any $z_+ \in \mathbf{m}^+$ we have $\lim \limits_{z \to z_+}
  a(z) = + \infty$ and $\limsup \limits_{z \to z_+} |b(z)| < +
  \infty$;
\item for any $z_- \in \mathbf{m}^-$ we have $\lim \limits_{z \to z_-}
  a(z) = - \infty$ and $\limsup \limits_{z \to z_-} |b(z)| < +
  \infty$;
\item for any $z_r  \in \mathbf{m}^r$ the singularity is removable.
\end{itemize}
\end{cor}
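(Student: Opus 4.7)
The plan is to analyze each puncture separately, using the bounds already established. Fix $z_0 \in \mathbf{m}$ and identify a punctured disk neighborhood of $z_0$ with the half-cylinder $\Sigma_0 = [r_0, \infty) \times S^1$, where $z_0$ lies at $r = +\infty$. By Lemma~\ref{lemma: bound on tau} we have $\tau \circ f \leq 0$ on $\Sigma_0$, and Proposition~\ref{prop: bound on b for single F} applied to this punctured disk gives $|b| = |t \circ f| \leq C_b$ for some constant $C_b$. Consequently, $f(\Sigma_0)$ is contained in the compact subset
\[
K := \{x \in M^* \mid \tau(x) \leq 0 \text{ and } |t(x)| \leq C_b\} \subset M^*,
\]
and by Proposition~\ref{gradientbound} together with Remark~\ref{rmk: commensurate}, $\|\nabla F\|$ is uniformly bounded on $\Sigma_0$ with respect to the flat cylindrical metric. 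The bound on $\limsup|b|$ appearing in the statement is already present; it remains to classify the behavior of $a = s \circ F$ near $z_0$.

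I would split into two cases. If $a$ is bounded on $\Sigma_0$, then $F(\Sigma_0)$ lies in a compact subset of $\R \times M^*$; with finite Hofer energy and a bounded gradient, the classical removal of singularities theorem (applied in finitely many Darboux charts covering the image, as in \cite[Prop.~5.4]{BEHWZ}) extends $F$ holomorphically across $z_0$, so $z_0 \in \mathbf{m}^r$. If $a$ is unbounded on $\Sigma_0$, I would invoke Hofer's asymptotic analysis \cite[Secs.~5--6]{Ho1}: pick a sequence $r_n \to \infty$ and consider the translates
\[
F_n(r,\theta) = \bigl(a(r+r_n,\theta) - a(r_n,0),\ f(r+r_n,\theta)\bigr),
\]
which have image in $\R \times K$, uniformly bounded gradient, and normalized $s$-coordinate. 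By Arzel\`a--Ascoli a subsequence converges in $C^\infty_{\mathrm{loc}}(\R \times S^1)$ to a nonconstant finite energy holomorphic cylinder $F_\infty$ of zero $d\alpha^*$-energy, which must be a cover of the trivial cylinder over some closed Reeb orbit $\gamma \subset K$ of $R_{\alpha^*}$.

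The asymptotic charge
\[
m_0 = \lim_{r \to \infty} \int_{\{r\} \times S^1} f^* \alpha^*
\]
exists (the integrand is nondecreasing in $r$ by $d\alpha^*$-positivity of $J$, and is bounded above by the Hofer energy), is independent of the subsequence $r_n$ (every bubble limit yields the same charge), and is nonzero (it equals the action of $\gamma$ times the covering degree). Its sign then forces $a \to +\infty$ or $a \to -\infty$ uniformly in $\theta$ as $r \to \infty$, placing $z_0$ in $\mathbf{m}^+$ or $\mathbf{m}^-$ respectively.

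The main obstacle is justifying the last two paragraphs: Hofer's asymptotic machinery is classically formulated for a compact contact manifold, while $M^*$ is noncompact. The resolution is that all holomorphic curves arising in the argument --- $F|_{\Sigma_0}$ itself and every bubble limit --- have image inside the compact set $K$, so Hofer's proofs (which rely on compactness of the image, not of the ambient target) carry through verbatim. This is exactly the role played by Lemma~\ref{lemma: bound on tau} and Proposition~\ref{prop: bound on b for single F}.
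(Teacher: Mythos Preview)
Your proof is correct and matches the paper's approach; in fact the paper states this corollary without proof, leaving it implicit that once $\tau$ and $b$ are bounded (the latter by Proposition~\ref{prop: bound on b for single F}), the projection $f$ lands in a compact subset of $M^*$ and the standard Hofer asymptotic dichotomy from the closed case applies verbatim. Your write-up supplies exactly these details.

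One small correction: you invoke Lemma~\ref{lemma: bound on tau} to get $\tau\circ f\le 0$, but that lemma is stated only for curves in $\mathcal{M}_g(\underline\gamma;\underline\gamma';J)$, i.e.\ curves already known to be asymptotic to Reeb orbits at every puncture --- which is precisely what you are trying to establish. This is harmless, since boundedness of $\tau\circ F$ is a hypothesis of the corollary and that is all you need to confine $f$ to a compact set; simply drop the reference to Lemma~\ref{lemma: bound on tau} and use the hypothesis directly.
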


\subsection{Bounds for sequences of holomorphic curves}

To extend the SFT and ECH compactness theorems to our situation, we
need uniform bounds on the $t$ coordinate for sequences of holomorphic
curves.

\subsubsection{Gradient bound for a sequence}

We start with the following lemma which gives a gradient bound
for a sequence of holomorphic maps. The proof is similar to the
proof of Proposition~\ref{gradientbound} and to \cite[Section
10.2.1]{BEHWZ}.

\begin{lemma}\label{gradbound}
Let $F_n= (a_n, f_n) \colon (\Sigma_n, j_n,\mathbf{m}_n) \to (\R
\times M^*, J)$ be a sequence of $J$-holomorphic maps such that
there exists $C>0$ with $E(F_n)<C$ and $|\tau\circ F_n|<C$. Then we
can remove finite sets $\mathbf{m}_n^0$ from $\Sigma_n \setminus
\mathbf{m}_n$ so that the sequence
$$F_n \colon (\Sigma_n \setminus (\mathbf{m}_n \cup  \mathbf{m}_n^0),j_n)
\to (\R\times M^*,J)$$ satisfies the bound
\begin{equation} \label{gradientbound.eq}
{\rho_n(x)} \| \nabla F_n (x) \| \le C, \quad \forall x \in \Sigma_n
\setminus (\mathbf{m}_n \cup  \mathbf{m}_n^0),
\end{equation}
where the norm of gradient is computed with respect to the unique
complete, finite volume hyperbolic metric which is compatible with
$j_n$ on $ \Sigma_n \setminus (\mathbf{m}_n \cup  \mathbf{m}_n^0)$,
and with respect to the metric on $\R \times M^*$ given by
Equation~(\ref{eqn: metric}).
\end{lemma}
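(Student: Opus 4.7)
The plan is to adapt the single-curve argument of Proposition~\ref{gradientbound} to a sequence by an iterative bubbling procedure, using the uniform Hofer-energy bound $E(F_n)<C$ to control the number of extracted bubbles. Suppose, toward a contradiction, that no choice of finite sets $\mathbf{m}_n^0$ achieves the inequality with any uniform constant $C'$ in place of $C$. Then for each $n$ we may pick a point $x_n\in\Sigma_n\setminus\mathbf{m}_n$ at which $\rho_n(x_n)\|\nabla F_n(x_n)\|$ is arbitrarily large; applying Hofer's Lemma~\ref{hofer} to rescale and then invoking the Bubbling Lemma~\ref{rescaling} (whose hypothesis of an upper bound on $\tau\circ F_n$ holds by assumption) yields a non-constant finite energy plane $F^0$, landing either in $(\R\times M^*,J)$ (Case~1) or in $(\R\times\R\times\widehat{R_\pm(\Gamma)},J)$ (Case~2), with $E(F^0)\le C$.

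The heart of the argument is to rule out Case~2 and to establish a positive lower bound $\hbar>0$ on the $d\alpha^*$-area of every bubble. For Case~2, Corollary~\ref{removal} extends $F^0$ across its unique puncture at $\infty$ to a holomorphic map $\overline{F^0}\colon\mathbb{CP}^1\to\R\times\R\times\widehat{R_\pm(\Gamma)}$; since the contact primitive $\alpha^*$ is globally defined on this target, Stokes gives $\int_{\mathbb{CP}^1}(\overline{F^0})^*d\alpha^*=0$, so Lemma~\ref{noarea}(1), applied to the restriction to $\C$, forces $F^0$ to be constant, contradicting the conclusion of Lemma~\ref{rescaling}. Hence every bubble arises in Case~1 and, after removal of removable singularities, is asymptotic at $\infty$ to a closed Reeb orbit of $R_{\alpha^*}$. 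Since $R_{\alpha^*}$ is a constant multiple of $\partial_t$ on the ends (T), (B), and (S) of $M^*$, all closed Reeb orbits of $R_{\alpha^*}$ are confined to the compact piece $M$; because $R_\alpha$ is a smooth non-vanishing vector field on $M$, the infimum $\hbar$ of its periods is strictly positive, so $E(F^0)\ge\hbar$.

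To conclude, for each $n$ we inductively select a maximal collection of mutually disjoint ``bubbling disks'' around concentration points on $\Sigma_n$; each yields a bubble contributing at least $\hbar$ of Hofer energy in the limit. By lower semicontinuity of Hofer energy together with $E(F_n)<C$, the process terminates after at most $\lfloor C/\hbar\rfloor$ steps, producing the desired finite set $\mathbf{m}_n^0$. Passing to the complete hyperbolic metric on $\Sigma_n\setminus(\mathbf{m}_n\cup\mathbf{m}_n^0)$, the cusp at each newly added puncture forces $\rho_n$ to vanish rapidly there (cf.\ Remark~\ref{rmk: commensurate}), absorbing the gradient blowup. The main obstacle will be to carry out this iteration rigorously --- in particular, to verify that adding one puncture genuinely kills the gradient singularity in its neighborhood without merely shifting the blowup elsewhere. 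This is handled by a standard diagonal argument applied to local rescaled limits, invoking Proposition~\ref{gradientbound} for each limit to upgrade the pointwise bubble extraction to the uniform bound~(\ref{gradientbound.eq}).
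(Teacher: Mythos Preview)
Your proposal is correct and follows essentially the same approach as the paper: bubble off a finite energy plane via Lemma~\ref{rescaling}, rule out Case~2 by extending to a sphere in the exact target $\R\times\R\times\widehat{R_\pm(\Gamma)}$ (the paper invokes exactness directly rather than routing through Lemma~\ref{noarea}, but the content is the same), and then use the uniform lower bound on the action of closed Reeb orbits to cap the number of added punctures. Your write-up is somewhat more explicit about the iteration and the quantum $\hbar$, whereas the paper simply defers to \cite[Subsection~10.2.1]{BEHWZ} for that step.
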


\begin{proof}
Suppose there is a sequence $z_n \in \Sigma_n \setminus
\mathbf{m}_n$ such that $\rho_n(z_n) \| \nabla F_n(z_n) \| \to
\infty$ for $n \to \infty$.  There exist holomorphic charts $\psi_n
\colon \mathbb{D} \stackrel\sim\to {\mathcal D_n} \subset \Sigma_n
\setminus \mathbf{m}_n$ such that $\psi_n(0)=z_n$ and
\[ C_1 \rho_n(z_n) \le \| \nabla \psi_n(z) \| \le C_2 \rho_n(z_n) \]
for all $z\in \mathbb{D}$.  Here $C_1$ and $C_2$ are two positive
constants that do not depend on $z_n$ and $\nabla$ is calculated
with respect to the standard Euclidean metric on $\mathbb{D}$ and
the complete hyperbolic metric on $\Sigma_n \setminus \mathbf{m}_n$.
Setting
\[ \widetilde{F}_n = (\tilde{a}_n, \tilde{f}_n)=
(a_n \circ \psi_n, f_n \circ \psi_n), \]
we have $\| \nabla \widetilde{F}_n(0) \| \to +\infty$ as $n \to +
\infty$.

By Lemma~\ref{rescaling} we obtain the bubbling off of a nonconstant
finite energy plane $\widetilde{F}^0 \colon \C \to (\R \times M^*,
J)$ or $\widetilde{F}^0 \colon \C \to (\R \times \R\times
\widehat{R_\pm(\Gamma)}, J)$. The latter cannot happen because a
nonconstant finite energy plane in $\R \times \R \times
\widehat{R_{\pm}(\Gamma)}$ would extend to a nonconstant holomorphic
sphere by Corollary~\ref{removal}. (Note that there are no closed
orbits in $\R\times \widehat{R_\pm(\Gamma)}$.) This is a
contradiction since the symplectic form on $\R \times \R \times
\widehat{R_{\pm}(\Gamma)}$ is exact. Also observe that the finite
energy plane $\widetilde{F}^0$ is positively asymptotic to a closed
Reeb orbit because there are no nonconstant holomorphic spheres in
$(\R \times M^*, J)$.

In order to achieve the gradient bound given by
Equation~(\ref{gradientbound.eq}), we add marked points in the
bubbling neighborhoods as in \cite[Subsection 10.2.1]{BEHWZ}. Since
there is a uniform lower bound on the areas of finite energy planes,
we only need a finite set $\mathbf{m}^0_n$.
\end{proof}

\subsubsection{Bound on $b_n$, assuming topological complexity bound}

We now prove the following bound on $b_n$, provided we have bounds
on the energy and genus (and number of marked points).

\begin{prop} \label{noside} Let $F_n = (a_n, f_n) \colon (\Sigma_n,
  j_n,\mathbf{m}_n) \to (\R \times M^*, J)$ be a sequence of
  holomorphic maps with uniform upper bounds on $|\tau \circ F_n|$,
  the energy $E(F_n)$, and the ``topological complexity''
  $g(\Sigma_n)+|\mathbf{m}_n|$. Then there is a uniform upper bound on
  $|b_n|= |t \circ f_n|$.
\end{prop}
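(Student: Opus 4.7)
The plan is to argue by contradiction: suppose $|b_n|$ is not uniformly bounded. After passing to a subsequence and, if necessary, applying the reflection $t\mapsto -t$, I may assume there are points $z_n\in\dot\Sigma_n$ with $b_n(z_n)\to+\infty$, so that $F_n(z_n)$ eventually lies in the Top. I would first add auxiliary punctures $\mathbf{m}_n^0$ via Lemma~\ref{gradbound} to secure the bound $\rho_n\|\nabla F_n\|\le C$; since each bubbling point absorbs a definite quantum of Hofer energy, the uniform bound $E(F_n)\le C$ forces $|\mathbf{m}_n^0|$ to be uniformly bounded. With $2g(\Sigma_n)+|\mathbf{m}_n\cup\mathbf{m}_n^0|$ still bounded, Theorem~\ref{classical} lets me pass to a subsequence for which the stable marked surfaces $(\Sigma_n,j_n,\mathbf{m}_n\cup\mathbf{m}_n^0)$ converge to a nodal Riemann surface $\mathbf{S}=(\Sigma,j,\mathbf{m}_\infty,D)$, with accompanying diffeomorphisms $\varphi_n$ and a decomposition into thick and thin parts as in Proposition~\ref{radius}.

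Next I would analyse the thick part. There the injectivity radii of $\mathbf{S}_n$ are bounded below, so by the gradient bound $\|\nabla F_n\|$ is uniformly bounded in Euclidean charts. Using the $s$-translation invariance of $J$ everywhere, and the additional $t$-translation invariance in the Top (condition (A$_1$)), I may translate each connected component of the preimage of a thick piece in $(s,t)$ and extract a $C^\infty_{\mathrm{loc}}$-convergent subsequence, yielding a non-constant holomorphic limit map either into $\R\times M^*$ or, for components whose image is eventually in the Top, into $\R\times\R\times\widehat{R_+(\Gamma)}$. The limit is genuinely controlled at the original punctures of $\mathbf{m}_n$, since all closed Reeb orbits of $\alpha^*$ lie in $M$, where $|b|\le 1$. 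In particular, if the points $z_n$ lay in the thick part for infinitely many $n$, the limit would attain $b=+\infty$, contradicting that the target is a smooth manifold. Hence I may assume, after a further subsequence, that $z_n$ lies in a thin cylindrical component $C_n\subset\Sigma_n$.

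The main obstacle, and where the bulk of the work lies, is to rule out the thin case. Identify $C_n$ with $[0,L_n]\times S^1$ in the flat (equivalently, hyperbolic up to bounded distortion) metric, with $L_n\to\infty$. Near each end of $C_n$ the value $|b_n|$ is bounded (by the thick-part analysis, or by convergence to a Reeb orbit in $M$ if the end abuts an original puncture), whereas $b_n(z_n)\to\infty$; thus $b_n$ must increase by arbitrarily large amounts along $C_n$, entirely inside the Top. I now mimic the proof of Proposition~\ref{prop: bound on b for single F}: I locate radii $r_n^{(1)}<r_n^{(2)}<r_n^{(3)}<r_n^{(4)}$ in $[0,L_n]$ with consecutive gaps tending to infinity, with $b_n(r_n^{(i)},0)=c_n+i\kappa_n$ for some $\kappa_n\to\infty$, and with $F_n([r_n^{(1)},r_n^{(4)}]\times S^1)$ contained in $\R\times\R\times\widehat{R_+(\Gamma)}$. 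In the first case, $\sup_{[r_n^{(2)},r_n^{(3)}]\times S^1}\|\nabla F_n\|$ has a positive lower bound along some subsequence; translating in $s$, $t$ and in the cylinder parameters and invoking the Arzel\`a--Ascoli argument of Lemma~\ref{rescaling}, I bubble off a non-constant finite-energy holomorphic cylinder in $\R\times\R\times\widehat{R_+(\Gamma)}$ with $\int d\alpha=0$ (the $d\alpha$-energy on the shrinking neck tends to zero), and Lemma~\ref{noarea}(2) forces it to be constant --- a contradiction, since $R_{\alpha^*}=\partial_t$ has no closed orbits in the Top. In the second case, $\sup\|\nabla F_n\|\to 0$ on the middle strip, and the ``long and thin tube'' argument from Proposition~\ref{prop: bound on b for single F}, combining the isoperimetric inequality (giving a quadratic area upper bound in $\varepsilon_n\to 0$) with the monotonicity lemma (giving a uniform lower bound on the area of a ball of fixed radius inside $Z_n$), produces the desired contradiction. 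Either outcome is impossible, so $|b_n|$ is uniformly bounded.
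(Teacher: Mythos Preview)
Your overall architecture matches the paper's: contradiction, gradient bound via Lemma~\ref{gradbound}, Deligne--Mumford convergence, thick/thin decomposition, and then the dichotomy on the thin cylinder. There are, however, two gaps.

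First, your reduction to the thin case is organised around the location of the point $z_n$, whereas the paper tracks the \emph{variation} $\Delta_n(C_i^n)=\sup_{C_i^n}b_n-\inf_{C_i^n}b_n$. Your claim that ``near each end of $C_n$ the value $|b_n|$ is bounded'' is not justified: the adjacent thick piece has bounded $b_n$-\emph{variation}, but its $b_n$-values could still be large if it is separated from all original punctures by other thin components. The clean fix (which is what the paper does) is to note that $\sup_{\dot\Sigma_n}b_n-\inf_{\dot\Sigma_n}b_n\to\infty$, that the cover has a fixed finite number of pieces, and that thick pieces have bounded diameter and hence bounded $\Delta_n$; so some \emph{thin} piece $C_0^n$ has $\Delta_n(C_0^n)\to\infty$, and you work with that one.

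Second, and more substantively, in your Case~1 on the thin cylinder you assert that the limit cylinder has $\int d\alpha=0$ because ``the $d\alpha$-energy on the shrinking neck tends to zero''. That justification is borrowed from the single-curve argument (Proposition~\ref{prop: bound on b for single F}), where the subcylinders are nested in a fixed domain with finite total $d\alpha$-area. Here the $F_n$ live on different domains $\Sigma_n$, and there is no reason the $d\alpha$-energy on $C_0^n$ (or on $[r_n^{(2)},r_n^{(3)}]\times S^1$) tends to zero; it is merely bounded. Consequently Lemma~\ref{noarea}(2) is not directly available. The paper instead observes that the nonconstant limit cylinder $\widetilde F_\infty\colon\R\times S^1\to\R\times\R\times\widehat{R_+(\Gamma)}$ has bounded $\tau$ and finite energy, so by Corollary~\ref{removal} both ends are removable (there being no closed Reeb orbits in the Top). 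This produces a nonconstant holomorphic sphere in $\R\times\R\times\widehat{R_+(\Gamma)}$, which is impossible since the symplectic form there is exact. Replace your appeal to Lemma~\ref{noarea}(2) by this removal-of-singularities step and the argument goes through.
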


\begin{proof}
Let $F_n$ be a sequence as in the hypothesis of
Proposition~\ref{noside}.  Arguing by contradiction, suppose the
functions $b_n$ are not uniformly bounded. Without loss of
generality we may assume that
$\displaystyle\lim_{n\to\infty}(\sup_{\dot{\Sigma}_n} b_n)= +
\infty$ for $n \to \infty$. By Lemma \ref{gradbound} we can add
marked points $\mathbf{m}_n^0$ to $\dot{\Sigma}_n=\Sigma_n\setminus
\mathbf{m}_n$ to obtain the gradient bound given by
Equation~(\ref{gradientbound.eq}) for the sequence $F_n$.

By Theorem~\ref{classical}, there is a subsequence of
$\mathbf{S}_n'= (\Sigma_n, j_n, \mathbf{m}_n \cup \mathbf{m}_n^0)$
which converges to a nodal surface $\mathbf{S}=(\Sigma, j,
\mathbf{m}, D)$. Fix $\epsilon < \frac 14 \log(1+ \sqrt{2})$ (i.e.,
$1/4$ of the constant required for the thick-thin decomposition),
and consider the cover
\[
\Sigma-(\mathbf{m}\cup D)= C_0 \cup \ldots \cup C_k,
\]
where $C_i$ is either a connected component of ${\rm
  Thick}_{2\epsilon}(\mathbf{S})$ or a ``connected component'' of
${\rm Thin}_{3 \epsilon}(\mathbf{S})$. (Here any two components of
${\rm Thin}_{3\epsilon}(\mathbf{S})$, whose corresponding marked
points in $D$ are identified, are regarded as part of the same
``connected component'' of ${\rm Thin}_{3 \epsilon}(\mathbf{S})$.)
Similarly consider the cover
\[
\Sigma_n-(\mathbf{m}_n\cup \mathbf{m}_n^0)=C_0^n\cup\ldots \cup C_k^n,
\]
where $C_i^n$ is a component of ${\rm
Thick}_{\epsilon}(\mathbf{S}'_n)$ or a ``connected component'' of
${\rm Thin}_{4\epsilon}(\mathbf{S}'_n)$, and $C_i^n$ corresponds to
$C_i$. By Proposition \ref{radius}, for sufficiently large $n$,
$C_i$ is contained in a component $C^n_i$ for all $i$.

Now define
\[
\Delta_n(C_i^n)= \sup_{C_i^n} b_n - \inf_{C_i^n} b_n.
\]
Since $\lim \limits_{n \to \infty} (\sup \limits_{\dot\Sigma_n} b_n)
= + \infty$ and the ends of $F_n$ are asymptotic to cylinders over
Reeb orbits in $M$, it follows that $$\lim_{n\to \infty}
(\sup_{\dot\Sigma_n} b_n -\inf_{\dot\Sigma_n}b_n) = +\infty.$$ Now,
since each covering has the same finite number of components, there
must be one
--- which we call $C_0^n$ without loss of generality
--- for which $\lim \limits_{n \to \infty} \Delta_n(C_0^n) = +
\infty$. By Lemma \ref{gradbound} and Proposition \ref{radius},
$\|\nabla F_n\|$ is uniformly bounded on ${\rm
Thick}_{2\epsilon}(\mathbf{S})$.  Since the variation of $b_n$ is
bounded on the thick part due to a bound on the diameter, $C_0$ must
be a connected component of ${\rm Thin}_{3 \epsilon}(\mathbf{S})$.

By reparametrizing the component $C^n_0$ using a standard flat
cylinder, we can write $F_n$ on $C^n_0$ as:
$$F_n\colon  [0,r_n]\times S^1 \to (\R \times \R \times
\widehat{R_+(\Gamma)},J),$$ where $\|\nabla F_n\|$ is uniformly
bounded by Lemma~\ref{gradbound}, in view of Remark~\ref{rmk:
commensurate}. This uniform bound implies that $\op{Im} F_n$ has
bounded diameter (independent of $n$) when restricted to any circle
$\{r=\mbox{const}\}$.

The rest of the proof is as in Proposition~\ref{prop: bound on b for
single F}. There exist $\kappa_n\to\infty$ and $r_n^{(i)}\to
\infty$, $i=1,2,3,4$, such that:
\begin{itemize}
\item $0<r_n^{(1)}< r_n^{(2)}< r_n^{(3)}<r_n^{(4)}<r_n$;
\item $r_n^{(i+1)}-r_n^{(i)}\to \infty$, $i=1,2,3$;
\item $b_n(r_n^{(i+1)},0)-b_n(r_n^{(i)},0)=\kappa_n$, $i=1,2,3$;
\footnote{Note that,
unlike the corresponding condition for Proposition~\ref{prop: bound
on b for single F}, we are taking the difference of the $b_n$
values.} and
\item $b_n(r,\theta)\geq 1$ for all $(r,\theta)\in [r_n^{(1)},r_n^{(4)}]
\times S^1$, i.e., $f([r_n^{(1)},r_n^{(4)}]\times S^1)$ is contained
in the Top.
\end{itemize}
If $\sup\|\nabla F_n\|$ is bounded below by $c>0$ on
$[r_n^{(2)},r_n^{(3)}]\times S^1$, then, after restricting the
domain of $F_n$ and translating in the $r$- and $\theta$-directions,
we obtain:
$$ \widetilde F_n\colon  [-R_n,R_n]\times S^1\to (\R \times \R \times
\widehat{R_+(\Gamma)},J),$$ where $\|\nabla \widetilde
F_n(0,0)\|\geq c$ and $R_n\to \infty$. The limit curve
$$\widetilde F_\infty\colon  \R\times S^1\to (\R \times \R
\times \widehat{R_+(\Gamma)},J),$$ is a nonconstant holomorphic
curve. By Corollary~\ref{removal}, we can extend this function to a
nonconstant holomorphic sphere in $(\R \times \R \times
\widehat{R_+(\Gamma)},J)$ and obtain a contradiction.  On the other
hand, if $\sup \|\nabla F_n\|\to 0$ on $[r_n^{(2)},r_n^{(3)}]\times
S^1$, then we can eliminate the ``long and thin'' tubes in $\R\times
\R\times \widehat{R_+(\Gamma)}$ as in Proposition~\ref{prop: bound
on b for single F}.
\end{proof}

\begin{cor}
\label{cor:SFTcompactness}
Let $M^*$ be the completion of a sutured contact manifold $M$ and let
$J$ be a tailored almost complex structure on the symplectization
$\R\times M^*$, as usual.  Then the SFT compactness theorem
\cite[Theorem 10.1]{BEHWZ} holds for $J$-holomorphic curves in
$\R\times M^*$ whose punctures are asympotic to Reeb orbits.
\end{cor}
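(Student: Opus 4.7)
The plan is to reduce Corollary~\ref{cor:SFTcompactness} to the SFT compactness theorem \cite[Theorem 10.1]{BEHWZ} in the closed case by confining any sequence of holomorphic curves to a compact region of $M^*$. Recall that the statement of the SFT compactness theorem applies to a sequence $F_n$ of $J$-holomorphic curves in a symplectization $\R\times Y$ with uniform upper bounds on the Hofer energy $E(F_n)$ and on the topological complexity $g(\Sigma_n)+|\mathbf{m}_n|$; the conclusion is that, after passing to a subsequence, the curves converge in the sense of broken holomorphic buildings. The proof in \cite{BEHWZ} is essentially local, and the only place compactness of $Y$ enters is to guarantee that the images of the $F_n$ are contained in a compact subset of $Y$, which is needed both for the gradient bound arguments and for the Arzel\`a--Ascoli extraction of a limit.

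First I would observe that for any such sequence in $\R\times M^*$, Lemma~\ref{lemma: bound on tau} gives $\tau\circ F_n\leq 0$ everywhere, so the images $f_n(\dot\Sigma_n)$ do not enter the Side (S). Next, Proposition~\ref{noside} applies directly: under the hypotheses of SFT compactness we already assume uniform bounds on $E(F_n)$ and on $g(\Sigma_n)+|\mathbf{m}_n|$, and the $\tau$-bound just established feeds into its hypothesis. We therefore obtain a uniform constant $C>0$ with $|t\circ f_n|\leq C$ for all $n$. Combining the two estimates, there is a compact set $K\subset M^*$ (namely $K=\{\tau\leq 0\}\cap\{|t|\leq C\}$) such that $f_n(\dot\Sigma_n)\subset K$ for all $n$.

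With this confinement in hand, the entire local analysis of \cite{BEHWZ} --- bubbling off of finite-energy planes/cylinders/spheres, the thick-thin decomposition on the domain, Gromov--Schwarz elliptic estimates, the monotonicity lemma, and the Arzel\`a--Ascoli extraction of $C^\infty_{loc}$ limits --- may be carried out inside the fixed compact region $\R\times K$. The only subtlety, and what I expect to be the main technical nuisance rather than a genuine obstacle, is that the Reeb orbits to which the punctures are asymptotic may a priori not come from a compact set of orbits; however our confinement forces any limit orbit to lie in $K$, and on $K$ the Reeb vector field $R_{\alpha^*}$ agrees with the translation-invariant Reeb vector field of $\alpha$ on the compact manifold $M$, so the space of orbits of bounded action that can appear as asymptotic limits is compact in the usual $C^\infty$ sense. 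This is exactly the input needed to run the SFT limiting argument at each puncture.

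Therefore the argument consists of: (i) verifying that the hypotheses of Proposition~\ref{noside} are implied by those of SFT compactness (which follows because $|\tau\circ F_n|$ is bounded above by Lemma~\ref{lemma: bound on tau} and below by $-\tau$ being proper on $M^*$ restricted to the set where curves can live after applying our bounds on $t$), (ii) confining all the curves to a fixed compact $\R\times K$, and (iii) running the \cite{BEHWZ} proof verbatim inside this compact region. The conclusion is convergence to a holomorphic building in $\R\times M^*$ of the same type as in the closed case, with each level a (possibly nodal) holomorphic curve in either the symplectization $\R\times M^*$ itself or, after suitable rescaling, in $\R\times M^*$; no level can be lost into the noncompact ends (S), (T), or (B) by construction.
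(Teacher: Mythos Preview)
Your proposal is correct and follows essentially the same route as the paper: bound $\tau$ via Lemma~\ref{lemma: bound on tau}, feed this together with the energy and topological-complexity bounds into Proposition~\ref{noside} to bound $|t|$, and then run the \cite{BEHWZ} argument verbatim on the resulting compact region of $M^*$. One minor simplification: in your step~(i) you do not need the circular-looking argument for a lower bound on $\tau$, since by the extension convention in Section~\ref{sec:completion} one has $\tau(M^*)\subset[-1,\infty)$, so Lemma~\ref{lemma: bound on tau} already yields $\tau\circ F_n\in[-1,0]$.
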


\begin{proof}
  We need to show that any sequence in
  $\mathcal{M}_g(\underline{\gamma};\underline{\gamma}';J)$ has a
  subsequence which converges to a holomorphic building in the sense
  of \cite{BEHWZ}.  By Lemma~\ref{lemma: bound on tau}, there is a
  uniform upper bound on $\tau$ for the curves in the sequence.  By
  \cite[Prop.\ 5.13]{BEHWZ} there is a uniform upper bound on the
  Hofer energy of the curves in the sequence.  By
  Proposition~\ref{noside} there is then a uniform upper bound on $t$.
  Thus the projections of all the holomorphic curves in the sequence
  to $M^*$ are contained in a compact set, and the rest of the
  argument in \cite{BEHWZ} carries over.
\end{proof}

\subsubsection{Bound on $b_n$ in dimension four}

We turn now to the compactness theorem for ECH.  For this purpose we
will prove the bound on $b_n$ without any constraints on the genus,
but assuming that $\R\times M^*$ has dimension four. The proof is
based on a version of Gromov compactness due to Taubes which uses
currents and does not assume any genus bounds; see
\cite[Proposition~3.3]{T3} and \cite[Lemma~9.8]{Hu1}.

We recall some basic terminology from ECH.  An {\em orbit set\/} is a
finite set of pairs $\{(\gamma_i,m_i)\}$, where the $\gamma_i$'s are
distinct embedded Reeb orbits, and the $m_i$'s are positive integers.
In the terminology of \cite{Hu1}, a {\em flow line} from the orbit set
$\{(\gamma_i,m_i)\}$ to the orbit set $\{(\gamma_j',m_j')\}$ is a
finite energy holomorphic curve $F\colon (\Sigma,j,\mathbf{m})\to
(\R\times M^*,J)$ such that:
\begin{enumerate}
\item $F$ is an embedding, except perhaps for repeated $\R$-invariant
cylinders which do not intersect the other components of $F$.
\item $F$ has positive punctures at covers of $\gamma_i$ with total
  multiplicity $m_i$, negative punctures at covers of $\gamma_j'$ with
  total multiplicity $m_j'$, and no other punctures.
\end{enumerate}

\begin{prop} \label{prop: ECH top and bottom bound} Suppose $\dim
  (\R\times M^*)=4$. Let $F_n = (a_n, f_n) \colon (\Sigma_n,
  j_n,\mathbf{m}_n) \to (\R \times M^*, J)$ be a sequence of flow
  lines from $\{(\gamma_i,m_i)\}$ to $\{(\gamma_j',m_j')\}$. Then
  there are uniform upper bounds on $|\tau\circ f_n|$ and $|b_n|= |t
  \circ f_n|$.
\end{prop}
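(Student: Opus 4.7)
The plan is to bound $|\tau\circ f_n|$ first and then $|b_n|$. The $\tau$-bound is immediate from Lemma~\ref{lemma: bound on tau}, which applied to each $F_n$ yields $\tau\circ f_n\le 0$; combined with the extension convention of Section~\ref{sec:completion} (under which $\tau\in [-1,0]$ on $M\cup(\mathrm{T})\cup(\mathrm{B})$), this gives a uniform two-sided bound on $\tau\circ f_n$.

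For the $b$-bound, the argument of Proposition~\ref{noside} cannot be used directly, because ECH flow lines may have arbitrarily large genus and thus no uniform bound on $g(\Sigma_n)+|\mathbf{m}_n|$ is available. The plan is to replace the thick--thin Gromov compactness underlying Proposition~\ref{noside} with Taubes's current-based Gromov compactness, as in \cite[Proposition~3.3]{T3} and used in ECH by Hutchings in \cite[Lemma~9.8]{Hu1}. This variant applies to sequences of (nearly) embedded pseudoholomorphic curves in a $4$-dimensional symplectic manifold, provided one has uniform bounds on symplectic area on compact sets, and requires no bound on topological complexity.

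The first step is to establish such an area bound. For an ECH flow line the $d\alpha^*$-energy
\[
\int_{\dot\Sigma_n}F_n^*\,d\alpha^* \;=\; \sum_i m_i\,\mathcal{A}(\gamma_i)-\sum_j m_j'\,\mathcal{A}(\gamma_j')
\]
depends only on the asymptotic data, so together with the $\tau$-bound from the first step and the $J$-tameness of $d\alpha^*$, this bounds the symplectic area of $F_n$ on every compact $K\subset \R\times M^*$. I then assume for contradiction that $\sup b_n\to +\infty$ (the case $\inf b_n\to -\infty$ being symmetric), pick $p_n\in\dot\Sigma_n$ with $b_n(p_n)=\sup b_n$, and translate each $F_n$ in the $s$- and $t$-directions so that the translated map $\tilde F_n$ satisfies $\tilde F_n(p_n)\in \{s=0,\,t=0\}\times R_+(\Gamma)$. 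The $\partial_s$- and $\partial_t$-invariance of $J$ on the Top preserves the $J$-holomorphicity, and the area bounds on compact sets persist. Extracting a further subsequence so that $\pi_{R_+(\Gamma)}\tilde F_n(p_n)\to v_\infty\in R_+(\Gamma)$, Taubes's compactness theorem then produces a nontrivial weak limit $J$-holomorphic current $\mathcal{C}$ in $\R\times\R\times\widehat{R_+(\Gamma)}$, with $(0,0,v_\infty)\in\mathrm{supp}(\mathcal{C})$.

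The hard part will be to derive a contradiction from $\mathcal{C}$. The $\tau$-bound forces $\mathrm{supp}(\mathcal{C})\subset \R\times\R\times R_+(\Gamma)$, so the projection $\pi_+\mathcal{C}$ to the compact Liouville domain $(R_+(\Gamma),\beta_+)$ is a $J_0$-holomorphic current with bounded $d\beta_+$-area. In the generic case where $\pi_+\mathcal{C}$ is non-constant, $J_0$-positivity gives it positive $d\beta_+$-area on any open set meeting it, but exactness of $d\beta_+$ on the compact $R_+(\Gamma)$ together with a Stokes argument — accounting for the contribution of any boundary pieces on $\partial R_+(\Gamma)$, which are controlled because the full $\mathcal{C}$ is $\partial_t$-invariant along the Reeb orbits of $dt+\hat\beta_+$ and therefore decomposes near $\partial R_+(\Gamma)$ into standard pieces — gives the contradiction. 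The main obstacle lies in the degenerate case, where $\pi_+\mathcal{C}$ is a union of points so that $\mathcal{C}$ is concentrated along $\partial_t$-leaves $\R\times\R\times\{v\}$; here one must exploit that these leaves are non-closed Reeb orbits of $dt+\hat\beta_+$, while the original $F_n$ must eventually descend back into $M$ to reach their asymptotic orbit sets, to rule out $\mathcal{C}$ as a weak limit compatible with the fixed ECH energy and the embeddedness of the $\tilde F_n$.
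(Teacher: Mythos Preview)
Your overall framework is right: use Lemma~\ref{lemma: bound on tau} for the $\tau$-bound, then invoke Taubes's current compactness to extract a limiting subvariety in $\R\times\R\times\widehat{R_+(\Gamma)}$. But the endgame is where you run into trouble, and the paper avoids your difficulty with a trick you are missing.

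The key step you do not have is arranging, \emph{before} taking the limit, that $\int_{C_n} d\alpha^* \to 0$. The paper does this as follows. The total $d\alpha^*$-energy of each $F_n$ is fixed (it equals the action difference of the orbit sets). Since $b_n(x_n)\to\infty$, the image of $F_n$ in the Top stretches over arbitrarily long $t$-intervals; by pigeonhole one can choose the translation center $x_n$ and a slowly growing window size $d_n\ll b_n(x_n)$ so that the $d\alpha^*$-energy of the translated curve inside $[-c_n,c_n]\times[-d_n,d_n]\times\widehat{R_+(\Gamma)}$ tends to zero. Once this is done, weak convergence immediately gives $d\alpha^*|_C=0$ for the limit current $C$, forcing $C$ to be supported on $\R\times\gamma$ for a single Reeb trajectory $\gamma$. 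Since there are no closed Reeb orbits in the Top, $\gamma$ is a line; properness then forces $C$ to cover all of $\R\times\gamma$, which has infinite Hofer energy, contradicting the uniform Hofer energy bound on the $C_n$.

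Without the pigeonhole step, your dichotomy does not close. In your ``generic case'' (non-constant projection to $R_+(\Gamma)$), the Stokes argument you sketch is not valid: the limit current $\mathcal{C}$ having positive $d\beta_+$-area on its projection is not a contradiction, since the $d\alpha^*$-energy of the $C_n$ is bounded but not tending to zero, and the boundary terms you allude to are not controlled (you assert $\mathcal{C}$ is $\partial_t$-invariant near $\partial R_+(\Gamma)$, but this is not established). In your ``degenerate case'' you explicitly acknowledge the argument is incomplete --- and in fact this is exactly the case the paper reduces to and dispatches with the infinite-Hofer-energy contradiction. So the missing idea is: use pigeonhole to force $d\alpha^*|_C=0$, collapsing both of your cases into the second one, and then finish with Hofer energy rather than trying to analyze the projection to $R_+(\Gamma)$.
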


\begin{proof}
  The bound on $\tau$ follows from Lemma~\ref{lemma: bound on tau}.
  To prove the bound on $t$, suppose on the contrary that there is a
  sequence of flow lines $F_n$ with $b_n$ unbounded.  Without loss of
  generality there exist $x_n\in \dot\Sigma_n$ such that $b_n(x_n)\to
  +\infty$.  Now consider the restriction
\[
F_n'\colon \Sigma_n'\to \R\times [1,\infty)\times\widehat{R_+(\Gamma)}
\]
of $F_n$ where
\[
\Sigma_n'=\{x\in \dot\Sigma_n~|~ f_n(x)\in
[1,\infty)\times\widehat{R_+(\Gamma)}\}.
\]
Let $C_n'$ be the
holomorphic subvariety obtained by translating $F_n'(\Sigma_n')$ by
$a_n(x_n)$ in the $s$-direction and by $b_n(x_n)$ in the
$t$-direction.  (From now on, we will not distinguish between
holomorphic maps and their images, viewed as currents.) We then set
\[
C_n=C_n'\cap ([-c_n,c_n]\times[-d_n,d_n]\times \widehat{R_+(\Gamma)}),
\]
where $c_n,d_n\to \infty$ and $0<d_n\ll b_n(x_n)$.  Note that $C_n$
passes through $\{(0,0)\}\times \widehat{R_+(\Gamma)}$.  We may
assume without loss of generality that $\int_{C_n} d\alpha^*\to 0$.

By the Gromov compactness theorem via
currents~\cite[Proposition~3.3]{T3}, we can pass to a subsequence so
that $C_n$ converges weakly as currents in $(\R\times \R\times
\widehat{R_+(\Gamma)},J)$ to a proper $J$-holomorphic subvariety $C$,
so that, for any compact set $K\subset \R\times \R\times
\widehat{R_+(\Gamma)}$,
\begin{equation}
\sup_{x\in C_n\cap K} \mbox{dist}(x,C)+\sup_{x\in C\cap
K}\mbox{dist}(x,C_n)\to 0,
\end{equation}
as $n\to \infty$. More precisely, for any compact set
$K\subset\R\times \R\times\widehat{R_+(\Gamma)}$, we can pass to a
subsequence so that the intersections of the curves $C_n$ with $K$
converge to a $J$-holomorphic subvariety in $K$, using the fact that
there is a uniform upper bound on the integral of the symplectic form
$d(e^s\alpha^*)$ over $C_n\cap K$.  An exhaustion argument then gives
a subsequence converging on all of $\R\times
\R\times\widehat{R_+(\Gamma)}$ as above.

We claim now that $d\alpha^*|_C=0$.  To see this, let $p\in C$ and let
$\varphi\colon \R\times M\to [0,1]$ be a compactly supported smooth
function with $\varphi(p)=1$.  Since $\int_{C_n}d\alpha^*\to 0$ and
$d\alpha^*|_{C_n}\ge 0$ on all of $C_n$, we have $\int_{C_n}\varphi
d\alpha^*\to 0$.  Since $C_n$ converges to $C$ as functionals on
compactly supported $2$-forms, we obtain $\int_C\varphi d\alpha^*=0$.
Since $d\alpha^*|_C\ge 0$ on all of $C$, we conclude that
$d\alpha^*|_C$ vanishes on a neighborhood of $p$.  This proves the
claim.

It follows now that $C$ is supported on $\R\times \gamma$, where
$\gamma$ is a Reeb orbit.  Note that $\gamma$ is not a closed orbit,
and instead is a line. Now $C$ covers all of $\R\times \gamma$ by the
properness of $C$, and the fact that holomorphic maps are open. On the
other hand, $\R\times\gamma$ has infinite Hofer energy, while there is
a uniform upper bound on the Hofer energy of $C_n$ by \cite[Prop.\
5.13]{BEHWZ}. This contradicts the weak convergence of $C_n$ to $C$.
\end{proof}

\begin{cor}
\label{cor:ECHCompactness}
Suppose $\dim (\R\times M^*)=4$.  Then the ECH compactness theorem
\cite[Lemma~9.8]{Hu1} holds for $J$-holomorphic curves in the
symplectization of the completion of a sutured contact manifold,
provided that we choose the almost complex structure $J$ on $\R
\times M^*$ to be tailored to $(M^*, \alpha^*)$ in the sense of
Section~\ref{subsection: almost complex structure}.
\end{cor}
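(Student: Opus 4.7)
The plan is to reduce Corollary~\ref{cor:ECHCompactness} to the closed-case ECH compactness theorem \cite[Lemma~9.8]{Hu1} by showing that any sequence of ECH flow lines in $\R\times M^*$ has images contained in a uniformly compact region of $M^*$; once this is established, the original argument of \cite{Hu1} applies verbatim.

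First, I would take a sequence of flow lines $F_n=(a_n,f_n)\colon (\Sigma_n,j_n,\mathbf{m}_n)\to(\R\times M^*,J)$ from a fixed orbit set $\{(\gamma_i,m_i)\}$ to a fixed orbit set $\{(\gamma_j',m_j')\}$. Since the positive and negative asymptotic ends are fixed, \cite[Prop.~5.13]{BEHWZ} provides a uniform upper bound on the Hofer energy $E(F_n)$. Applying Proposition~\ref{prop: ECH top and bottom bound} (whose hypotheses are exactly that we are in the $4$-dimensional sutured setting with $J$ tailored), we obtain uniform upper bounds on $|\tau\circ f_n|$ and $|t\circ f_n|$. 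These two bounds, together with the fact that the contact form $\alpha^*$ is invariant in the $s$-direction on the ends, imply that the projections $f_n(\dot\Sigma_n)\subset M^*$ are all contained in a fixed compact subset $K\subset M^*$ independent of $n$.

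Second, once the sequence is confined to $\R\times K$, the situation is formally indistinguishable from that of a closed contact $3$-manifold: the almost complex structure is $s$-invariant, tames $d\alpha^*$, and the Reeb orbits in $K$ satisfy the standard non-degeneracy and Morse--Bott hypotheses needed by \cite{Hu1}.  I would then invoke the Gromov compactness via currents (\cite[Prop.~3.3]{T3} and \cite[Lemma~9.8]{Hu1}) to extract a subsequence of $F_n$, viewed as currents in $\R\times K$, converging to a $J$-holomorphic subvariety $C_\infty$. The topological/combinatorial argument of \cite[Sec.~9]{Hu1} then produces the decomposition of $C_\infty$ into a broken ECH configuration with the prescribed asymptotics, using the uniform bound on the ECH index and the intersection-theoretic arguments which are purely local and hence transfer directly.

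The main obstacle, and the whole point of everything preceding this corollary, is obtaining the a priori bound on $|t\circ f_n|$ (equivalently, ruling out escape of curves into the Top or Bottom); the bound on $\tau$ comes essentially for free from the maximum principle in Lemma~\ref{lemma: bound on tau}, but the $t$-bound required the current-based argument of Proposition~\ref{prop: ECH top and bottom bound}, because we do not have genus control in the ECH setting. Once that bound is in hand, the remainder is a mechanical check that the arguments of \cite{Hu1} — which are local in $M$ and rely only on properties of $J$, $\alpha^*$, and the Reeb dynamics inside a compact region — do not use global compactness of the ambient contact manifold in any essential way.
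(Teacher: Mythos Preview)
Your proposal is correct and follows exactly the intended approach: the paper treats Corollary~\ref{cor:ECHCompactness} as an immediate consequence of Proposition~\ref{prop: ECH top and bottom bound}, and your argument spells out precisely the reduction to \cite[Lemma~9.8]{Hu1} via the uniform $\tau$- and $t$-bounds confining the curves to a compact region of $M^*$. The structure mirrors the proof of Corollary~\ref{cor:SFTcompactness}, with Proposition~\ref{prop: ECH top and bottom bound} replacing Proposition~\ref{noside} to handle the lack of a genus bound.
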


\section{Definition of the sutured contact homology 
and sutured ECH}
\label{section: defn of sutured CH}

We now use the Gromov compactness established in the previous section
to define the sutured contact homology and sutured ECH and prove
Theorem~\ref{thm: well-definition}.

\subsection{Definition of sutured contact homology}
\label{sec:DSCH}

Let $(M,\Gamma, U(\Gamma), \xi)$ be a sutured contact manifold and
$\alpha$ be an adapted contact form for $\xi$. Let $(M^*, \alpha^*)$
be the completion of $(M,\alpha)$ and $J$ be an almost complex
structure on $\R\times M^*$ which is tailored to $(M^*,\alpha^*)$.
Since all the periodic orbits of $R_{\alpha^*}$ are contained in $M$,
by performing a small perturbation of $\alpha^*$ supported in $M$ we may
assume that $\alpha^*$ is {\em nondegenerate}, i.e., all the
periodic orbits of $R_{\alpha^*}$ are nondegenerate. 

We define the {\em sutured contact homology}
$HC(M,\Gamma,\alpha,J)$ to be the contact homology of
$(M^*,\alpha^*,J)$ as follows: A periodic orbit of the Reeb vector
field $R_{\alpha^*}$ is said to be {\em good} it does not cover a
simple orbit $\gamma$ an even number of times, where the first
return map $\xi_{\gamma(0)}\rightarrow \xi_{\gamma(T)}$ has an odd
number of eigenvalues in the interval $(-1,0)$. Let
$\mathcal{P}(\alpha)$ be the set of good periodic orbits $\gamma$ of
$R_{\alpha^*}$.  The contact homology chain complex
$\mathcal{A}(\alpha,J)$ is the free supercommutative $\Q$-algebra
with unit generated by elements of $\mathcal{P}(\alpha)$, where the
grading and the boundary map $\bdry\gamma$ are defined in the usual
way (as in \cite{EGH}) with respect to the $\alpha^*$-adapted almost
complex structure $J$. The homology of $\mathcal{A}(\alpha,J)$ is
the sutured contact homology algebra $HC(M,\Gamma,\alpha,J)$.

It follows from Corollary~\ref{cor:SFTcompactness} that the necessary
Gromov compactness holds to show that the differential $\bdry$ is
well-defined and $\bdry^2=0$.  Namely, if $\gamma$ is a periodic
orbit, then there are only finitely many collections of negative ends
with total action less than that of $\gamma$.  Hence $\bdry\gamma$
counts holomorphic curves in the quotients by the $\R$-action of index
$1$ moduli spaces $\mathcal{M}_0(\gamma;\gamma'_1,\dots,\gamma'_l)$,
where we range over finitely many $(\gamma'_1,\dots,\gamma'_l)$.  If
these moduli spaces are cut out transversely, then it follows from
Corollary~\ref{cor:SFTcompactness} that $\bdry\gamma$ is a finite
count of holomorphic curves.  Similarly, the proof that $\bdry^2=0$
involves considering the boundaries of quotients by the $\R$-action of
index $2$ moduli spaces
$\mathcal{M}=\mathcal{M}_0(\gamma;\gamma'_1,\dots,\gamma'_l)$, where
for any given $\gamma$ there are only finitely many possibilities for
$\underline{\gamma}'$.  If these moduli spaces are cut out
transversely, then it follows from Corollary~\ref{cor:SFTcompactness}
that $\bdry^2$ counts points in the boundary of a compact
$1$-manifold.

\s\n {\bf Disclaimer.}  Already for closed contact manifolds, it is
usually not possible to choose $J$ so that all of the above moduli
spaces are cut out transversely.  This problem arises because of
multiply covered holomorphic curves of negative index.  Thus in
general the differential $\bdry$ needs to be defined as a count of
points in some abstract perturbation of the moduli space of index $1$
holomorphic curves.  Even in a lucky situation where all relevant
moduli spaces of holomorphic curves are cut out transversely, one
still needs some abstract perturbations to define the chain homotopies
needed to prove that the contact homology is independent of the choice
of contact form and almost complex structure.  This problem arises
because in a generic $1$-parameter family of data there can be
holomorphic buildings with repeated index $-1$ curves.

The necessary abstract perturbations to solve the above problems in
the closed case are a work in progress by Hofer-Wysocki-Zehnder (see
\cite{Ho3} for an overview), and are expected to carry over directly
to the sutured case.  But strictly speaking Theorem~\ref{thm:
  well-definition} should be regarded as a conjecture until this work
has been completed.

\bigskip

On the other hand, transversality for {\em somewhere injective\/}
holomorphic curves in $\mathcal{M}_g(\m{\gamma};\m{\gamma}';J)$ can be
achieved by taking $J$ to be generic inside $M$, while keeping it
tailored.  In fact, the transversality argument in \cite{Dr} carries
over directly to the sutured case. In particular, it suffices to
perturb $J$ arbitrarily near the periodic orbits in order to attain
transversality for somewhere injective curves.

\subsection{Invariance of the contact homology algebra}
\label{subsection: ac}

Modulo the above disclaimers, we now prove the following proposition,
which will complete the proof of Theorem~\ref{thm:
  well-definition}(1).  Below we suppress the (not yet defined)
abstract perturbations from the discussion.

\begin{prop}
Let $(M,\Gamma,\xi)$ be a sutured contact manifold.  
\begin{enumerate}
\item The contact homology algebra $HC(M,\Gamma,\alpha,J)$ does not
  depend on the choice of adapted contact form $\alpha$ with
  $\operatorname{Ker}(\alpha)=\xi$ or tailored almost complex
  structure $J$, and so we can denote it by $HC(M,\Gamma,\xi)$.
\item More generally, a one-parameter family of contact strutures
  $\{\xi_t\mid t\in[0,1]\}$ which are the kernels of a one-parameter
  family $\{\alpha^\lambda\mid \lambda\in[0,1]\}$ of adapted contact
  forms on $(M,\Gamma,U(\Gamma))$ induces an isomorphism
  $HC(M,\Gamma,\xi^0)\simeq HC(M,\Gamma,\xi^1)$ which depends only on
  the homotopy class of the path $\{\xi_t\}$.
\end{enumerate}
\end{prop}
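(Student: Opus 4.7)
The plan is to adapt the standard invariance argument for contact homology (as in \cite{EGH,BM}) to the sutured setting, the key new input being the compactness results of Section~\ref{section: compactness results}. Given two choices $(\alpha^0,J^0)$ and $(\alpha^1,J^1)$, both adapted/tailored to $(M,\Gamma,U(\Gamma))$, we may connect them through a one-parameter family $(\alpha^\lambda,J^\lambda)$ of adapted/tailored data: in case (2) this is given, while in case (1), since $\alpha^0,\alpha^1$ have the same kernel we may write $\alpha^1=f\alpha^0$ and interpolate linearly (after rescaling). After a generic perturbation we may assume each $\alpha^\lambda$ is nondegenerate except at finitely many $\lambda$.

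First I would build an exact symplectic cobordism. On $\R\times M^*$, fix a smooth non-decreasing function $\phi\colon\R\to[0,1]$ with $\phi(s)=0$ for $s\le 0$ and $\phi(s)=1$ for $s\ge N$; after rescaling the $\alpha^\lambda$ by positive constants and choosing $N$ large so that $\phi'$ is small, the form $d(e^s\alpha^{\phi(s)})$ is symplectic on $\R\times M^*$. Choose a compatible almost complex structure $\widetilde J$ which equals $J^0$ for $s\le 0$, equals $J^1$ for $s\ge N$, is $\bdry_t$-invariant outside $\operatorname{int}(M)$, and projects to a single $\widehat\beta_\pm$-adapted $J_0$ on $\widehat{R_\pm(\Gamma)}$ that is independent of $s$. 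This last condition is what makes the compactness arguments of Section~\ref{section: compactness results} extend.

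Next, the main analytic step: I would show that a sequence of $\widetilde J$-holomorphic curves in this cobordism with fixed positive and negative asymptotics has a subsequence converging to an SFT-style holomorphic building. The bound $\tau\circ F\le 0$ of Lemma~\ref{lemma: bound on tau} carries over verbatim, since $\tau$ is plurisubharmonic with respect to the $s$-independent $J_0$. The bubbling Lemma~\ref{rescaling}, the gradient bound of Proposition~\ref{gradientbound}, and the bound on $t$ of Proposition~\ref{noside} all adapt, using that the cobordism is $s$-invariant outside a compact slab and $\bdry_t$-invariant outside $\operatorname{int}(M)$, and that the Top/Bottom/Side regions contain no closed Reeb orbits. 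With compactness in hand, the standard recipe counts rigid genus-zero $\widetilde J$-holomorphic curves with one positive puncture at a good $\alpha^1$-orbit and arbitrarily many negative punctures at good $\alpha^0$-orbits, with the combinatorial coefficients of \cite{BM}, to define a $\Q$-DGA homomorphism $\Phi\colon\mathcal{A}(\alpha^1,J^1)\to\mathcal{A}(\alpha^0,J^0)$; inspection of the boundaries of compactified index $1$ moduli spaces shows $\Phi\bdry=\bdry\Phi$.

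Finally, I would run the standard chain-homotopy arguments. A generic two-parameter family of cobordism data produces, via parametric moduli spaces of index $-1$ curves, a DGA chain homotopy between the two cobordism maps at its endpoints; neck-stretching decomposes a cobordism into two and shows that the map assigned to a composition is chain homotopic to the composition of the individual maps; and the cobordism for a constant family is chain homotopic to the identity. Together these yield canonical isomorphisms on homology, and imply that the isomorphism $HC(M,\Gamma,\xi^0)\simeq HC(M,\Gamma,\xi^1)$ in (2) depends only on the homotopy class of the path $\{\xi_t\}$. The principal obstacle, explicitly acknowledged in the disclaimer preceding this proposition, is transversality: multiply covered curves of negative index force the use of abstract perturbations (the Hofer--Wysocki--Zehnder polyfold machinery) both to define $\bdry$ and to cut out the parametric moduli spaces regularly, and the entire argument is contingent on that framework being in place.
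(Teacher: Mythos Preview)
Your outline captures the standard SFT cobordism scheme, but there is a genuine gap in the construction of $\widetilde J$. You require simultaneously that (i) $\widetilde J=J^0$ for $s\le 0$, (ii) $\widetilde J=J^1$ for $s\ge N$, and (iii) the projection of $\widetilde J$ to $\widehat{R_\pm(\Gamma)}$ is a \emph{single}, $s$-independent $J_0$. Conditions (i)--(iii) are mutually inconsistent unless the tailored almost complex structures $J^0$ and $J^1$ already have the same projection $(J^0)_0=(J^1)_0$ to $\widehat{R_\pm(\Gamma)}$ (and the same completed Liouville form $\widehat\beta_\pm$, which also varies with $\alpha^\lambda$). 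In general they do not, so your $\widetilde J$ cannot be built as described; and if you drop (iii) and let the projection depend on $s$, the maximum principle for $\tau$ (Lemma~\ref{lemma: bound on tau}) is lost, since the projected map is no longer holomorphic for a fixed complex structure on $\widehat{R_\pm(\Gamma)}$.

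The paper handles this by a two-step factorization. Step~1 is exactly your argument in the special case where the projections agree on $\{\tau\ge 0\}$; there (iii) is consistent and the compactness goes through as you indicate. Step~2 treats the general case by invoking the interpolation Lemma~\ref{lemma: interpol} and Corollary~\ref{cor: plurisub} to build an intermediate pair $((\alpha^2)^*,J^2)$ on $M^*$ with $J^2=J^1$ on $\{\tau\le 0\}$, while on $\{\tau\ge 0\}$ some increasing function $u(\tau)$ is $J^2$-plurisubharmonic and $J^2$ matches the end data of $J^0$. Step~1 then gives $HC(\alpha^0,J^0)\simeq HC((\alpha^2)^*,J^2)$, and since every curve counted for $J^2$ is confined to $\{\tau\le 0\}$ one has $\mathcal{A}((\alpha^2)^*,J^2)=\mathcal{A}(\alpha^1,J^1)$ as chain complexes. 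The missing ingredient in your proposal is precisely this use of Lemma~\ref{lemma: interpol} to decouple the change of the projected almost complex structure $J_0$ (handled in the $\tau$-direction, via plurisubharmonicity) from the cobordism interpolation in the $s$-direction.
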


\begin{proof}
  Let $\alpha^0$ and $\alpha^1$ be two contact $1$-forms which are
  adapted to $(M,\Gamma,U(\Gamma))$, and are connected by a
  $1$-parameter family $\alpha^\lambda$, $\lambda\in[0,1]$, of adapted
  contact $1$-forms; also let $(\alpha^\lambda)^*$ be the completion
  of $\alpha^\lambda$ to $M^*$. Note that we are not assuming that
  $\ker\alpha^0=\ker\alpha^1$, only that they are isotopic. Let
  $J^\lambda$, $\lambda\in[0,1]$, be an almost complex structure on
  $\R\times M^*$ which is tailored to $(M^*,(\alpha^\lambda)^*)$. In
  particular, the projection $J_0^\lambda$ of $J^\lambda$ to
  $(\widehat{R_\pm(\Gamma)},\widehat{\beta}_\pm^\lambda)$ is
  $\widehat{\beta}_\pm^\lambda$-adapted.  Here
  $\widehat{\beta}^\lambda_\pm$ is the completion of the Liouville
  $1$-form $\alpha^\lambda|_{R_\pm(\Gamma)}$ so that the Liouville
  vector field $Y^\lambda=\bdry_\tau$ for $\tau\geq 0$; let us also
  write $(\beta^\lambda_\pm)_0$ for the restriction of
  $\widehat{\beta}^\lambda_\pm$ to $\bdry R_\pm(\Gamma)$.  We now
  define an isomorphism
  $HC(M,\Gamma,\alpha^0,J^0)\stackrel{\simeq}{\to}
  HC(M,\Gamma,\alpha^1,J^1)$.

  \s\n {\bf Step 1.} First consider the case when
  $\widehat{\beta}^\lambda_\pm$ and $J^\lambda_0$ are independent of
  $\lambda$ on the region where $\tau\geq 0$.  We then define a chain
  map
\[
\Phi\colon  \mathcal{A}(\alpha^0,J^0) \to
\mathcal{A}(\alpha^1,J^1).
\]
as follows.
Let $\phi\colon \R\to[0,1]$ be a
smooth nonincreasing function with $\phi(s)=1$ for $s\leq  -N$ and
$\phi(s)=0$ for $s\geq N$, where $N\gg 0$. On $\R\times M^*$ with
coordinates $(s,y)$, define the almost complex structure $\tilde{J}$
so that $\tilde{J}(s,y)=J^{\phi(s)}(s,y)$. Let
$\mathcal{M}_g(\m\gamma;\m\gamma';\tilde J)$ be the moduli space of
genus $g$ finite energy holomorphic maps $F\colon 
(\Sigma,j,\mathbf{m})\to (\R\times M^*,\tilde J)$ with positive ends
$\m\gamma$ which are periodic orbits of $R_{(\alpha^0)^*}$ and
negative ends $\m\gamma'$ which are periodic orbits of
$R_{(\alpha^1)^*}$. Then the chain map $\Phi(\gamma)$ counts
elements of index zero moduli spaces
$\mathcal{M}=\mathcal{M}_0(\gamma;\gamma_1',\dots,\gamma_k';\tilde
J)$. Note that the almost complex structure $\tilde
J$ is tamed by the symplectic form $d(e^s\alpha^{\phi(s)})$,
provided $|{d\phi\over ds}|$ is sufficiently small for all $s$.
Moreover, $\tilde J$ is $\alpha^0$-adapted for $s\geq N$ and
$\alpha^1$-adapted for $s\leq -N$.

We claim that all the curves in
$\mathcal{M}_0(\gamma;\ldots;\tilde{J})$, when projected to $M^*$, are
contained inside a compact subset of $M^*$, so that they satisfy the
Gromov compactness needed to show that $\Phi$ is a well-defined chain
map. Since the projection $J_0^{\phi(s)}$ of $\tilde{J}$ is
$s$-invariant on $\tau\geq 0$, it follows that no such curve enters
the region $\tau\geq 0$. Now, if there is a sequence of curves $F_n\in
\mathcal{M}_0(\gamma;\ldots;\tilde{J})$ and $z_n\in \dot\Sigma$ such
that $t\circ F_n(z_n)\to\infty$, then an argument similar to the proof
of Proposition~\ref{noside} implies the existence of a nonconstant
finite energy holomorphic map to $\R\times \R\times R_\pm(\Gamma)$,
either with respect to $\tilde J$ or with respect to $J^0$ or $J^1$.
In any case, since there are no periodic orbits inside $\R\times
\R\times R_\pm(\Gamma)$, we have a contradiction.

Arguing as usual, we can prove that $\Phi$ has a homotopy inverse
$\Psi$, so that $\Phi$ induces an isomorphism on homology.
%Here we note that the homotopy inverse $\Psi$ and the homotopy between
%$id$ and
%$\Psi\circ \Phi$ (or $\Phi\circ \Psi$) also only count curves in a
%compact region of $M^*$.

\s\n {\bf Step 2.} Next suppose that $J^0$ and $J^1$ do not agree on
the ends $\tau\geq 0$.   We define an intermediate almost complex
structure $J^2$ together with a $1$-form $(\alpha^2)^*$ on $M^*$ so
that there are isomorphisms $HC(\alpha^0,J^0)\simeq
HC((\alpha^2)^*,J^2)$ and $HC((\alpha^2)^*, J^2)\simeq
HC(\alpha^1,J^1)$.

The proof of Lemma~\ref{lemma: interpol} and Corollary~\ref{cor:
  plurisub} shows that there exist an almost complex structure $J_0^2$
and $1$-forms $\widehat\beta^2_\pm$ on $\widehat{R_\pm(\Gamma)}$ which
satisfy the following:
\begin{itemize}
\item Where $\tau\gg 0$, the $1$-form $\widehat\beta^2_\pm$
agrees with $(\beta^0_\pm)_0$, and the almost complex structure
$J_0^2$ is $(\beta^0_\pm)_0$-adapted.
\item Where $\tau\leq 0$ we have $J_0^2=J_0^1$ and
  $\widehat\beta^2_\pm=\widehat\beta^1_\pm$;
\item Where $\tau\geq 0$, some increasing function $u(\tau)$ is
  plurisubharmonic with respect to $J_0^2$;
\item $\widehat\beta^2_+=\widehat\beta^2_-$ for $\tau\geq 0$, where
$\widehat{R_\pm(\Gamma)}-int(R_\pm(\Gamma))$ are naturally
identified.
%\item $J_0^2$ is $d\widehat\beta^2_\pm$-positive for all $\tau\geq 0$.
\end{itemize}
In particular, no holomorphic map from a punctured Riemann surface to
$([0,\infty)\times \bdry
R_\pm(\Gamma), J_0^2)$ has a local maximum of $\tau$ in
the interior of the domain.

The $1$-form $(\alpha^2)^*$ on $M^*$ is defined as follows:
\begin{itemize}
\item $(\alpha^2)^*=\alpha^1$ on $M$;
%\item $(\alpha^2)^*$ restricts to $\widehat\beta^2_\pm$ on
%$\widehat{R_\pm(\Gamma)}$;
\item $(\alpha^2)^*=Cdt+\widehat\beta^2_\pm$ on $M^*-int(M)$.
\end{itemize}
The almost complex structure $J^2$ on $\R\times M^*$ is chosen so that:
\begin{itemize}
\item Conditions (A$_0$) (with respect to the $1$-form
$(\alpha^2)^*$) and (A$_1$) from Section~\ref{subsection: almost
complex structure} hold;
\item the projection of $J^2$ to $\widehat{R_\pm(\Gamma)}$
is $J_0^2$;
\item $J^2=J^1$ on $\R\times\{\tau\leq 0\}$.
\end{itemize}

We then apply Step 1 to obtain a chain map
\[
\Phi_1\colon  \mathcal{A}(\alpha^0,J^0)\to
\mathcal{A}((\alpha^2)^*,J^2),
\]
which is a quasi-isomorphism.

On the other hand, since $J_0^1$ and $J_0^2$ agree on $R_\pm(\Gamma)$
and $\tau\circ F$ does not attain a local maximum for any holomorphic
curve $F$ where $\tau>0$, it follows that every holomorphic curve
counted in $\bdry$ for $J_0^2$ lies inside $\R\times \{\tau\leq 0\}$.
This implies that
$\mathcal{A}((\alpha^2)^*,J^2)=\mathcal{A}(\alpha^1,J^1)$ as chain
complexes.  Hence we obtain an isomorphism
\begin{equation} \label{eqn:iso}
HC(M,\Gamma,\alpha^0,J^0)\stackrel{\simeq}{\to} HC(M,\Gamma,\alpha^1,J^1).
\end{equation}

\s\n {\bf Step 3.}  To complete the proof of the proposition, we need
to show that the isomorphism \eqref{eqn:iso} is canonical when
$\xi^\lambda$ is independent of $\lambda$, and otherwise depends only
on the homotopy class of the path $\{\xi^\lambda\}$.

First consider the situation where $M$ is closed and $\alpha_0$,
$\alpha_1$ are contact $1$-forms which are homotopic through contact
$1$-forms $\alpha_\rho$, $\rho\in[0,1]$.  We can use the homotopy to
construct a cobordism $(\R\times M,J)$, which gives rise to the chain
map $\Phi\colon
\mathcal{A}(\alpha_0,J_0)\to\mathcal{A}(\alpha_1,J_1)$, where $J_i$ is
adapted to $\alpha_i$. Now, if there are two homotopies $\alpha_\rho$,
$\alpha'_\rho$ from $\alpha_0$ to $\alpha_1$ which are homotopic, then
there is a homotopy of cobordisms from $(\R\times M,J)$ to $(\R\times
M,J')$, and the usual chain homotopy argument implies that the induced
isomorphisms $\Phi$, $\Phi'$ agree.  In other words, the map
$\Phi\colon HC(M,\alpha_0,J_0)\to HC(M,\alpha_1,J_1)$ only depends on
the homotopy class of paths connecting $\alpha_0$ and $\alpha_1$;
however, the map will likely depend on the choice of homotopy
class. On the other hand, when we have two contact $1$-forms
$\alpha_0$ and $\alpha_1$ for the same contact structure $\xi$, we can
write $\alpha_1=f_1\alpha_0$, and there is a canonical homotopy class
of paths from $\alpha_0$ to $\alpha_1$, namely one which has the form
$\alpha_\rho=f_\rho\alpha_0$. Hence, the identification $\Phi\colon
HC(M,\alpha_0,J_0)\to HC(M,f_1\alpha_0,J_1)$ is canonical.

Returning to the sutured case, suppose $\alpha^0$ and $\alpha^1$
are adapted to the sutured contact manifold $(M,\Gamma,
U(\Gamma),\xi)$. We claim that the contact homology algebras
$HC(M,\Gamma,\alpha^0,J^0)$ and $HC(M,\Gamma,\alpha^1,J^1)$ are
canonically isomorphic. Since $\alpha^0$ and $\alpha^1$ are
contact forms for the same contact structure $\xi$, the forms
$\alpha^1$ and $\alpha^0$ are conformally equivalent. Consequently,
$(\beta^1_\pm)_0$ and $(\beta^0_\pm)_0$ differ by a constant
multiple. Any two almost complex structures $J$ constructed in the
proof of Lemma~\ref{lemma: interpol} are connected by a
$1$-parameter family of almost complex structures with the same
properties. Hence there is a $1$-parameter family of chain maps
$(\Phi_1)_\rho\colon  \mathcal(\alpha^0,J^0)\to
\mathcal{A}((\alpha^2)^*_\rho,J^2_\rho)$ where
$\mathcal{A}((\alpha^2)^*_\rho,J^2_\rho)$ and
$\mathcal{A}(\alpha^1,J^1)$ are canonically isomorphic. Then, by the
discussion in the previous paragraph, the induced isomorphisms in
Equation~(\ref{eqn:iso}) agree.
\end{proof}

\subsection{Sutured embedded contact homology}
\label{subsection: sutured ECH}

Suppose now that $(M,\Gamma,\alpha)$ is a sutured contact manifold
where $\dim(M)=3$ and $\alpha$ is nondegenerate.  Let $J$ be a generic
tailored almost complex structure on $\R\times M^*$.  We can now
define the {\em sutured embedded contact homology\/}
$ECH(M,\Gamma,\alpha,J)$ by copying the definition in the closed case
(see e.g.\ \cite[Sec.\ 7]{HT1}) verbatim.  It follows from the
discussion at the end of Section~\ref{sec:DSCH} that for generic
tailored $J$, the moduli spaces of $J$-holomorphic curves needed to
define the ECH differential $\partial$ and prove that $\partial^2=0$
are cut out transversely.  (These curves are all somewhere injective.)
Corollary~\ref{cor:ECHCompactness} implies that the necessary
compactness holds to show that $\partial$ is defined and satisfies
$\partial^2=0$.  The gluing analysis from \cite{HT1,HT2} to complete
the proof that $\partial^2=0$ carries over unchanged.

Recall that part of Conjecture~\ref{conj for ECH} is that
$ECH(M,\Gamma,\alpha,J)$ depends only on $(M,\alpha,\xi)$.  Currently
the only known proof of the analogous statement in the closed case
uses Seiberg-Witten theory; there is no known definition of an
isomorphism in terms of holomorphic curves (due to the presence of
multiply covered curves of negative ECH index in cobordisms).  However
if such an isomorphism could be constructed, then the discussion in
Section~\ref{subsection: ac} would allow it to be extended to the
sutured case.

\section{Variations}
\label{section: variations}

In this section we define some variants of sutured contact homology
and sutured ECH.

\subsection{The ``hat'' versions of contact homology and embedded
contact homology}

Let $(M,\xi)$ be a closed contact $(2n+1)$-dimensional manifold.
Choose a contact form $\alpha$ for $\xi$, and consider a Darboux ball
of the form $B^{2n+1}=D^{2n}\times [-1,1]$ with coordinates
$(x_1,y_1,\dots,x_n,y_n,t)$ and $\alpha=dt+\sum_i{1\over
  2}(x_idy_i-y_idx_i)$ on $B^{2n+1}$. Here
$D^{2n}=\{\sum_i|x_i|^2+|y_i|^2=1\}$.  (One may need to multiply the
contact form by a large positive constant in order for such a Darboux
ball to exist.)  On $B^{2n+1}$ the Reeb vector field is given by
$R_\alpha=\bdry_t$.  In particular, $R_\alpha$ is tangent to $(\bdry
D^{2n})\times[-1,1]$ and transverse to $D^{2n}\times\{-1,1\}$.  Let
$(M(1)',\alpha|_{M(1)'})$ be the concave sutured contact manifold
obtained from $(M,\alpha)$ by removing $B^{2n+1}$. Applying the
concave-to-convex procedure described in Section~\ref{subsection:
  concave to convex} to $(M(1)',\alpha|_{M(1)'})$ then gives a convex
sutured contact manifold $(M(1),\alpha_{1})$.

Recall from Theorem~\ref{thm: mapping cone} that when $\dim(M)=3$ we have
\begin{equation}
\label{eqn:ECHhat}
\widehat{ECH}(M,\xi) \simeq ECH(M(1),\alpha_1).
\end{equation}
By analogy with this, in all odd dimensions we define a ``hat''
version of contact homology by
\begin{equation}
\label{eqn:HChat}
\widehat{HC}(M,\xi)= HC(M(1),\alpha_1).
\end{equation}
(This does not depend on $\alpha$ as shown in Section \ref{subsection: ac}.)

\subsection{A transverse knot filtration} \label{section:
knot filtration}

Let $(M,\xi)$ be a closed contact $3$-manifold and let $K \subset M$
be a null-homologous transverse knot. Since $K$ is transverse, there
exists a contact form $\alpha$ on $M$ such that $\xi=\ker\alpha$ and
$K$ is a closed orbit of $R_\alpha$. In fact, by the Darboux-Weinstein
neighborhood theorem, we can choose $\alpha$ so that there is a
neighborhood $N(K)=D^2\times [-2,2]/(-2\sim 2)$ of $K=\{r=0\}$ in
which $\alpha = dt+cr^2d\theta$.  Here $c$ is a small positive
constant, $(r,\theta,t)$ are cylindrical coordinates on $D^2\times
[-2,2]$, and $D^2=\{r\leq 1\}$. Let $(M(1)',\alpha|_{M(1)'})$ be
defined as in the previous subsection, where
$B^3=D^2\times[-1,1]\subset N(K)$. Define $(M(1),\alpha_1)$ as above,
so that \eqref{eqn:ECHhat} and \eqref{eqn:HChat} hold.

Next we define a related contact manifold $(M_0,\Gamma_0,\xi_0)$,
which is obtained from $(M-N(K), \xi|_{M-N(K)})$ by attaching a
collar. Consider
\[
A=\bdry (M-N(K))\times[-1,0]=\R/2\pi\Z \times
([-2,2]/\sim)\times[-1,0]
\]
with coordinates $(\theta,t,u)$.  We take $M_0=(M-N(K))\cup A$, where
$\bdry (M-N(K))$ is identified with $\bdry(M-N(K))\times\{-1\}$.  We
extend $\alpha$ over $A$ as $dt-cud\theta$. (This is smooth if we
define the smooth structure on $M_0$ using an appropriate chart in the
gluing region.)  If we perturb $\alpha$ near
$\bdry(M-N(K))\times\{0\}$, then the resulting $\xi_0=\ker\alpha_0$
has convex boundary and dividing set $\Gamma_0$ which consists of two
meridians (circles where $t$ is constant).

\begin{prop}
  A nullhomologous transverse knot $K$ in a closed contact 3-manifold
  $(M,\xi)$ induces:
\begin{enumerate}
\item a filtration $\mathcal{F}$ on the chain complex $C(M(1),
  \alpha_1)$ for $\widehat{HC}(M,\xi)$, such that the homology of the
  associated graded complex is isomorphic to $HC(M_0,\Gamma_0,\xi_0)$,
  and:
\item a filtration $\mathcal{F}$ on the chain complex for
  $ECH(M(1),\alpha_1,J_1)\simeq \widehat{ECH}(M,\xi)$, such that the
  homology of the associated graded complex is isomorphic to
  $ECH(M_0,\Gamma_0,\alpha_0,J_0)$, if the almost complex structures
  $J_0$ and $J_1$ are suitably related.
\end{enumerate}
\end{prop}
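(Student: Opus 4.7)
The plan is to define the filtration using intersection with a $J_1$-holomorphic cylinder arising from the Reeb orbit $K$, to use positivity of intersection to show that the differential $\partial$ respects it, and to identify the associated graded complex with the chain complex of $(M_0,\Gamma_0,\xi_0)$. First, since $K$ is a closed Reeb orbit of $\alpha$ in $M$ and $(M(1),\alpha_1)$ is obtained from $M-B^3$ by the concave-to-convex construction, I would choose that construction so that the arc $K\cap (M-B^3)$ extends, along Reeb trajectories in the attached region and in the Top and Bottom of $M(1)^*$, to a proper arc $K^*\subset M(1)^*$ going from $-\infty$ to $+\infty$. Because $J_1$ is $\alpha_1$-adapted and hence sends $\bdry_s$ to $R_{\alpha_1}$ (which is tangent to $K^*$), the cylinder $\R\times K^*\subset \R\times M(1)^*$ is $J_1$-holomorphic. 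For each periodic orbit $\gamma$ of $R_{\alpha_1}$, which is disjoint from $K^*$ for generic choices, set $\mathcal{F}(\gamma)=-\op{lk}(\gamma,K)$, computed using any Seifert surface $\Sigma\subset M$ for $K$, and extend $\mathcal{F}$ multiplicatively to monomials (for CH) or additively on multiplicities (for ECH). Let $F_k$ denote the subcomplex spanned by generators with $\mathcal{F}\leq k$.

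The next step would be to show that $\partial$ is compatible with $\mathcal{F}$. For a $J_1$-holomorphic curve $C$ counted by $\partial$, with positive ends on $\gamma^+_i$ and negative ends on $\gamma^-_j$, a topological calculation using the compactified projection of $C$ and the cut-off Seifert surface $\tilde\Sigma=\Sigma\cap M(1)$ yields
\[
C\cdot(\R\times K^*)=\sum_j\op{lk}(\gamma^-_j,K)-\sum_i\op{lk}(\gamma^+_i,K)=\sum_i\mathcal{F}(\gamma^+_i)-\sum_j\mathcal{F}(\gamma^-_j).
\]
Positivity of intersection applied to the two $J_1$-holomorphic objects $C$ and $\R\times K^*$ makes the left side nonnegative, so $\sum_j\mathcal{F}(\gamma^-_j)\leq\sum_i\mathcal{F}(\gamma^+_i)$. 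Hence $\partial F_k\subseteq F_k$ and $\mathcal{F}$ defines a filtration on the chain complex.

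Finally, I would identify the associated graded complex with $C(M_0,\Gamma_0,\alpha_0,J_0)$. The associated graded differential counts only those $C$ with $C\cdot(\R\times K^*)=0$, and by positivity such a curve is disjoint from $\R\times K^*$ and lies in $\R\times(M(1)^*\setminus N(K^*))$. Topologically, $M(1)\setminus N(K^*)$ retracts onto $M-N(K)$, which carries the restricted form $\alpha|_{M-N(K)}$; choosing $\alpha_0$ and $J_0$ so that they agree with the restrictions of $\alpha_1$ and $J_1$ under this identification (up to modifications near the boundary that realize $(M_0,\Gamma_0,\xi_0)$ as a sutured contact manifold) identifies the associated graded complex with $C(M_0,\Gamma_0,\alpha_0,J_0)$ for (1), and analogously for ECH in (2).

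The main obstacle will be this last identification. The sutured contact structure on $M(1)\setminus N(K^*)$ inherited from $\alpha_1$ differs in its boundary data from that of $(M_0,\Gamma_0,\alpha_0)$, because the former comes from the concave-to-convex procedure near $\partial B^3$ (producing a single dividing circle together with its parallel hyperbolic orbit), while the latter is built by perturbing $dt-cu\,d\theta$ on a collar with two meridional sutures. Reconciling the two, and arranging a bijection between the relevant $\mathcal{F}$-preserving holomorphic curves on the two sides, will require either a direct deformation of sutured contact structures using the flexibility in their construction, or a neck-stretching argument along a hypersurface close to $\partial N(K)$, in the spirit of the proofs of Theorems~\ref{thm: sutured gluing} and~\ref{thm: convex gluing} elsewhere in the paper.
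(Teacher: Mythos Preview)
Your approach is essentially the paper's: define $\mathcal{F}$ via intersection with a Seifert surface, use positivity of intersection with the $J_1$-holomorphic cylinder $\R\times K^*$ to show the differential respects it, and identify the associated graded. One sign slip: the correct identity is $C\cdot(\R\times K^*)=\sum_i\op{lk}(\gamma^+_i,K)-\sum_j\op{lk}(\gamma^-_j,K)$, so with your convention $\mathcal{F}=-\op{lk}$ the differential would \emph{increase} $\mathcal{F}$; the paper simply takes $\mathcal{F}(\underline{\gamma})=\underline{\gamma}\cdot S$.

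The final identification is simpler than you anticipate and needs no neck-stretching or deformation argument. The key observation you are missing is that a small tubular neighborhood $N(K^*)$ is foliated by Reeb arcs parallel to $K^*$; hence, by positivity of intersections, any holomorphic curve disjoint from $\R\times K^*$ is automatically disjoint from all of $\R\times N(K^*)$. Thus the filtration-preserving curves are exactly those contained in $\R\times(M(1)^*\setminus N(K^*))$. On the other side, the paper removes $N(K_1)=D^2_\varepsilon\times([-2,-1]\cup[1,2])$ from $M(1)$ and attaches a collar $A'$ with form $dt-cu\,d\theta$ to correct the direction of the Liouville vector field along the new boundary; the result is directly contactomorphic to $(M_0,\Gamma_0,\xi_0)$ (left as an exercise), and the same Reeb-foliation positivity argument shows that curves counted in its differential never enter the collar. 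The two chain complexes, with their differentials, are therefore literally identified, so the associated graded homology is $HC(M_0,\Gamma_0,\xi_0)$ without any limiting procedure.
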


\begin{proof}
  We will only prove assertion (1) for sutured contact homology;
  assertion (2) for sutured ECH is proved using the same argument.

  A generator of $C(M(1),\alpha_1)$ is a monomial
  $\underline{\gamma}=\gamma_1^{m_1}\dots\gamma_k^{m_k}$, where the
  $\gamma_i$ are closed orbits of $R_{\alpha_1}$, and each $m_i$ is a
  positive integer.  The total homology class of this generator is $A
  = m_1 [\gamma_1] + \dots + m_k [\gamma_k]\in H_1(M)$.  Fix a
  relative homology class $B\in H_2(M,K)$ with $\partial B = [K]$, and
  let $S$ be a Seifert surface for $K$ in the class $B$. Let $K_1 = K
  \cap M(1)$. We view $S$ as a surface in $M(1)$ with boundary on
  $K_1\cup \bdry M(1)$. Since all the closed orbits of $R_{\alpha_1}$
  are contained in $M(1)\setminus K_1$, we can define the filtration
  level of $\underline{\gamma}$ to be its algebraic intersection
  number with $S$, namely
\[
\mathcal{F}(\underline{\gamma})=\underline{\gamma}\cdot S=\sum_{i=1}^k m_i
(\gamma_i\cdot S).
\]
Note that if
$\underline{\gamma}'=(\gamma_1')^{m_1'}\cdots(\gamma'_{l})^{m_{l}'}$
is another generator representing the same homology class $A\in
H_1(M)$, then the filtration difference is given by
\begin{equation}
\label{eqn:FD}
\mathcal{F}(\underline{\gamma})-\mathcal{F}(\underline{\gamma}')=\Sigma\cdot K_1,
\end{equation}
where $\Sigma$ is any $2$-chain in $M$ with
$\partial\Sigma=\sum_{i=1}^k m_i\gamma_i-\sum_{j=1}^{l}m_j'\gamma_j'$.
One can show this by perturbing $\Sigma$ so that it is transverse to
$S$ and then counting points in the boundary of the compact
$1$-manifold $\Sigma\cap S$.

Next we prove that the differential does not increase the filtration
level of the generators.  More generally, for any holomorphic curve
\[
F = (a, f) \colon (\Sigma, j,\mathbf{m}) \to \R \times M(1)^*
\]
which is
positively asymptotic to $\underline{\gamma}$ and negatively asymptotic to
$\underline{\gamma}'$, we have
\[
\mathcal{F}(\underline{\gamma}) \ge \mathcal{F}(\underline{\gamma}').
\]
To prove this, first note that $K_1$ extends to an infinite length
Reeb orbit $\widetilde{K}_1$ in $M(1)^*$.  Now let $\overline{\Sigma}$
be the compact surface with boundary obtained from $\Sigma$ by
performing a real blowup at each puncture. Then the map $f$ extends to
a map $\overline{f} \colon \overline{\Sigma} \to M(1)^*$ whose
restriction to the boundary is
$\sum_im_i\gamma_i-\sum_jm_j'\gamma_j'$.  Moreover $f$ is homotopic
rel boundary to a map $f'$ whose image is contained in $M(1)$.  We
then have
\[
\mathcal{F}(\underline{\gamma})-\mathcal{F}(\underline{\gamma}') =
f'(\overline{\Sigma})\cdot K_1 = f(\overline{\Sigma})\cdot
\widetilde{K}_1 \ge 0,
\]
where the last inequality holds by positivity of intersections of the
holomorphic curve $F$ with the holomorphic plane
$\R\times\widetilde{K}_1$ in $\R\times M(1)^*$.

We now show that the homology of the associated graded complex with
respect to $\mathcal{F}$ is the contact homology
$HC(M_0,\Gamma_0,\xi_0)$. Recall the identification $N(K)=D^2\times
[-2,2]/(-2\sim 2)$.  Consider a small neighborhood
$N(K_1)=D^2_{\varepsilon}\times ([-2,-1]\cup [1,2]) \subset M(1)\cap
N(K)$, where $D^2_{\varepsilon}= \{r\leq \varepsilon\}$. The manifold
$M(1)-N(K_1)$ is almost a convex sutured manifold contactomorphic to
$(M_0,\Gamma_0,\xi_0)$.  The only issue is that, along $(\bdry
D^2_{\varepsilon})\times \{t\}$ with $t\in[-2,1]\cup[1,2]$, the
contact form $\alpha$ restricts to a positive contact form with
respect to the boundary orientation induced from $D^2_{\varepsilon}$,
and hence to a negative contact form with respect to the boundary of
$R_\pm (\Gamma)$.  To remedy this problem we attach a collar
\[
A'=\R/2\pi\Z \times ([-2,-1]\cup[1,2])\times[-1,1]
\]
with coordinates
$(\theta,t,u)$ to $M(1)-N(K_1)$ by identifying $\bdry D^2_{\varepsilon}
\times ([-2,-1]\cup[1,2])$ with $\R/2\pi\Z\times
([-2,-1]\cup[1,2])\times\{-1\}$ and extending via the contact form
$dt-cud\theta$.  Then $(M(1)-N(K_1))\cup A'$ is a
sutured contact manifold, and we leave it as an exercise to prove
that it is contactomorphic to $(M_0,\Gamma_0,\xi_0)$ (modulo the
process of matching up the contact structures on the boundary by a
homotopy).

Finally, let $N(\widetilde{K}_1)$ denote the obvious extension of
$N(K_1)$ to a neighborhood of $\widetilde{K}_1$ in $M(1)^*$.  We then
observe that a holomorphic curve in $\R\times M(1)^*$ does not pass
through $\R\times \widetilde{K}_1$, i.e. does not decrease the
filtration, if and only if its image is contained in $\R\times
(M(1)^*-N(\widetilde{K}_1))$. This follows from intersection
positivity by observing that $N(\widetilde{K}_1)$ is foliated by Reeb
arcs parallel to $\widetilde{K}_1$.  A similar argument shows that the
holomorphic curves that are counted by the contact homology
differential in $\R\times (((M(1)-N(K_1))\cup A')^*$ do not pass
through the ``vertical completion'' of $\R\times A'$, and so are
contained in $\R\times (M(1)^*-N(\widetilde{K}_1))$.  Thus the
differential on the associated graded complex for $M(1)$ counts the
same holomorphic curves as the differential for the contact homology
of $(M(1)-N(K_1))\cup A' \simeq M_0$.
\end{proof}

\begin{rmk}
  Although the filtration defined above depends on the choice of a
  relative homology class $B\in H_2(M,K)$ with $\partial B = [K]$, the
  filtration {\em difference\/} between two generators representating
  the same class $A\in H_1(M)$ does not depend on this choice, by
  equation \eqref{eqn:FD}.
\end{rmk}

\subsection{Invariants of Legendrian submanifolds}
\label{subsection: invts of leg submanifolds}

In this subsection we briefly discuss invariants of Legendrian
submanifolds.  Let $(M,\xi)$ be a closed $(2n+1)$-dimensional
contact manifold and $L\subset M$ be a closed Legendrian
submanifold.  By Example~\ref{example: Legendrian2}, there is a
tubular neighborhood $N(L)$ of $L$ so that
$(M-N(L),\Gamma=S^{n-1}T^*L,\xi|_{M-N(L)})$ is a concave sutured
contact manifold.  Now, by Proposition~\ref{prop: concave to
sutured}, we can modify the concave sutured contact manifold into a
convex sutured contact manifold $(M',\Gamma',\xi')$.  Then we define
\[
HC(M,\xi,L)= HC(M',\Gamma',\xi').
\]
To show that this is well-defined, recall from
Section~\ref{subsection: ac} that the right hand side is independent
of the choices of contact form and the almost complex structure.  We
then have:

\begin{lemma}
  The contact homology algebra $HC(M,\xi,L)$ is an invariant of
  $(M,\xi,L)$, i.e. does not depend on the choice of tubular
  neighborhood of $L$.
\end{lemma}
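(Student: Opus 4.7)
The plan is to show that any two Darboux--Weinstein tubular neighborhoods $N_0(L)$ and $N_1(L)$ of $L$ are carried into one another by an ambient contactomorphism of $M$, which then induces a contactomorphism of the resulting convex sutured contact manifolds. The conclusion will follow from the invariance of sutured contact homology under contactomorphism established in Section~\ref{subsection: ac}.

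First I will show that any two such neighborhoods are related by an ambient contact isotopy of $M$. By Example~\ref{example: Legendrian2}, each $N_i(L)$ is the image under a Darboux--Weinstein chart of a ball $\{z^2 + |p|_q^2 < \varepsilon_i^2\}$ in the standard model $(\R \times T^*L, dz + \lambda)$, where the norm $|\cdot|_q$ depends on a choice of Riemannian metric on $L$. When both arise from the same chart and the same metric, the rescaling $(z,p,q) \mapsto (\rho z, \rho p, q)$ pulls back $dz + \lambda$ to $\rho(dz+\lambda)$ and hence is a contactomorphism of the model; truncating this to the identity outside a larger ambient neighborhood of $L$ via an appropriate bump function yields an ambient contact isotopy taking $N_0(L)$ to $N_1(L)$. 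For charts defined using different Riemannian metrics on $L$, the convexity of the space of such metrics lets me interpolate, giving a one-parameter family of Darboux--Weinstein neighborhoods whose boundaries are moved by an ambient contact isotopy via Gray's theorem. The uniqueness portion of the Darboux--Weinstein theorem then handles the case of different charts arising from the same data.

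Next, the ambient contactomorphism $\Phi$ of $M$ so obtained restricts to a contactomorphism of concave sutured contact manifolds $(M-N_0(L), \Gamma_0, \xi|_{M - N_0(L)}) \stackrel\sim\to (M-N_1(L), \Gamma_1, \xi|_{M - N_1(L)})$. The concave-to-convex procedure of Proposition~\ref{prop: concave to sutured} is local near the boundary and involves some choices, but $\Phi$ allows the choices made for $M-N_0(L)$ to be transported to compatible ones for $M-N_1(L)$; this yields contactomorphic convex sutured contact manifolds $(M'_0, \Gamma'_0, \xi'_0) \stackrel\sim\to (M'_1, \Gamma'_1, \xi'_1)$. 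The invariance result of Section~\ref{subsection: ac} then gives $HC(M'_0, \Gamma'_0, \xi'_0) \cong HC(M'_1, \Gamma'_1, \xi'_1)$, which is the desired independence of the choice of $N(L)$.

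The main technical point is the passage from a contactomorphism of neighborhoods of $L$ to a genuine ambient contact isotopy of $M$; this will be handled by the standard device of cutting off the generating contact Hamiltonian by a bump function supported in a slightly larger tubular neighborhood, so that the isotopy extends by the identity to the rest of $M$.
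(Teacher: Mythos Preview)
Your approach is sound but genuinely different from the paper's. The paper does not invoke the ambient uniqueness of Weinstein neighborhoods. Instead it works locally: writing $\alpha = dz + \beta$ near $L$ with $\beta|_L = 0$, it analyzes the Liouville vector field $Y$ of $\beta$ on $\{z=0\}$ explicitly (finding $Y = \sum p_i\partial_{p_i} + \text{higher order}$), observes that the ``fan'' of $Y$-trajectories through each point of $L$ varies continuously with the local contact form, and concludes that the convex boundaries $\Sigma^0, \Sigma^1$ of any two such neighborhoods are connected by a one-parameter family $\Sigma^t$ of convex hypersurfaces. The invariance then follows (implicitly) from Section~\ref{subsection: ac} applied to the family of complements. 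Your route via an ambient contactomorphism is cleaner conceptually and avoids this local computation, at the cost of relying on the Darboux--Weinstein package as a black box; the paper's route is more self-contained and makes the geometry near $L$ explicit.

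One genuine quibble: your invocation of Gray's theorem for the different-metrics case is not the right tool. Gray's theorem moves a family of contact \emph{structures} by an isotopy; it does not by itself produce a contact isotopy carrying one hypersurface to another. What you actually need is that the family $\Sigma_t = \{z^2 + |p|_{g_t}^2 = \varepsilon^2\}$ consists of hypersurfaces all transverse to the same contact vector field $X = z\partial_z + \sum p_i\partial_{p_i}$ (the rescaling field you already used), and then you can either (a) run your cut-off contact Hamiltonian argument with a $t$-dependent Hamiltonian chosen so that the resulting flow carries $\Sigma_0$ to $\Sigma_t$, or (b) simply use the family $\Sigma_t$ directly as the paper does, passing to a family of sutured complements and invoking part (2) of the Proposition in Section~\ref{subsection: ac}. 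Either fix is routine; just don't call it Gray.
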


\begin{proof}
  Observe that the hypersurface $\Sigma$ of $M$, defined in
  Example~\ref{example: Legendrian2}, has the following properties:
  \be
\item[(i)] There is a contact $1$-form for $\xi$, written locally as
\begin{equation} \label{eqn: nbhd}
\alpha=dz+\beta=dz+\sum_{i=1}^n f_i(p,q) dp_i+\sum_{i=1}^n
g_i(p,q)dq_i.
\end{equation}
Here $(z,p=(p_1,\dots,p_n),q=(q_1,\dots,q_n))$ are local
coordinates, $R_\alpha=\bdry_z$, $L=\{z=0,p=0\}$, and
$f_i(0,q)=g_i(0,q)=0$ for all $q$.  In particular, $\xi$ is tangent
to $\{z=0\}$ along $L$.
\item[(ii)] On the $2n$-dimensional submanifold $\{z=0\}$,
let $Y$ be the Liouville vector field satisfying $\imath_Yd\beta=\beta$,
and let $W_q$ be the ``fan'' consisting of all points $(p,q)$ whose
backwards flow along $Y$ converge to $(0,q)$. Then let $\Gamma$ be a
$(2n-1)$-dimensional submanifold of $\{z=0\}$ which is arbitrarily
close to $L$ and such that each $\Gamma\cap W_q$ is ``star-shaped'',
i.e., an $(n-1)$-dimensional sphere which is transverse to $Y$.
\item[(iii)] $\Gamma$ is diffeomorphic to the unit cotangent bundle of $L$
and bounds a $2n$-dimensional submanifold $\Sigma_0\subset \{z=0\}$
which is diffeomorphic to the unit disk bundle of $T^*L$.  Then
$\Sigma\cap \{z>0\}$ (resp.\ $\Sigma\cap \{z<0\}$) is transverse to
$R_\alpha$ and the projection along $R_\alpha$ gives a
diffeomorphism with $int(\Sigma_0)$.
\ee

Condition (ii) implies that $\Gamma$ is a $(2n-1)$-dimensional
contact submanifold and Condition (iii) implies that $\Sigma$ is a
convex hypersurface of $M$.

Now let $\alpha$ be a contact $1$-form for $\xi$, which is defined
in a neighborhood of $L$ and satisfies (i). In particular, $\alpha$
is given by Equation~(\ref{eqn: nbhd}). We describe the Liouville
vector field $Y$ for $\beta$ on $\{z=0\}$ when $|p|$ is arbitrarily
small. For $|p|$ small,
$$d\alpha\approx \sum_i {\bdry g_i\over \bdry p_j} dp_jdq_i+\sum_i
{\bdry f_i\over \bdry p_j}dp_jdp_i,$$ since ${\bdry f_i\over \bdry
q_j}$ and ${\bdry g_i\over \bdry q_j}$ are close to zero. Ignoring
higher order terms, we write $f_i=\sum_j F_{ij}p_j$ and $g_i=\sum_j
G_{ij}p_j$, where $F_{ij}$ and $G_{ij}$ are constants. By the
symplectic condition, $det({\bdry g_i\over \bdry
p_j})=det(G_{ij})>0$. If we write $Y=\sum_i a_i\bdry_{p_i}+\sum_i
b_i\bdry_{q_i}$, then the Liouville condition implies that
$$g_i=\sum_j {\bdry g_i\over \bdry p_j} a_j,$$
or $\sum_j G_{ij}p_j=\sum_j G_{ij}a_j$.  Hence $a_j=p_j$ and $Y$ has
the form:
\begin{equation} \label{eqn for Y}
Y=\sum_i p_i \bdry_{p_i}+ \sum_{i,j} A_{ij} p_j \bdry_{q_j},
\end{equation}
by the invertibility of $G_{ij}$.  Here $A_{ij}$ are constants which
smoothly depend on $F_{ij}$ and $G_{ij}$.

Equation~(\ref{eqn for Y}) implies that the fan $W_q$ and $Y|_{W_q}$
vary continuously as we vary $\beta$ (while preserving the
conditions in (i)), and that the $\bdry_{p_i}$-terms are independent
of $\beta$ (modulo higher order corrections).

Finally, given two convex submanifolds $\Sigma^0$ and $\Sigma^1$ of
the type described in Example~\ref{example: Legendrian2}, there is a
$1$-parameter family of contact $1$-forms $\alpha^{t}$ interpolating
between $\alpha^0$ and $\alpha^1$, all satisfying (i).  Since
$W_q^t$ varies continuously with $\beta^t$, it follows that there is
a family $\Gamma^t$ from $\Gamma^0$ to $\Gamma^1$, all satisfying
(ii).  We can then extend to $\Sigma^t$ from $\Sigma^0$ to
$\Sigma^1$, all satisfying (iii). This implies that $\Sigma^0$ and
$\Sigma^1$ can be connected by a $1$-parameter family of convex
submanifolds $\Sigma^t$.
\end{proof}

Our Legendrian submanifold invariant $HC(M,\xi,L)$, unlike other
invariants such as {\em Legendrian contact homology}, does not
automatically vanish under stabilizations. In fact,
Corollary~\ref{cor: nonvanishing Legendrian invariant} shows that
the invariant does not vanish for example when the ambient manifold
$(M,\xi)$ has an exact symplectic filling.

\begin{example}
Suppose $(M,\xi)=(S^3,\xi)$ is the standard contact $3$-sphere and
$L$ is a Legendrian unknot with Thurston-Bennequin number $tb(L)=-n$
and rotation number $r(L)=n-1$ for $n\geq 1$. (These Legendrian
unknots have maximal rotation number amongst those with the same
$tb$.) Then $(S^3-N(L),\xi|_{S^3-N(L)})$ is a sutured contact solid
torus which is obtained from a product sutured contact manifold
\[
(D^2\times[-1,1], \bdry D^2\times\{0\}, N(\bdry D^2)\times[-1,1],
dt+\beta),
\]
where $\beta$ is a primitive of an area form on $D^2$, by a sutured
manifold gluing. Its contact homology $HC(S^3,\xi,L)$ has been
completely calculated by Golovko~\cite{Go1,Go2}, and in particular is
nonzero.
\end{example}

\begin{q}
Determine the relationship of $HC(M,\xi,L)$ with the Legendrian
contact homology $LCH(M,\xi,L)$ of the Legendrian submanifold
$L\subset (M,\xi)$ as well as the contact homology $HC(M(L),\xi_L)$
of the contact manifold $(M(L),\xi_L)$, obtained from $M$ by
Legendrian surgery along $L$.
(A surgery exact sequence involving $HC(M(L),\xi_L)$ and a variant of
$LCH(M,\xi,L)$ was obtained by
Bourgeois-Ekholm-Eliashberg~\cite{BEE}.)
\end{q}

When $\dim M=3$,
we can also define
\[
ECH(M,\xi,L)= ECH(M',\Gamma',\xi').
\]
This is conjectured to be independent of the choice of $\xi$ (up to
the usual grading shift) and dependent only on the framing of $L$.

\section{First warm-up: neck-stretching in the $t$-direction}
\label{section: warm-up 1}

Before embarking on the proof of Theorem~\ref{thm: sutured gluing},
we treat slightly easier cases in this section and the next.

Consider the situation where we have a sutured contact manifold
$(M',\Gamma',\alpha')$, and there is a diffeomorphism
\[
\phi\colon  (R_+(\Gamma'),\beta'_+)\stackrel\sim\to (R_-(\Gamma'),\beta'_-),
\]
where $\beta'_\pm=\alpha'|_{R_\pm(\Gamma')}$, which is the identity on
$R_+(\Gamma')\cap U(\Gamma')$. Let $(M,\alpha)$ be the contact
manifold with boundary obtained from $M'$ by gluing $R_+(\Gamma')$ and
$R_-(\Gamma')$ via $\phi$.  If we let $\Gamma$ denote the image of
$\Gamma'$ in $M$, then a neighborhood of $\partial M$ is identified
with $[-1,0]\times(\R/\Z)\times\Gamma$ so that $\alpha=C dt + \beta$.

Although $(M,\alpha)$ is not quite a sutured contact manifold in the
sense of Definition~\ref{def:SCM}, we can nonetheless define part of
its contact homology as follows.  First complete $(M,\alpha)$ to
$(M^*,\alpha^*)$ by attaching the side (S) as usual (but not the
top/bottom), and choose a tailored almost complex structure on
$\R\times M^*$.  Define $\mathcal{A}_{[0]}(M,\Gamma,\alpha)$ be the
free supercommutative $\Q$-algebra with unit generated by good Reeb
orbits in $M^*$ which do not intersect $R_+(\Gamma')$; note that these
are the same as the good Reeb orbits in $M'$.  Note that if a
holomorphic curve in $\R\times M^*$ has all positive ends at such Reeb
orbits, then it also has all negative ends at such Reeb orbits,
because all orbits that nontrivially intersect $R_+(\Gamma')$ do so 
positively, therefore they belong to different homology classes.  
Thus the usual construction defines a well-defined
differential on $\mathcal{A}_{[0]}(M,\Gamma,\alpha)$ which has a
well-defined homology $HC_{[0]}(M,\Gamma,\alpha)$.

The goal of this section is to prove the following result:

\begin{thm} \label{thm: warm-up 1}
There is an isomorphism $HC(M',\Gamma')\simeq HC_{[0]}(M,\Gamma)$.
\end{thm}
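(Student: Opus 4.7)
The plan is to apply neck-stretching along the convex hypersurface $\widehat R = \widehat{R_+(\Gamma')} \subset M^*$ (the image in $M^*$ of $R_+(\Gamma')$ identified with $R_-(\Gamma')$). For each $n > 0$, replace a tubular neighborhood of $\widehat R$ in $M^*$ by an elongated product collar $(\widehat R \times [-n,n],\, C\,dt + \widehat\beta_\pm)$, producing a contact manifold $(M_n^*, \alpha_n^*)$ whose contact structure agrees with that of $(M^*, \alpha^*)$ up to a conformal rescaling in the $t$-direction. Choose a tailored almost complex structure $J_n$ on $\R \times M_n^*$ which, on the inserted collar, is $t$-translation invariant and whose projection to $\widehat{R_\pm(\Gamma')}$ is a fixed $\widehat\beta_\pm$-adapted $J_0$. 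The crucial geometric observation is that cutting $M_n^*$ along the central slice $\{0\} \times \widehat R$ and attaching standard cylindrical ends $R_\pm \times [\pm 1, \pm\infty)$ recovers the completion $(M')^*$.

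Define a $\Q$-algebra map $\Phi\colon \mathcal A(M',\Gamma',\alpha') \to \mathcal A_{[0]}(M_n, \Gamma, \alpha_n)$ as the identity on generators, via the tautological bijection between good Reeb orbits of $(\alpha')^*$ and good Reeb orbits of $\alpha_n^*$ not meeting $\widehat R$. The technical core is to show that, for $n$ sufficiently large, $\Phi$ is a chain isomorphism; equivalently, that every index-one $J_n$-holomorphic curve in $\R \times M_n^*$ with ends on old orbits is contained in $\R \times (M')^* \subset \R \times M_n^*$. If this failed, take a sequence $F_{n_i}$ of such curves crossing $\widehat R$ with $n_i \to \infty$. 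By the SFT compactness of Corollary~\ref{cor:SFTcompactness}, extract a subsequence converging to a holomorphic building whose levels lie either in $\R \times (M')^*$ or in the symplectization of the contact manifold $(\R \times \widehat R,\, C\,dt + \widehat\beta_\pm)$. Its Reeb vector field is $(1/C)\partial_t$, which has \emph{no} closed orbits; since the matching asymptotics between successive SFT levels must be closed Reeb orbits, no nontrivial neck level can appear, and the limit lies entirely in $\R \times (M')^*$, contradicting the crossing of $\widehat R$. The converse inclusion is automatic once $n$ exceeds the uniformly bounded $t$-diameter of curves in $\R \times (M')^*$ with fixed asymptotics (Proposition~\ref{prop: bound on b for single F} and Lemma~\ref{lemma: bound on tau}).

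Combining both inclusions shows the differentials agree under $\Phi$, and passing to homology (together with the contactomorphism $(M_n, \ker\alpha_n) \cong (M, \ker\alpha)$) yields $HC(M',\Gamma') \simeq HC_{[0]}(M,\Gamma)$. The main obstacle is the noncompactness of $\widehat R$: it meets the completed side $(S)$ of $M^*$ along $\Gamma$, so the stretched symplectization $(\R \times \widehat R,\, C\,dt + \widehat\beta_\pm)$ is not a standard closed-hypersurface neck and the SFT splitting theorem does not apply verbatim. The $\tau \le 0$ bound of Lemma~\ref{lemma: bound on tau} must be used to confine all curves to a compact core where $\widehat\beta_\pm$ is genuinely Liouville, together with a removable-singularities argument in the neck symplectization (which has no closed orbits to accommodate essential punctures). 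A secondary issue is to verify that the generators of $\mathcal A_{[0]}$ really coincide with those of $\mathcal A(M',\Gamma',\alpha')$: any Morse--Bott family of closed orbits arising in the $[-1,0]\times S^1\times\Gamma$ neck must, after a generic nondegenerate perturbation of $\alpha^*$, contribute only bad orbits or orbits meeting $\widehat R$.
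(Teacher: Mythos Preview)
Your overall strategy---stretch the neck along $\widehat R$ and argue that curves with ends in $M'$ cannot cross---matches the paper's, and your compactness sketch is essentially the content of Lemma~\ref{lemma: warm-up 1}. The genuine gap is the claim that a \emph{single} sufficiently large $n$ makes $\Phi$ a chain isomorphism. Your contradiction argument (``if this failed, take a sequence $F_{n_i}$ crossing $\widehat R$ with $n_i\to\infty$'') does not yield a sequence with fixed asymptotics: the negation of your statement only produces, for each $n$, \emph{some} index-one curve with \emph{some} ends on old orbits that crosses $\widehat R$. As $n$ grows the positive ends of these curves may have unbounded action, so there is no uniform Hofer-energy bound and neither Corollary~\ref{cor:SFTcompactness} nor any neck-splitting theorem applies. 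What your argument actually proves is Lemma~\ref{lemma: warm-up 1}: for each \emph{fixed} $\underline\gamma^\pm$ (hence bounded energy), the moduli spaces agree for $n$ large.

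The paper confronts this explicitly: ``one cannot choose a single $n$ that always works; the size of $n$ \dots\ depends on the total symplectic action.'' The fix is a direct-limit argument. One filters by action, getting chain maps $\Phi_{K,n}\colon \mathcal A_{\le K}(M',\alpha')\hookrightarrow \mathcal A_{[0]}(M_n,\alpha_n)$ for $n\gg K$, and then constructs continuation maps $\Psi_n\colon \mathcal A_{[0]}(M_n)\to\mathcal A_{[0]}(M_{n+1})$ via a cobordism interpolating between $\alpha_n$ and $\alpha_{n+1}$. The key additional lemma (Lemma~\ref{lemma: warm-up 1 cobordism}) is that $\Psi_n$ is \emph{triangular}: it sends $\gamma\mapsto\gamma+(\text{strictly lower action})$, using a closed taming $2$-form $\omega=d\alpha^1$ that is $J$-nonnegative on the interpolating cobordism. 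This triangularity is what lets one pass to the direct limit and prove both injectivity and surjectivity of $\Phi$ on homology; without it you only get a map, not an isomorphism.

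Your secondary worries are not real obstacles. The noncompactness of $\widehat R$ is handled exactly as you say, by the $\tau\le 0$ bound of Lemma~\ref{lemma: bound on tau}. And there is no issue with spurious generators: on the collar the Reeb field is $\frac1C\partial_t$, so every closed orbit not already in $M'$ must wind around the $\R/\Z$ factor and hence meets $\widehat R$; no Morse--Bott perturbation is needed.
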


The idea of the proof is to ``stretch the neck'' in the gluing that
produces $M$ from $M'$, with a parameter $n$ that measures the length
of the neck.  One wants to argue that if $n$ is sufficiently large
then all relevant holomorphic curves in $\R\times M^*$ correspond to
holomorphic curves in $\R\times (M')^*$.  However one cannot choose a
single $n$ that always works; the size of $n$ that is required for
this to work depends on the total symplectic action of the Reeb orbits
involved.  To deal with this issue we will use a direct limit
argument.

We remark that one can also prove a more general version of
Theorem~\ref{thm: warm-up 1} in which one glues only some components
of $R_+(\Gamma')$ to some components of $R_-(\Gamma')$.  This uses the
same argument but more notation.

\subsection{Stretching the neck}
\label{subsection: warm-up 1 stretching the neck}

For the purposes of the neck-stretching, we introduce a sequence of
contact manifolds with boundary $(M_n,\alpha_n)$ and almost complex
structures $J_n$ which are parametrized by $n$: Let $M_n$ be the
manifold diffeomorphic to $M=M'/\phi$, obtained from $M'\sqcup
(R_+(\Gamma')\times [-n,n])$ by identifying $R_+(\Gamma')$ and
$R_+(\Gamma')\times\{-n\}$ by the identity and
$R_+(\Gamma')\times\{n\}$ to $R_-(\Gamma')$ by $\phi$. We take the
$1$-form $\alpha_n$ to agree with $dt+\beta'_+$ on
$R_+(\Gamma')\times[-n,n]$ and with $\alpha'$ on $M'$. Let $J'$ be an
almost complex structure which is tailored to $(M',\alpha')$ and is
taken to itself by $\phi$.  Then define $J_n$ to be $t$-invariant on
$R_+(\Gamma')\times[-n,n]$ and to agree with $J'$ on $M'$.  Also
define $M^*_n$ as the completion of $M_n$, obtained by attaching (S),
but not (T) or (B) since $R_\pm$ have been eliminated.  By counting
$J_n$-holomorphic curves in $\R\times M_n^*$ we can define the contact
homology $HC_{[0]}(M_n,\alpha_n,J_n)$.  The standard continuation
argument shows that this does not depend on $n$ and is canonically
isomorphic to $HC_{[0]}(M,\Gamma)$.

\begin{lemma}\label{lemma: warm-up 1}
Let $\underline{\gamma}^+=(\gamma_1^+, \ldots, \gamma^+_k)$ and
$\underline{\gamma}^-=(\gamma_1^-, \ldots, \gamma^-_l)$ be finite
ordered sets of Reeb orbits in $M'$, possibly taken with
multiplicities. Then given $g$, for all sufficiently large $n$,
\[
\mathcal{M}_g(\underline{\gamma}^+;\underline{\gamma}^-;\R\times (M')^*,J')
=\mathcal{M}_g(\underline{\gamma}^+;\underline{\gamma}^-; \R\times
M_n^*,J_n).
\]
\end{lemma}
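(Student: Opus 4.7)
\textbf{Proof plan for Lemma~\ref{lemma: warm-up 1}.}  The equality is to be interpreted via the canonical identification between curves in the two completions: a curve in $\R\times (M')^*$ whose $b$-range is smaller than $n$ sits inside $\R\times M_n^*$ via the obvious embedding of a truncated copy of $(M')^*$ into $M_n^*$ obtained by identifying the partial Top $R_+(\Gamma')\times[1,n+1]$ with the portion $R_+(\Gamma')\times[-n,0]$ of the neck, and the partial Bottom $R_-(\Gamma')\times[-n-1,-1]$ with $R_+(\Gamma')\times[0,n]$ via $\phi$.  Under this identification both $\alpha'$ matches $\alpha_n$ and $J'$ matches $J_n$, because $J'$ is $\bdry_t$-invariant on the Top/Bottom and $J_n$ is $\bdry_t$-invariant on the neck.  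I will show both inclusions of sets.

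The inclusion ``$\subseteq$'' is the easy direction.  By Corollary~\ref{cor:SFTcompactness}, the moduli space $\mathcal{M}_g(\m{\gamma}^+;\m{\gamma}^-;\R\times (M')^*,J')$ is precompact (modulo the $\R$-action); in particular there is a constant $C=C(g,\m{\gamma}^\pm)$ such that $|b\circ F|\le C$ for every curve $F$ in this moduli space.  For any $n>C$, the image of such a curve lies in the region $\{|b|\le C\}\subset (M')^*$, which embeds holomorphically into $\R\times M_n^*$ as described above, producing an element of $\mathcal{M}_g(\m{\gamma}^+;\m{\gamma}^-;\R\times M_n^*,J_n)$.

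The inclusion ``$\supseteq$'' is the substantive direction.  Arguing by contradiction, suppose there is a sequence $n_k\to\infty$ and $F_{n_k}\in\mathcal{M}_g(\m{\gamma}^+;\m{\gamma}^-;\R\times M_{n_k}^*,J_{n_k})$ none of which lifts to a curve in $\R\times (M')^*$.  The asymptotics fix the $d\alpha^*$-energy $E(F_{n_k})=\sum\mathcal{A}(\gamma_i^+)-\sum\mathcal{A}(\gamma_j^-)$, and Lemma~\ref{lemma: bound on tau} gives $\tau\circ F_{n_k}\le 0$.  The crucial observation is that the neck $R_+(\Gamma')\times[-n_k,n_k]$, equipped with the contact form $dt+\beta'_+$, has Reeb vector field $\bdry_t$ which admits no closed orbits, and the $J_{n_k}$ are $\bdry_t$-invariant and project to a fixed $\widehat\beta_+$-adapted almost complex structure $J'_0$ on $\widehat{R_+(\Gamma')}$.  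The same local analysis used in Proposition~\ref{noside}, namely the bubbling Lemma~\ref{rescaling} combined with Lemma~\ref{noarea} and removal of singularities (Corollary~\ref{removal}) applied to the limiting plane in $\R\times\R\times\widehat{R_+(\Gamma')}$, then yields a uniform bound $|b\circ F_{n_k}|\le C'$ (measured after locally lifting across the neck), independent of $k$.  The bubbling analysis goes through verbatim: failure of the bound would produce a nontrivial finite-energy holomorphic plane in $\R\times\R\times\widehat{R_+(\Gamma')}$, which by Corollary~\ref{removal} extends to a holomorphic sphere in an exact symplectic manifold, a contradiction.

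Once the uniform bound $C'$ is established, for all $k$ with $n_k>C'$ the image of $F_{n_k}$ lies in the embedded truncated copy of $(M')^*$ inside $M_{n_k}^*$, and hence $F_{n_k}$ lifts to a $J'$-holomorphic curve in $\R\times (M')^*$, contradicting our hypothesis.  The main obstacle is verifying that the proof of Proposition~\ref{noside} adapts to the sequence $(M_{n_k}^*,J_{n_k})$ rather than a fixed $(M^*,J)$; this reduction rests on the fact that the proof is purely local (bubbling, thick/thin decomposition, isoperimetric inequality, monotonicity lemma) and takes place in the model region $\R\times\R\times\widehat{R_+(\Gamma')}$ which is the same for all $n_k$.
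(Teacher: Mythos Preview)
Your proposal is correct and follows essentially the same approach as the paper's proof: both argue the nontrivial direction by contradiction, reducing to the bubbling and thick/thin analysis of Proposition~\ref{noside} carried out in the fixed model region $\R\times\R\times\widehat{R_+(\Gamma')}$, and both observe that the only new feature---that the target $(\R\times M_n^*,J_n)$ varies with $n$---is harmless because the relevant local analysis takes place in this $n$-independent model. The paper phrases the contradiction as the existence of a nonconstant finite-energy cylinder in $\R\times\R\times\widehat{R_+(\Gamma')}$ (ruled out by Corollary~\ref{removal}), while you phrase it as a uniform bound on $|b|$, but these are the same conclusion.
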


\begin{proof}
The proof is almost identical to that of Lemma~\ref{gradbound} and
Proposition~\ref{noside}; the slight difference that the ranges of
the holomorphic maps vary with $n$. Arguing by contradiction,
suppose there is a sequence
$$F_n=(a_n,f_n)\colon  (\Sigma_n,j_n,\mathbf{m}_n)\to (\R\times M_n^*,J_n)$$
in $\mathcal{M}_g(\underline{\gamma}^+;\underline{\gamma}^-;\R\times
M_n^*,J_n)$ whose second component $f_n$ nontrivially intersects
$\widehat{R_+(\Gamma')}\times\{0\}$ for all $n$. (Observe that, if
$f_n$ does not intersect $\widehat{R_+(\Gamma')}\times\{0\}$, then
$F_n$ can be viewed as a holomorphic map in
$\mathcal{M}_g(\underline{\gamma}^+;\underline{\gamma}^-;\R\times
(M')^*,J')$.)  As before, we can restrict to $\tau\leq 0$ by the
strict plurisubharmonicity of $\tau$.

On $\R\times M_n^*$ we use the metric given by Equation~(\ref{eqn:
metric}), and on $(\Sigma_n-\mathbf{m}_n,j_n)$ we use the unique
complete, compatible, finite volume hyperbolic metric $g_n$.  Also
write $\rho_n$ for the injectivity radius of $g_n$.  If there is no
``gradient bound'', i.e., a bound on $\rho_n(x)\|\nabla F_n(x)\|$,
then we obtain the bubbling off of a nonconstant finite energy plane
with image in $(\R\times (M')^*,J')$ or $(\R\times \R\times
\widehat{R_+(\Gamma')}, J')$ by Lemma~\ref{rescaling}.  In the
latter case, we obtain a holomorphic sphere inside $(\R\times
\R\times \widehat{R_+(\Gamma')}, J')$ by the removal of
singularities lemma for the Top/Bottom (Lemma~\ref{removal}), a
contradiction. Hence the bubbling occurs inside $(\R\times
(M')^*,J')$.  Since the area of finite energy holomorphic planes is bounded by below
(see \cite[Lemma 5.11]{BEHWZ}), we
can remove finite sets $\mathbf{m}_n^0$ from $\Sigma_n-\mathbf{m}_n$
to ensure that there is a gradient bound with respect to
$(\dot\Sigma_n=\Sigma_n-(\mathbf{m}_n\cup \mathbf{m}^0_n),j_n)$.

Arguing as in Proposition~\ref{noside}, there is a subsequence of
$F_n$ (again denoted $F_n$ by abuse of notation) for which:
\begin{enumerate}
\item[(i)]
there
is a bound on the gradient,
\item[(ii)]
there is a $\varepsilon$-thin
component $C^n$ of $\dot\Sigma_n$ and an annulus $Z^n\subset C^n$,
such that $f_n(Z^n)\subset R_+(\Gamma')\times[-n,n]$, and
\item[(iii)]
$\max_{x\in Z^n} t\circ f_n(x) -\min_{x\in Z^n} t\circ f_n(x)$ is an
unbounded sequence in $n$, where $t\in[-n,n]$.
\end{enumerate}
This sequence limits
to a nonconstant holomorphic cylinder in $(\R\times\R\times
\widehat{R_+(\Gamma')},J')$, which is a contradiction.
\end{proof}

\subsection{Continuation maps}
\label{subsection: warm-up 1 continuation maps}

Given a contact form $\alpha$, the {\em action} of an oriented curve
$\gamma$ with respect to $\alpha$ will be written as
\[
A_\alpha(\gamma)=\int_\gamma\alpha.
\]
We also write
$\overline{\gamma}=\gamma_1^{m_1}\dots\gamma_k^{m_k}$ and
$A_{\alpha}(\overline{\gamma})=\sum_i m_iA_\alpha(\gamma_i)$.

Let $\mathcal{A}_{\leq K}(M',\alpha',J')$ denote the subcomplex of
$\mathcal{A}(M',\alpha',J')$ generated (as a module) by monomials
$\overline{\gamma}$ with $A_{\alpha'}(\overline{\gamma})\leq
K$. Lemma~\ref{lemma: warm-up 1} implies that given $K$, if $n$ is
sufficiently large then the inclusion
\[
\Phi_{K,n}\colon \mathcal{A}_{\leq K}(M',\alpha',J') \hookrightarrow
\mathcal{A}(M_n,\alpha_n,J_n)
\]
is a chain map.

We now
investigate the dependence of this map on $K$ and $n$.  To start, we have the following key lemma:

\begin{lemma} \label{lemma: warm-up 1 cobordism} For all $n$
  sufficiently large, the canonical isomorphism
  $HC_{[0]}(M_n,\alpha_n,J_n)\simeq
  HC_{[0]}(M_{n+1},\alpha_{n+1},J_{n+1})$ is induced by a chain map
\[
\Psi_n\colon  \mathcal{A}_{[0]}(M_n,\alpha_n,J_n)
\to \mathcal{A}_{[0]}(M_{n+1},\alpha_{n+1},J_{n+1}),
\]
such that if $\gamma$ is a Reeb orbit in $M'$ then
\begin{equation}
\label{eqn: warm-up 1 gamma}
\gamma\mapsto\gamma+\sum_i a_i\overline{\gamma_i},
\end{equation}
where all the orbits of $\overline{\gamma_i}$ are contained in
$M'$ and $A_{\alpha_n}(\gamma)>
A_{\alpha_{n+1}}(\overline{\gamma_i})=A_{\alpha_n}(\overline{\gamma_i})$.
\end{lemma}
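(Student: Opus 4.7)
The chain map $\Psi_n$ will be constructed via the standard continuation map in contact homology. The plan is to build an exact symplectic cobordism with positive end $(M_n,\alpha_n)$ and negative end $(M_{n+1},\alpha_{n+1})$, and define $\Psi_n(\gamma)$ as a count of rigid holomorphic curves in this cobordism with one positive puncture at $\gamma$ and arbitrarily many negative punctures. Concretely, fix a diffeomorphism $\psi_n\colon M_n\to M_{n+1}$ that is the identity on $M'$ and stretches the neck $R_+(\Gamma')\times[-n,n]$ linearly onto $R_+(\Gamma')\times[-(n+1),n+1]$. Choose a cutoff $\phi\colon\R\to[0,1]$ equal to $1$ for $s\geq N$ and to $0$ for $s\leq -N$, and on $\R\times M_n^*$ form the $1$-form $\tilde\alpha$ interpolating between $\alpha_n$ (at $s\gg 0$) and $\psi_n^*\alpha_{n+1}$ (at $s\ll 0$) via $\phi$; with $|\phi'|$ small, $\omega=d(e^s\tilde\alpha)$ is symplectic and tames a $J_n$-to-$\psi_n^*J_{n+1}$ interpolating almost complex structure $\widetilde J_n$ that is tailored at each end.

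The first observation is that for every Reeb orbit $\gamma$ of $R_{\alpha'}$ in $M'$, the cylinder $\R\times\gamma$ is genuinely $\widetilde J_n$-holomorphic because $\alpha_n$ and $\psi_n^*\alpha_{n+1}$ both restrict to $\alpha'$ on $M'$ and $\widetilde J_n$ is $s$-independent on $\R\times M'$; these trivial cylinders are rigid and regular and contribute the leading $\gamma$ term in $\Psi_n(\gamma)$. Next, for any non-trivial curve $F$ with positive puncture $\gamma$ and negative punctures $\overline{\gamma'}$, Stokes' theorem applied to $\omega$ gives
\[
A_{\alpha_n}(\gamma)-A_{\alpha_{n+1}}(\overline{\gamma'})=\int F^*\omega\geq 0,
\]
with strict inequality unless $F^*\omega\equiv 0$; the latter forces $F$ to lie in $\R\times\gamma$ and hence to be trivial. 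Since $\alpha_n|_{M'}=\alpha'=\alpha_{n+1}|_{M'}$, the action identity $A_{\alpha_{n+1}}(\overline{\gamma_i})=A_{\alpha_n}(\overline{\gamma_i})$ required in the statement is automatic whenever the $\gamma_i$ lie in $M'$.

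It remains to verify that for $n$ large, all non-trivial contributions $\overline{\gamma_i}$ to $\Psi_n(\gamma)$ are supported in $M'$, and that $\Psi_n$ induces the canonical isomorphism. The first point is the main obstacle, and is handled by a neck-stretching argument essentially identical to Lemma~\ref{lemma: warm-up 1}: if there were a sequence $F_n\in\mathcal{M}_0(\gamma;\overline{\gamma_i};\widetilde J_n)$ whose images nontrivially intersected $\widehat{R_+(\Gamma')}\times\{0\}\subset M_n$, then the $\tau$-bound from plurisubharmonicity, the gradient bound of Lemma~\ref{gradbound}, and the bubbling analysis of Lemma~\ref{rescaling} and Proposition~\ref{noside} would produce a non-constant finite energy holomorphic plane or cylinder in $(\R\times\R\times\widehat{R_+(\Gamma')},J')$, contradicting the absence of closed Reeb orbits there and the exactness of the symplectic form (via Corollary~\ref{removal}). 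The only new ingredient is that $\widetilde J_n$ depends on $s$, but the relevant compactness input only uses the tailored structure at each end and the uniform taming, so the argument carries over verbatim. Once $\Psi_n$ is defined as a chain map, the standard argument producing a chain-homotopy inverse via the reverse cobordism, together with the uniqueness up to chain homotopy of continuation maps, shows that $\Psi_n$ induces the canonical isomorphism $HC_{[0]}(M_n,\alpha_n,J_n)\simeq HC_{[0]}(M_{n+1},\alpha_{n+1},J_{n+1})$.
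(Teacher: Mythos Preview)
Your construction of the cobordism and the continuation map is the same as the paper's, but the action estimate has a genuine gap. You write that Stokes' theorem applied to $\omega=d(e^s\tilde\alpha)$ yields
\[
A_{\alpha_n}(\gamma)-A_{\alpha_{n+1}}(\overline{\gamma'})=\int F^*\omega\geq 0.
\]
This is not correct: the primitive $e^s\tilde\alpha$ blows up at both ends, so $\int F^*d(e^s\tilde\alpha)$ is infinite on any curve with Reeb asymptotics, not the action difference. The taming form $d(e^s\tilde\alpha)$ (or $d(g(s)\tilde\alpha)$ with $g$ increasing) is what gives compactness, but it cannot be used to compare actions with equal constants at the two ends. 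The paper separates these two roles explicitly: it tames $\widetilde J_n$ with $d(g(s)\alpha^s)$, but for the action estimate it uses the \emph{closed} $2$-form $\omega=d\alpha^1=d\alpha_n$, which is $s$-independent. Stokes' theorem for this $\omega$ gives $A_{\alpha_n}(\gamma)-\int_{\overline{\gamma'}}\alpha_n\ge 0$, and since $\overline{\gamma'}\subset M'$ where $\alpha_n=\alpha_{n+1}=\alpha'$, the right integral equals $A_{\alpha_{n+1}}(\overline{\gamma'})$. The actual work, which you have skipped, is verifying that $d\alpha_n$ is $\widetilde J_n$-nonnegative on the neck $R_+(\Gamma')\times[-n,n]$ where $\widetilde J_n$ is \emph{not} $\alpha_n$-adapted. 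The paper does this by writing $d\alpha_n=d\beta$ there and checking $d\beta(v,\widetilde J_n v)=d\beta(w,J_{s,t}w)\ge 0$ via the projection to $R_+(\Gamma')$.

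Your neck-stretching argument for $\overline{\gamma_i}\subset M'$ is also misplaced. That containment follows for every $n$ from homology: any Reeb orbit of $\alpha_n$ not lying in $M'$ intersects $R_+(\Gamma')$ positively, so the algebraic intersection number with $R_+(\Gamma')$ is preserved along a holomorphic curve and vanishes for $\gamma\subset M'$. Your stretching argument would instead force $n$ to depend on $A_{\alpha'}(\gamma)$, which is stronger than (and incompatible with) the uniform statement of the lemma; that dependence is exactly what is isolated in the \emph{next} lemma (Lemma~\ref{lemma: warm-up 1 inclusion of algebras}), not this one.
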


In particular, the lemma implies that the chain map $\Psi_n$ is
``triangular'', i.e., is the identity plus lower order terms with
respect to the action.

\begin{proof}
Let us write $\alpha^1=\alpha_n$; on $R_+(\Gamma')\times
[-n,n]$, $\alpha^1=dt+\beta$. (In this subsection we will write
$\beta$ for $\beta'_+$.) There exists an identification
$i_n\colon M_n\stackrel\sim\to M_{n+1}$ so that $M'$ is taken to itself by
the identity and $i_n^*(\alpha_{n+1})=f(t)dt+\beta$ on
$R_+(\Gamma')\times [-n,n]$, where $1\leq f(t)\leq 1+{2\over n-1}$.
If we set $\alpha^0=i_n^*(\alpha_{n+1})$, then $\alpha^0$ and
$\alpha^1$ agree on $M'$. Let $\alpha^s$, $s\in[0,1]$, be the
$1$-parameter family of contact $1$-forms obtained by interpolating
between $\alpha^0$ and $\alpha^1$.
Let us write $\alpha^s=\alpha^0$ for $s\leq 0$ and
$\alpha^s=\alpha^1$ for $s\geq 1$.

Define an almost complex
structure $J$ on $\R\times M_n^*$ such that the following hold: \be
\item For all $s\in \R$, $J|_{\{s\}\times M_n^*}$ takes $\ker \alpha^s$ to
itself, maps $\bdry_s$ to $R_{\alpha^s}$, and is
$d\alpha^s$-positive;
\item $J|_{s\geq 1}=J_n$ and $J|_{s\leq 0} =
i_n^*J_{n+1}$;
\item $J$ is $s$-invariant on $\R\times M'$;
\item the projection of $J|_{\R\times R_+(\Gamma')\times [-n,n]}$ to
$R_+(\Gamma')$ does not depend on $s$ and on $t$. \ee

The cobordism $(\R\times M_n^*,J)$ gives rise to the chain map
$\Psi_n$, obtained in the usual way by counting rigid rational
curves with one positive puncture and an unspecified number of
negative punctures.  The $2$-form $\omega$ that we use below to
control the action is insufficient for verifying the compactness of
the relevant moduli spaces. For compactness, we need a taming form
$d(g(s)\alpha^s)$ for a suitable $g(s)$, whose $J$-positivity is
verified as in Lemma~\ref{lemma: interpol}.  We also restrict the
range from $\R\times M_n^*$ to $\R\times M_n$; this is possible
since the projection of $J$ to $\widehat{R_+(\Gamma')}$ is adapted
to $\beta$.

Consider the $2$-form $\omega= d\alpha^1$.  We claim that $\omega$ is
$J$-nonnegative, i.e., $\omega(v,Jv)\geq 0$ for all tangent vectors
$v\not=0$.  On $M'$, $\alpha^1=\alpha^0$, and the claim is
immediate. On $R_+(\Gamma')\times[-n,n]$, we have $\omega= d\beta$.
If we write $v\in T_{(s, x)}(\R\times R_+(\Gamma')\times[-n,n])$ (for $s \in \R$ and
$x \in R_+(\Gamma')\times[-n,n]$) as
$a\bdry_s+b\bdry_t+w$, where $w\in \ker \alpha^s$, then
$Jv=ah\bdry_t-(b/h) \bdry_s +J_{s,t}w$.  Here $h$ is a function which
is approximately equal to $1$. (This comes from the fact that
$R_{\alpha^s}$ is parallel, but not exactly equal, to $\bdry_t$ on
$\R\times R_+(\Gamma')\times[-n,n]$.) We then compute that
\[
\omega(v,Jv)=d\beta(w,J_{s,t}(w))\geq 0,
\]
by projecting to $R_+(\Gamma')$.

Next let $F$ be a holomorphic curve in $(\R\times M_n^{TB},J)$ with
positive end $\gamma$ and negative ends $\overline{\gamma}'$.
As noted previously, if $\gamma\subset M'$, then all orbits of
$\overline{\gamma}'$ are also contained in $M'$ for homological reasons. By
Stokes' theorem and the $J$-nonnegativity of $\omega$, we have:
\begin{equation} \label{eqn: warm-up 1 action inequality}
A_{\alpha^1}(\gamma)\geq
A_{\alpha^0}(\overline{\gamma}')=A_{\alpha^1}(\overline{\gamma}').
\end{equation}
Note that the first term on the right-hand side of Equation~(\ref{eqn:
  warm-up 1 gamma}) comes from counting a trivial cylinder over
$\gamma$. To obtain strict inequality in (\ref{eqn: warm-up 1 action
  inequality}) when $\overline{\gamma}'\not=\gamma$, first observe
that $F$ is asymptotically a cylinder over $\gamma$ at $+\infty$. If
$F$ is not a cylinder over $\gamma$, then $F$ must have positive
$d\alpha^1$-area, implying the strict inequality in (\ref{eqn: warm-up
  1 action inequality}).  (Branched covers of trivial cylinders do not
contribute to the differential by Fabert \cite{F}.)
\end{proof}

Our next ingredient is the following:

\begin{lemma}\label{lemma: warm-up 1 inclusion of algebras}
Given $K>0$, there exists $n_0> 0$ such that for all $n\geq n_0$,
\[
\Psi_n\colon  \mathcal{A}_{[0]}(M_n,\alpha_n,J_n)\to
\mathcal{A}_{[0]}(M_{n+1},\alpha_{n+1},J_{n+1})
\]
maps
$\gamma\mapsto\gamma$, whenever $A_{\alpha_n}(\gamma)\leq K$.
\end{lemma}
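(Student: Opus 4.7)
I plan to argue by contradiction, combining the triangular form of $\Psi_n$ from Lemma~\ref{lemma: warm-up 1 cobordism} with a neck-stretching compactness argument that closely mirrors the proof of Lemma~\ref{lemma: warm-up 1}.

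Suppose the conclusion fails. Then there exist $K>0$, a subsequence $n_k\to\infty$, and a closed orbit $\gamma$ with $A_{\alpha_{n_k}}(\gamma)\leq K$ for which $\Psi_{n_k}(\gamma)\neq \gamma$. The orbits may be taken independent of $k$: the Reeb vector field of $\alpha_{n_k}$ equals $\bdry_t$ on the neck and so has no closed orbits there, identifying the closed orbits of $\alpha_{n_k}$ with those of $\alpha'$ in $M'$, and only finitely many of these have action at most $K$. By Lemma~\ref{lemma: warm-up 1 cobordism} we may then (after a further subsequence) fix a monomial $\overline{\gamma}'\neq \gamma$ of strictly smaller action whose coefficient in $\Psi_{n_k}(\gamma)$ is nonzero for all $k$. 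This coefficient is computed by a sequence of rigid $J$-holomorphic curves $F_{n_k}$ in the cobordism $(\R\times M_{n_k}^*,J)$ positively asymptotic to $\gamma$ and negatively asymptotic to $\overline{\gamma}'$. Genus zero, a bounded number of punctures, the uniform action bound, and Lemma~\ref{lemma: bound on tau} yield uniform control on topological complexity, Hofer energy, and $\tau\circ F_{n_k}$.

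I would then split into two cases exactly as in the proof of Lemma~\ref{lemma: warm-up 1}, depending on whether the image of $F_{n_k}$ meets the middle slice $\widehat{R_+(\Gamma')}\times\{0\}$ of the neck. \emph{Case 1 (middle missed for infinitely many $k$):} the curves live in $\R\times M'$ together with a half-neck. On $\R\times M'$ the cobordism $J$ is already $s$-invariant and equal to $J'$ by conditions (1)--(3) of the construction, while on the neck the function $f(t)$ from the proof of Lemma~\ref{lemma: warm-up 1 cobordism} satisfies $|f-1|\leq 2/(n_k-1)\to 0$, so the $s$-dependence of $J$ on the neck also decays. After a further subsequence, Corollary~\ref{cor:SFTcompactness} produces an SFT limit that is a holomorphic building in the symplectization $(\R\times (M')^*,J')$ with top end $\gamma$ and bottom ends $\overline{\gamma}'$. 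By the $\R$-translation action on each symplectization level, every rigid level is a union of (possibly branched) covers of trivial cylinders over closed orbits of $\alpha'$, which contribute trivially by Fabert \cite{F}; hence the whole building is a stack of trivial cylinders over $\gamma$, forcing $\overline{\gamma}'=\gamma$, a contradiction. \emph{Case 2 (middle hit for infinitely many $k$):} I would reproduce verbatim the bubbling and thin-annulus analysis from the proof of Lemma~\ref{lemma: warm-up 1}. Its only structural input on $J$ is property (4), that the projection of $J|_{\R\times R_+(\Gamma')\times[-n,n]}$ to $R_+(\Gamma')$ is independent of both $s$ and $t$, which holds by construction. One obtains a nonconstant finite-energy plane or cylinder in $(\R_s\times\R_t\times\widehat{R_+(\Gamma')},J')$; Corollary~\ref{removal} extends it to a nonconstant holomorphic sphere, contradicting the exactness of the symplectic form on the target.

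The main obstacle will be the convergence step in Case 1: one must show that the limit building genuinely lies in the $s$-invariant symplectization $(\R\times(M')^*,J')$, even though for finite $n_k$ the ambient almost complex structure $J$ is $s$-dependent on the cobordism strip $\{0\leq s\leq 1\}$. This is controlled by the facts that the $s$-dependence of $J$ is compactly supported in $s$, that the defining contact forms $\alpha^0$ and $\alpha^1$ agree on $M'$ and converge uniformly on the neck as $n_k\to\infty$, and that the uniform bound on $t\circ F_{n_k}$ (obtained by adapting Proposition~\ref{noside} to this cobordism, using property (4)) keeps the images in a region where this convergence of $J$ to $J'$ is effective.
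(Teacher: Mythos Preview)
Your proposal is correct and follows essentially the same approach as the paper's proof: both argue by contradiction, use the argument of Lemma~\ref{lemma: warm-up 1} to rule out curves crossing the middle slice (your Case~2), and then invoke the $\R$-invariance of $J'$ on $\R\times(M')^*$ to conclude that the only rigid curves contributing to $\Psi_n(\gamma)$ are trivial cylinders. Your Case~1 makes this last step more explicit via an SFT limit to a building in the symplectization, whereas the paper compresses it to the single sentence ``Since $J'$ is $\R$-invariant, the lemma follows,'' together with the parenthetical remark that $\widetilde J_n\to J_n$ as $n\to\infty$.
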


\begin{proof}
This is a variant of the proof of Lemma~\ref{lemma: warm-up 1}.
First note that $A_{\alpha_n}(\gamma) \leq K$ implies that
$\gamma\subset M'$ for sufficiently large $n$.  Suppose there is a
sequence of finite energy, rational holomorphic maps $F_n$ to
$(\R\times M_n,\widetilde J_n)$ with one positive end at
$\gamma$, where $\widetilde J_n$ is the almost complex structure for
the cobordism given in Lemma~\ref{lemma: warm-up 1 cobordism} (called $J_n$ there). 
If $F_n$ intersects $\R\times R_+(\Gamma')\times\{0\}$ for all $n$, the
proof of Lemma~\ref{lemma: warm-up 1} produces a holomorphic sphere
in $\R\times R_+(\Gamma')\times \R$, a contradiction. (Note that, as
$n\to \infty$, the difference between the almost complex structure
$\widetilde J_n$ and the tailored almost complex structure $J_n$ for
$\alpha_n$ becomes arbitrarily small in the $C^{\infty}$ topology.) 
Hence $F_n$ can be viewed as
a map to $(\R\times (M')^*,J')$ for sufficiently large $n$.  Since
$J'$ is $\R$-invariant, the lemma
follows.
\end{proof}

Lemma~\ref{lemma: warm-up 1 inclusion of algebras} implies that the diagram
\begin{equation}
\label{eqn:cd}
\begin{diagram}
\mathcal{A}_{\leq K}(M',\alpha',J') & \rTo^{\Phi_{K,n}} &
\mathcal{A}_{[0]}(M_n, \alpha_{n},J_n)  \\
& \rdTo^{\Phi_{K, n+1}}  &  \dTo_{\Psi_{n }} \\
 &  &
\mathcal{A}_{[0]}(M_{n+1},\alpha_{n+1},J_{n+1})  \\
\end{diagram}
\end{equation}
commutes, provided that $n$ is sufficiently large with respect to $K$.

\subsection{Direct limits}
\label{subsection: warm-up 1 direct limits} $\mbox{}$

\s\n {\bf Definition of $\Phi$.} Suppose $n'\gg n\gg 0$. By
composing $ \Psi_{n'-1}\circ \Psi_{n'-2}\circ \dots\circ \Psi_n$, we
obtain a chain map
\[
\Psi_{n,n'}\colon  \mathcal{A}_{[0]}(M_n,\alpha_n,J_{n})\to
\mathcal{A}_{[0]}(M_{n'},\alpha_{n'},J_{n'}),
\]
where $\gamma\subset M'$ is mapped to $\gamma+\sum_i
a_i\overline{\gamma_i}$, the orbits of $\overline{\gamma_i}$ are
contained in $M'$, and
$A_{\alpha_n}(\overline{\gamma_i})=A_{\alpha_{n'}}(\overline{\gamma_i})<
A_{\alpha_n}(\gamma)$.  It follows from the commutativity of the
diagram \eqref{eqn:cd} that if $K<K'$ and if $n$ is sufficiently large
with respect to $K'$, then the chain map $\Psi_{n,n'}$ fits into the
following commutative diagram of chain complexes:
\begin{equation} \label{equation: warm-up 1 diagram}
\begin{diagram}
\mathcal{A}_{\leq K}(M',\alpha',J') & \rTo^{\Phi_{K,n}} &
\mathcal{A}_{[0]}(M_n, \alpha_{n},J_{n})  \\
\dTo^{i_{K,K'}} &
&  \dTo_{\Psi_{n,n'}} \\
\mathcal{A}_{\leq K'}(M',\alpha',J')  & \rTo^{\Phi_{K',n'}} &
\mathcal{A}_{[0]}(M_{n'},\alpha_{n'},J_{n'}) . \\
\end{diagram}
\end{equation}
Here $i_{K,K'}$ denotes the natural inclusion. Note that the usual
chain homotopy argument shows that $\Psi_{n,n'}$ is chain homotopic to
any continuation map given by a symplectic cobordism from $\alpha_n$
to $\alpha_{n'}$.

By commutativity of the diagram~(\ref{equation: warm-up 1 diagram}),
we can take direct limits to obtain a map
\[
\Phi: \lim_{K\to \infty} HC_{\leq K}(M',\alpha',J') \to
\lim_{n\to\infty} HC_{[0]}(M_n,\alpha_n,J_n)
\]
at the level of homology.  Now observe
that
\[
\lim_{K\to \infty} HC_{\leq K}
(M',\alpha',J') = HC(M',\alpha',J') = HC(M',\Gamma'),
\]
because the analogous statement at the level of chain complexes holds
by definition, and taking homology commutes with direct limits.  On
the other hand,
\[
\lim_{n\to\infty} HC_{[0]}(M_n,\alpha_n,J_n) = HC_{[0]}(M,\Gamma),
\]
because the map $\Psi_{n,n'}$ induces the canonical isomorphism on
homology, so that the direct limit is isomorphic to any single
$HC_{[0]}(M_n,\alpha_n,J_n)$, and canonically isomorphic to
$HC_{[0]}(M,\Gamma)$.  We conclude that $\Phi$ defines a map
\[
\Phi\colon HC(M',\alpha')\to
HC_{[0]}(M,\Gamma).
\]
To complete the proof of Theorem~\ref{thm: warm-up 1}, we will show
that this is an isomorphism.

In the arguments below, we will use, without further notation, the
canonical identifications
\[
\mathcal{A}(M',\alpha',J')\simeq
\mathcal{A}_{[0]}(M_n,\alpha_n,J_n)\simeq
\mathcal{A}_{[0]}(M_{n'},\alpha_{n'},J_{n'})
\]
arising from the fact that an orbit $\gamma$ in
$\mathcal{A}(M',\alpha',J')$ can naturally be viewed as an orbit in
$\mathcal{A}_{[0]}(M_n,\alpha_n,J_n)$ or in
$\mathcal{A}_{[0]}(M_{n'},\alpha_{n'},J_{n'})$.  These are
identifications of $\Q$-vector spaces, but not necessarily of chain
complexes.

\s\n {\bf Injectivity of $\Phi$.} Refer to Diagram~(\ref{equation:
warm-up 1 diagram}).  Suppose that $a$ is a cycle in $\mathcal{A}_{\leq
K}(M',\alpha',J')$ and that $a=\bdry b$ for some $b\in
\mathcal{A}_{[0]}(M_n,\alpha_n, J_{n})$ with $n$ sufficiently large.
Then $\Psi_{n,n'}$ sends $a\mapsto a$ by Lemma~\ref{lemma: warm-up 1
inclusion of algebras}, and $b\mapsto b+\sum_i b_i$ by
Lemma~\ref{lemma: warm-up 1 cobordism}, where $A_{\alpha_{n'}}(b_i)<
A_{\alpha_n}(b)$. (Here $A_{\alpha_n}(b)$ means the maximum over all
the monomials of $b$.) Hence $a=\bdry (b+\sum_i b_i)$ in
$\mathcal{A}_{[0]}(M_{n'},\alpha_{n'},J_{n'})$.  Now, if we let $K'>
A_{\alpha_{n}}(b)$, then, for sufficiently large $n'$, the inclusion
$\Phi_{K',n'}$ is a chain map by Lemma~\ref{lemma: warm-up 1}.
Hence $a=\bdry (b+\sum_i b_i)$ in $\mathcal{A}_{\leq K'}(M',\alpha',
J')$. This proves the injectivity of $\Phi$.

\s\n {\bf Surjectivity of $\Phi$.}  Suppose $a$ is a cycle in
$\mathcal{A}_{[0]}(M_n,\alpha_n, J_{n})$ for some $n$.  By
Lemma~\ref{lemma: warm-up 1 inclusion of algebras}, $\Psi_{n,n'}(a)=
a+\sum_i a_i$ stabilizes for sufficiently large $n'$. As before, for
sufficiently large $K'$, the inclusion $\Phi_{K',n'}$ is a chain map by
Lemma~\ref{lemma: warm-up 1}.  Hence $\Phi_{K',n'}$ sends $a+\sum_i
a_i\mapsto a+\sum_ia_i$.  This proves the surjectivity of $\Phi$.

\subsection{Proof of Theorem~\ref{thm: connected sum}.}
\label{subsection: proof of connected sum theorem} 
Starting with $(M_i,\xi_i)$, let $B_i$ be a standard Darboux ball with
convex boundary in $M_i$, and set $M_i'=M_i-B_i$.  Applying the
convex-to-sutured operation in Lemma~\ref{l:suture}, we obtain sutured
contact manifolds $(M_i'', \Gamma_i''=S^{2n-1}, U(\Gamma_i''),
\xi_i'')$, where $R_\pm(\Gamma_i'')=D^{2n}$.  We then glue $M_1''$,
$M_2''$, and a layer $D^{2n}\times[-N,N]$ so that $R_-(\Gamma_1'')$
and $D^{2n}\times \{N\}$ are identified by a diffeomorphism and
$R_+(\Gamma_2'')$ and $D^{2n}\times\{-N\}$ are identified by a
diffeomorphism.  Without loss of generality we may assume that the
contact $1$-form on $D^{2n}\times \{-N\}$ has the form $dt+\beta$, and
that the contact forms on $M_i''$ agree with $dt+\beta$. Now observe
that all the Reeb orbits of $M''=M_1''\cup M_2''\cup
(D^{2n}\times[-N,N])$ are Reeb orbits of $M_1''$ or Reeb orbits of
$M_2''$. The rest of the proof of Theorem~\ref{thm: connected sum}(1)
is identical to that of Theorem~\ref{thm: warm-up 1}.

We prove Theorem~\ref{thm: connected sum}(2) using a slightly
different argument (which can also be used to give an alternate proof
of Theorem~\ref{thm: connected sum}(1)).  Let $(M'',\alpha_N)$ denote
the version of $M''$ with neck stretching parameter $N$.  (That is we
are using diffeomorphisms to regard the different stretched contact
manifolds as different contact forms on the same 3-manifold.)  Fix
almost complex structures $J^i$ as needed to define the ECH of $M_i''$
for $i=1,2$.  Let $J_N$ be an almost complex structure as needed to
define the ECH of $(M'',\alpha_N)$, which restricts $J^i$ on $M_i''$.
An analogue of Lemma~\ref{lemma: warm-up 1}, modifed for ECH as in
Proposition~\ref{prop: ECH top and bottom bound}, shows that for any
$K$, if $N$ is sufficiently large, then there is a canonical
isomorphism
\begin{equation}
\label{eqn:ECHTensor1}
ECH_{\le K}(M'',\alpha_N,J_N) \stackrel{\simeq}{\longrightarrow}
ECH_{\le K}(M_1''\sqcup M_2'')
\end{equation}
induced by the obvious bijection on generators.  From this description
of the isomorphism it follows that given $K<K'$, if $N$ is
sufficiently large, then the above isomorphisms for $K$ and $K'$ fit
into a commutative diagram
\begin{equation}
\label{eqn:ECHTensor2}
\begin{CD}
ECH_{\le K}(M'',\alpha_N,J_N) @>{\simeq}>>
ECH_{\le K}(M_1''\sqcup M_2'')\\
@VVV @VVV\\
ECH_{\le K'}(M'',\alpha_N,J_N) @>{\simeq}>>
ECH_{\le K'}(M_1''\sqcup M_2'')
\end{CD}
\end{equation}
where the vertical arrows are induced by the inclusions of chain complexes.

Now since the Reeb orbits in $M''$ and their actions do not depend on
the neck stretching parameter $N$, lemmas from \cite{HT3} can be
invoked to show the following:
\begin{enumerate}
\item[(i)] $ECH_{\le K}(M'',\alpha_N,J_N)$ does not depend on $N$,
 i.e.\ for any $N,N'$ there is a canonical isomorphism
\[
ECH_{\le K}(M'',\alpha_N,J_N)\simeq ECH_{\le K}(M'',\alpha_{N'},J_{N'}).
\]
Thus we can denote this homology simply by $ECH_{\le K}(M'')$.  (The
above isomorphism is constructed by choosing a generic homotopy from
$(\alpha_N,J_N)$ to $(\alpha_{N'},J_{N'})$, dividing the homotopy into
a composition of many short homotopies, and taking the composition of
the corresponding continuation isomorphisms from \cite{HT3}.  Note
that the latter continuation maps are defined using Seiberg-Witten
theory and so are only valid in a closed manifold.  To apply them
here, for any given $K$, take a large irrational ellipsoid whose Reeb
orbits have action much larger than $K$, remove a cylinder $Z$ such
that the Reeb flow near $\partial Z$ is diffeomorphic to the Reeb flow
near $\partial M''$, and then glue in $M''$.  )
\item[(ii)] For any given $K$, if $N,N'$ are sufficiently large, then
 the above canonical isomorphism is induced by the obvious bijection
 on generators.  (Here we are again using the ECH analogue of
 Lemma~\ref{lemma: warm-up 1}.)
\item[(iii)]
If $K<K'$ then the inclusion-induced map
\[
ECH_{\le K}(M'',\alpha_N,J_N)\to ECH_{\le K'}(M'',\alpha_N,J_N)
\]
commutes with the canonical isomorphisms in (i) and so induces a well-defined map
\[
ECH_{\le K}(M'') \to ECH_{\le K'}(M'').
\]
\end{enumerate}

It follows from (i) and (ii) that the isomorphism
\eqref{eqn:ECHTensor1} induces a well-defined isomorphism
\[
ECH_{\le K}(M'') \stackrel{\simeq}{\longrightarrow} 
ECH_{\le K}(M_1''\sqcup M_2'').
\]
By (iii) and the commutative diagram \eqref{eqn:ECHTensor2}, the above
isomorphisms fit into a commutative diagram
\[
\begin{CD}
ECH_{\le K}(M'') @>{\simeq}>> 
ECH_{\le K}(M_1''\sqcup M_2'')
\\
@VVV @VVV\\
ECH_{\le K'}(M'') @>{\simeq}>> 
ECH_{\le K'}(M_1''\sqcup M_2'').
\end{CD}
\]
We can then take the direct limit over $K$ to obtain an isomorphism
\[
ECH(M'') \stackrel{\simeq}{\longrightarrow} ECH(M_1'')\otimes ECH(M_2'').
\]
By Theorem~\ref{thm: mapping cone}, $ECH(M_i'')\simeq
\widehat{ECH}(M_i)$ and $ECH(M'')\simeq \widehat{ECH}(M_1\# M_2)$.
This completes the proof of Theorem~\ref{thm: connected sum}(2).

\section{Second warm-up: neck-stretching in the 
$\tau$-direction}
\label{section: warm-up 2}

Let $(M',\Gamma',\alpha')$ be a sutured contact manifold and let
$(W,\beta)$ be a Liouville cobordism from $\bdry_+W$ to $\bdry_-W$, as
defined in Example~\ref{example: interval fibered extension}. Suppose
there is a diffeomorphism which takes $(\bdry R_+(\Gamma'),
\beta_0=\alpha'|_{\bdry R_+(\Gamma')})$ to
$(\bdry_-W,\beta|_{\bdry_-W})$. We also assume that $R_{\beta_0}$ is
nondegenerate. Let us write $N=[0,1]\times[-1,1]\times \Gamma'$ with
coordinates $(\tau,t,x)$.
We construct the interval-fibered extension
$(M,\Gamma=\bdry_+W,\alpha)$ of $(M',\Gamma',\alpha')$  as follows:
The manifold $M$ is obtained from $M'\sqcup N\sqcup(W\times[-1,1])$
by identifying $\{0\}\times [-1,1]\times \Gamma'\subset U(\Gamma')$
and $\{0\}\times [-1,1]\times \Gamma'\subset N$ and by identifying
$\{1\}\times[-1,1]\times\Gamma'$ and $\bdry_-W\times[-1,1]$. We then
define $\alpha$ as follows:
\begin{equation}
\alpha=\left\{ \begin{array}{ll}
\alpha' & \mbox{on } M';\\
dt+\overline\beta & \mbox{on } N\cup (W\times[-1,1]),
\end{array}
\right.
\end{equation}
where $\overline\beta$ is a $1$-form on
$W_N=([0,1]\times\Gamma')\cup W$, which equals $e^\tau\beta_0$ on
$[0,1]\times\Gamma'$ and $e^1\beta$ on $W$.

Let $\kappa>0$.  Choose a diffeomorphism
$$H_\kappa\colon [0,1]\times \Gamma'\stackrel\sim\to
[0,\kappa]\times \Gamma',$$
$$(\tau,x)\mapsto (h_\kappa(\tau),x),$$
where $h_\kappa\colon [0,1]\stackrel\sim\to[0,\kappa]$, $h_\kappa(0)=0$,
$h_\kappa(1)=\kappa$, $h_\kappa'(\tau)=1$ in a neighbourhood of $\tau=0,1$, and
$h_\kappa$ is linear outside a bigger neighbourhood of $\tau=0,1$. If $J'$ is an almost
complex structure on $M'$ which is tailored to $\alpha'$, then we
define its extension $J_\kappa$ on $M$ to be tailored to $\alpha$,
subject to the following conditions on the projection $(J_\kappa)_0$
of $J_\kappa$ to $W_N$:
\begin{enumerate}
\item $(J_\kappa)_0$ is independent of
$\kappa$ on $W$;
\item on $[0,1]\times \Gamma'$, $(J_\kappa)_0$ is the pullback of a
$\beta_0$-adapted almost complex structure on $[0,\kappa]\times
\Gamma'$ via $H_\kappa$.
\end{enumerate}
By sending $\kappa\to \infty$, we are ``stretching the neck'' in the
$\tau$-direction.

In this section we prove the following theorem:

\begin{thm}
\label{thm: interval-fibered extension} An interval-fibered
extension $(M',\Gamma',\xi')\hookrightarrow (M,\Gamma,\xi)$ induces
an isomorphism
$$\Phi\colon  HC(M',\Gamma',\xi')\stackrel\sim\to HC(M,\Gamma,\xi).$$
\end{thm}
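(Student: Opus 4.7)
The plan is to adapt the neck-stretching argument used in the proof of Theorem~\ref{thm: warm-up 1}, now stretching in the $\tau$-direction via the parameter $\kappa$ rather than in the $t$-direction.

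First, the chain complex generators agree: the Reeb vector field on $N\cup(W\times[-1,1])$ is parallel to $\partial_t$ and $t$ varies monotonically on this region, so no closed Reeb orbit can lie in $M\setminus M'$. Hence the closed Reeb orbits of $\alpha$ coincide with those of $\alpha'$, and $\mathcal{A}(M',\alpha',J')$ and $\mathcal{A}(M,\alpha,J_\kappa)$ are canonically identified as $\Q$-vector spaces. Let $\mathcal{A}_{\leq K}$ denote the subspace spanned by monomials of total action at most $K$.

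The central technical step is the analogue of Lemma~\ref{lemma: warm-up 1}: for any fixed genus $g$ and any finite ordered lists $\underline{\gamma}^\pm$ of Reeb orbits in $M'$, one has
\[
\mathcal{M}_g(\underline{\gamma}^+;\underline{\gamma}^-;\R\times M^*,J_\kappa)=\mathcal{M}_g(\underline{\gamma}^+;\underline{\gamma}^-;\R\times (M')^*,J')
\]
for all sufficiently large $\kappa$. Supposing for contradiction that a sequence $F_\kappa$ on the left has images meeting the stretched neck $H_\kappa([0,1]\times\Gamma')\subset[0,\kappa]\times\Gamma'$, one invokes the SFT compactness of Corollary~\ref{cor:SFTcompactness}. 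Condition~(2) on $J_\kappa$ ensures that the projection $(J_\kappa)_0$ onto $W_N$ is $\beta_0$-adapted on the stretched neck, so $\tau$ is plurisubharmonic there and the curves have no interior $\tau$-maxima. The resulting SFT building has intermediate levels modelled on the symplectization $(\R\times\Gamma',dt+e^\tau\beta_0)$ of the infinitely stretched neck and on the completion of $(W,\beta)$. Since the outer asymptotics of the whole building lie in $M'$ and hence do not include any closed Reeb orbits of $\beta_0$ on $\Gamma'$, any interior asymptotics must cancel in pairs; a rescaling argument following Lemma~\ref{rescaling} and Proposition~\ref{noside} then produces a nonconstant finite energy plane in the exact Liouville completion $\widehat{R_\pm(\Gamma)}$, which by Corollary~\ref{removal} extends to a nonconstant holomorphic sphere in an exact symplectic manifold, a contradiction.

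Granting the compactness, we construct continuation chain maps $\Psi_\kappa\colon \mathcal{A}(M,\alpha,J_\kappa)\to\mathcal{A}(M,\alpha,J_{\kappa+1})$ using a symplectic cobordism that rescales only the $\tau$-coordinate on $[0,1]\times\Gamma'$, in the spirit of Lemma~\ref{lemma: warm-up 1 cobordism}. Stokes' theorem with the taming form $d\alpha$ shows $\Psi_\kappa$ is triangular with respect to the action filtration, $\Psi_\kappa(\gamma)=\gamma+\sum_i a_i\overline{\gamma_i}$ with $A_\alpha(\overline{\gamma_i})<A_\alpha(\gamma)$, and combined with the compactness step gives that for fixed $K$ and sufficiently large $\kappa$, $\Psi_\kappa$ is the identity on $\mathcal{A}_{\leq K}$. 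The inclusions $\Phi_{K,\kappa}\colon \mathcal{A}_{\leq K}(M',\alpha',J')\hookrightarrow\mathcal{A}(M,\alpha,J_\kappa)$ are then chain maps fitting into a commutative diagram analogous to \eqref{equation: warm-up 1 diagram}, and passing to direct limits in $K$ and $\kappa$ produces the isomorphism $\Phi$; its injectivity and surjectivity follow as in Section~\ref{subsection: warm-up 1 direct limits}. The main obstacle is the compactness step, because the $\tau$-stretched limit geometry is richer than in Theorem~\ref{thm: warm-up 1}: the contact manifold $(\Gamma',\beta_0)$ carries its own closed Reeb orbits, which could a priori appear as intermediate asymptotics of the SFT building, and ruling these out relies on a careful combination of the plurisubharmonicity of $\tau$, the exactness of the symplectic form on $\widehat{R_\pm(\Gamma)}$, and the removal-of-singularities results for the Top/Bottom.
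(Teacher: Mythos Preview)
Your overall architecture matches the paper exactly: identify the Reeb orbits, prove a moduli-space coincidence for large $\kappa$, construct triangular continuation maps $\Psi_\kappa$, and pass to direct limits as in Section~\ref{subsection: warm-up 1 direct limits}. The continuation-map and direct-limit portions are fine as stated.

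The gap is in the compactness argument. Corollary~\ref{cor:SFTcompactness} concerns a fixed $(\R\times M^*,J)$, not the varying family $J_\kappa$, and the $\tau$-stretch is not along a contact-type hypersurface in $M$, so the SFT building you describe is not directly available. More seriously, your contradiction mechanism --- a plane in $\widehat{R_\pm(\Gamma)}$ extending to a sphere via Corollary~\ref{removal} --- is imported from the $t$-direction case and does not apply: Corollary~\ref{removal} requires $\tau$ bounded, which is precisely what fails here, and a plane in the Liouville manifold $\widehat{R_\pm(\Gamma)}$ may well be asymptotic to a Reeb orbit of $\partial_+W$ rather than extend over its puncture. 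The remark that interior asymptotics ``cancel in pairs'' is just the definition of a building and obstructs nothing. The paper argues differently. After handling gradient blow-up in the neck (which would produce a plane in $\R\times\R\times\R\times\Gamma'$, ruled out since there are no closed orbits there), one restricts $F_\kappa$ to a subsurface $\overline\Sigma_\kappa$ mapping into the outer half of the neck together with $W\times[-1,1]$, and then \emph{projects to the Liouville base} $W_N$. The projection $v_\kappa$ is $(J_\kappa)_0$-holomorphic with Hofer energy bounded via Stokes, and its limit is a nonconstant finite-energy curve in the completion $W\cup(\R\times\Gamma')$ having \emph{no positive ends}: the only punctures arise from $\partial\overline\Sigma_\kappa$, and these go to $-\infty$ in $\tilde\tau$. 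A nonconstant holomorphic curve in a Liouville completion with only negative ends contradicts Stokes' theorem directly. The Reeb orbits of $(\Gamma',\beta_0)$ that worried you are exactly the would-be positive ends that this construction forbids.
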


The proof of Theorem~\ref{thm: interval-fibered extension} follows
the same outline as the proof of Theorem~\ref{thm: warm-up 1}.

We first observe that the set of Reeb orbits of
$(M',\Gamma',\xi',\alpha')$ and $(M,\Gamma,\xi,\alpha)$ are the
same.  The holomorphic curves are restricted by the following analog
of Lemma~\ref{lemma: warm-up 1}:

\begin{lemma}\label{lemma: warm-up 2}
Suppose $\underline{\gamma}^+$ and $\underline{\gamma}^-$ consist of
orbits in $M'$. Then, for sufficiently large $\kappa$,
$$\mathcal{M}_g(\underline{\gamma}^+;\underline{\gamma}^-;\R\times (M')^*,J')
=\mathcal{M}_g(\underline{\gamma}^+;\underline{\gamma}^-; \R\times
M^*,J_\kappa).$$
\end{lemma}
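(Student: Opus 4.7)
The plan is to mirror the proof of Lemma~\ref{lemma: warm-up 1}, replacing stretching in the $t$-direction by stretching in the $\tau$-direction. Assume for contradiction a sequence $F_n=(a_n,f_n)\colon(\Sigma_n,j_n,\mathbf{m}_n)\to(\R\times M^*,J_{\kappa_n})$ with $\kappa_n\to\infty$, positive/negative ends at $\underline{\gamma}^{\pm}\subset M'$, and $f_n(\Sigma_n)\not\subset(M')^*$ for every $n$. After pulling back by $H_{\kappa_n}$, view the $F_n$ as maps into a stretched model in which the neck between $M'$ and $W\times[-1,1]$ is $[0,\kappa_n]\times\Gamma'\times[-1,1]$ with a $\beta_0$-adapted almost complex structure, while the almost complex structure is fixed on $M'$ and on $W$.

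First I would record uniform bounds and control the gradient. By Lemma~\ref{lemma: bound on tau}, $\tau\leq 0$ on the image of $f_n$ outside $M$; on the stretched neck, $\beta_0$-adaptedness of the projected almost complex structure makes $\tau$ plurisubharmonic, so the maximum principle bounds $\tau\circ f_n$ there. If $\|\nabla F_n\|$ blows up, Lemma~\ref{rescaling} produces a nonconstant finite-energy plane in $\R\times M^*$ or in $\R\times\R\times\widehat{R_\pm(\Gamma)}$. The latter target is an exact symplectization whose Reeb vector field $\partial_t$ has no closed orbits; Corollary~\ref{removal} then extends the bubble to a nonconstant holomorphic sphere in an exact symplectic manifold, which is impossible. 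Bubbles in $\R\times M^*$ have $d\alpha$-area bounded from below (cf.\ \cite[Lemma~5.11]{BEHWZ}), so only finitely many can occur, and adding a finite set $\mathbf{m}^0_n\subset\Sigma_n$ of marked points as in Lemma~\ref{gradbound} yields a uniform gradient bound for the resulting sequence.

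Next, arguing as in Proposition~\ref{noside}, the combined hypotheses that $f_n(\Sigma_n)$ enters the neck nontrivially and $\kappa_n\to\infty$ produce, after passing to a subsequence, an $\varepsilon$-thin component $C^n\subset\dot\Sigma_n$ and an annulus $Z^n\subset C^n$ whose image $f_n(Z^n)$ lies in the stretched neck $[0,\kappa_n]\times\Gamma'\times\R$ and on which the $\tau$-variation of $f_n$ diverges with $n$. Translating in the $\tau$- and $s$-directions and passing to a $C^\infty_{\mathrm{loc}}$-limit, we obtain a nonconstant finite-energy holomorphic cylinder $F^\infty\colon\R\times S^1\to\R\times(\R\times\Gamma'\times\R,\,dt+e^\tau\beta_0)$ with bounded gradient. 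Since the Reeb vector field of this limit contact manifold is $\partial_t$, which has no closed orbits, Lemma~\ref{noarea}(2) forces $F^\infty$ to be constant, contradicting non-triviality.

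The hard part is ensuring that stretching in $\tau$ genuinely produces a long thin cylinder whose $\tau$-variation diverges, rather than, say, a curve that enters $W$ through the neck and returns, or a thin region whose variation lives in the $t$-direction. The plurisubharmonicity of $\tau$ from the first step, combined with the fact that all asymptotic orbits lie in $M'$ where $\tau<0$, is what forces the $\tau$-variation on the thin region to diverge; the isoperimetric/monotonicity argument from the second half of the proof of Proposition~\ref{noside} then rules out the competing ``short thin cylinder'' configurations and produces the required annulus $Z^n$.
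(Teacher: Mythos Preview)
Your outline follows Lemma~\ref{lemma: warm-up 1} closely, and the preliminary steps (gradient bound after adding punctures, locating a thin annulus that crosses the neck) are reasonable. The gap is in the identification of the limit target. The stretching here is of the \emph{almost complex structure} $J_\kappa$, not of the contact form: in the stretched coordinate $\tilde\tau=h_\kappa(\tau)\in[0,\kappa]$ the contact form on the neck is $dt+e^{h_\kappa^{-1}(\tilde\tau)}\beta_0$, and after translating in $\tilde\tau$ and sending $\kappa\to\infty$ this converges to $dt+C\beta_0$ for some constant $C\in[1,e]$, which is \emph{not} a contact form on $\R_{\tilde\tau}\times\Gamma'\times\R_t$ (one checks $\alpha_\infty\wedge(d\alpha_\infty)^m=0$). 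So your limit cylinder does not land in the symplectization of $(\R\times\Gamma'\times\R,\,dt+e^\tau\beta_0)$, and neither Lemma~\ref{noarea}(2) nor the removal-of-singularities/exact-sphere argument applies as stated. A related technical issue is that the metric of Equation~(\ref{eqn: metric}) degenerates on the neck as $\kappa\to\infty$, so the bounded-geometry ingredients (Gromov--Schwarz, monotonicity) are not available without modification.

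The paper's argument fixes both problems by projecting to the $2m$-dimensional base $W_N=([0,1]\times\Gamma')\cup W$. The projection $v_\kappa$ is $(J_\kappa)_0$-holomorphic, and on the neck in stretched coordinates $(J_\kappa)_0$ is genuinely $\beta_0$-adapted on $[0,\kappa]\times\Gamma'$, i.e.\ this is an honest symplectization neck. One also replaces $d\alpha$ by the $\tilde\tau$-invariant form $\omega=d\tilde\tau\wedge\beta_0+d\beta_0$ in the metric, restoring bounded geometry. Restricting to a connected subsurface $\overline\Sigma_\kappa$ mapping into the far half of the neck together with $W$, the projections $v_\kappa|_{\overline\Sigma_\kappa}$ have uniformly bounded $d\overline\beta$-energy and converge to a finite-energy holomorphic curve in $W\cup(\R\times\Gamma')$ with \emph{no positive ends}; Stokes' theorem then gives the contradiction. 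In short, the missing idea is to drop the $t$-direction and run the neck-stretching in the Liouville base rather than in the full contact manifold.
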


\begin{proof}
  Arguing by contradiction, suppose there is a sequence of holomorphic
  curves $$F_\kappa=(a_\kappa,f_\kappa)\colon
  (\Sigma_\kappa,j_\kappa,\mathbf{m}_\kappa)\to (\R\times
  M^*,J_\kappa)$$ in
  $\mathcal{M}_g(\underline{\gamma}^+;\underline{\gamma}^-;\R\times
  M^*,J_\kappa)$, whose second component $f_\kappa$ nontrivially
  intersects $(W_N)^{TB}=\R\times W_N$ for all $\kappa$.  (Here the
  superscript `$TB$' indicates that we are extending towards the top
  and bottom.)  We write $f_\kappa=(b_\kappa,v_\kappa)$ when
  $f_\kappa(x)\in (W_N)^{TB}$; here $b_\kappa=t\circ f_\kappa$ and
  $v_\kappa$ is the projection onto $W_N$.

On $\R\times M$ we use the Riemannian metric
$$g_\kappa=ds\otimes ds + \alpha\otimes \alpha +
\omega(\cdot,J_\kappa \cdot) - \omega(J_\kappa\cdot, \cdot),$$ where
$\omega$ is the (not everywhere closed) $2$-form defined by
$$\omega=\left\{ \begin{array}{ll} d\alpha' & \mbox{on } M';\\
d\tilde\tau\wedge \beta_0+d\beta_0 &  \mbox{on }
H_\kappa([0,1]\times\Gamma')=[0,\kappa]\times
\Gamma';\\
d\beta & \mbox{on } W.\end{array}\right.$$ Here $\tilde\tau$ is the
coordinate on $[0,\kappa]$.  

If there is a gradient blow-up for the sequence $F_\kappa$ in the
neck region $\R\times \R\times [0,\kappa]\times\Gamma'$, then the
usual argument gives us a nonconstant finite energy plane in
$\R\times\R\times \R\times \Gamma'$.  However, since there are no
closed orbits in $\R\times \R\times \R\times \Gamma'$, we obtain a
contradiction. Putting in finitely many punctures on
$\Sigma_\kappa-\mathbf{m}_\kappa$ to bound the gradient of
$F_\kappa$ on
$\dot\Sigma_\kappa=\Sigma_\kappa-(\mathbf{m}_\kappa\cup
\mathbf{m}^0_\kappa)$ as usual, we apply similar considerations as
in Proposition~\ref{noside}. There is a connected subsurface
$\overline\Sigma_\kappa$ of $\dot\Sigma_\kappa$ which satisfies the
following:
\begin{itemize}
\item $f_\kappa(\overline\Sigma_\kappa)\subset \R\times (([{1\over 2},
1]\times \Gamma')\cup W)$;
\item $\overline\Sigma_\kappa$ is a union of type $A\cup B$, where $A$
is a possibly empty union of thick and thin components of
$\dot\Sigma_\kappa$ and $B$ is a nonempty union of annular subsets
of thin components of $\dot\Sigma_\kappa$;
\item The annular subsets are of the form $[-R,0]\times S^1$ inside
thin components $[-R,R']\times S^1$ or $[-R, \infty)\times S^1$, or
of the form $[-R'',R''']\times S^1\subset [-R,R']\times S^1$. Here
$R,R'',R'''\to \infty$ as $\kappa\to\infty$;
\item $f_\kappa(\overline\Sigma_\kappa)$ nontrivially intersects
$\R\times W$ and $f(\bdry\overline\Sigma_\kappa)\subset \R\times
[{1\over 2},{1\over 2}+\varepsilon]\times \Gamma'$.
\end{itemize}
We now consider $v_\kappa$ restricted to $\overline\Sigma_\kappa$.
Observe that the finiteness of the $d\alpha$-energy of $F_\kappa$
implies the finiteness of $d\overline\beta$-energy of $v_\kappa$.
Moreover, if $\tilde\beta=f(\tau)\beta_0$ on $[0,1]\times \Gamma'$,
where $f\colon [0,1]\to \R$ is a smooth, monotonically increasing function
which agrees with $e^\tau$ on $[0,{1\over 2}+\varepsilon]$ and
satisfies $f(1)=e^1$, then Stokes' theorem gives an upper bound on
the $d\tilde\beta$-energy of $v_\kappa$ on $[{1\over
2}+\varepsilon,1]\times \Gamma'$. We then have the Hofer energy
bound of $v_\kappa$ on $[{1\over 2}+\varepsilon,1]\times \Gamma'$.
Therefore, $v_\kappa$ converges to a finite energy holomorphic curve
in $W\cup (\R\times \Gamma')$ without any positive ends,
contradicting Stokes' theorem. (Here the $\R$ coordinate corresponds to
$\tilde\tau$.) Hence, for sufficiently large $\kappa$,
$F_\kappa$ does not intersect $\R\times \R\times W$. It follows that
$F_\kappa$ has image inside $\R\times (M')^*$.
\end{proof}

By Lemma~\ref{lemma: warm-up 2}, given $K>0$, there exists
$\kappa>0$ such that all the punctured holomorphic spheres in
$(\R\times M^*,J_\kappa)$ which are asymptotic to $\gamma\in \mathcal{A}_{\leq
K}(M',\alpha',J')$ at the positive end are disjoint from $\R\times
W\times[-1,1]$. Hence we have an inclusion of chain complexes:
$$\Phi_{K,\kappa}\colon \mathcal{A}_{\leq K}(M',\alpha',J') \hookrightarrow
\mathcal{A}(M,\alpha,J_\kappa),$$ for sufficiently large $\kappa$.

We now compare $(M,\alpha,J_{\kappa})$ and $(M,\alpha,J_{\kappa+1})$
for sufficiently large $\kappa$. Observe that the contact forms are
the same, and we are only interpolating between $J_\kappa$ and
$J_{\kappa+1}$.  The almost complex structures differ only on
$\R\times \R\times [0,1]\times \Gamma'$.  We identify
$H_\kappa\colon [0,1]\times \Gamma'\stackrel\sim\to [0,\kappa]\times
\Gamma'$ and use coordinates $\tilde\tau$ on $[0,\kappa]$. Then
$(J_\kappa)_0$ and $(J_{\kappa+1})_0$ agree on $\ker \beta_0$;
however, $(J_\kappa)_0$ sends $\bdry_{\tilde\tau}\mapsto
R_{\beta_0}$ and $(J_{\kappa+1})_0$ sends $\bdry_{\tilde\tau}\mapsto
f(\tilde\tau) R_{\beta_0}$, where we may take $1-{2\over \kappa}\leq
f(\tilde\tau)\leq 1$.  Let $(J_{\kappa+1-s})_0$, $s\in[0,1]$, be an
interpolation between $(J_{\kappa+1})_0$ and $(J_{\kappa})_0$ where
only the function $f(\tilde\tau)$ is varying. Now define the almost
complex structure $J_{\kappa+1-s}$ on $M$ to be tailored to $\alpha$
so that the projection to $W_N$ is $(J_{\kappa+1-s})_0$. We then
define the almost complex structure $\widetilde{J}_\kappa$ on
$\R\times M^*$ so that:
\begin{enumerate}
\item $(\widetilde{J}_\kappa)|_{s\geq 1}=J_{\kappa}$ and
$(\widetilde{J}_\kappa)|_{s\leq 0}=J_{\kappa+1}$;
\item $(\widetilde{J}_\kappa)|_s=J_{\kappa+1-s}$.
\end{enumerate}

The following is the analog of Lemma~\ref{lemma: warm-up 1
cobordism}:

\begin{lemma} \label{lemma: warm-up 2 cobordism}
The cobordism $(\R\times M^*,\widetilde{J}_\kappa)$ gives rise to a
continuation map
$$\Psi_\kappa\colon  \mathcal{A}(M,\alpha,J_\kappa)
\to \mathcal{A}(M,\alpha,J_{\kappa+1}),$$ with the property that, if
$\gamma\subset M'$, then
\begin{equation} \label{eqn: warm-up 2 gamma}
\gamma\mapsto\gamma+\sum_i a_i\overrightarrow{\gamma_i},
\end{equation}
where all the orbits of $\overrightarrow{\gamma_i}$ are contained in
$M'$ and $A(\gamma)>
A(\overrightarrow{\gamma_i})=A_{\alpha}(\ora{\gamma_i})$.
\end{lemma}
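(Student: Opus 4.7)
The plan is to define $\Psi_\kappa$ in the standard way, by counting rigid (index zero) rational holomorphic curves in the cobordism $(\R\times M^*,\widetilde{J}_\kappa)$ with one positive puncture and an unspecified number of negative punctures. Since the underlying contact form $\alpha$ is the same on the two ends of the cobordism and only the tailored almost complex structure varies, the relevant compactness and chain map property follow from the SFT compactness theorem (Corollary~\ref{cor:SFTcompactness}) together with a taming $2$-form on $\R\times M^*$. The obvious candidate $d\alpha$ is closed but is only $\widetilde{J}_\kappa$-nonnegative; to obtain a genuine symplectic taming form, one uses $d(g(s)\alpha)$ for a suitably chosen function $g(s)$, with $g'(s)\ge 0$ chosen small enough that the difference between $J_\kappa$ and $J_{\kappa+1}$ (which only affects the function $f(\tilde\tau)$ scaling $R_{\beta_0}$) is absorbed, exactly as in the proof of Lemma~\ref{lemma: interpol}.

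Next I would verify that $\gamma\subset M'$ forces all negative ends to lie in $M'$. Since the only Reeb orbits of $\alpha$ are those of $\alpha'$ (all orbits of $R_\alpha$ are contained in $M'$ because $R_\alpha=\partial_t$ outside $M'$ and there is no closed orbit through the interval-fibered extension $W\times[-1,1]$), there is actually nothing to check here: every negative end automatically lies in $M'$. The identity coefficient in front of $\gamma$ arises, as in Lemma~\ref{lemma: warm-up 1 cobordism}, from the trivial cylinder over $\gamma$, which is $\widetilde{J}_\kappa$-holomorphic because $\widetilde{J}_\kappa$ is the pullback of an $\alpha$-adapted almost complex structure along each slice $\{s\}\times M^*$.

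The action inequality $A_\alpha(\gamma)\ge A_\alpha(\ora{\gamma_i})$ is then immediate from Stokes' theorem applied to the $\widetilde{J}_\kappa$-nonnegative form $d\alpha$: for any $\widetilde{J}_\kappa$-holomorphic curve $F$ with positive end $\gamma$ and negative ends $\ora{\gamma_i}$, one has $\int_F d\alpha\ge 0$, giving $A_\alpha(\gamma)-A_\alpha(\ora{\gamma_i})\ge 0$. Strictness when $\ora{\gamma_i}\ne\gamma$ follows as in Lemma~\ref{lemma: warm-up 1 cobordism}: a curve asymptotic to a cylinder over $\gamma$ at $+\infty$ either is such a cylinder or has positive $d\alpha$-area; branched covers of trivial cylinders do not contribute by Fabert~\cite{F}.

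The main obstacle is establishing Gromov compactness for the relevant moduli spaces defining $\Psi_\kappa$; this requires ruling out escape of holomorphic curves into (S), (T), (B), and into $\R\times W\times[-1,1]$. The first three are handled as in Section~\ref{section: compactness results}, using plurisubharmonicity of $\tau$ (Lemma~\ref{lemma: bound on tau}) together with the bound on $t$ from Proposition~\ref{noside}. The fourth, escape into $W$, is handled exactly as in the proof of Lemma~\ref{lemma: warm-up 2}: one shows that if the neck region $[0,\kappa]\times\Gamma'$ develops a long thin neck, the limit is a nonconstant finite energy holomorphic curve in $\R\times\R\times\Gamma'$, contradicting the absence of closed Reeb orbits there, while a curve with nontrivial part in $\R\times W$ would have to have a negative end there, which is impossible by Stokes' theorem applied to the exact form $d\overline\beta$. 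Once these compactness results are in place, the usual argument produces the chain map and the triangular structure \eqref{eqn: warm-up 2 gamma}.
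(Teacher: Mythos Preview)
Your approach is correct and matches the paper's, but you are working harder than necessary in two places. The paper's proof is two sentences: since every $J_{\kappa+1-s}$ is adapted to the \emph{same} contact form $\alpha$ (this is the whole point of the warm-up~2 setup, in contrast to Lemma~\ref{lemma: warm-up 1 cobordism} where the contact forms differ), the form $\omega=d(g(s)\alpha)$ is $\widetilde{J}_\kappa$-nonnegative for \emph{any} positive monotonically increasing $g$. There is no need to choose $g'$ small to ``absorb'' the variation in $f(\tilde\tau)$: writing $v=a\partial_s+bR_\alpha+X$ with $X\in\ker\alpha$, one gets $d(g(s)\alpha)(v,\widetilde{J}_\kappa v)=g'(s)(a^2+b^2)+g(s)\,d\alpha(X,\widetilde{J}_\kappa X)\ge 0$ directly, because $\widetilde{J}_\kappa$ sends $\partial_s\mapsto R_\alpha$ and preserves $\ker\alpha$ at every $s$. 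Your reference to Lemma~\ref{lemma: interpol} is therefore a red herring here.

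Your final paragraph conflates two separate issues. Gromov compactness for the moduli spaces defining $\Psi_\kappa$ only requires bounds on $\tau$ and $t$, which follow from the arguments of Section~\ref{section: compactness results} adapted to the cobordism; since $W$ is compact, ``escape into $\R\times W\times[-1,1]$'' is not a compactness obstruction at all. Ruling out curves entering $W$ is the content of Lemma~\ref{lemma: warm-up 2 inclusion of algebras} (showing $\Psi_\kappa(\gamma)=\gamma$ for low-action $\gamma$), not of the present lemma, which only asserts the triangular form~\eqref{eqn: warm-up 2 gamma}. The remaining points---all Reeb orbits lie in $M'$, the trivial cylinder gives the leading $\gamma$, and strictness of the action inequality via Fabert~\cite{F}---are exactly as you say.
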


\begin{proof}
This is straightforward, since both $J_\kappa$ and $J_{\kappa+1}$
are adapted to $\alpha$.  We easily see that $\omega=d(g(s)\alpha)$
is $\widetilde{J}_\kappa$-nonnegative whenever $g(s)$ is a positive,
monotonically increasing function.
\end{proof}

We also have the following lemma:

\begin{lemma}\label{lemma: warm-up 2 inclusion of algebras}
Given $K>0$, there exists $\kappa_0> 0$ such that for all
$\kappa\geq \kappa_0$,
$$\Psi_\kappa\colon  \mathcal{A}(M,\alpha,J_\kappa)\to
\mathcal{A}(M,\alpha,J_{\kappa+1})$$ maps $\gamma\mapsto\gamma$,
whenever $A_{\alpha}(\gamma)\leq K$.
\end{lemma}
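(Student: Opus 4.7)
The plan is to adapt the argument of Lemma~\ref{lemma: warm-up 1 inclusion of algebras} to the present setting. Arguing by contradiction, suppose there exist $K>0$, a sequence $\kappa_j \to \infty$, and Reeb orbits $\gamma_j$ with $A_\alpha(\gamma_j)\le K$ such that $\Psi_{\kappa_j}(\gamma_j)\neq \gamma_j$. Since $\alpha$ is nondegenerate and there are only finitely many Reeb orbits of action at most $K$, we may pass to a subsequence and assume $\gamma_j=\gamma$ is independent of $j$. By definition, $\Psi_{\kappa_j}(\gamma)$ is a finite count of rigid $\widetilde J_{\kappa_j}$-holomorphic rational curves $F_j$ in $\R\times M^*$ with a positive puncture asymptotic to $\gamma$ and finitely many negative ends $\overrightarrow{\gamma}'_j$. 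By the action inequality in Lemma~\ref{lemma: warm-up 2 cobordism}, $A_\alpha(\overrightarrow{\gamma}'_j)\le A_\alpha(\gamma)\le K$, and all orbits in $\overrightarrow{\gamma}'_j$ are contained in $M'$; passing to a further subsequence, we may assume $\overrightarrow{\gamma}'_j=\overrightarrow{\gamma}'$ is fixed.

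The central step is to show that, for $\kappa_j$ sufficiently large, the image of $F_j$ is contained in $\R\times (M')^*$, i.e.\ disjoint from $\R\times \R\times W\times[-1,1]$. I would do this by repeating the neck-stretching compactness argument of Lemma~\ref{lemma: warm-up 2} verbatim, but now applied to the cobordism almost complex structures $\widetilde J_{\kappa_j}$. The only modification is that one uses the symplectization-like taming form $d(g(s)\alpha)$ from the proof of Lemma~\ref{lemma: warm-up 2 cobordism} in place of $d\alpha$ to obtain uniform Hofer energy bounds. Otherwise the analysis is identical: if some $F_j$ meets $\R\times\R\times W$, then either a gradient blow-up in the neck region $\R\times\R\times[0,\kappa_j]\times\Gamma'$ produces a nonconstant finite-energy plane in $\R\times\R\times\R\times\Gamma'$ (impossible, since there are no closed Reeb orbits in this symplectization and hence any bubbled plane extends to a nonconstant sphere by removal of singularities, contradicting exactness), or after adding finitely many punctures to enforce a gradient bound one extracts a thin-annulus subsequence whose $W_N$-projection $v_{\kappa_j}$ converges to a nonconstant finite-energy holomorphic curve in $W\cup(\R\times\Gamma')$ with no positive ends, contradicting Stokes' theorem applied to $d\tilde\beta$.

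Once the $F_j$ are known to lie in $\R\times(M')^*$, the restriction of $\widetilde J_{\kappa_j}$ to that region equals the $s$-invariant almost complex structure $J'$. Hence each $F_j$ is a rigid $J'$-holomorphic rational curve in $(\R\times(M')^*,J')$ with a single positive puncture at $\gamma$. The standard argument for $\R$-invariant almost complex structures then forces $F_j$ to be an $\R$-invariant curve, and the only such rigid rational curve with one positive puncture at $\gamma$ is the trivial cylinder over $\gamma$ itself; branched covers of trivial cylinders do not contribute by Fabert~\cite{F}. Therefore $\Psi_{\kappa_j}(\gamma)=\gamma$ for large $j$, contradicting the choice of the sequence.

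The main obstacle is the neck-stretching step, because $\widetilde J_{\kappa_j}$ is not $s$-invariant and the taming form is no longer exact on the cobordism. However, this is essentially cosmetic: the interpolation between $J_{\kappa_j}$ and $J_{\kappa_j+1}$ differs only by a function $f(\tilde\tau)$ on $[0,\kappa_j]\times\Gamma'$ that is $C^0$-close to $1$ (within $O(1/\kappa_j)$), and the taming form $d(g(s)\alpha)$ from Lemma~\ref{lemma: warm-up 2 cobordism} supplies the required uniform Hofer energy bound. With this in hand, all the compactness ingredients (Corollary~\ref{cor:SFTcompactness}, the bubbling lemma, and the Stokes-theorem contradiction) transplant from the proof of Lemma~\ref{lemma: warm-up 2} to the cobordism setting without further change.
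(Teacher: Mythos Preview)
Your proposal is correct and follows essentially the same approach as the paper's proof: both argue by contradiction via a sequence of $\widetilde J_\kappa$-holomorphic curves, reduce to the neck-stretching argument of Lemma~\ref{lemma: warm-up 2}, and identify the one new wrinkle, namely that the projection $v_\kappa$ to $W_N$ is not holomorphic for either $(J_\kappa)_0$ or $(J_{\kappa+1})_0$ since $\widetilde J_\kappa$ interpolates in $s$, but nonetheless limits to a holomorphic curve in $W\cup(\R\times\Gamma')$ because the interpolation becomes $C^0$-small as $\kappa\to\infty$. One small inaccuracy: the taming form $d(g(s)\alpha)$ \emph{is} exact, so your remark that it ``is no longer exact on the cobordism'' should be dropped.
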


\begin{proof}
Similar to that of Lemma~\ref{lemma: warm-up 2}, with one
difference: If there is a sequence of holomorphic curves
$$F_\kappa=(a_\kappa,f_\kappa)\colon  (\Sigma_\kappa,j_\kappa,
\mathbf{m}_\kappa)\to (\R\times
M^*,\widetilde{J}_\kappa)$$ in
$\mathcal{M}_g(\underline{\gamma}^+;\underline{\gamma}^-;\R\times
M^*,\widetilde{J}_\kappa)$, then there is a restriction of
$F_\kappa$ to a connected subsurface $\overline{\Sigma}_\kappa$ as
before, whose image is contained in $(W_N)^{TB}$.  If we write
$f_\kappa=(b_\kappa,v_\kappa)$, then each $v_\kappa$ is not
necessarily $(J_\kappa)_0$- or $(J_{\kappa+1})_0$-holomorphic.
However, since the sequence $v_\kappa|_{\overline{\Sigma}_\kappa}$
limits to a holomorphic curve in $W\cup (\R\times \Gamma')$, after
possibly taking a subsequence, the proof of Lemma~\ref{lemma:
warm-up 2} still carries over.  (Compare Section~\ref{subsection: ac}.)
\end{proof}

Putting Lemmas~\ref{lemma: warm-up 2}, \ref{lemma: warm-up 2
cobordism}, and \ref{lemma: warm-up 2 inclusion of algebras}
together, the direct limit argument in Section~\ref{subsection:
warm-up 1 direct limits} proves Theorem~\ref{thm: interval-fibered
extension}.

\section{Proof of Theorem~\ref{thm: sutured gluing}}
\label{section: proof} In this section we prove Theorem~\ref{thm:
sutured gluing}, i.e., the inclusion map under sutured manifold
gluing. The proof is a combination of the previous two sections.

\subsection{Stretching the neck}
\label{subsection: stretching the neck}

Keeping the notation from Section~\ref{subsection: gluing}, the main
theorem of this subsection is the following:

\begin{thm}\label{disappearing}
Suppose the orbits of $\underline{\gamma}^+$ and
$\underline{\gamma}^-$ are contained in $M'$. Then there exist
$\kappa>0$ and $n_0=n_0(\kappa)>0$ such that the tailored almost
complex structure $J'_{\kappa}$ on $(M')^*$ satisfies
\[ {\mathcal M}_{g}
(\underline{\gamma}^+; \underline{\gamma}^-;\R \times (M')^*,
J'_{\kappa}) = {\mathcal M}_{g}(\underline{\gamma}^+;
\underline{\gamma}^-; \R \times M^*_n, J_{\kappa,n}),\] for all
$n\geq n_0$.
\end{thm}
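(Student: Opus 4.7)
The proof combines the neck-stretching arguments of the preceding two sections, applied in sequence to the two independent neck parameters. Let $K = \max\{A_{\alpha'}(\underline{\gamma}^+), A_{\alpha'}(\underline{\gamma}^-)\}$; by Stokes' theorem, any curve $F \in \mathcal{M}_g(\underline{\gamma}^+; \underline{\gamma}^-; \R \times M_n^*, J_{\kappa,n})$ has $d\alpha$-energy at most $K$, uniformly in $\kappa$ and $n$. Observe that every closed Reeb orbit of $\alpha_n^*$ lies in $M'$: in the attached pieces $M^{(1)} \setminus M'$ we have $R_{\alpha_n} = \partial_t$, and on $V^* = [0,\infty) \times \R \times B$ the Reeb vector field has nonzero $t$-component (compare Remark~\ref{rmk: tau is plurisubharmonic}), so no closed orbits appear there. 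The reverse inclusion $\mathcal{M}_g(\ldots; (M')^*, J'_\kappa) \subseteq \mathcal{M}_g(\ldots; M_n^*, J_{\kappa,n})$ is automatic, since $(M')^*$ embeds naturally in $M_n^*$ and $J_{\kappa,n}$ agrees with $J'_\kappa$ on this image by construction. So we focus on the nontrivial direction.

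\textbf{Step 1 ($\tau$-direction, choosing $\kappa_0$).} We first claim there exists $\kappa_0 = \kappa_0(K,g)$ such that for all $\kappa \ge \kappa_0$ and \emph{all} $n$, no curve $F \in \mathcal{M}_g(\underline{\gamma}^+; \underline{\gamma}^-; \R \times M_n^*, J_{\kappa,n})$ meets $\R \times V^*$, i.e.\ the image stays in the region $\tau \le 0$. If this failed, we would obtain sequences $\kappa_i \to \infty$, $n_i$, and curves $F_i$ violating the claim. By Remark~\ref{rmk: tau is plurisubharmonic}, $\tau$ is plurisubharmonic on $[0,\infty)\times B$, so $\tau \circ F_i$ cannot attain an interior maximum there; hence $F_i$ crosses the long collar neck $[-1,0] \times \Gamma' \subset U(\Gamma')$, which after $H_{\kappa_i}$ has length $\kappa_i \to \infty$, with $J'_{\kappa_i}$ adapted to the symplectization of $\overline{\beta}_0$ there. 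Adding finitely many marked points to achieve a gradient bound as in Lemma~\ref{gradbound} and extracting (via Theorem~\ref{classical} and Proposition~\ref{radius}) a thin-component annulus lying inside this $\tau$-neck, we run the argument of Lemma~\ref{lemma: warm-up 2} verbatim: either a nonconstant finite-energy plane bubbles off in $\R \times \R \times \R \times \Gamma'$ (impossible since this target has no closed Reeb orbits and, via removal of singularities, would yield a holomorphic sphere in an exact manifold), or we obtain a limit curve in the symplectization $\R \times \Gamma'$ with all ends negative, contradicting Stokes' theorem.

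\textbf{Step 2 ($t$-direction, choosing $n_0(\kappa)$).} Now fix $\kappa \ge \kappa_0$. By Step 1 all curves $F_n$ lie in the region $\tau \le 0$, i.e.\ in $M^{(1)}$ together with its top/bottom completions. We claim that there exists $n_0 = n_0(\kappa, K, g)$ such that for all $n \ge n_0$ no curve $F_n$ meets the glued layers $(P_+ \times [1,2n-1]) \cup (P_- \times [-2n+1,-1]) \subset M^{(1)}$. If not, there exist sequences $n_i \to \infty$ and $F_{n_i}$ intersecting these layers. After adding marked points to obtain the gradient bound of Lemma~\ref{gradbound} and applying the thick/thin decomposition, some thin annular component lies inside a $t$-neck of length $\to \infty$ with image in $\R \times P_+ \times [1,2n_i-1]$, with $J_{\kappa,n_i}$ $t$-invariant there. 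Following the long-thin-tube analysis of Lemma~\ref{lemma: warm-up 1} and Proposition~\ref{noside}, we extract a nonconstant finite-energy holomorphic cylinder in $\R \times \R \times \widehat{R_+(\Gamma')}$; by Corollary~\ref{removal} this extends to a nonconstant holomorphic sphere, a contradiction since the target is exact. Hence $F_n$ avoids the $t$-necks, and its image is contained in the canonical copy of $\R \times (M')^*$ inside $\R \times M_n^*$. Because $J_{\kappa,n}$ restricts to $J'_\kappa$ there, $F_n \in \mathcal{M}_g(\underline{\gamma}^+; \underline{\gamma}^-; \R \times (M')^*, J'_\kappa)$.

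\textbf{Main obstacle.} The key subtlety is the required \emph{ordering} and \emph{uniformity} of the two parameters. Step 1 must produce a bound on $\kappa$ that is uniform in $n$, even though the ambient manifold $M_n^*$ changes with $n$; this works because the $\tau$-neck and the almost complex structure on $V^*$ are defined independently of $n$, so the bubbling analysis in the $\tau$-neck is insensitive to the distant $t$-necks. Only after Step 1 is secured can we fix $\kappa$ and apply Step 2 to pin down $n_0(\kappa)$. A second technical point is that, after the marked-point additions in both steps, one must verify that the same finite collection of punctures suffices to bound gradients uniformly in both parameters; this follows because finite-energy planes carry a uniform lower bound on area \cite[Lemma~5.11]{BEHWZ}, so the number of bubbling points is bounded by $K$.
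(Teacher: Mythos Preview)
Your proof has a genuine gap stemming from a misidentification of what the parameter $\kappa$ actually controls.

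Your Step 1 is confused. Avoiding $V^*$ holds for \emph{every} $\kappa$ and $n$, immediately from the maximum principle: you yourself invoke Remark~\ref{rmk: tau is plurisubharmonic} to say $\tau$ is plurisubharmonic on $[0,\infty)\times B$, but that alone already forbids any interior maximum of $\tau\circ F$ with $\tau>0$, so the curve never enters $V^*$. The subsequent sentence ``hence $F_i$ crosses the long collar neck $[-1,0]\times\Gamma'\subset U(\Gamma')$'' is a non sequitur, and the whole $\kappa$-stretching argument you run there is aimed at the wrong target. This is exactly the paper's first reduction, done in one line with no hypothesis on $\kappa$.

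The real job of $\kappa$ is the one you never carry out. After the $V^*$ reduction and the $t$-neck stretching, curves land in (the top/bottom completion of) the \emph{infinite interval-fibered extension} $M'_e$ of Equations~\eqref{eqn: interval fibered 1}--\eqref{eqn: interval fibered 2}, not in $(M')^*$. The pieces $P_\pm^c\times[2kn-1,2kn+1]$ for $k\geq 1$, together with the boundary identifications $(\partial P_+)_{int}\times[n,\infty)\sim(\partial P_-)_\partial\times[-n,\infty)$, make the complement of your ``glued layers'' in $M_n^{(1)}$ genuinely different from $(M')^*$. So the conclusion of your Step~2, that the image sits in ``the canonical copy of $\R\times(M')^*$'', does not follow from its premise. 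The paper handles this with Lemma~\ref{lemma: equiv of moduli spaces}: the $\kappa$-stretching inside $U(\Gamma')$ is precisely what prevents curves in $(M'_e)^{TB}$ from escaping $M'$ into the attached interval-fibered pieces $S_\pm\times[-1,1]$; the argument is the one from Lemma~\ref{lemma: warm-up 2}, applied on the noncompact base $S'\cup([-1,0]\times\Gamma')$.

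In short, the paper's decomposition has three nested reductions, $M_n^*\supset M_n^{(1)}\supset (M'_e)^{TB}\supset (M')^*$, handled respectively by plurisubharmonicity (free), Lemma~\ref{stretching} (the $n$-step), and Lemma~\ref{lemma: equiv of moduli spaces} (the $\kappa$-step). Your argument covers the first two but conflates them with each other, and omits the third entirely; the intermediate space $M'_e$ is the missing piece.
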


\begin{proof}
We analyze the convergence of a sequence of finite energy
holomorphic maps
$$F_n=(a_n,f_n)\colon (\Sigma_n, j_n, \mathbf{m}_n)
\rightarrow (\R\times M^*_n, J_{\kappa,n})$$ in ${\mathcal
M}_{g}(\underline{\gamma}^+; \underline{\gamma}^-;\R \times M^*_n,
J_{\kappa,n})$.

Our first reduction is to restrict the range of $F_n$ from $\R\times
M^*_n$ to $\R\times M^{(1)}_n$. Indeed, by Remark~\ref{rmk: tau is
plurisubharmonic}, any holomorphic map $F_n$ is disjoint from
$\R\times V^*$. From now on, we consider the sequence
$$F_n\colon  (\Sigma_n,j_n,\mathbf{m}_n)\to (\R\times M^{(1)}_n,
J_{\kappa,n}).$$

Recall that $M_e'$ is the infinite interval-fibered extension of
$M'$, obtained from $M'$ by attaching an interval bundle $S'\times
I$ over $S'=S_\infty-S$ (as given in Equations~(\ref{eqn: interval
fibered 1}) and (\ref{eqn: interval fibered 2})), and that
$(M_e')^{TB}$ is the partial completion of $M_e'$, obtained by
attaching just the Top and the Bottom. The theorem now follows from combining
the following Lemmas~\ref{lemma: equiv of moduli spaces} and \ref{stretching}.
\end{proof}

\begin{lemma} \label{lemma: equiv of moduli spaces}
For sufficiently large $\kappa>0$, the almost complex structure
$J'_{\kappa}$ tailored to $(M')^*$ satisfies
$$\mathcal{M}_g(\underline\gamma^+;\underline\gamma^-; \R\times
(M')^*, J'_{\kappa}) =
\mathcal{M}_g(\underline\gamma^+;\underline\gamma^-; \R\times
(M_e')^{TB},J_{\kappa,n}).$$
\end{lemma}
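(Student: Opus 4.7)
The two ambient spaces $\R \times (M')^*$ and $\R \times (M_e')^{TB}$ both contain the common subspace $\R \times (M' \cup T \cup B)$, where $T$ and $B$ denote the Top and Bottom symplectization pieces attached to $M'$. On this common subspace, the tailored almost complex structures $J'_\kappa$ and $J_{\kappa,n}$ agree by construction, since both restrict to the same $J'$ on $\R\times M'$ and are $\partial_t$-invariant on the Top and Bottom. Thus the lemma reduces to the single assertion that, once $\kappa$ is sufficiently large, every holomorphic curve appearing on either side of the claimed equality is supported in $\R \times (M' \cup T \cup B)$.

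For curves in $\R\times (M')^*$, this containment is the easy direction: by Lemma~\ref{lemma: bound on tau} and Remark~\ref{rmk: tau is plurisubharmonic}, the function $\tau$ on the Side is plurisubharmonic with respect to $J'_\kappa$; since the asymptotics $\underline{\gamma}^\pm$ all live in $M'\subset\{\tau\le 0\}$, the maximum principle forbids any excursion into the Side.

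For curves in $\R\times (M_e')^{TB}$, the argument parallels Lemma~\ref{lemma: warm-up 2} and this is the content-heavy step. I would argue by contradiction: assuming no $\kappa$ works, pick a sequence $\kappa_j\to\infty$ and $J_{\kappa_j,n}$-holomorphic curves $F_{\kappa_j}$ with asymptotics in $M'$ whose projections hit the extension region $M_e'\setminus M'$. The extension is glued to $M'$ through the Liouville boundary regions, and the parameter $\kappa_j$ dilates a neck modeled on $[0,\kappa_j]\times\Gamma'$ in the horizontal direction through which the curves must pass. Add finite sets of marked points as in Lemma~\ref{gradbound} to secure gradient bounds, identify an annular thin component whose image sweeps the stretching neck (as in Proposition~\ref{noside}), and pass to a $C^\infty_{\text{loc}}$-limit. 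The limit is a nonconstant finite-energy holomorphic curve living either in the symplectization of the contact manifold $(\R\times\Gamma',\, dt+e^\tau\beta_0)$, whose Reeb field $\partial_t$ has no closed orbits, or in $\R\times\widehat{R_\pm(\Gamma')}$, where removal of singularities (Corollary~\ref{removal}) would extend it to a closed holomorphic sphere in a symplectic manifold with exact symplectic form. Both alternatives are impossible, yielding the contradiction. Once containment holds in both spaces, the equality of moduli spaces follows from the coincidence of the almost complex structures on $M'\cup T\cup B$.

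\textbf{Main obstacle.} The genuine subtlety is that $M_e'$ is an \emph{infinite} tower of interval-fibered extensions, so a priori a curve could thread through arbitrarily many layers $P_\pm^c\times\{(2k+1)n\}$. The uniform upper bound on the Hofer energy, coming from the action of the positive asymptotics $\underline{\gamma}^+$ (which is independent of $\kappa$), together with the monotonicity lemma and area arguments used in Proposition~\ref{prop: bound on b for single F}, must be invoked to see that only finitely many layers are accessible, with a depth bound independent of $\kappa$; then the neck between $M'$ and the first layer, of length $\kappa$, becomes the sole gateway, and taking $\kappa$ past the threshold dictated by the bubbling analysis shuts it.
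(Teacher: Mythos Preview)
Your overall strategy---parallel Lemma~\ref{lemma: warm-up 2}, stretch the $\tau$-neck in $U(\Gamma')$, and argue by contradiction---is correct, and the easy containment for curves in $\R\times(M')^*$ via plurisubharmonicity is fine. You also correctly identify the genuine obstacle: the noncompactness of the infinite interval-fibered extension $M_e'$.

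The gap is in how you propose to dispatch that obstacle. Your claim that Hofer energy plus monotonicity bounds the number of layers $P_\pm^c\times\{(2k+1)n\}$ a curve can visit, uniformly in $\kappa$, is not obviously true: a holomorphic curve can traverse a layer while staying $C^0$-close to a Reeb arc, carrying arbitrarily small $d\alpha$-energy, and the monotonicity/isoperimetric machinery of Proposition~\ref{prop: bound on b for single F} gives no lower bound for that. So the reduction to a compact target is not justified. Relatedly, your dichotomy for the limiting curve mixes the $t$-stretching picture (Lemma~\ref{lemma: warm-up 1}, Proposition~\ref{noside}) with the $\tau$-stretching picture: the relevant limit here is the \emph{projection} $v_\kappa$ to the $2n$-dimensional Liouville base $S'\cup([-1,0]\times\Gamma')$, not a curve in the symplectization of some contact $(\R\times\Gamma')$.

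The paper sidesteps the noncompactness entirely and never needs a depth bound. It projects $f_\kappa$ to the base to obtain $v_\kappa\colon\overline\Sigma_\kappa\to S'\cup([-1,0]\times\Gamma')$; although $v_\kappa$ may escape to infinity in $S'$ and hence fail to converge, the portion of $\overline\Sigma_\kappa$ mapping into the neck $[-1,0]\times\Gamma'$ can still be analyzed via \cite[Section~10]{BEHWZ}. That analysis produces separating curves in $\overline\Sigma_\kappa$ which, as $\kappa\to\infty$, approach Reeb orbits of $\beta_0$ on $\Gamma'$ as \emph{negative} punctures. Taking these as $\partial\overline\Sigma_\kappa$, one has $\int_{\partial\overline\Sigma_\kappa}v_\kappa^*\beta<0$ for $\kappa$ large, whence Stokes gives $\int_{\overline\Sigma_\kappa}v_\kappa^*d\beta<0$, contradicting the positivity of $d\beta$-area on the $(J_\kappa)_0$-holomorphic projection. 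This is the missing idea: rather than compactify the target, isolate the neck crossing and get a sign contradiction from Stokes on the base projection.
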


Observe that, by the construction in Section~\ref{subsection:
gluing}, the almost complex structure $J_{\kappa,n}$ does not depend
on $n$ when restricted to $M'_e$.

\begin{proof}[Proof of Lemma~\ref{lemma: equiv of moduli spaces}]
Similar to that of Lemma~\ref{lemma: warm-up 2}. The only difference
is that the region $S'\cup ([-1,0]\times \Gamma')$ analogous to $W$
is not compact, since $M'_e$ is an infinite interval-fibered
extension of $M'$. Hence the sequence
$$v_\kappa\colon  \overline{\Sigma}_\kappa \to S'\cup ([-1,0]\times
\Gamma')$$ may not converge, since $v_\kappa$ can be pushed towards
the ends of $S'$.  However, most of the analysis in \cite[Section 10]{BEHWZ} can be 
carried out for the portion of $\overline{\Sigma}_{\kappa}$ mapped into 
$[-1,0]\times \Gamma'$ by $v_{\kappa}$. In particular there must be a finite set of
disjoint separating curves in $\overline{\Sigma}_\kappa$ which converge to some Reeb 
orbits as negative punctures. We can assume without loss of generality that those curves 
are $\partial \overline{\Sigma}_{\kappa}$, therefore, for $\kappa$ big enough, $\int_{\partial
\overline{\Sigma}_{\kappa}} v_{\kappa}^* \beta <0$ (the negative sign because $\partial
\overline{\Sigma}_{\kappa}$ approaches a Reeb orbit as a negative puncture). Stokes 
Theorem  gives then $\int_{\overline{\Sigma}_{\kappa}} v_{\kappa}^* \beta ,0$, contradicting
the positivity of the symplectic area on holomorphic curves.
\end{proof}

\begin{lemma} \label{stretching}
Given $\kappa>0$, there exists $n_0> 0$ so that for all $n\geq n_0$,
$${\mathcal M}_{g}(\underline{\gamma}^+; \underline{\gamma}^-; \R
\times M^{(1)}_n, J_{\kappa,n})=
\mathcal{M}_g(\underline{\gamma}^+;\underline{\gamma}^-;\R\times
(M'_e)^{TB},J_{\kappa,n}).$$
\end{lemma}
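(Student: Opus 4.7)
The plan is to mimic the neck-stretching argument of Lemma~\ref{lemma: warm-up 1}, with the slab $S_\infty\times[-n+1,n-1]\subset M^{(1)}_n$ playing the role of the growing neck. The key structural features I will exploit are: (i) on $S\times([-n+1,-\varepsilon]\cup[\varepsilon,n-1])$ the almost complex structure $J_{\kappa,n}$ is $\bdry_t$-invariant and the Reeb vector field is $\bdry_t$, so there are no closed Reeb orbits inside the neck; (ii) the ``arbitrary'' piece $S\times[-\varepsilon,\varepsilon]$ has width independent of $n$; (iii) the ends $\m\gamma^\pm$ lie in $M'$, so $|t|$ at the punctures is uniformly bounded; and (iv) Lemma~\ref{lemma: bound on tau} forces $\tau\circ f_n\leq 0$, confining the $R_+(\Gamma')$-projection of each $f_n$ to a compact set. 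The reverse inclusion $\mathcal{M}_g(\m\gamma^+;\m\gamma^-;\R\times(M'_e)^{TB},J_{\kappa,n})\subseteq\mathcal{M}_g(\m\gamma^+;\m\gamma^-;\R\times M^{(1)}_n,J_{\kappa,n})$ is immediate from the natural inclusion $(M'_e)^{TB}\hookrightarrow M^{(1)}_n$, so only one direction requires work.

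I would argue by contradiction: suppose there is a sequence $F_n=(a_n,f_n)\in\mathcal{M}_g(\m\gamma^+;\m\gamma^-;\R\times M^{(1)}_n,J_{\kappa,n})$ whose image intersects $S_\infty\times[-n+1,n-1]$ for every $n$. First, by Lemma~\ref{gradbound} I add finitely many marked points $\mathbf{m}_n^0$ to obtain a uniform gradient bound on $\dot\Sigma_n=\Sigma_n-(\mathbf{m}_n\cup\mathbf{m}_n^0)$ with respect to the hyperbolic metric and the metric \eqref{eqn: metric} on $\R\times M^{(1)}_n$. By Theorem~\ref{classical} and Proposition~\ref{radius}, a subsequence of the marked surfaces converges to a nodal Riemann surface and admits a thick-thin decomposition. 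On the thick part $f_n$ has bounded diameter (by the gradient bound and the bounded hyperbolic diameter), so the $t$-variation, which by (iii) must grow like $n$, is concentrated in some thin cylindrical component $C_n\subset\dot\Sigma_n$.

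Next, reparameterize $C_n$ as a flat cylinder $[0,r_n]\times S^1$ with $r_n\to\infty$ and, using (ii), select a subcylinder $[r_n^{(1)},r_n^{(4)}]\times S^1$ whose image under $f_n$ lies in the $\bdry_t$-invariant portion of the neck, with $r_n^{(i+1)}-r_n^{(i)}\to\infty$ and the $t$-oscillation on $[r_n^{(2)},r_n^{(3)}]\times S^1$ growing without bound. Now I run the dichotomy from Proposition~\ref{noside}: if $\sup\|\nabla F_n\|\geq c>0$ on $[r_n^{(2)},r_n^{(3)}]\times S^1$, then translation in the $r$-, $s$-, and $t$-directions yields a $C^\infty_{loc}$-subsequential limit, a nonconstant finite energy holomorphic cylinder $\widetilde F_\infty\colon \R\times S^1\to(\R\times\R\times\widehat{R_+(\Gamma')},J)$; since there are no closed Reeb orbits in this region, Corollary~\ref{removal} extends $\widetilde F_\infty$ to a nonconstant holomorphic sphere in an exact symplectic manifold, contradicting exactness. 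Alternatively, if $\sup\|\nabla F_n\|\to 0$ on $[r_n^{(2)},r_n^{(3)}]\times S^1$, the ``long and thin tubes'' argument using Wirtinger's inequality, the isoperimetric inequality, and the monotonicity lemma (as in the second half of the proof of Proposition~\ref{prop: bound on b for single F}) yields the contradiction.

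The main obstacle I expect is the bookkeeping involved in simultaneously ensuring that the chosen subcylinder maps entirely into the $\bdry_t$-invariant portion of the neck (avoiding the fixed-width ``bad'' slab $S\times[-\varepsilon,\varepsilon]$ where $J_{\kappa,n}$ is uncontrolled), into the compact $\tau\leq 0$ part of $S_\infty$, and into the region where the $t$-oscillation is large enough to force the limit cylinder to be genuinely nonconstant. All three constraints are reconciled by the interplay of the unboundedly growing oscillation of $t\circ f_n|_{C_n}$ with the fixed widths of the obstruction regions, together with Lemma~\ref{lemma: bound on tau}; once this is arranged, the removal-of-singularities extension to a holomorphic sphere is standard and the contradiction is immediate.
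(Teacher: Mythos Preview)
Your overall strategy---run the thick-thin and long-tube argument of Proposition~\ref{noside} and Lemma~\ref{lemma: warm-up 1} on the growing slab $S_\infty\times[-n+1,n-1]$---is exactly what the paper does. However, two points need correcting.

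First, the reverse inclusion is not immediate: there is no inclusion $(M'_e)^{TB}\hookrightarrow M^{(1)}_n$. The Top/Bottom completion $(M'_e)^{TB}$ attaches semi-infinite $t$-rays, whereas in $M^{(1)}_n$ the corresponding region is the finite slab. What makes the two moduli spaces coincide is that curves in $(M'_e)^{TB}$ have a uniform bound on $|t|$ (by a noside-type argument), so for $n$ large enough the half-slab in $M^{(1)}_n$ contains their image; and conversely curves in $M^{(1)}_n$ do not cross the middle of the slab, so they sit in a region isometric to a subset of $(M'_e)^{TB}$. Both directions need work.

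Second---and this is the issue the paper singles out---the target $M'_e$ (and $S_\infty$) is \emph{noncompact}: it is built from infinitely many copies of $P^c_\pm\times[-1,1]$. Your reference to ``the compact $\tau\leq 0$ part of $S_\infty$'' and the invocation of Lemma~\ref{lemma: bound on tau} are misplaced: the reduction from $M^*_n$ to $M^{(1)}_n$ (via plurisubharmonicity of $\tau$ on $V^*$) has already been made before this lemma, and within $M^{(1)}_n$ the noncompactness of $S_\infty$ is in the direction of the infinite chain of pieces, not in any $\tau$-direction. The compactness tools you need (Gromov--Schwarz, the isoperimetric inequality, monotonicity) require \emph{bounded geometry}, which holds here precisely because the pieces $P^c_+\times[2kn-1,2kn+1]$ are all isometric. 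This is the point the paper's proof emphasizes, and it is the genuine new wrinkle relative to Lemma~\ref{lemma: warm-up 1}; the ``bookkeeping'' you flag is routine by comparison. A minor related imprecision: your limit cylinder lands in $\R\times\R\times(\text{a piece of }S_\infty)$ rather than in $\R\times\R\times\widehat{R_+(\Gamma')}$, though the contradiction works the same way.
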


\begin{proof}[Proof of Lemma~\ref{stretching}]
Suppose we are given a sequence $F_n\in {\mathcal
M}_{g}(\underline{\gamma}^+; \underline{\gamma}^-; \R \times
M^{(1)}_n, J_{\kappa,n}).$   If $A$ and $B$ are subsets of a metric
space $(X,d)$, we define the {\em distance from $A$ to $B$} to be
$\sup_{x\in A} d(x,B)$. This ``distance'' is not symmetric, but it is not a problem.
We apply the argument in
Proposition~\ref{noside} and Lemma~\ref{lemma: warm-up 1} to bound
the distance from $\op{Im}(F_n)$ to the interval-fibered extension
$(M'_e,\alpha_{n}, J_{\kappa,n})$. Although the interval-fibered
extension is noncompact, $\R\times M'_e$ has bounded geometry due to
the fact that the almost complex structures on the pieces
$P_+^c\times[2kn-1,2kn+1]$ are isomorphic (and similarly for
$P_-^c\times[-2kn-1,-2kn+1]$), so we can use the same compactness arguments of
Proposition \ref{noside} and Lemma \ref{lemma: warm-up 1}.
\end{proof}

\s\n{\bf The ECH case.}  We have the following analog of
Theorem~\ref{disappearing} in the ECH case:

\begin{thm}\label{disappearing ECH version}
Let $\{(\gamma_i,m_i)\}$ and $\{\gamma'_j, m'_j)\}$ be orbit sets in
$M'$. Then there is some $n_0 \in \N$ and some tailored almost
complex structure $J'$ on $(M')^*$ such that all flow lines in
$(\R\times M^*_n,J_{n})$ from $\{(\gamma_i,m_i)\}$ to $\{\gamma'_j,
m'_j)\}$ are contained in $(\R\times(M')^*,J')$ for all $n\geq n_0$.
\end{thm}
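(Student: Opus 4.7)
The plan is to mimic the strategy of Theorem~\ref{disappearing}, with every use of SFT-style compactness (which relies on a fixed bound on topological complexity) replaced by Gromov compactness via currents, as in the proof of Proposition~\ref{prop: ECH top and bottom bound}.  I will fix a tailored almost complex structure $J'$ on $\R\times (M')^*$ and extend it in the standard way to a tailored almost complex structure $J_e$ on the completion $\R\times (M'_e)^{TB}$ of the infinite interval-fibered extension; for each $n$, the tailored $J_{n}$ on $\R\times M_n^*$ will agree with $J_e$ on the compact subset of $M_n^{(1)}$ corresponding to $|t|\le n$.  I will then argue by contradiction, assuming a subsequence of flow lines $F_n$ from $\{(\gamma_i,m_i)\}$ to $\{(\gamma'_j,m'_j)\}$ in $(\R\times M_n^*, J_{n})$ whose images are not contained in $\R\times (M')^*$.

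The first step is to confine $F_n$ to a region of uniform geometry.  Lemma~\ref{lemma: bound on tau} gives $\tau\circ F_n\le 0$, so $F_n$ is disjoint from $\R\times V^*$ and lies in $\R\times M_n^{(1)}$.  Proposition~\ref{prop: ECH top and bottom bound} gives a uniform bound on $|t\circ F_n|$.  Since the asymptotic orbit sets do not depend on $n$, Stokes' theorem applied to $d(e^s\alpha_n^*)$ yields a uniform upper bound on the symplectic area, and \cite[Prop.~5.13]{BEHWZ} provides a uniform upper bound on the Hofer energy $E(F_n)$.  Applying Taubes's Gromov compactness for currents \cite[Prop.~3.3]{T3} on an exhaustion of $\R\times (M'_e)^{TB}$ by compact subsets, together with a diagonal argument, I will pass to a subsequence so that the currents $[F_n]$ converge weakly to a proper $J_e$-holomorphic subvariety $C\subset \R\times (M'_e)^{TB}$.

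The main obstacle, and the heart of the argument, will be to show that $C$ is actually supported in $\R\times (M')^*$; this is the ECH replacement for Lemmas~\ref{lemma: equiv of moduli spaces} and \ref{stretching} combined, and is where the absence of a topological complexity bound precludes a direct bubbling-off argument.  Instead, suppose that $C$ has a component meeting $\R\times ((M'_e)^{TB}\setminus (M')^*)$.  Then, as in the last part of the proof of Proposition~\ref{prop: ECH top and bottom bound}, a compactly supported test-function argument paired with the pointwise nonnegativity of $d\alpha_e^*$ on each $F_n$ forces $d\alpha_e^*$ to vanish identically on that component.  The component is then supported on $\R\times\gamma$ for some integral curve $\gamma$ of $R_{\alpha_e}$ in $(M'_e)^{TB}\setminus (M')^*$; but the Reeb vector field there is $\bdry_t$ with no closed orbits, so $\gamma$ is an open Reeb line.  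By properness of $C$ and openness of holomorphic maps, $C$ must cover all of $\R\times\gamma$, which has infinite Hofer energy, contradicting the uniform bound on $E(F_n)$.  Hence $C\subset \R\times (M')^*$, and the weak convergence then forces $F_n\subset \R\times (M')^*$ for all sufficiently large $n$, the desired contradiction.
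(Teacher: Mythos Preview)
Your overall architecture---restrict to $\R\times M_n^{(1)}$ via the $\tau$-bound, extract a currents limit, and then confine that limit---matches the paper's strategy.  The gap is in the confinement step.

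You claim that if the limiting subvariety $C$ has a component meeting $\R\times\bigl((M'_e)^{TB}\setminus (M')^*\bigr)$, then ``as in the last part of the proof of Proposition~\ref{prop: ECH top and bottom bound}'' a test-function argument forces $d\alpha_e^*|_C=0$ on that component.  But that argument in Proposition~\ref{prop: ECH top and bottom bound} has a crucial hypothesis you have not secured: there one first arranges $\int_{C_n} d\alpha^*\to 0$, which is legitimate precisely because the $C_n$ are obtained by translating by $b_n(x_n)\to\infty$ in the $t$-direction, so a fixed-width window is pushed out to where ever less of the finite total $d\alpha^*$-energy can live.  In your setting there is no translation: the interval-fibered extension region $(M'_e)^{TB}\setminus (M')^*$ is a \emph{fixed} subset (independent of $n$), and nothing prevents each $F_n$ from carrying a uniformly positive amount of $d\alpha^*$-area there.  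So the weak limit $C$ can perfectly well have positive $d\alpha^*$-area on a component in that region, and your contradiction does not fire.

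The paper handles this step by a completely different, dimension-specific mechanism that does not need any energy-vanishing input.  Since $\dim M'=3$, the base $S''=(S_\infty-S)\cup([-1,0]\times\Gamma')$ is a Riemann surface, and the projection of any $J'$-holomorphic curve in $\R\times (M'_e)^{TB}$ to $S''$ is itself holomorphic between Riemann surfaces.  Such a map is open, and it is also proper (since the ends of the curve are asymptotic to orbits in $M'$); hence its image is a nonempty open-and-closed subset of $S''$, forcing it to be all of $S''$.  But $S''$ is noncompact, contradicting properness.  This open-mapping argument is what replaces Lemma~\ref{lemma: equiv of moduli spaces} in the ECH case; it is both simpler than the contact-homology version (no $\kappa$-stretching needed) and avoids the energy issue that sinks your approach.
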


\begin{proof}
The proof of Theorem~\ref{disappearing ECH version} is similar to
that of Proposition~\ref{prop: ECH top and bottom bound}. We can
restrict to $(\R\times M_n^{(1)}, J_{\kappa,n})$ as in the contact
homology case, and apply the Gromov-Taubes compactness theorem in
dimension four to bound the distances of $\op{Im}(F_n)$ to
$(M^{(2)}_n, \alpha^{(2)}_{n}, J_{\kappa,n})$ and $(M'_e,\alpha_{n},
J_{\kappa,n})$.

The analog of Lemma~\ref{lemma: equiv of moduli spaces} is
straightforward and does not involve $\kappa$ since $\dim M'=3$ and
the projection of $J'$ to $J_0'$ on $S_\infty$ makes $S_\infty$ into
a Riemann surface: Let $F$ be a holomorphic map to $\R\times
(M'_e)^{TB}$, whose ends are contained in $\R\times (M')^*$. Also
let $S''=(S_\infty-S)\cup ([-1,0]\times \Gamma')$.  Then consider
the restriction of $F$ to $\R\times S''\times \R$, composed with the
projection to $S''$. It is a holomorphic map between Riemann
surfaces, and hence is an open mapping; on the other hand it is also
proper. We now obtain a contradiction since $S''$ is noncompact. We
conclude that $F$ does not intersect $\R\times S''\times \R$.
\end{proof}

\subsection{Continuation maps and direct limits}
\label{subsection: continuation maps and direct limits}

In this subsection we prove part of Theorem~\ref{thm: sutured
gluing}, namely we define the map $$\Phi\colon  HC(M',\alpha') \to
HC(M,\alpha)$$ and show that $\Phi$ is injective.

By Theorem~\ref{disappearing}, given $K>0$, there are $\kappa>0$ and
$n_0(\kappa)>0$ such that for all $n\geq n_0(\kappa)$ there is an
inclusion of chain complexes:
$$\Phi_{K,\kappa,n}\colon \mathcal{A}_{\leq K}(M',\alpha',J'_\kappa)
\hookrightarrow \mathcal{A}(M_n,\alpha_{n},J_{\kappa,n}).$$

The following lemma is essentially the same as the combination of
Lemmas~\ref{lemma: warm-up 1 cobordism} and ~\ref{lemma: warm-up 1
inclusion of algebras} --- the only difference is the bounded
geometry of the interval-fibered portion --- and its proof will be
omitted.

\begin{lemma} \label{lemma: cobordism}
Given $K>0$ and $\kappa>0$, there exists $n_0(\kappa)>0$ such that
for all $n\geq n_0(\kappa)$ there is a cobordism $(\R\times
M_n^*,J)$ which gives rise to a continuation map
$$\Psi_n\colon  \mathcal{A}(M_n,\alpha_{n},J_{\kappa,n})
\to \mathcal{A}(M_{n+1},\alpha_{n+1},J_{\kappa,n+1}),$$ with the
following properties:
\begin{enumerate}
\item if $A_{\alpha_{n}}(\gamma)\leq K$, then $\Psi_n(\gamma)=\gamma$;
\item if $\gamma\subset M'$, then $\Psi_n(\gamma)=\gamma+\sum_i
a_i\overrightarrow{\gamma_i},$ where all the orbits of
$\overrightarrow{\gamma_i}$ are contained in $M'$ and
$A_{\alpha_{n}}(\gamma)>
A_{\alpha_{n+1}}(\overrightarrow{\gamma_i})$.
\end{enumerate}
\end{lemma}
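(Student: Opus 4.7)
\textbf{Proof plan for Lemma \ref{lemma: cobordism}.}  My plan is to follow closely the template of the two warm-ups, combining the cobordism construction from Section \ref{section: warm-up 1} with the neck-stretching control supplied by Theorem~\ref{disappearing}. First I construct an identification $i_n\colon M_n\stackrel\sim\to M_{n+1}$ which is the identity on $M'\cup V\cup M_e'$ (modulo a mild rescaling in the $t$-direction of the neck pieces $P_+^c\times[2kn-1,2kn+1]$ and $P_-^c\times[-2kn-1,-2kn+1]$), such that $\alpha^0:=i_n^*\alpha_{n+1}$ and $\alpha^1:=\alpha_n$ agree on $M'$ and differ on the interval-fibered necks only by a conformal factor $f(t)$ with $1\le f\le 1+O(1/n)$. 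Interpolate to obtain a family $\alpha^\sigma$, $\sigma\in[0,1]$, of contact forms, extended constantly outside $[0,1]$. Then interpolate $i_n^*J_{\kappa,n+1}$ (for $s\le 0$) to $J_{\kappa,n}$ (for $s\ge 1$) to obtain an almost complex structure $J$ on $\R\times M_n^*$ which is $\alpha^{\phi(s)}$-adapted for an appropriate cutoff $\phi$, $s$-invariant on $\R\times M'$ and $s$- and $t$-invariant on the side $V^*$, with projection to each $\widehat{R_\pm(\Gamma')}$-factor independent of $s$ and $t$.

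Next I define $\Psi_n$ by counting rigid rational curves in $(\R\times M_n^*,J)$ with one positive puncture, as in Lemma~\ref{lemma: warm-up 1 cobordism}. The compactness needed to make this well-defined follows by combining the strategies from Section~\ref{section: compactness results} and the two warm-ups: the strict plurisubharmonicity of $\tau$ confines curves to $\R\times M_n^{(1)}$, the ECH/SFT energy arguments (Proposition~\ref{noside}) give uniform height bounds, and on the non-compact interval-fibered portion of $M_e'$ one uses the bounded geometry afforded by the periodic gluing data, exactly as in Lemma~\ref{stretching}. A tame form of the type $d(g(s)\alpha^{\phi(s)})$ constructed as in Lemma~\ref{lemma: interpol} controls the Hofer energy along the cobordism.

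For property (2), suppose $\gamma\subset M'$ and $F$ is a counted curve with positive end $\gamma$ and negative ends $\overrightarrow{\gamma'}$. Since all orbits intersecting $P_+$ positively represent the class $[P_+]$ (or its image in $H_1$), a homological argument as in Theorem~\ref{thm: warm-up 1} forces each orbit of $\overrightarrow{\gamma'}$ to lie in $M'$. Taking $\omega=d\alpha^1$, a projection computation shows $\omega$ is $J$-nonnegative (on $M'$ directly; on the necks by projecting out the $s,t$-directions and using $d\beta_\pm'$-positivity of the tailored $J$). Stokes then yields $A_{\alpha^1}(\gamma)\ge A_{\alpha^0}(\overrightarrow{\gamma'})=A_{\alpha^1}(\overrightarrow{\gamma'})$, with equality only when $F$ is a trivial cylinder over $\gamma$, exploiting Fabert's result to discard branched covers.

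Finally, for property (1), I argue by contradiction: if there is a sequence of closed orbits $\gamma_n$ with $A_{\alpha_n}(\gamma_n)\le K$ and rigid $J$-holomorphic non-cylinders $F_n$ asymptotic to $\gamma_n$, then $\gamma_n$ lies in $M'$ for $n$ large by Theorem~\ref{disappearing}, and by the same theorem applied to the cobordism (the neck-stretching argument of Lemma~\ref{stretching} goes through verbatim because $J$ is still tailored on both ends and the projection to $W_N$ is unchanged in $s$) the entire image of $F_n$ is eventually contained in $\R\times(M')^*$. But on this region $J$ is $s$-invariant by construction, so $F_n$ descends to a rigid curve in the symplectization of $(M',\alpha')$ with one positive puncture, forcing it to be a trivial cylinder --- contradiction. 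Thus $\Psi_n(\gamma)=\gamma$. The main obstacle in this outline is verifying the neck-stretching compactness in the cobordism setting, since the target is neither a symplectization nor compact; this is where the bounded geometry of the infinite interval-fibered extension $M_e'$ and the periodicity of $J_{\kappa,n}$ along the stacked $P_\pm^c$-layers are essential, allowing the argument of Lemma~\ref{stretching} to be imported into the cobordism.
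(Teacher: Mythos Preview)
Your proposal is correct and follows essentially the same route as the paper, which explicitly omits the proof and states that it is ``essentially the same as the combination of Lemmas~\ref{lemma: warm-up 1 cobordism} and~\ref{lemma: warm-up 1 inclusion of algebras} --- the only difference is the bounded geometry of the interval-fibered portion.'' Your outline reproduces exactly this: the cobordism construction and the $\omega=d\alpha^1$ nonnegativity argument from Lemma~\ref{lemma: warm-up 1 cobordism} for property~(2), the contradiction/neck-stretching argument from Lemma~\ref{lemma: warm-up 1 inclusion of algebras} (adapted via Theorem~\ref{disappearing} and Lemma~\ref{stretching}) for property~(1), and you correctly flag the bounded-geometry issue on the infinite interval-fibered extension $M_e'$ as the one genuinely new ingredient.
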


It follows that given $K>0$ there exist $\kappa>0$ and
$n_0(\kappa)>0$ such that if $n\geq n_0(\kappa)$, then the following
diagram of chain complexes commutes:

\begin{equation}
\begin{diagram}
\mathcal{A}_{\leq K}(M',\alpha',J'_{\kappa}) &
\rTo^{\Phi_{K,\kappa,n}} &
\mathcal{A}(M_n, \alpha_{n},J_{\kappa,n})  \\
& \rdTo^{\Phi_{K,\kappa,n+1}}  &  \dTo_{\Psi_{n }} \\
 &  &
\mathcal{A}(M_{n+1},\alpha_{n+1},J_{\kappa,n+1})  \\
\end{diagram}
\end{equation}

Next consider the continuation maps
$$i_{\kappa,\kappa+1}\colon  \mathcal{A}(M',\alpha',J'_{\kappa})
\to \mathcal{A}(M',\alpha',J'_{\kappa+1}),$$
$$j_{\kappa,\kappa+1}\colon  \mathcal{A}(M_n,\alpha_{n},J_{\kappa,n})
\to \mathcal{A}(M_n,\alpha_{n},J_{\kappa+1,n}),$$ which are defined
as in Lemma~\ref{lemma: warm-up 2 cobordism}. The map
$i_{\kappa,\kappa+1}$ sends $\gamma\mapsto \gamma + \sum_i
a_i\ora{\gamma_i},$ where $A_{\alpha'}(\gamma)>
A_{\alpha'}(\ora{\gamma_i})$.  This is due to the fact that the
contact form $\alpha'$ is the same for the domain and the range.
Similar considerations hold for $j_{\kappa,\kappa+1}$.  We then have
the following lemma:

\begin{lemma} \label{lemma: toad}
Given $K'>K>0$, there exists $\kappa_0>0$ such that for all
$\kappa\geq \kappa_0$ there exists $n(\kappa)$ such that for all
$n\geq n(\kappa)$, the following diagram commutes:
\begin{equation}
\begin{diagram}
\mathcal{A}_{\leq K}(M',\alpha',J'_{\kappa}) &
\rTo^{\Phi_{K,\kappa,n}}
& \mathcal{A}(M_{n},\alpha_{n},J_{\kappa,n}) \\
\dTo^{i_{\kappa,\kappa+1}} & & \dTo_{j_{\kappa,\kappa+1}} \\
\mathcal{A}_{\leq K'}(M',\alpha',J'_{\kappa+1})  &
\rTo^{\Phi_{K',\kappa',n}} &
\mathcal{A}(M_{n},\alpha_{n},J_{\kappa+1,n})  \\
\end{diagram}
\end{equation}
Moreover, if $\gamma\in \mathcal{A}_{\leq K}(M',\alpha',J'_\kappa)$,
then all the maps in the diagram send $\gamma\mapsto\gamma$.
\end{lemma}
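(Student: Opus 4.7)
The plan is to prove the ``moreover'' statement first; once every map in the diagram acts as the identity on any $\gamma \in \mathcal{A}_{\leq K}(M',\alpha',J'_\kappa)$, commutativity becomes automatic. I will choose $\kappa_0$ and then $n(\kappa)$ large enough to force this triviality on four separate maps: $\Phi_{K,\kappa,n}$, $\Phi_{K',\kappa+1,n}$, $i_{\kappa,\kappa+1}$, and $j_{\kappa,\kappa+1}$.

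First, Theorem~\ref{disappearing} applied with bound $K'$ (which exceeds $K$) furnishes, for all $\kappa\geq\kappa_0$ and $n\geq n(\kappa)$, a literal inclusion of chain complexes $\Phi_{K,\kappa,n}$ and $\Phi_{K',\kappa+1,n}$: any holomorphic curve in $\R\times M_n^*$ asymptotic to orbits of action at most $K'$ already lives in $\R\times (M')^*$, and hence these inclusions send $\gamma\mapsto\gamma$. (We also need to know that orbits of action $\le K'$ in $M_n$ lie in $M'$; this holds for $n\gg 0$ since all Reeb orbits passing through $M_n\setminus M'$ wrap the glued layer and therefore have action at least comparable to $n$.) Next, Lemma~\ref{lemma: warm-up 2 inclusion of algebras} applied directly to $(M',\alpha',J'_\kappa)\to(M',\alpha',J'_{\kappa+1})$ shows that, after possibly enlarging $\kappa_0$, $i_{\kappa,\kappa+1}(\gamma) = \gamma$ for every $\gamma$ of action $\leq K$; the lower-action terms $\sum_i a_i\,\ora{\gamma_i}$ that could a priori appear are suppressed by the neck-stretching compactness argument of Section~\ref{section: warm-up 2} (a non-trivial continuation curve would yield either a holomorphic plane in $\R\times\R\times\bdry R_\pm(\Gamma')$, impossible since there are no closed Reeb orbits there, or a finite-energy curve in $(\widehat{R_\pm(\Gamma')})^{TB}$ with no positive ends, impossible by Stokes).

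The main obstacle, and the third step of the plan, is to establish the analogous triviality $j_{\kappa,\kappa+1}(\gamma) = \gamma$ for $\gamma$ of action $\leq K$ on the glued manifold $M_n$. This requires combining the two neck-stretching mechanisms already used: the $\tau$-direction stretch (between $J_{\kappa,n}$ and $J_{\kappa+1,n}$) of Section~\ref{section: warm-up 2}, and the $t$-direction stretch of Theorem~\ref{disappearing} governing the sutured gluing. Specifically, I will choose $n(\kappa)$ so that the argument of Lemma~\ref{stretching} confines any continuation curve into the interval-fibered exhaustion $\R\times(M'_e)^{TB}$ (this uses only the bounded-action assumption on the positive end, together with the bounded geometry of the pieces $P^c_\pm\times[2kn-1,2kn+1]$), and then apply the warm-up~2 argument in this enlarged setting. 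Because $(M'_e)^{TB}$ has bounded geometry, no new analytical difficulties arise beyond those already handled in Lemmas~\ref{lemma: warm-up 2} and~\ref{lemma: warm-up 2 inclusion of algebras}; any would-be nontrivial continuation curve would again produce a forbidden holomorphic object (a sphere in $\R\times\R\times\widehat{R_\pm(\Gamma')}$, or a finite-energy curve in $(W_N)^{TB}$ without positive ends). Once all four maps act as the identity on $\gamma$, both compositions $j_{\kappa,\kappa+1}\circ\Phi_{K,\kappa,n}$ and $\Phi_{K',\kappa+1,n}\circ i_{\kappa,\kappa+1}$ send $\gamma$ to $\gamma$, proving the lemma.
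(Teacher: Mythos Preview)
Your proposal is correct and follows essentially the same approach as the paper. The paper's proof is very terse---it simply says the argument is similar to Lemma~\ref{lemma: warm-up 2 inclusion of algebras}, that $i_{\kappa,\kappa+1}(\gamma)=\gamma$ for large $\kappa$, and that the same holds for $j_{\kappa,\kappa+1}$ provided $n$ is chosen sufficiently large in response to $\kappa$---while you have spelled out the mechanism (confining continuation curves to $\R\times(M'_e)^{TB}$ via the $t$-direction stretch, then ruling out nontrivial curves via the $\tau$-direction argument) in more detail.
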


\begin{proof}
The proof is similar to that of Lemma~\ref{lemma: warm-up 2
inclusion of algebras}. For sufficiently large $\kappa$, if
$A_{\alpha'}(\gamma)\leq K$, then
$i_{\kappa,\kappa+1}(\gamma)=\gamma$.  The same holds for
$j_{\kappa,\kappa+1}$, provided we choose $n$ to be sufficiently
large in response to $\kappa$.
\end{proof}

\s\n {\bf Definition of $\Phi$.} Suppose $n'>n > 0$. By composing $
\Psi_{n'-1}\circ \Psi_{n'-2}\circ \dots\circ \Psi_n$, we obtain a
chain map
$$\Psi_{n,n'}\colon  \mathcal{A}(M_n,\alpha_{n},J_{\kappa,n})\to
\mathcal{A}(M_{n'},\alpha_{n'},J_{\kappa,n'}),$$ where
$\gamma\subset M'$ is mapped to $\gamma+\sum_i
a_i\overrightarrow{\gamma_i}$ with orbits of
$\overrightarrow{\gamma_i}$ contained in $M'$ and
$A_{\alpha_{n'}}(\overrightarrow{\gamma_i})<
A_{\alpha_{n}}(\gamma)$. Similarly, if $\kappa'>\kappa>0$, then we
can define $i_{\kappa,\kappa'}$ and $j_{\kappa,\kappa'}$ by
composing chain maps of type $i_{\kappa,\kappa+1}$ and
$j_{\kappa,\kappa+1}$. Given $K'>K>0$, there exists $\kappa_0$ such
that if $\kappa'>\kappa\geq \kappa_0$ and $n\geq n(\kappa,\kappa')$,
then the chain maps $\Psi_{n,n'}$, $i_{\kappa,\kappa'}$, and
$j_{\kappa,\kappa'}$ fit into the following commutative diagram of
chain complexes:
\begin{equation} \label{equation: diagram}
\begin{diagram}
\mathcal{A}_{\leq K}(M',\alpha',J'_{\kappa}) &
\rTo^{\Phi_{K,\kappa,n}} &
\mathcal{A}(M_n, \alpha_{n},J_{\kappa,n})  \\
\dTo(0,4)^{i_{\kappa,\kappa'}} & \rdTo^{\Phi_{K,\kappa,n'}}
&  \dTo_{\Psi_{n,n'}} \\
& & \mathcal{A}(M_{n'},\alpha_{n'},J_{\kappa,n'}) \\
& & \dTo_{j_{\kappa,\kappa'}} \\
\mathcal{A}_{\leq K'}(M',\alpha',J'_{\kappa'})  &
\rTo^{\Phi_{K',\kappa',n'}} &
\mathcal{A}(M_{n'},\alpha_{n'},J_{\kappa',n'}).  \\
\end{diagram}
\end{equation}
Now,
$$HC(M',\alpha')= \lim_{K\to \infty} HC_{\leq K}
(M',\alpha',J'_{\kappa(K)}),$$ since the contact form $\alpha'$ does
not vary while $K\to \infty$.  The diagram induces the map
$$\Phi:HC(M',\alpha')\to \lim_{\kappa\to\infty}
HC(M_{n(\kappa)},\alpha_{n(\kappa)},J_{\kappa,n(\kappa)})$$ on the
level of homology. Moreover, the direct limit
$\lim_{\kappa\to\infty}
HC(M_{n(\kappa)},\alpha_{n(\kappa)},J_{\kappa,n(\kappa)})$ is
isomorphic to any single
$HC(M_{n(\kappa)},\alpha_{n(\kappa)},J_{\kappa,n(\kappa)})$.

\s\n {\bf Injectivity of $\Phi$.} Refer to Diagram~(\ref{equation:
  diagram}). Suppose $a$ is a cycle in $\mathcal{A}_{\leq
  K}(M',\alpha',J'_{\kappa})$ and $a=\bdry b$ for some $b\in
\mathcal{A}(M_n,\alpha_{n}, J_{\kappa,n})$ with $n$ sufficiently
large. Note that for homological reasons, all the orbits of $b$ must
be contained in $M'$. Then $\Psi_{n,n'}$ sends $a\mapsto a$ and
$b\mapsto b+\sum_i b_i$ by Lemma~\ref{lemma: cobordism}, where all the
orbits of $b_i$ are contained in $M'$ and $A_{\alpha_{n'}}(b_i)<
A_{\alpha_{n}}(b)$, where the latter means the maximum over all the
monomials of $b$.  Hence $a=\bdry (b+\sum_i b_i)$ in
$\mathcal{A}(M_{n'},\alpha_{n'},J_{\kappa,n'})$. For sufficiently
large $n'$, if we apply $j_{\kappa,\kappa'}$ to $a=\bdry (b+\sum_i
b_i)$, we obtain $a=\bdry (b+\sum_i b_i')$ in
$\mathcal{A}(M_{n'},\alpha_{n'}, J_{\kappa',n'})$ with
$A_{\alpha_{n'}}(b_i')< A_{\alpha_{n'}}(b)$. Now, if we let $K'>
A_{\alpha_{n'}}(b)$, then there is a sufficiently large $n'$ such that
the map $\Phi_{K',\kappa',n'}$ is injective by
Theorem~\ref{disappearing}. Hence $a=\bdry (b+\sum_i b_i')$ in
$\mathcal{A}_{\leq K'}(M',\alpha', J'_{\kappa'})$. This proves the
injectivity of $\Phi$.

\subsection{The inclusion map is well-defined}
\label{subsection: naturality}

In this subsection we prove that the inclusion map
\[
\Phi\colon  HC(M',\Gamma',\xi') \to HC(M,\Gamma,\xi)
\]
does not depend on the choices made to define it.  By this we mean the
following:

\begin{prop} \label{prop: naturality sutured gluing case}
Let $(\alpha')^0$ and $(\alpha')^1$ be two contact forms which are
adapted to the sutured contact manifold
$(M',\Gamma',U(\Gamma'),\xi')$, and let $\alpha^0_{n}$,
$\alpha^1_{n}$ be their extensions to $M_n$. Then there is a
commutative diagram:
\begin{equation} \label{equation: commutative diagram for
naturality}
\begin{diagram}
HC(M',(\alpha')^0) & \rTo^{\Phi^0} &
\lim_{\kappa\to\infty} HC(M_{n(\kappa)},
\alpha^0_{n(\kappa)}, J^0_{\kappa,n(\kappa)})  \\
\dTo^{\Theta'} & &  \dTo_{\Theta} \\
HC(M',(\alpha')^1)  & \rTo^{\Phi^1} & \lim_{\kappa\to\infty}
HC(M_{n(\kappa)},
\alpha^1_{n(\kappa)},J^1_{\kappa,n(\kappa)}),\\
\end{diagram}
\end{equation}
where the $\Phi^i$ are the inclusion maps defined in
Section~\ref{subsection: continuation maps and direct limits} and
$\Theta'$ is the continuation map given in Section~\ref{subsection:
ac}.
\end{prop}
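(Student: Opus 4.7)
The plan is to combine the homotopy-invariance argument of Section~\ref{subsection: ac} with the neck-stretching and direct-limit machinery of Section~\ref{subsection: continuation maps and direct limits}. First, choose a smooth path $(\alpha')^\lambda$, $\lambda\in[0,1]$, of adapted contact forms on $(M',\Gamma',U(\Gamma'))$ from $(\alpha')^0$ to $(\alpha')^1$, and extend it in the $t$-invariant way of Section~\ref{subsection: gluing} to a path $\alpha_n^\lambda$ of adapted forms on $M_n$ (the gluing data $(P_+,P_-,\phi)$ is fixed throughout, only the Liouville structures on $R_\pm(\Gamma')$ vary with $\lambda$). For each $\kappa$, choose compatible families $(J')^\lambda_\kappa$ on $(M')^*$ and $J^\lambda_{\kappa,n}$ on $M_n^*$ of tailored almost complex structures, whose projections to $\widehat{R_\pm(\Gamma')}$ agree and whose restrictions to the interval-fibered portion $M'_e\subset M_n$ coincide. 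Applying the two-step construction of Section~\ref{subsection: ac} (first interpolate in the cobordism $s$-direction using a cutoff $\phi(s)$, then rectify the tailoring at infinity using Lemma~\ref{lemma: interpol} and Corollary~\ref{cor: plurisub}) yields cobordism almost complex structures $\widetilde{(J')}_\kappa$ on $\R\times(M')^*$ and $\widetilde{J}_{\kappa,n}$ on $\R\times M_n^*$, giving chain-level continuation maps $\Theta'_\kappa$ and $\Theta_{\kappa,n}$ which induce $\Theta'$ and $\Theta$ on the (direct limits of) homologies.

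The technical heart of the proof is a cobordism version of Theorem~\ref{disappearing}: given $K>0$, there exist $\kappa_0$ and, for each $\kappa\ge\kappa_0$, an $n_0(\kappa)$ such that for all $n\ge n_0(\kappa)$, every rigid $\widetilde{J}_{\kappa,n}$-holomorphic curve in $\R\times M_n^*$ with positive puncture at an orbit of $\alpha_n^0$-action at most $K$ is contained in $\R\times(M')^*$ and coincides with a $\widetilde{(J')}_\kappa$-holomorphic curve. The proof is a direct transcription of the arguments of Sections~\ref{section: warm-up 1}--\ref{section: proof}. The cobordism almost complex structure is $s$-invariant outside a compact $s$-interval, so the uniform bound $\tau\circ F\le 0$ from Lemma~\ref{lemma: bound on tau} persists (it uses only plurisubharmonicity of $\tau$, which is unaffected by $s$-dependence of $J$), and the bubbling/neck-stretching arguments of Lemmas~\ref{lemma: warm-up 1}, \ref{lemma: equiv of moduli spaces}, and~\ref{stretching} exclude escape to the Top/Bottom/Side or to the noncompact part of the interval-fibered extension. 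The only additional ingredient is the $J$-nonnegative $2$-form $d(g(s)\alpha^{\phi(s)})$ of Lemma~\ref{lemma: warm-up 1 cobordism}, which is compatible with the neck-stretching geometry because the gluing data and the tailored structure on $M^*\setminus\operatorname{int}(M)$ are $s$-independent. A parallel chain-homotopy argument (using a two-parameter cobordism) shows that $\Theta'_\kappa$ and $\Theta_{\kappa,n}$ are well-defined up to chain homotopy, so descend unambiguously to the homologies.

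By the preceding step, for each fixed $K$ and sufficiently large $\kappa, n$, the square
\begin{equation*}
\begin{diagram}
\mathcal{A}_{\leq K}(M',(\alpha')^0,(J')^0_\kappa) & \rTo^{\Phi^0_{K,\kappa,n}} & \mathcal{A}(M_n,\alpha^0_n,J^0_{\kappa,n}) \\
\dTo^{\Theta'_\kappa} & & \dTo_{\Theta_{\kappa,n}} \\
\mathcal{A}_{\leq K'}(M',(\alpha')^1,(J')^1_\kappa) & \rTo^{\Phi^1_{K',\kappa,n}} & \mathcal{A}(M_n,\alpha^1_n,J^1_{\kappa,n})
\end{diagram}
\end{equation*}
commutes at the chain level up to chain homotopy, where $K'>K$ is taken larger than the maximum $\alpha^1$-action produced by $\Theta'_\kappa$ on monomials of action $\le K$. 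Combining this with the two-directional compatibility diagrams~\eqref{equation: diagram} for both $(\alpha')^0$ and $(\alpha')^1$, and taking the direct limit over $K$ (with $\kappa, n$ taken large in response), yields commutativity of diagram~\eqref{equation: commutative diagram for naturality} on homology. The main obstacle is the careful ordering of limits in the technical step above: the action bound $K$ must be fixed first, the neck-stretching parameter $\kappa$ (rectifying almost complex structures at the ends) chosen in response, and only then the gluing parameter $n$ taken large, all while the cobordism $s$-direction is controlled by a taming form as in Lemma~\ref{lemma: warm-up 1 cobordism}. No genuinely new compactness ideas beyond those of Sections~\ref{section: compactness results}--\ref{section: proof} are needed, but the simultaneous management of the four parameters $(K,\kappa,n,s)$ and the verification that Gromov compactness survives their joint degeneration is the delicate bookkeeping point.
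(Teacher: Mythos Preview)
Your proposal is correct and follows essentially the same approach as the paper: construct compatible continuation maps $\Theta'_\kappa$ and $\Theta_{\kappa,n}$ via cobordisms, prove a cobordism version of Theorem~\ref{disappearing} to get chain-level commutativity of the finite-stage squares, verify the additional compatibility diagrams (with $j_{\kappa,\kappa'}$ and $\Psi_{n,n'}$) commute up to chain homotopy, and pass to the direct limit. The paper streamlines one point you leave implicit: since $(\alpha')^0$ and $(\alpha')^1$ have the same kernel $\xi'$, one can write $(\alpha')^0=f\cdot(\alpha')^1$ with $f$ constant near the sutures and use the canonical path $f^\rho(\alpha')^1$, which makes the extension to $M_n$ and the matching of boundary data along $P_\pm$ automatic.
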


\begin{proof}
Let $(\alpha')^0$ and $(\alpha')^1$ be two contact forms which are
adapted to $(M',\Gamma',U(\Gamma'),\xi')$, and let $\alpha^0_{n}$
and $\alpha^1_{n}$ be their extensions to $M_n$. Also let
$(J'_{\kappa})^0$ and $(J'_{\kappa})^1$ be the almost complex
structures on $M'$ corresponding to $(\alpha')^0$ and $(\alpha')^1$,
as defined in Section~\ref{subsection: gluing}, and let
$J^0_{\kappa,n}$ and $J^1_{\kappa,n}$ be their extensions to $M_n$.
Also write $(\beta')^i_0=(\alpha')^i|_{\bdry R_+(\Gamma')}$ and
$(\beta')^i=(\alpha')^i|_{R_+(\Gamma')}$.

Since $(\alpha')^0$ and $(\alpha')^1$ are contact forms for the same
contact structure $\xi'$, we can write $(\alpha')^0=f \cdot (\alpha')^1$,
where $f$ is constant in a neighborhood of the sutures.  Moreover, we
can write $(\beta')^0_0=C (\beta')^1_0$ for some constant $C$, which
we take to be equal to $1$ for simplicity. Also, if we identify the manifolds $M_n$
using the appropriate diffeomorphisms, then we can write $\alpha^0_n=f_n \alpha^1_n$.

Choose a $1$-parameter family $f^\rho$, $\rho\in[0,1]$, where
$f^0=f$ and $f^1=1$. We then use the family $f^\rho(\alpha')^1$
to construct a symplectic cobordism and an almost complex structure
as in Section~\ref{subsection: ac} and to define a continuation map
\[
\Theta'_\kappa\colon  \mathcal{A} (M',(\alpha')^0,(J'_{\kappa})^0) \to
\mathcal{A} (M',(\alpha')^1,(J'_{\kappa})^1).
\]
Next choose a $1$-parameter family $f_n^\rho$, $\rho\in[0,1]$, where
$f_n^0=f_n$ and $f_n^1=1$ and $f_n^\rho$ extends $f^\rho$. Using
$f_n^\rho\alpha^1_n$, we obtain a continuation map
\[
\Theta^{\kappa}_{n}\colon \mathcal{A}(M_{n},\alpha^0_{n},
J^0_{\kappa,n})\to \mathcal{A}(M_{n},\alpha^1_{n},J^1_{\kappa,n}).
\]

Let $K>0$. Then there exists $K'>0$ such that
\[
\Theta'_\kappa(\mathcal{A}_{\leq K}(M',(\alpha')^0,(J'_{\kappa})^0))
\subset\mathcal{A}_{\leq K'}(M',(\alpha')^1,(J'_{\kappa})^1).
\]
For sufficiently large $\kappa$, there exists $n(\kappa)$ such that if
$n\geq n(\kappa)$ then the following diagram is commutative:
\begin{equation}
\begin{diagram}
\mathcal{A}_{\leq K}(M',(\alpha')^0,(J'_{\kappa})^0) &
\rTo^{\Phi^0_{K,\kappa,n}} &
\mathcal{A}(M_{n}, \alpha_{n}^0,J^0_{\kappa,n})  \\
\dTo^{\Theta'_\kappa} & &  \dTo_{\Theta_{n}^{\kappa}} \\
\mathcal{A}_{\leq K'}(M',(\alpha')^1,(J'_{\kappa})^1)  &
\rTo^{\Phi^1_{K',\kappa,n}} &
\mathcal{A}(M_{n},\alpha_{n}^1,J^1_{\kappa,n}).  \\
\end{diagram}
\end{equation}
The proof follows from combining Step~1 of Section~\ref{subsection:
ac} and Theorem~\ref{disappearing}.

Given $\kappa'> \kappa> 0$ and $n'> n> 0$, let
\[
(\Psi_{n,n'}^{\kappa,\kappa'})^0=j^0_{\kappa,\kappa'}
\circ \Psi^0_{n,n'}\colon  \mathcal{A}(M_n,
\alpha^0_{n},J^0_{\kappa,n})\to \mathcal{A}(M_{n'}, \alpha^0_{n'},
J^0_{\kappa',n'}),
\]
be the continuation map from last section; similarly define
$(\Psi_{n,n'}^{\kappa,\kappa'})^1$.

In order to take direct limits, we need to verify that the diagrams
\begin{equation}
\begin{diagram}
\mathcal{A}_{\leq K}(M',(\alpha')^0,(J'_{\kappa})^0) &
\rTo^{j_{\kappa,\kappa'}^0} &
\mathcal{A}_{\leq K''}(M',(\alpha')^0, (J'_{\kappa'})^0) \\
\dTo^{\Theta'_\kappa} & &  \dTo_{\Theta'_{\kappa'}} \\
\mathcal{A}_{\leq K'}(M',(\alpha')^1,(J'_{\kappa})^1)  &
\rTo^{j_{\kappa,\kappa'}^1} &
\mathcal{A}_{\leq K'''} (M',(\alpha')^1,(J'_{\kappa'})^1) \\
\end{diagram}
\end{equation}
and
\begin{equation}
\begin{diagram}
\mathcal{A}(M_{n}, \alpha_{n}^0,J^0_{\kappa,n}) &
\rTo^{(\Psi_{n,n'}^{\kappa,\kappa'})^0} &
\mathcal{A}(M_{n'}, \alpha_{n'}^0,J^0_{\kappa',n'}) \\
\dTo^{\Theta^\kappa_n} & &  \dTo_{\Theta^{\kappa'}_{n'}} \\
\mathcal{A}(M_{n}, \alpha_{n}^1,J^1_{\kappa,n})  &
\rTo^{(\Psi_{n,n'}^{\kappa,\kappa'})^1} &
\mathcal{A}(M_{n'}, \alpha_{n'}^1,J^1_{\kappa',n'}) \\
\end{diagram}
\end{equation}
commute up to chain homotopy. This follows from the fact that, in
either case, the symplectic cobordisms corresponding to the
compositions (together with their almost complex structures) are
homotopic.  Taking direct limits, we obtain Diagram~(\ref{equation:
  commutative diagram for naturality}).
\end{proof}

\subsection{The ECH case} \label{subsection: ECH case}

In this section we explain how to prove Theorem~\ref{thm: sutured gluing}(2),
assuming the existence of appropriate cobordism maps on sutured ECH,
analogous to the cobordism maps on ECH of closed contact 3-manifolds
defined in \cite{HT3}.

First observe that the ECH setup is much simpler since we do not need
to use the parameter $\kappa$. Let $C(M',\alpha',J')$ be the ECH chain
complex ($\F=\Z/2\Z$-vector space) generated by the orbits sets of
$R_{\alpha'}$ and whose boundary map counts $J'$-holomorphic
curves. Also let $C_0(M_n,\alpha_n,J_{n})$ be the subcomplex of the
ECH chain complex $C(M_n,\alpha_n,J_{n})$ which counts orbit sets
which have zero intersection with $S_\infty$. As before,
$C(M',\alpha',J')$ and the subcomplexes $C_0(M_n,\alpha_n,J_{n})$ for
different $n$ are all isomorphic as $\F$-vector spaces, although not
necessarily as chain complexes.

Fix $n>0$.  By analogy with \cite{HT3}, it is conjectured that given
$K>0$, for sufficiently large $K'$, the cobordism in Lemma~\ref{lemma:
  cobordism} induces a chain map
\[
\Psi^{K,K'}_{n,n+1}\colon C_{\leq K} (M_n,\alpha_n,J_{n}) \to C_{\leq
  K'}(M_{n+1},\alpha_{n+1}, J_{n+1}),\] which depends on some choices,
but which has the following two properties: First,
$\Psi^{K,K'}_{n,n+1}$ is given by some unspecified count of (possibly
broken) holomorphic curves between orbit sets $\ora{\gamma}$ for
$(M_n,\alpha_n)$ and $\ora{\gamma}'$ for $(M_{n+1},\alpha_{n+1})$, in
the cobordism $(\R\times M_n^*,J)$ given in the proof of
Lemma~\ref{lemma: cobordism}.  Second, on the subset $\R\times M'$
where the almost complex structure is cylindrical, trivial holomorphic
cylinders over closed Reeb orbits are always counted in
$\Psi^{K,K'}_{n,n+1}$.

We now claim that the following commutative diagram of chain
complexes exists:
\begin{equation} \label{diagram: ECH case}
\begin{diagram}
C_{\leq K}(M',\alpha',J') & \rTo &
(C_0)_{\leq K''} (M_n,\alpha_n,J_{n})  \\
\dTo & &  \dTo_{\Psi^{K'',K'''}_{n,n+1}} \\
C_{\leq K'}(M',\alpha',J')  & \rTo &
(C_0)_{\leq K'''} (M_{n+1},\alpha_{n+1},J_{n+1})\\
\end{diagram}
\end{equation}
Here we are given $K'> K > 0$; we choose $n=n(K) > 0$, $K''\geq K$ and
$K'''=K'''(n,K'')\geq K'$. First note that $\Psi^{K'',K'''}_{n,n+1}$
is given by some count of holomorphic curves in the cobordism. On the
other hand, Theorem~\ref{disappearing ECH version} shows that, if
$\ora{\gamma}$ is a generator of $(C_0)_{\leq K''}
(M_n,\alpha_n,J_{n})$ which comes from $C_{\leq K}(M',\alpha',J')$,
then no holomorphic subvariety in $(\R\times M_n^*,J)$ which flows
from $\ora{\gamma}$ can cross the ``neck region'', i.e., cross
$S_\infty$, provided $n$ is chosen to be sufficiently
large. Furthermore, once we know that no curve from $\ora{\gamma}$
crosses the ``neck region'', we are now in the symplectization
portion, and we only have trivial cylinders.  Hence
$\Psi^{K'',K'''}_{n,n+1}$ maps $\ora\gamma\mapsto \ora\gamma$ if
$\ora\gamma$ comes from $C_{\leq K}(M',\alpha',J')$.  This proves the
commutativity of Diagram~(\ref{diagram: ECH case}).

For other $\ora\gamma$ in $(C_0)_{\leq K''} (M_n,\alpha_n,J_{n})$,
considerations of $\omega$ in Lemma~\ref{lemma: cobordism}, together
with the fact that $\Psi^{K'',K'''}_{n,n+1}$ is some count of
holomorphic curves, proves that $\Psi^{K'',K'''}_{n,n+1}$ maps
$\ora\gamma$ to $\ora\gamma$ plus terms with lower action.  (Note
that $\omega$ is not the exact symplectic form which gives the exact
symplectic cobordism, but is just some taming form for $J$.)

Arguing as in the contact homology case, we obtain an inclusion:
\[
\lim_{K\to \infty} ECH_{\leq K} (M',\alpha',J') \hookrightarrow
\lim_{n\to\infty} (ECH_0)_{\leq K''(n)} (M_n, \alpha_n, J_{n}),
\]
where $ECH_0$ is the homology for $C_0$.  More precisely, the limit on
the right-hand side is over $n\to\infty $ and $K''(n)$ is a sequence
$\to\infty$ which depends on both $n$ and $K''(n-1)$. The left-hand
side is $ECH(M',\Gamma',\xi')$, and the right-hand side equals
$ECH_0(M,\Gamma,\xi)$, under our conjecture that $ECH(M,\Gamma,\xi)$
does not depend on the choice of contact form or almost complex structure.

\section{Gluing along a convex submanifold}
\label{section: convex gluing}

Let $(M,\Gamma,\xi)$ be a contact manifold with convex boundary and
let $S \subset M$ be a closed convex submanifold with dividing set
$\Gamma_S$. Also let $(M',\Gamma',\xi')$ be the sutured contact
manifold obtained by splitting $M$ along $S$ and applying
Lemma~\ref{l:suture}.

The goal of this section is to prove the following:

\begin{thm} \label{thm: convex gluing restated}
There is a canonical map
\[
\Phi\colon  HC(M', \Gamma', \xi') \to  HC(M,\Gamma, \xi).
\]
\end{thm}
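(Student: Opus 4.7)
The plan is to mimic the direct-limit argument used in the proofs of Theorems~\ref{thm: warm-up 1}, \ref{thm: interval-fibered extension}, and~\ref{thm: sutured gluing}, with the ``neck'' now provided by the convex gluing construction of Lemma~\ref{lemma: convexgluing}. For each $n$ we realize $(M,\xi)$ as $(M_n,\ker\alpha_{n,f(n),g(n)})$ obtained from the sutured contact manifold $(M',\alpha')$ by attaching the collar layers $(S_1^\pm\times[0,n],\,dt+\beta'|_{S_1^\pm})$ and the fillings $(V\times D^2,\,f\,d\theta+g\beta_0')$. We pick a tailored $J'$ on $\R\times (M')^*$ and extend it to a tailored $J_n$ on $\R\times M_n^*$ that is $\R$- and $t$-invariant on the collar layers and adapted on the $V\times D^2$-fillings, as described at the end of Section~\ref{subsection: gluing along convex submanifolds}. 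Since every good Reeb orbit of $\alpha'$ persists as a good Reeb orbit of $\alpha_n$, the inclusion $M'\hookrightarrow M_n$ induces an inclusion of $\Q$-algebras $\mathcal{A}(M',\alpha',J')\hookrightarrow \mathcal{A}(M_n,\alpha_n,J_n)$.

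The key technical input is the analogue of Lemma~\ref{lemma: warm-up 2}/Theorem~\ref{disappearing}: given $K>0$, there exists $n_0=n_0(K)$ such that for all $n\ge n_0$ and all tuples $\overline{\gamma}^\pm$ of orbits of $R_{\alpha'}$ with $A_{\alpha'}(\overline{\gamma}^+)\le K$,
\[
\mathcal{M}_g(\overline{\gamma}^+;\overline{\gamma}^-;\R\times (M')^*,J')=\mathcal{M}_g(\overline{\gamma}^+;\overline{\gamma}^-;\R\times M_n^*,J_n).
\]
Three ingredients combine to establish this. First, Lemma~\ref{lemma: bound on tau} confines all curves away from the side (S). Second, Lemma~\ref{lemma: convexgluing} ensures that every new Reeb orbit of $\alpha_n$ created in $M_n\setminus M'$ (either an orbit in a collar layer or an orbit in $V\times D^2$, including the dividing-set orbit $V\times\{0\}$ in the three-dimensional case) has action strictly greater than $n$, so for $n>K$ none of them can appear as a negative end. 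Third, a sequence of curves whose images intersect the interior of the neck for every $n$ would bubble off, after the standard introduction of extra marked points to restore the gradient bound (Lemma~\ref{gradbound} and Proposition~\ref{noside}), a nonconstant finite energy holomorphic map either into $\R\times \R\times\widehat{R_{\pm}(\Gamma')}$ (from the collar, ruled out by absence of closed orbits and Corollary~\ref{removal}) or into the symplectization of a limiting contact model of the $V\times D^2$-filling (ruled out because the only possible limit orbits there are the tangential ones of action $>n$).

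Granted this disappearance statement, the rest follows the template of Sections~\ref{section: warm-up 1}--\ref{section: proof}. For $n\ge n_0(K)$ we obtain filtered chain inclusions $\Phi_{K,n}\colon \mathcal{A}_{\le K}(M',\alpha',J')\hookrightarrow \mathcal{A}(M_n,\alpha_n,J_n)$. Continuation maps $\Psi_n\colon \mathcal{A}(M_n,\alpha_n,J_n)\to\mathcal{A}(M_{n+1},\alpha_{n+1},J_{n+1})$ are constructed from the family $\alpha_{n,f,g}\to\alpha_{n+1,f,g}$ together with a taming $2$-form $d(g(s)\alpha^s)$, exactly as in Lemma~\ref{lemma: warm-up 2 cobordism}. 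A Stokes/action argument shows that $\Psi_n$ is triangular with respect to $\alpha'$-action on $M'$-orbits and, by the disappearance lemma applied to $\Psi_n$ itself, is the identity on generators of action $\le K$ once $n\ge n_0(K)$. The resulting triangular commutative diagrams (as in~(\ref{equation: warm-up 1 diagram}) and~(\ref{equation: diagram})) are compatible under increasing $K$ and $n$, and $\Psi_n$ induces the canonical isomorphism on $HC(M_n,\alpha_n,J_n)\simeq HC(M,\Gamma,\xi)$. Taking direct limits produces
\[
\Phi\colon HC(M',\Gamma',\xi')=\lim_{K\to\infty} HC_{\le K}(M',\alpha',J')\longrightarrow \lim_{n\to\infty} HC(M_n,\alpha_n,J_n)\cong HC(M,\Gamma,\xi),
\]
which is manifestly a $\Q$-algebra homomorphism because each $\Phi_{K,n}$ is an inclusion of supercommutative $\Q$-algebras.

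To show that $\Phi$ is canonical, we repeat the argument of Proposition~\ref{prop: naturality sutured gluing case}: two choices of adapted contact form and tailored almost complex structure on $(M',\Gamma',\xi')$ are connected by a one-parameter family, which extends over the convex gluing; the associated continuation maps give chain-homotopy commutative squares comparing the two versions of $\Phi$ with the continuation isomorphisms on $HC(M',\Gamma',\xi')$ and on $HC(M,\Gamma,\xi)$. The main obstacle I expect is the bubbling analysis in the third ingredient above when the limit sits inside the $V\times D^2$-filling: unlike the sutured gluing case, the Reeb dynamics there is nontrivial, and one must argue carefully that the only orbits available to the limit curve are precisely the high-action ones provided by Lemma~\ref{lemma: convexgluing}, contradicting the action bound for $n\gg K$. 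Everything else is a direct transcription of the machinery already set up in Sections~\ref{section: warm-up 1}--\ref{section: proof}.
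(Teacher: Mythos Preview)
Your overall architecture---realize $(M,\xi)$ as $(M_n,\ker\alpha_n)$ via Lemma~\ref{lemma: convexgluing}, prove a ``disappearance'' lemma confining curves to $\R\times(M')^*$, build continuation maps, and pass to a direct limit---matches the paper exactly, and your naturality argument is also the paper's.

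The gap is precisely where you flagged it. Your third ingredient proposes that a bubble landing in $V\times D^2$ is ruled out because ``the only possible limit orbits there are the tangential ones of action $>n$.'' This does not work as stated: the geometry of the filling changes with $n$ (the constant $a>n/\pi$ and the functions $f,g$ all vary), so there is no fixed target in which to take a limit as $n\to\infty$; and even if there were, a limiting trivial cylinder over such an orbit has zero $d\alpha$-energy, so the action bound $K$ on the full curve gives no immediate contradiction. The paper resolves this by introducing a \emph{second} stretching parameter $B$ governing the $\tau$-direction inside the filling. Concretely, one arranges a sub-annulus $U_B\subset V\times D^2$ on which $\alpha_{n,B}=dt+e^\tau\beta_0'$, i.e.\ a piece of the symplectization of $(V,\beta_0')$, with $\tau\in[0,\tau_B]$ and $\tau_B\to\infty$ as $B\to\infty$. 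Stretching $B$ (with $n$ fixed) and projecting to $[0,\tau_B]\times V$, a persistent intrusion into $U_B$ produces in the limit a finite-energy curve in $[0,\infty)\times V$ approaching a Reeb orbit of $\beta_0'$ at $\tau=+\infty$; but such a curve has unbounded $d(e^\tau\beta_0')$-area, contradicting the fixed $d\alpha$-energy bound. Once curves avoid $U_B$, plurisubharmonicity of $\tau$ pushes them out of $V\times D^2$ entirely, and then your $t$-direction stretching (your second ingredient) finishes the job. The direct limit is accordingly two-parameter: one defines chain maps $\Psi^n_{B,B+1}$ and $\Psi^{n,n+1}_B$ separately, shows both act as the identity on $\mathcal{A}_{\le K}$ for $n,B$ large, and takes the limit over a sequence $(n,B(n))$. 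In dimension three the paper also gives an alternative to the $B$-stretching, using positivity of intersections with $V\times\{0\}$ to show directly that curves cannot enter $V\times D^2$; you might find that route cleaner if you are primarily interested in the ECH case.
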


In this section we will treat the case of contact homology; the
proofs for embedded contact homology are similar.

According to Lemma~\ref{lemma: convexgluing}, there is a contact
$1$-form $\alpha'$ which is adapted to the sutured contact manifold
$(M',\Gamma', U(\Gamma'), \xi')$ and an extension to
$(M_n,\alpha_{n,g_0,g_1})$ which is contactomorphic to
$(M,\Gamma,U(\Gamma),\xi)$.  Here $n>0$ and $g_0,g_1$
are functions depending on $n$. In this section we assume that $V\times D^2$ is the
union of all the fillings of $M_n'$, unlike in
Section~\ref{subsection: gluing along convex submanifolds} where it
was assumed to be just one connected component. It is clear that
there is an inclusion
\[
\Phi\colon  \mathcal{A}(M', \Gamma', \alpha',J') \to
\mathcal{A}(M_n,\alpha_{n,g_0,g_1},J_{n,g_0,g_1});
\]
we would like
to prove that $\Phi$ is a chain map.

Our first task is to prove that, given $K>0$, for sufficiently large
$n$ there exist $g_0,g_1$ so that the inclusion
\[
\Phi_K\colon  \mathcal{A}_{\leq K}(M',\alpha',J')\to \mathcal{A}
(M_n,\alpha_{n,g_0,g_1},J_{n,g_0,g_1})
\]
is a chain map, i.e., $\Phi_K\circ \bdry'=\bdry\circ \Phi_K$, where
$\bdry$ and $\bdry'$ are boundary maps for $M_n$ and $M'$.  For this,
it suffices to show the following:

\begin{lemma} \label{lemma: inclusion convex gluing case} Suppose the
  orbits of $\underline\gamma$ are contained in $(M',\alpha')$.  Then
  for sufficiently large $n>0$ there exist $g_0,g_1$ (depending on
  $n$) such that
\[
\mathcal{M}_g(\underline\gamma;\underline\gamma';\R\times M_n^*,J_{n,g_0,g_1})
= \mathcal{M}_g(\underline\gamma;\underline\gamma';\R\times
(M')^*,J'),
\]
if the orbits of $\underline\gamma'$ are contained in
$(M',\alpha')$, and
\[
\mathcal{M}_g(\underline\gamma;\underline\gamma';\R\times
M_n^*,J_{n,g_0,g_1})=\emptyset,
\]
otherwise.
\end{lemma}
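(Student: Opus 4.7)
The plan is to combine the neck-stretching analysis of Section~\ref{section: warm-up 1} with a maximum-principle argument that rules out holomorphic curves entering the solid-torus fillings $V\times D^2$ used in the convex gluing. Throughout, we fix $K>0$ and a sequence of holomorphic curves
\[
F_n=(a_n,f_n)\colon(\Sigma_n,j_n,\mathbf{m}_n)\to(\R\times M_n^*,J_{n,g_0,g_1})
\]
with $A_{\alpha'}(\underline\gamma)\le K$, and we argue by contradiction.

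The first step is a Stokes-theorem action bound. Applying Stokes to $d\alpha_{n,g_0,g_1}$ and using that $J_{n,g_0,g_1}$ tames this $2$-form gives $A_{\alpha_{n,g_0,g_1}}(\underline\gamma')\le A_{\alpha_{n,g_0,g_1}}(\underline\gamma)\le K$. Lemma~\ref{lemma: convexgluing} says every periodic orbit intersecting $M_n-M'$ has action $>n$, so picking $n>K$ forces every negative end of $F_n$ to be an orbit of $M'$. This already proves the second clause of the lemma (emptiness when some $\gamma_i'$ lies outside $M'$). Together with Lemma~\ref{lemma: bound on tau}, we obtain a uniform upper bound on $\tau\circ f_n$, so $F_n$ never escapes out the Side of $M_n^*$.

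The second step handles the cylindrical ``neck'' layers $S_1^+\times[0,n]$ and $S_1^-\times[0,n]$. These regions are identical in structure to the necks in Section~\ref{subsection: warm-up 1 stretching the neck}, and the almost complex structure $J_{n,g_0,g_1}$ is $t$-invariant there by construction. Repeating verbatim the bubbling-and-thin-tube analysis of the proof of Lemma~\ref{lemma: warm-up 1} (add marked points to obtain a uniform gradient bound via Lemma~\ref{rescaling}; find a cylindrical thin component of $\dot\Sigma_n$ on which the $t$-oscillation of $f_n$ blows up; pass to a $C^\infty_{loc}$ limit after translation in $r,s,t$), any sequence that meets $\R\times S_1^\pm\times\{n/2\}$ produces a nonconstant finite-energy holomorphic cylinder in $\R\times\R\times \widehat{S_1^\pm}$ with zero $d\beta$-energy. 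Since $\widehat{R_\pm(\Gamma')}$ carries no closed Reeb orbits, this contradicts Lemma~\ref{noarea}(2). Hence for $n$ large enough, no $F_n$ crosses the layers.

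The third and most delicate step is to rule out curves entering a filling $V\times D^2$. The contact form there is $\alpha=g_0(r)d\theta+g_1(r)\beta_0'$, with $g_0'g_1-g_1'g_0>0$, and $g_0=a$ is constant near $r=1$ where the filling is glued to the layers. The plan is to imitate Lemma~\ref{lemma: interpol}: choose the projection of $J_{n,g_0,g_1}$ to the $D^2$-direction, and the functions $g_0,g_1$, so that there is a smooth increasing function $u(r)$ on $D^2$ which is $J_{n,g_0,g_1}$-plurisubharmonic on $\R\times V\times D^2$, with $du(\partial_\theta)=0$ and $u'(0)=0$. The contact condition and the transversality of the Reeb vector field to the radial rays $\{\theta=\text{const}\}\subset D^2-\{0\}$ (Lemma~\ref{lemma: convexgluing}) are exactly the features which made the construction in Lemma~\ref{lemma: interpol} work; the analogous computation here produces a compatible $(u,J)$ provided the ratio $g_1'/g_0'$ is sufficiently negative near $r=0$. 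Since $u$ is plurisubharmonic and any curve with all ends in $M'$ is compactly supported (in the $r$-direction) inside $V\times D^2$, the maximum principle forbids $r\circ f_n$ from attaining a local maximum in $V\times(D^2-\{0\})$, and so the image of $F_n$ cannot meet the interior of $V\times D^2$.

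The main obstacle is Step~3: one must show that the functions $g_0,g_1$ constructed in Section~\ref{subsection: gluing along convex submanifolds}, which were chosen to guarantee large orbit action, can simultaneously be arranged (together with an appropriate tailored extension of $J'$ across the filling) so that a genuine plurisubharmonic radial function $u(r)$ on $V\times D^2$ exists and matches smoothly with the chosen $J'$ along $V\times\partial D^2$. This is an interpolation problem of exactly the type treated in Lemmas~\ref{lemma: interpol} and~\ref{lemma: interpol2}, and the expectation is that modifying $g_0,g_1$ on $[1-2\varepsilon,1-\varepsilon]$ without disturbing the action lower bound yields the required $u$; once this is done, the remainder of the argument is standard.
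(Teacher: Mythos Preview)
Your Steps~1 and~2 are correct and parallel the paper's argument: the action bound rules out negative ends outside $M'$, and the $t$-neck-stretching in the layers $S_1^\pm\times[0,n]$ is handled exactly as in Lemma~\ref{lemma: warm-up 1}.

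Step~3, however, has a genuine gap. First, the maximum-principle logic is pointed in the wrong direction. If $u(r)$ is \emph{increasing} in $r$ and plurisubharmonic, then the maximum principle says $u\circ f$ (hence $r\circ f$) has no interior local maximum; but a curve entering $V\times D^2$ from $r=1$ and returning has an interior local \emph{minimum} of $r$, not a maximum, so nothing is forbidden. To block entry you would need $u$ \emph{decreasing} in $r$ and plurisubharmonic, so that going inward increases $u$ above its boundary value. But then $u$ attains its maximum along the core $V\times\{0\}$, and any holomorphic disk meeting the core transversely would exhibit an interior maximum of $u$, contradicting plurisubharmonicity there. So a global radial plurisubharmonic barrier on the whole filling cannot exist; at best you get one on the annular region where $g_0$ is constant, and that alone does not stop curves from traversing the annulus into the inner region. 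This is not an interpolation problem of the type in Lemmas~\ref{lemma: interpol}--\ref{lemma: interpol2}: those extend a symplectization-adapted $J$ outward along a Liouville ray, whereas here the inward direction terminates at a Reeb orbit, a fundamentally different local model.

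The paper avoids this by introducing a second stretching parameter $B$, following the template of Section~\ref{section: warm-up 2}. One arranges $g_0,g_1$ so that an outer annulus $U_B\subset V\times D^2$ is a genuine piece of the symplectization of $(\Gamma',\beta_0')$, of $\tau$-length $\tau_B\to\infty$ as $B\to\infty$. If a sequence of curves crossed $U_{B_i}$ with $B_i\to\infty$, their projections to $[0,\tau_{B_i}]\times V$ would limit to a finite-energy half-cylinder asymptotic to a Reeb orbit of $\beta_0'$, whose $d(e^\tau\beta_0')$-area is unbounded --- contradicting the fixed $d\alpha$-energy bound coming from $\underline\gamma,\underline\gamma'$. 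This is an energy/neck-stretching argument in the $\tau$-direction, not a maximum principle; once curves are confined to $(M_n')^*$, your Step~2 finishes the job. In dimension three the paper also gives a separate, more direct argument using positivity of intersections with the core orbits $V\times\{0\}$.
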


\begin{proof}
Let $F = (a,f) \colon (\Sigma,j,\mathbf{m}) \to (\R \times
M_n^*,J_{n,g_0,g_1})$ be an element of
$\mathcal{M}_g(\underline\gamma;\underline\gamma';\R\times
M_n^*,J_{n,g_0,g_1})$. It suffices to show the following: \be
\item There is no $F$ from $\underline\gamma$ to $\underline\gamma'$,
  where some component of $\underline\gamma'$ is not strictly
  contained in $M'$.
\item No $F$ from $\underline\gamma$ to $\underline\gamma'$ with all
  components of $\underline\gamma'$ in $M'$ has $\op{Im}(f)$ which
  nontrivially intersects $V\times D^2$, $S_1^+\times\{{n\over 2}\}$
  or $S_1^-\times\{{n\over 2}\}$. \ee (1) is easy since we can choose
  $n$, $g_0,g_1$ so that all the closed orbits in
  $(M_n,\alpha_{n,g_0,g_1})$ which are not in $(M',\alpha')$ have
  arbitrarily large action, see Lemma~\ref{lemma: convexgluing}.

We now argue (2). First take $n$ sufficiently large so that any
$F\in\mathcal{M}_g(\underline\gamma;\underline\gamma';\R\times
M_n^*,J_{n,g_0,g_1})$ with image inside $\R\times (M'_n)^*$ has
image inside $\R\times (M')^*$. This can be done by
Lemma~\ref{lemma: warm-up 1}.  In addition to $n$, the functions
$g_0,g_1$ will depend on the choice of $B> 0$.  In particular, we
take $B$ so that $\operatorname{Im}(g_0,g_1)$
contains the line segment between $(a,1)$ and $(a,B)$. Let
$U_B\subset V\times D^2$ be the subset consisting of points
$(x,r,\theta)$, where $(g_0(r),g_1(r))$ is contained in this
line segment. Also let $\beta_0'$ be the restriction of $\alpha'$ to
$\bdry R_+(\Gamma')$. On $U_B$,
$\alpha_{n,B}=\alpha_{n,g_0(B),g_1(B)}$ is of the form
$ad\theta+\beta_B$, where $\beta_B$ is a symplectization of
$\beta_0'$ in the $-r$-direction. Alternatively, we write
$M''_{n,B}= M'_n\cup U_B$ and use coordinates $(t,\tau,x)$ on
\[
U_B\simeq (\R/a\R) \times [0,\tau_B]\times V
\]
so that $\alpha_{n,B}=dt+e^\tau\beta_0'(x)$. Let $\widehat{S}_i^+$ be
the extension of $S_i^+$ to $M''_{n,B}$ so that $\bdry
\widehat{S}_i^+\subset \bdry M''_{n,B}$. Let $(J_B)_0$ be an almost
complex structure on $\widehat{S}_i^+$ which is adapted to the
symplectization $d(e^\tau\beta_0'(x))$, and let $J_{n,B}$ be a
tailored almost complex structure on $M_n^*$ whose projection to
$\widehat{S}_i^+$ equals $(J_B)_0$.

We claim that, for sufficiently large $B>0$, all holomorphic maps
$F_B=(a_B,f_B)\in \mathcal{M}_g(\underline\gamma;
\underline\gamma';\R\times M_n^*,J_{n,B})$ are disjoint from
$\R\times U_{B}$. (Note that, by the strict plurisubharmonicity of
$\tau$, $F_{n,B}$ is disjoint from $U_B$ if and only if $F_{n,B}$ is
disjoint from $\bdry M''_{n,B}$.) The argument is similar to that of
Lemma~\ref{lemma: warm-up 2}, only easier. Arguing by contradiction,
suppose there is a sequence $F_{B_i}=(a_{B_i},f_{B_i})$ where
$f_{B_i}$ nontrivially intersects $U_{B_i}$ and $B_i\to\infty$.
Writing $v_{B_i}$ as the projection of $f_{B_i}$ to
$[0,\tau_{B_i}]\times V$ whenever applicable, in the limit as
$B_i\to\infty$ we eventually obtain a finite energy cylinder
$v_\infty\colon  [0,\infty)\times S^1\to [0,\infty)\times V$. However,
this contradicts the energy bound as follows:  First, the $F_{B_i}$
have bounded $d\alpha_{n,B_i}$-energy since $\underline\gamma$ and
$\underline{\gamma}'$ are fixed. On $U_{B_i}$,
$d\alpha_{n,B_i}=d(e^\tau\beta_0')$, and a cylinder over a Reeb
orbit of $\beta_0'$ has unbounded $d(e^\tau\beta_0')$-area, a
contradiction.

Once we know that $F_B$ is disjoint from $\R\times U_B$, by our
choice of $n\gg 0$, $F_{B}$ has image inside $\R\times(M')^*$ by Lemma 
\ref{lemma: warm-up 1}. This
concludes the proof of Lemma~\ref{lemma: inclusion convex gluing
case}.
\end{proof}

\s\n {\bf Case of dimension three.}  We give an alternate, more
straightforward proof of Lemma~\ref{lemma: inclusion convex gluing
case} when $\dim M=3$.

\begin{lemma} \label{lemma: positivity of intersections}
Let $F = (a,f) \colon (\Sigma,j,\mathbf{m}) \to (\R \times
M_n^*,J_{n,g_0,g_1})$ be a holomorphic map which is asymptotic to
$\underline{\gamma}$ at $s\to +\infty$ and asymptotic to
$\underline{\gamma}'$ at $s\to -\infty$. If $\dim M=3$ and the
orbits of $\underline{\gamma}$ and $\underline{\gamma}'$ lie in
$M'$, then the image of $f$ is disjoint from $V\times\{0\}$ if $n$ is
sufficiently large.
\end{lemma}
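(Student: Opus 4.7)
The plan is to combine positivity of intersections for $J$-holomorphic curves in a $4$-dimensional almost complex manifold with a topological computation of an algebraic intersection number.

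First, I would use the observation made immediately after Lemma~\ref{lemma: convexgluing}: when $\dim M = 3$, the component $V$ of $S_1 \cap \Gamma$ is a circle, and the core curve $V \times \{0\}$ of the added solid torus $V \times D^2$ is itself a closed periodic orbit of $R_{\alpha_{n,g_0,g_1}}$. Consequently, the trivial cylinder $C := \R \times V \times \{0\}$ is a $J_{n,g_0,g_1}$-holomorphic curve in the $4$-dimensional symplectization $\R \times M_n^*$.

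Next, I would invoke positivity of intersections for $J$-holomorphic curves in $4$-dimensional almost complex manifolds. Since the asymptotic orbits $\underline\gamma, \underline\gamma'$ of $F$ lie in $M'$ and are therefore disjoint from $V \times \{0\}$, the curve $F$ cannot be a (multiple) cover of $C$. Hence $F$ and $C$ meet in only finitely many isolated points, each contributing a strictly positive integer to the algebraic intersection number $F \cdot C$. It therefore suffices to show $F \cdot C = 0$.

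Finally, I would compute $F \cdot C$ topologically. This number depends only on the relative homology class of $F$ in $\R \times M_n^*$ with prescribed ends at $\underline\gamma \cup \underline\gamma'$, paired with the class of $C$ (whose ends at $\pm\infty$ are disjoint from the ends of $F$). I would produce a reference $2$-chain with the same asymptotic data whose image lies in $\R \times M'$, and hence has intersection number $0$ with $C$; this is possible for $n$ sufficiently large by the topological portion of the convex gluing construction (in particular, because the asymptotic orbits lie in $M'$, which is disjoint from $V \times D^2$). Any other representative in the same relative homology class differs by a closed $2$-cycle in $\R \times M_n^*$; the pairing of such a cycle with $[C]$ vanishes because $V \times \{0\}$ is homologous inside $V \times D^2$ to a curve $V \times \{p\}$ on the boundary torus $V \times S^1 \subset \bdry M_n'$, which can then be pushed into $M'$ where it does not link cycles carried by $F$. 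Combining with the previous paragraph, $F$ is disjoint from $C$, proving the lemma.

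The main obstacle is the topological computation in the last step. Positivity of intersections reduces the problem very cleanly to showing $F \cdot C = 0$, but verifying this vanishing requires careful bookkeeping of the relative homology of $\R \times M_n^*$ relative to the asymptotic orbits and of the linking behaviour between Reeb orbits in $M'$ and the core $V \times \{0\}$ of the filling solid torus; the hypothesis that $n$ be sufficiently large is what ensures the existence of the reference chain in $\R \times M'$ with the required asymptotic data.
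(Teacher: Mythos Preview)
Your reduction via positivity of intersections is exactly right and matches the paper: since $V\times\{0\}$ is a Reeb orbit, $\R\times V\times\{0\}$ is $J$-holomorphic, and it suffices to show the algebraic intersection number with $F$ vanishes.

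The difference is in how that intersection number is computed. The paper's argument is shorter and avoids the reference-chain machinery entirely: one observes that $V\times\{0\}$ is the oriented boundary of a surface $S$ in $M_n$ obtained by extending $R_+(\Gamma')\subset M'$ across the glued-in pieces. Since the orbits of $\underline\gamma,\underline\gamma'$ lie in the interior of $M'$, they are disjoint from $S$. After making $C=\op{Im}(f)$ transverse to $S$, the intersection $C\cap S$ is a compact $1$-manifold whose boundary lies on $\partial S=V\times\{0\}$; the two endpoints of each arc contribute opposite signs to $C\cdot(V\times\{0\})$, so the total is zero. No appeal to ``$n$ sufficiently large'' is actually needed.

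Your approach, by contrast, has a gap in the last step. You assert the existence of a reference $2$-chain in $\R\times M'$ with the same asymptotics as $F$, but this requires $\sum[\gamma_i]=\sum[\gamma_j']$ in $H_1(M')$, whereas the existence of $F$ only gives equality in $H_1(M_n^*)$; the inclusion $M'\hookrightarrow M_n^*$ need not induce an injection on $H_1$, and enlarging $n$ does not change this topology. Likewise, your argument that closed $2$-cycles pair trivially with $[C]$ reduces to knowing $[V\times\{0\}]=0$ in $H_1(M_n^*)$; pushing $V\times\{0\}$ to $V\times\{p\}$ on the boundary torus and then ``into $M'$'' does not establish this. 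What does establish it is precisely the Seifert surface $S$ extending $R_+(\Gamma')$---and once you have that surface, the paper's direct arc argument is both simpler and complete.
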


\begin{proof}
By Lemma~\ref{lemma: convexgluing}, the contact form
$\alpha_{n,g_0,g_1}$ has the property that every connected component
of $V\times\{0\}$ is a periodic orbit of the Reeb flow.  Hence all
intersection points between $V\times\{0\}$ and $C=\op{Im}(f)$ are
positive, by the positivity of intersections in dimension four.
Observe that $V\times\{0\}$ is the oriented boundary of a surface
$S$ which is an extension of $R_+(\Gamma')\subset M'$ to $M_n$, and
$R_+(\Gamma')$ is disjoint from $\underline{\gamma}\cup
\underline{\gamma}'$.  We may assume without loss of generality that
$C\pitchfork S$.  If $C$ has nontrivial intersection with $\bdry S$,
then there is a properly embedded arc $c$ on $S$ which connects from
$\bdry S$ to itself. However, $C$ and $V\times\{0\}=\bdry S$
intersect positively at one endpoint of $c$ and negatively at the
other endpoint, a contradiction.  We conclude that the image of $f$
is disjoint from $V\times \{0\}$.
\end{proof}

We claim that $C=\op{Im}(f)$ is contained in $(M_n')^*$. Assume for
convenience that $V$ is connected. By Lemma~\ref{lemma: positivity of
  intersections}, $C$ is disjoint from $V\times \{0\}$. Let $T_{r=1}$
be the torus $\{r=1\}\subset V\times D^2$. It then follows that $C\cap
T_{r=1}$ is homologous to $\emptyset$ on $T_{r=1}$. On the other hand,
on $T_{r=1}$ the Reeb vector field is parallel to $\bdry_\theta$ and
$C$ must be positively transverse to $\bdry_\theta$ by intersection
positivity. (By a slight perturbation if necessary, we may assume that
$C\cap T_{r=1}$ is an immersion.)  If we take an oriented
identification $T_{r=1}= \R^2/\Z^2$ with orientation on $T_{r=1}$
equal to the boundary orientation of $V\times D^2$ and choose
coordinates $({\theta\over 2\pi},x)$, and we set $\Sigma'=\Sigma-
f^{-1}(V\times D^2)$, then $dx$ is everywhere positive on $f|_{\bdry
  \Sigma'}$. Since $f|_{\bdry\Sigma'}$ is not homologically zero if
$C$ intersects $T_{r=1}$, we conclude that $C$ does not enter $V\times
D^2$. Now we can apply the argument in Lemma~\ref{lemma: warm-up 1} to
show that, for sufficiently large $n$, no $f$ intersects
$S_1^+\times\{{n\over 2}\}$ and $S_1^-\times\{{n\over 2}\}$ as
described in Section~\ref{subsection: gluing along convex
  submanifolds}.  Hence we can view $F$ as sitting inside $\R\times
(M')^*$.

\s Returning to the proof of Theorem~\ref{thm: convex gluing
restated}, we now define two chain maps $\Psi^n_{B,B+1}$ and
$\Psi^{n,n+1}_B$, where $n$ and $B$ are positive integers:

\s\n {\bf The first chain map.} Given contact forms $\alpha_{n,B}$
and $\alpha_{n,B+1}$ on $M_n$, arrange them via an isotopy  so that
the forms agree on $M''_{n,B}$ and the contact structures agree on
$M_n-M''_{n,B}$.  We also assume that $J_{n,B}$ and $J_{n,B+1}$
agree on $M''_{n,B}$, and are induced by $J'$ on $M'$. Then
interpolating between $\alpha_{n,B}$ and $\alpha_{n,B+1}$ and
between the almost complex structures gives us a symplectic
cobordism and a corresponding chain map:
$$\Psi^n_{B,B+1}\colon  \mathcal{A}(M_n, \alpha_{n,B}, J_{n,B})
\to \mathcal{A}(M_{n},\alpha_{n,B+1},J_{n,B+1}).$$ An argument
identical to that of Lemma~\ref{lemma: inclusion convex gluing case}
shows that, given $K>0$, for sufficiently large $n$ there exists
$B_0(n)$ such that for $B\geq B_0(n)$ the following diagram
commutes:
\begin{equation}
\begin{diagram}
\mathcal{A}_{\leq K}(M',\alpha',J') & \rTo^{\Phi_{K,n,B}} &
\mathcal{A}(M_n, \alpha_{n,B},J_{n,B})  \\
& \rdTo^{\Phi_{K,n,B+1}}  &  \dTo_{\Psi^n_{B,B+1}} \\
 &  &
\mathcal{A}(M_{n},\alpha_{n,B+1},J_{n,B+1})  \\
\end{diagram}
\end{equation}
In particular, if $\gamma\in \mathcal{A}_{\leq K}(M',\alpha',J')$,
then
$\Psi^n_{B,B+1}\circ\Phi_{K,n,B}(\gamma)=\Phi_{K,n,B+1}(\gamma)$.

\s\n {\bf The second chain map.} Given $\alpha_{n,B}$ on $M_n$ and
$\alpha_{n+1,B}$ on $M_{n+1}$, we take a diffeomorphism $i\colon 
M_n\stackrel\sim\to M_{n+1}$ which is similar to the one defined in
the paragraph before Lemma~\ref{lemma: warm-up 1 cobordism}: it
takes $M'$ to $M'$ by the identity and sends
$M''_{n,B}\stackrel\sim\to M''_{n+1,B}$, while stretching
$M''_{n,B}-M'$ in the $\bdry_t$-direction (i.e., the Reeb direction)
so that $i^*(dt+\beta')=df+\beta'$ and $|{\bdry f\over \bdry
t}-1|=O({1\over n})$.  Also assume that $J_{n,B}$ and $J_{n+1,B}$
agree with $J'$ on $M'$ and project to the same almost complex
structure on $\widehat{S}_i^+$.  Interpolating between
$\alpha_{n,B}$ and $i^*\alpha_{n+1,B}$, we obtain:
\[
\Psi^{n,n+1}_B\colon  \mathcal{A}(M_n, \alpha_{n,B}, J_{n,B})
\to \mathcal{A}(M_{n+1},\alpha_{n+1,B},J_{n+1,B}).
\]

Given $K>0$, for sufficiently large $n$ there exists $B_0(n)$ such
that for $B\geq B_0(n)$ the following diagram commutes:
\begin{equation}
\begin{diagram}
\mathcal{A}_{\leq K}(M',\alpha',J') & \rTo^{\Phi_{K,n,B}} &
\mathcal{A}(M_n, \alpha_{n,B},J_{n,B})  \\
& \rdTo^{\Phi_{K,n+1,B}}  &  \dTo_{\Psi^{n,n+1}_B} \\
&  & \mathcal{A}(M_{n+1},\alpha_{n+1,B},J_{n+1,B})  \\
\end{diagram}
\end{equation}
Moreover, if $\gamma\in \mathcal{A}_{\leq K}(M',\alpha',J')$, then
$\Psi^{n,n+1}_B\circ\Phi_{K,n,B}(\gamma)=\Phi_{K,n+1,B}(\gamma)$.
First we pick $n$ so that any $F\in \mathcal{M}_g(\underline\gamma;
\underline\gamma';\R\times M_n^*,J_{n,B})$ with image inside
$\R\times (M'_n)^*$ has image inside $\R\times (M')^*$, as in
Lemma~\ref{lemma: warm-up 1 cobordism}. Next, we pick $B_0(n)$ to
bound the $\tau$-direction as in the proof of Lemma~\ref{lemma:
inclusion convex gluing case};

\s\n {\bf Definition of the map $\Phi$.} By repeatedly composing the
maps of type $\Psi^{n}_{B,B+1}$ and $\Psi^{n,n+1}_B$, we obtain the
chain map
\[
\Psi^{n,n'}_{B,B'}=\Psi^{n'-1,n'}_{B'}\circ\dots\circ \Psi^{n,n+1}_{B'}
\circ \Psi^{n}_{B'-1,B'}\circ\dots\circ \Psi^{n}_{B,B+1}.
\]
Here $i_{K,K'}$ is the natural inclusion.  Given $K'> K>0$, there
exist $n'> n> 0$ and $B'=B'(n')> B=B(n)> 0$ so that $\Phi_{K,n,B}$ and
$\Phi_{K',n',B'}$ both map $\gamma\mapsto\gamma$ and the following
diagram commutes:
\begin{equation}
\begin{diagram}
\mathcal{A}_{\leq K}(M',\alpha',J') & \rTo^{\Phi_{K,n,B}} &
\mathcal{A}(M_n, \alpha_{n,B},J_{n,B})  \\
\dTo^{i_{K,K'}} & &  \dTo_{\Psi^{n,n'}_{B,B'}} \\
\mathcal{A}_{\leq K'}(M',\alpha',J')  & \rTo^{\Phi_{K',n',B'}} &
\mathcal{A}(M_{n'},\alpha_{n',B'},J_{n',B'})  \\
\end{diagram}
\end{equation}

Taking direct limits, we have
\[
\Phi\colon \lim_{K\to \infty} HC_{\leq K}(M',\alpha',J') \to
\lim_{n\to \infty} HC(M_n, \alpha_{n,B(n)}, J_{n,B(n)}).
\]
Since the $HC(M',\alpha')=\lim_{K\to
\infty} HC_{\leq K}(M',\alpha')$ and the maps $\Psi^{n,n'}_{B,B'}$
are always isomorphisms, we have defined the map $\Phi$ in
Theorem~\ref{thm: convex gluing restated}.

\s\n {\bf Proof that $\Phi$ is independent of choices.} Let
$(\alpha')^i$, $i=0,1$, be two contact forms which are adapted to
$(M',\Gamma',U(\Gamma'),\xi')$ and let $(J')^i$ be almost complex
structures tailored to $(\alpha')^i$.  Also let $(M_n,\alpha^i_{n,B},
J^i_{n,B})$ be the extensions of $(M',(\alpha')^i,(J')^i)$, as
described earlier. Let $(\beta')^i_0=(\alpha')^i|_{\bdry
  R_+(\Gamma')}$ and $(\beta')^i=(\alpha')^i|_{R_+(\Gamma')}$. As in
the proof of Proposition~\ref{prop: naturality sutured gluing case},
we can write $(\alpha')^0=f(\alpha')^1$ and $(\beta')^0_0=
(\beta')^1_0$. Also, if the manifold $M_n$ is fixed, then we can write
$\alpha^0_n=f_{n,B} \alpha^1_{n,B}$.

We construct a $1$-parameter family $f(\rho)(\alpha')^1$,
$\rho\in[0,1]$, $f(0)=f$, $f(1)=1$, to construct a symplectic
cobordism and a continuation map
$$\Theta'\colon  \mathcal{A}(M',(\alpha')^0,(J')^0)\to
\mathcal{A}(M',(\alpha')^1,(J')^1).$$ Next we extend $f(\rho)$ to
$f_{n,B}(\rho)$, so that $f_{n,B}(0)=f_{n,B}$ and $f_{n,B}(1)=1$.
Using $f_{n,B}(\rho)\alpha^1_{n,B}$, we obtain a continuation map
$$\Theta^{n}_{B}\colon \mathcal{A}(M_n, \alpha^0_{n,B},J^0_{n,B})
\to \mathcal{A}(M_{n},\alpha^1_{n,B},J^1_{n,B}).$$

Let $K>0$. Then there exists $K'>0$ such that
$$\Theta'(\mathcal{A}_{\leq K}(M',(\alpha')^0,(J')^0))\subset
\mathcal{A}_{\leq K'}(M',(\alpha')^1,(J')^1).$$ For sufficiently
large $n$ there exists $B_0(n)$ such that for $B\geq B_0(n)$ the
following diagram commutes:
\begin{equation} \label{diagram: commut1}
\begin{diagram}
\mathcal{A}_{\leq K}(M',(\alpha')^0,(J')^0) & \rTo^{\Phi^0_{K,n,B}}
&
\mathcal{A}(M_n, \alpha^0_{n,B},J^0_{n,B})  \\
\dTo^{\Theta'} & &  \dTo_{\Theta^{n}_{B}} \\
\mathcal{A}_{\leq K'}(M',(\alpha')^1,(J')^1)  &
\rTo^{\Phi^1_{K',n,B}} &
\mathcal{A}(M_{n},\alpha^1_{n,B},J^1_{n,B})  \\
\end{diagram}
\end{equation}

Taking direct limits, we obtain the following commutative diagram:
\begin{equation}
\begin{diagram}
HC(M',(\alpha')^0,(J')^0) & \rTo &
\lim_{n\to \infty}HC(M_n, \alpha^0_{n,B(n)},J^0_{n,B(n)})  \\
\dTo^{\Theta'} & &  \dTo_{\Theta} \\
HC(M',(\alpha')^1,(J')^1)  & \rTo &
\lim_{n\to\infty} HC(M_{n},\alpha^1_{n,B(n)},J^{1}_{n,B(n)})  \\
\end{diagram}
\end{equation}
which proves that the two versions of the map $\Phi$ agree.

\s\n {\em Acknowledgements.}  It is a pleasure to thank Yasha
Eliashberg and Jian He for very helpful discussions.  Part of this
work was done while KH, MH and PG visited MSRI during the academic year
2009--2010; they would like to thank MSRI and the organizers of the
``Symplectic and Contact Geometry and Topology''  and the ``Homology Theories of 
Knots and Links'' programs for their hospitality.

\end{document}